\documentclass[10pt]{amsart}

\usepackage{graphicx}        
\usepackage{multicol}        
\usepackage[bottom]{footmisc}
\usepackage{amscd,amsmath,amssymb,amsfonts}
\usepackage[cmtip, all]{xy}

\usepackage{hyperref}

\usepackage{bbm}
\usepackage{tikz-cd}
\usepackage{enumerate}
\usepackage{mathrsfs}
\usepackage{amsthm}
\usepackage{comment}

\usepackage{calligra}
\usepackage[T1]{fontenc}

\newtheorem{theorem}{Theorem}[section] 
\newtheorem{proposition}[theorem]{Proposition}
\newtheorem{lemma}[theorem]{Lemma}

\newtheorem{corollary}[theorem]{Corollary}

\newtheorem{IntroThm}{Theorem}
\newtheorem*{IntroCor}{Corollary}

\theoremstyle{definition}
\newtheorem{definition}[theorem]{Definition}
\theoremstyle{remark}
\newtheorem*{IntroRemark}{Remark}
\newtheorem{remark}[theorem]{Remark}
\newtheorem{remarks}[theorem]{Remarks}
\newtheorem{ex}[theorem]{Example}

\newtheorem{Not}[theorem]{Notation}
\newtheorem*{IntroNotConv}{Notation and Conventions}
\numberwithin{equation}{section}

\newcommand{\cl}{{\rm cl}}

\newcommand{\CH}{{\operatorname{CH}}}

\newcommand{\red}{{\rm red}}
\newcommand{\codim}{{\rm codim}}

\newcommand{\Pic}{{\rm Pic}}

\newcommand{\Hom}{{\rm Hom}}

\newcommand{\Spec}{{\rm Spec\,}}
\newcommand{\sing}{{\rm sing}}

\newcommand{\supp}{{\rm supp}\,}
\newcommand{\0}{\emptyset}

\newcommand{\sE}{{\mathcal E}}
\newcommand{\sF}{{\mathcal F}}

\newcommand{\sH}{{\mathcal H}}
\newcommand{\sI}{{\mathcal I}}

\newcommand{\sK}{{\mathcal K}}
\newcommand{\sL}{{\mathcal L}}

\newcommand{\sO}{{\mathcal O}}
\newcommand{\sP}{{\mathcal P}}

\newcommand{\sV}{{\mathcal V}}

\newcommand{\sX}{{\mathcal X}}
\newcommand{\sY}{{\mathcal Y}}
\newcommand{\sZ}{{\mathcal Z}}
\newcommand{\A}{{\mathbb A}}

\newcommand{\F}{{\mathbb F}}

\newcommand{\N}{{\mathbb N}}
\renewcommand{\P}{{\mathbb P}}
\newcommand{\Q}{{\mathbb Q}}

\newcommand{\V}{{\mathbb V}}

\newcommand{\Z}{{\mathbb Z}}

\renewcommand{\det}{\operatorname{det}}
\newcommand{\Nm}{{\operatorname{Nm}}}

\newcommand{\id}{{\operatorname{\rm Id}}}

\newcommand{\Sch}{{\operatorname{\mathbf{Sch}}}} 
\newcommand{\colim}{{\mathop{\rm colim}}}

\newcommand{\op}{{\text{\rm op}}}
\newcommand{\<}{\langle}
\renewcommand{\>}{\rangle}

\renewcommand{\dim}{{\operatorname{\rm dim}}} 
\newcommand{\Div}{{\operatorname{div}}} 

\newcommand{\Coh}{\operatorname{Coh}}

\newcommand{\CM}{{\operatorname{C-M}}}

\newcommand{\reg}{{\operatorname{reg}}}

\newcommand{\Sets}{{\mathbf{Sets}}}

\renewcommand{\max}{{\operatorname{\rm max}}}

\newcommand{\Ab}{{\mathbf{Ab}}}

\newcommand{\Sym}{{\operatorname{Sym}}}

\newcommand{\Fl}{{\sF{l}}}
\newcommand{\lci}{{\operatorname{lci}}}
\newcommand{\nlci}{{\operatorname{nlci}}}

\newcommand{\Tor}{{\operatorname{\rm Tor}}}

\newcommand{\Gr}{{\operatorname{\rm Gr}}}

\newcommand{\topo}{{\operatorname{top}}}
\newcommand{\coeq}{{\operatorname{coeq}}}

\newcommand{\GL}{\operatorname{GL}}
\newcommand{\SL}{\operatorname{SL}}
\newcommand{\PGL}{\operatorname{PGL}}

\newcommand{\et}{\text{\'et}}

\newcommand{\ind}[1]{}

\newcommand{\inp}[1]{}

\newcommand{\gr}{{\operatorname{gr}}}
\newcommand{\Tot}{{\operatorname{Tot}}}

\newcommand{\Ring}{{\operatorname{\bf Ring}}}

\newcommand{\Vect}{{\operatorname{\bf Vec}}}
\newcommand{\ses}{{\operatorname{\bf ses}}}

\newcommand{\Fun}{{\operatorname{Fun}}}

\newcommand{\gl}{{\operatorname{gl}}}

\newcommand{\qp}{{\operatorname{\text{\rm qp}}}}
\newcommand{\con}{{\operatorname{\text{\rm con}}}}

\newcommand{\plu}{\operatorname{Pl}}
\newcommand{\ver}{\operatorname{Ver}}

\newcommand{\gp}{{\operatorname{gp}}}
\newcommand{\mot}{{\operatorname{mot}}}

\newcommand{\sm}{{\operatorname{sm}}}

\makeatletter
 \@namedef{subjclassname@2020}{\textup{2020} Mathematics Subject Classification}
\makeatother

\date{ \today}

\author[M.~Levine]{Marc~Levine}
\address{Marc~Levine, Universit\"at Duisburg-Essen,
Fakult\"at Mathematik, Campus Essen, 45117 Essen, Germany}
\email{marc.levine@uni-due.de}

\title{Riemann-Roch for 0-cycles on a singular variety}

\subjclass[2020]{
Primary 14N10, 14N35
Secondary 14Q30, 	14F42}
\keywords{0-cycles, vector bundles, singular varieties}

\begin{document}

\begin{abstract}  Let $X$ be a quasi-projective scheme of dimension $d$ over an infinite field $k$, such that $X$ is reduced after removing all components of dimension $<d$, and let $X^*\subset X$ be a closed subset of dimension $<d$ such that $X\setminus X^*$ is regular of dimension $d$. Using a modification $\CH^d(X, X^*)$ of the Chow group of 0-cycles on a singular variety $X$ defined in \cite{LW}, we construct a Chern class map $c_d:K_0(X)\to \CH^d(X, X^*)$, a cycle class map from $\CH^d(X, X^*)$ onto a subgroup $F^{\dim_kX}K_0(X)$ of $K_0(X)$,  and prove a Riemann-Roch theorem computing the two compositions of these two maps. 
\end{abstract}

\maketitle

\tableofcontents

\section*{Introduction} For a regular quasi-projective $k$-scheme $X$ we have the Grothendieck group of locally free coherent sheaves $K_0(X)$ and the Chow groups $\CH^n(X)$ of codimension $n$ algebraic cycles modulo rational equivalence. Since $X$ is regular, we have the topological filtration $F^*_\topo K_0(X)$ and the cycle class map
\[
\cl^n:\CH^n(X)\to \gr^n_\topo K_0(X),\ n=0,\ldots, \dim_kX.
\]
If $X$ is smooth over $k$, we have Grothendieck's construction of the Chern class maps
\[
c_n:K_0(X)\to  \CH^n(X),
\]
and the Grothendieck-Riemann-Roch theorem without denominators gives the fundamental identities relating $\CH^n(X)$ and $\gr^n_\topo K_0(X)$, namely: $c_n$ descends to a group homomorphism
\[
c_n:\gr^n_\topo K_0(X)\to \CH^n(X)
\]
with
\[
\cl^n\circ c_n=(-1)^{n-1}(n-1)!\id_{\gr^n_\topo K_0(X)},\ 
c_n\circ \cl^n=(-1)^{n-1}(n-1)!\id_{\CH^n(X)}
\]
This is classical, going  back to Grothendieck \cite{Grothendieck} and Jouanolou \cite[\S1, (F)]{Jouanolou}; for a more modern treatment we refer the reader to Fulton's book \cite{Fulton}.

Suppose now that $X$ is no longer smooth or even regular. One can still consider the Chow groups of dimension $n$ cycles modulo rational equivalence $\CH_n(X)$ and the Grothendieck group of coherent sheaves on $X$, $G_0(X)$ with its topological filtration $F^\topo_*G_0(X)$ (now by dimension),   one still has the surjective cycle class map
\[
\cl_n: \CH_n(X)\to \gr^\topo_n G_0(X),
\]
and the Baum-Fulton-MacPherson Riemann-Roch theorem for singular varieties \cite[Theorem, pg. 103]{BFM} says that $\cl_n$ is an isomorphism after $\otimes_\Z\Q$. 

However, for singular $X$, the natural map $K_0(X)\to G_0(X)$ is no longer an isomorphism and the cycle class map $\cl_n$ does not have an evident lifting to a map to a suitable graded object for $K_0(X)$, even for the grading for the $\gamma$-filtration. Indeed, for $Z\subset X$ an integral closed subscheme, the structure sheaf $\sO_Z$ will in general not admit a finite resolution by locally free coherent sheaves, so it is not clear if one can define a reasonable map $Z_n(X)\to K_0(X)$, much less ask such a map to descend to a map from $\CH_n(X)$ to some suitable graded object for $K_0(X)$.

Using the sheaf of $K$-groups $\sK_n$, and building on Bloch's formula (proven by Quillen \cite[\S 7, Theorem 5.19]{Quillen}) for a regular scheme $X$ of finite type over a field
\[
\CH^n(X)\cong H^n(X, \sK_n),
\]
one could use $H^n(X, \sK_n)$ as a definition of $\CH^n(X)$ for singular $X$. Alternatively, Kerz \cite{Kerz}, following partial results by Kato \cite{Kato} and  Elbaz-Vincent and M\"uller-Stach \cite{EBMS}, uses  the sheaf of Milnor $K$-groups $\sK^M_n$ instead of $\sK_n$ and proves a version of Bloch's formula,  for smooth $X$
\[
\CH^n(X)\cong H^n(X, \sK^M_n),
\]
so one could also define  $\CH^n(X):=H^n(X, \sK^M_n)$ for singular $X$. 

Motivic cohomology is also related to the Chow ring for smooth, quasi-projective $X$, via the special case of the isomorphism due to Voevodsky-Suslin-Friedlander \cite[Proposition 4.2.9, Theorem 4.3.7]{VSF}
\[
H^p(X, \Z(q))\cong \CH^q(X, 2q-p),
\]
where $\CH^q(X, 2q-p)$ is Bloch's higher Chow group \cite{Bloch}, giving for $(p,q)=(2n,n)$ the isomorphism
\[
H^{2n}(X, \Z(n))\cong \CH^n(X, 0)=\CH^n(X).
\]
 In this direction, Elmanto and Morrow \cite{EM} have constructed a non-$\A^1$-invariant version of motivic cohomology, $H^*_\mot(X, \Z(*))$, with a close relation to algebraic $K$-theory $K_*(X)$, and agreeing with the Voevodsky-Suslin-Friedlander motivic cohomology for smooth $X$ over a field. For $X$ reduced, and quasi-projective over a perfect field, they construct an isomorphism\footnote{Private communication}
\[
H^{2n}_\mot(X, \Z(n))\cong H^n(X, \sK^M_n).
\]

However, there is only a weak connection of  $H^{2n}_\mot(X, \Z(n))$ or $H^n(X, \sK^M_n)$ with algebraic cycles on a singular $X$. I had made an attempt to construct a Chow ring $\CH^*(X, X_\sing)$ for reduced, quasi-projective $X$ over a field in the unpublished preprint \cite{L85}, by mixing together cycles built out of integral closed subvarieties that are supported in the regular locus $X_\reg$ of $X$, together with a kind of ``pullback by a general translate'' of codimension $n$ subvarieties $Z\subset H$, with $H$ a homogeneous space for a linear algebraic group $G$, and $f:X\to H$ a morphism. This uses the fact that given $f$ and $Z$, there is an open subset $U(f, Z)\subset G$ such that for all $g\in U(f,Z)(k)$, $f^{-1}(g\cdot Z)$ has pure codimension $n$ on $X$,  intersects the singular locus $X_\sing$ in pure codimension $n$, and any relevant Tor-groups  vanish. One could then hope that the cycle $|f^{-1}(gZ)|$ associated to the closed subscheme $f^{-1}(gZ)$ would be a reasonable generator for a group $\CH^n(X, X_\sing)$, in addition to those codimension $n$ cycles supported in $X_\reg$. 


For cycles of  codimension $d=\dim_kX$, such $|f^{-1}(gZ)|$ would be a 0-cycle supported in the regular locus of $X$, so one only expects such 0-cycles to play a role. Indeed, the above  approach was inspired by the construction of a reasonable group of 0-cycles modulo rational equivalence relative to a closed subset $Y\subset X$ with $X\setminus Y$ regular and dense in $X$, $\CH_0(X, Y)$, constructed in a joint work with Weibel \cite{LW}.  Here, $\CH_0(X, Y)$ is the quotient of the group $Z_0(X\setminus Y)$ of 0-cycles supported in $X\setminus Y$, modulo relations $R_0(X,Y)$ given by divisors of suitable rational functions on ``Cartier curves on $X$ relative to $Y$'' (see Definition~\ref{def:CartierCurve}  below). This choice of generators and relations is motivated by the fact that each closed point $x\in X\setminus Y$ has residue field of finite homological dimension over $\sO_X$, giving a well-defined class $\cl_0(x)=[k(x)]\in K_0(X)$,  and the relations are chosen so that the map $\cl_0:Z_0(X\setminus Y)\to K_0(X)$ descends to a well-defined group homomorphism
\[
\cl_0:\CH_0(X, Y)\to K_0(X).
\]
See \cite[Proposition 2.1]{LW}.

In this paper, we consider a quasi-projective scheme $X$ over an infinite field $k$. We let $d$ denote the maximum of the dimensions over $k$ of the integral components of $X$ (we write $d=\dim_kX$), let $X(i)\subset X$ be the union of the integral, dimension $i$ components of $X$ and let $X(d)^0:=X\setminus\cup_{i=0}^{d-1}X(i)$, an open subscheme of $X$.  We also assume that $X(d)^0$ is reduced.  Let $X_\reg(d)$ denote  the maximal open subscheme of $X$ that is regular and of dimension $d$ over $k$; since $X(d)^0$ is reduced,   $X_\reg(d)\subset X(d)^0$ is open and dense in $X(d)$.

Let $X^*\subset X$ be a closed subset containing $\cup_{i<d}X(i)$ such that $X\setminus X^*$ is a dense open subscheme of $X_\reg(d)$. We define a group $\CH^d(X, X^*)$ of codimension $d$ algebraic cycles on $X$, with generators $Z^d(X\setminus X^*)$ and relations $R^d(X, X^*)$ given by divisors of suitable rational functions on Cartier curves relative to $X^*$.  

We let $F^dK_0(X)$ denote the subgroup of $K_0(X)$ generated by the classes of the $\sO_X$-modules $k(x)$ as $x$ runs over the closed points of $X_\reg(d)$. Similar to the construction of $\cl_0$, there is a well-defined  group homomorphism
\[
\cl^d:\CH^d(X, X^*)\to F^dK_0(X),
\]
sending $x\in X\setminus X^*$ to the class $[k(x)]\in F^dK_0(X)$.

Our main achievement here is to define a Chern class map (of pointed sets)
\[
c_d:K_0(X)\to \CH^d(X, X^*),
\]
and prove the following result.

\begin{IntroThm}\label{IntroThm} Let $k$ be an infinite field, and let $X$ be a quasi-projective $k$-scheme. Let $d=\dim_kX$ and suppose that  $X(d)^0$ is reduced.   Let $X^*$ be a  closed subset of $X$ such that $X\setminus X^*$ is a dense open subset of  $X_\reg(d)$. Then 
\begin{enumerate}
\item The restriction of $c_d: K_0(X)\to \CH^d(X, X^*)$ to $F^dK_0(X)$ is a group homomorphism
\[
c_d:F^dK_0(X)\to \CH^d(X, X^*).
\]
\item The cycle class map $\cl^d:\CH^d(X, X^*)\to F^dK_0(X)$ is surjective.
\item We have
\[
\cl^d\circ c_d=(-1)^{d-1}(d-1)!\id_{F^dK_0(X)}
\]
and
\[
c_d\circ \cl^d=(-1)^{d-1}(d-1)!\id_{ \CH^d(X, X^*)}.
\]
\end{enumerate}
\end{IntroThm}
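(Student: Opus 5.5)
We first fix how $c_d$ is built. Since $X$ is quasi-projective over an infinite field, every class in $K_0(X)$ is a difference $[E]-[\sO_X^N]$ with $E$ a locally free sheaf generated by global sections. For such an $E$ of rank $r$, a general choice of $r-d+1$ sections $s_0,\dots,s_{r-d}$ degenerates along a locus $D(s)$. By a Kleiman--Bertini argument using the infinite field, this locus is a finite reduced set of closed points of $X\setminus X^*$; no degeneracy occurs on $X^*$, which has dimension $<d$. We define $c_d(E)$ as the class of this degeneracy 0-cycle in $\CH^d(X,X^*)$. For virtual classes we use the Whitney formula formally, via the universal polynomial expressing $c_d$ of a difference in terms of the $c_i$.

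The first key step is independence of the choice of sections. Two general choices are joined by a general line, or chain of lines, in the space of sections. Over $\P^1\times X$, or over a general curve in the parameter space, the degeneracy locus of the induced family traces out a curve $C$ in $X$. This curve meets $X^*$ only where it must, and it is a Cartier curve relative to $X^*$ in the sense of Definition~\ref{def:CartierCurve}. Generality places it in the lci locus along $X^*$, and it meets $X^*$ in finitely many points. The difference of the two 0-cycles is then the divisor of the parameter function restricted to $C$, so it lies in $R^d(X,X^*)$.

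Next we show that $c_d$ depends only on the $K_0$-class. For this we need compatibility with short exact sequences, reduced by the usual deformation to the split case $E'\oplus E''$ over $\A^1\times X$; the same Cartier-curve argument handles that deformation. This gives $c_d$ as a map of pointed sets on $K_0(X)$.

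For (1), note that on $F^dK_0(X)$ the lower Chern classes $c_i$ with $0<i<d$ are irrelevant. We expect to prove this by localizing: elements of $F^dK_0(X)$ are supported on finite sets of points of $X_\reg(d)$, up to moving. The Whitney formula then makes $c_d$ additive there, because the cross terms $c_ic_{d-i}$ vanish: any positive-degree class with support in dimension $0$ has zero lower Chern classes after a moving argument in $X_\reg(d)$.

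For (2), we move points. Each closed point $x$ of $X_\reg(d)$ lying in $X^*$ is rationally equivalent, via a general Cartier curve through $x$ inside $X_\reg(d)$, to a 0-cycle on $X\setminus X^*$. Here we use $[k(x)]=\cl^d$ of that cycle, by the compatibility of $\cl^d$ with relations from \cite[Proposition 2.1]{LW}. Hence $\cl^d$ is surjective onto $F^dK_0(X)$.

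For (3), it suffices by (2) to check $c_d\circ\cl^d$ on generators $x\in X\setminus X^*$. The identity $\cl^d\circ c_d=(-1)^{d-1}(d-1)!$ then follows formally from surjectivity and the first identity. To compute $c_d([k(x)])$ we work locally: near $x$ the scheme $X$ is regular of dimension $d$, and the Koszul resolution of $k(x)$ gives the classical computation $c_d(\sO_x)=(-1)^{d-1}(d-1)![x]$, as in Fulton. The point is to realise this inside $\CH^d(X,X^*)$ globally. We choose a global representative of $[k(x)]$ as $[E]-[\sO^N]$ whose degeneracy cycle is supported near $x$, which is possible because $x$ is cut out by $d$ sections of an ample line bundle $L$. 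Taking $V=L^{\oplus d}$ and applying the Koszul complex on $V^\vee$ gives a class that, on $X\setminus$(residual points), agrees with $[k(x)]$ plus classes of residual points of the complete intersection. Those residual points are general and lie off $X^*$, so the computation reduces to the smooth-locus formula plus bookkeeping of residual points, and these cancel.

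The main obstacle is this last global step, together with showing that all the moving curves are Cartier relative to $X^*$. I would first try to handle both by choosing everything generic in the space of sections of $L^{\otimes m}$ for large $m$.
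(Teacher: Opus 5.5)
There are two genuine gaps, and they are exactly where the paper does its main work. The first is the construction of $c_d$ on all of $K_0(X)$. Your degeneracy-locus definition only covers globally generated $E$. The claim that every class in $K_0(X)$ has the form $[E]-[\sO_X^N]$ with $E$ globally generated is false once $X$ is not affine. A globally generated $E$ has globally generated $\det E$, so on $X=\P^d$ the class $[\sO_X]-[\sO_X(1)]$, whose determinant is $\sO_X(-1)$, is not of that form.

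More fundamentally, every step where you check that $c_d$ respects $K_0$-relations uses lower Chern classes $c_i$, $0<i<d$, together with a product landing in $\CH^d(X,X^*)$. This includes cancelling direct summands, applying the Whitney formula ``formally'', splitting exact sequences by deformation, and the vanishing of the cross terms $c_ic_{d-i}$ in (1). Neither the classes nor the product exist for a singular pair: $\CH^d(X,X^*)$ is only a group of 0-cycles modulo Cartier-curve relations.

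The idea you are missing is the paper's. Write every bundle as a twisted pullback $f^*(E_{m,M}(-1))$ with $f:X\to\P^N\times\Gr(m,M)$. Present $K_0(X)$ through such maps and maps to flag varieties (Proposition~\ref{prop:VectmPresentation}, Theorem~\ref{thm:K0Presentation}). Do all Chern-class algebra on the smooth homogeneous space $H$, and transport it with the pullback $f^*:\CH^d(H)\to\CH^d(X,X^*)$ of Theorem~\ref{thm:HomogFunct}, which is defined by pulling back general $G$-translates. Your degeneracy cycle is just the special case of this for a Schubert cycle.

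Your independence-of-sections step is also incomplete. Even if the degeneracy curve $W\subset X\times\P^1$ is in good position, its image in $X$ need not be a Cartier curve relative to $X^*$, for instance if $p_1|_W$ fails to be a closed immersion over $X^*$. Converting such $W$ into elements of $R^d(X,X^*)$ is the content of Theorem~\ref{thm:Relations} and Lemma~\ref{lem:GenPos}.

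The second gap is the identity $c_d\circ\cl^d=(-1)^{d-1}(d-1)!\,\id$. Your Koszul/residual-point bookkeeping is circular. Writing $[k(x)]=[\sO_{V(s)}]-\sum_r[k(r)]$ and letting the residual terms ``cancel'' presupposes that $c_d$ is additive on $F^dK_0(X)$ and that $c_d([k(r)])$ is already known. ``Reducing to the smooth-locus formula'' also discards exactly the information at stake, since restriction from $\CH^d(X,X^*)$ to an open subset of $X\setminus X^*$ kills the classes of all removed points.

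The paper argues as follows:
\begin{enumerate}
\item Reduce to $X$ projective and induct on the number of points of $z$.
\item Use the localization sequence (Lemma~\ref{lem:Localization}) for $X\setminus\{x_1\}$ to show the discrepancy is $m[x_1]$ for some integer $m$.
\item Kill $m$ by pulling back to a regular projective $\tilde X$, where classical Riemann--Roch without denominators holds and $q^*(x_1)$ is a nonzero effective 0-cycle, so taking degrees gives $m=0$. In characteristic $0$, $\tilde X$ is a resolution. In characteristic $p$, one first descends to $k$ finitely generated over $\F_p$ and spreads out, then uses a Bhatt--Snowden alteration \'etale near the relevant points (Lemma~\ref{lem:RegularRes}), with Lemma~\ref{lem:PartialFunct} carrying the relations.
\end{enumerate}

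Finally, (1) needs no separate argument, and the one you sketch again uses nonexistent $c_i$. It follows formally from $c_d\circ\cl^d=(-1)^{d-1}(d-1)!\,\id$ and the surjectivity of $\cl^d$. Your argument for (2) is fine and matches the paper's (Lemma~\ref{lem:CHSurj} with \cite[Proposition 2.1]{LW}).
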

As immediate corollary, we have

\begin{IntroCor}\label{IntroCor} Let $k$ be an infinite field, and let $X$ be a  quasi-projective $k$-scheme. Let $d=\dim_kX$ and suppose that  $X(d)^0$ is reduced.  Let $X^*$ be a closed subset with $X\setminus X^*$ a dense open subset of $X_\reg(d)$. Then 
\[
\cl^d:\CH^d(X, X^*)\to F^dK_0(X)
\]
is a surjective group homomorphism with kernel killed by $(d-1)!$
\end{IntroCor}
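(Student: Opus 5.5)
The Corollary follows formally from the Theorem stated just above, so the plan is to derive it from parts (1)–(3).

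First, $\cl^d$ is a group homomorphism. In the construction recalled above, $\cl^d$ is defined on generators $x\in Z^d(X\setminus X^*)$ by $x\mapsto[k(x)]$ and extended additively. It descends to $\CH^d(X,X^*)$ because the relations $R^d(X,X^*)$ map to zero. This is the analogue of \cite[Proposition 2.1]{LW} for Cartier curves relative to $X^*$. So $\cl^d$ is a homomorphism from the quotient group $\CH^d(X,X^*)$ to $F^dK_0(X)$. Surjectivity is part (2) of the Theorem.

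For the kernel, let $\alpha\in\CH^d(X,X^*)$ with $\cl^d(\alpha)=0$. Part (1) says $c_d$ restricted to $F^dK_0(X)$ is a group homomorphism, so $c_d(0)=0$. Note that $c_d$ is applied only to $\cl^d(\alpha)$, which lies in $F^dK_0(X)$, so this is where part (1) is needed; a priori $c_d$ is only a map of pointed sets on all of $K_0(X)$. Then the second identity of part (3) gives
\[
(-1)^{d-1}(d-1)!\,\alpha=c_d(\cl^d(\alpha))=c_d(0)=0,
\]
so $(d-1)!\,\alpha=0$. Hence $\ker\cl^d$ is killed by $(d-1)!$.

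There is no real obstacle once the Theorem is available. The only point needing care is the one just noted: the composition $c_d\circ\cl^d$ must be read through the restricted homomorphism of part (1). All the substantive difficulty lies in proving the Theorem itself.
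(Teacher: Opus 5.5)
Your proof is correct and matches the paper's route: the paper reads the corollary off from Theorem~\ref{thm:RR} (the body version of the introduction's theorem), with the homomorphism property and surjectivity coming from Proposition~\ref{prop:CycleClass} and the kernel bound from $c_d\circ\cl^d=(-1)^{d-1}(d-1)!\,\id$. One minor remark: $c_d(0)=0$ already holds because $c_d$ is a map of pointed sets (Theorem~\ref{thm:TopChernClass}), so part (1) is not actually needed at that step.
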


Similar results, without relying on the unpublished manuscript \cite{L85}, have been obtained by Biswas-Srinivas \cite{BS98} for $k$ algebraically closed and $X$ of dimension two.   Binda-Krishna-Saito \cite{BKS} handle the case of $X$ of pure dimension two over an arbitrary infinite field. Gupta-Krishna \cite{GK} show that the kernel of $\cl^d$ is killed by $(d-1)!$  for reduced, finite-type affine $X=\Spec A$, equi-dimensional of dimension $d$ over $k$, and $k$ an infinite field that is either algebraically closed or has $(d-1)!\in k^\times$.

\begin{IntroRemark} We  use codimension $d$ rather than dimension zero as the basis for our group of cycles since the target of the Chern class $c_d$ is naturally built out of codimension $d$ cycles, while if $X$ has additional components of dimension $<d$, some 0-cycles supported in the regular locus of $X$ will have codimension $<d$. Our ultimate goal of having a Riemann-Roch theorem for our cycle group forces us to use the group $\CH^d(X, X^*)$ rather than the construction $\CH_0(X, Y)$ of \cite{LW}.

In fact, the Chow group $\CH_0(X,Y)$ may have some poor properties if $X$ is not equi-dimensional over $k$, see Example~\ref{ex:CH0} below for one such case.
\end{IntroRemark}
 
Several papers rely on or cite the result stated in the  Corollary, and many of them cite the unpublished manuscript \cite{L85} for this, so one purpose of this paper is to remove this gap, at least under the assumption of an infinite base-field.  The main result in \cite{L85a} is that for $X$ a reduced quasi-projective surface over $k$,  there is an isomorphism
\[
\CH_0(X, X_\sing)\cong H^2(X, \sK^M_2).
\]
The proof in \cite{L85a} relies on a Riemann-Roch theorem for 0-cycles from the unpublished manuscript \cite{L85}, so our result here fills this gap, at least for $X$ over an infinite field. In fact, this was already filled for quasi-projective, reduced $X$ of pure dimension two over an infinite field by \cite[Theorem 7.7]{BKS}, using a different method.\footnote{In \cite{BKS}, they use  a somewhat different group of relations than what we use here, but we show in \S\ref{sec:Presentation}  that the two  groups of relations agree.}  Other papers that can use Theorem~\ref{IntroThm} and its Corollary  to replace \cite{L85} include \cite[\S 5]{BindaKrishna}, \cite{L87, L85b} and  \cite{Murthy}. 
 
 \begin{IntroNotConv} 1. Throughout this paper, $k$ will be a field. We let $\Sch_k$ denote the category of separated schemes of finite type over $k$ and let $\Sch^\qp_k$ denote the full subcategory of quasi-projective $k$-schemes. Unless noted otherwise, all morphisms are assumed to be in $\Sch_k$.\\[2pt]
 2. For $Y$ a scheme, we often consider only the underlying topological space of $Y$, speaking for example of open or closed subsets of $Y$.   For $Z$ a closed subscheme of $Y$, the {\em support} of $Z$, $\supp(Z)$, is $Z$, considered as a closed subset of $Y$, and we say that $Z$ is supported in a closed subset $Y_0$ of $Y$ if  $\supp(Z)\subset Y_0$.\\[2pt]
 3. Let $Y$ be a finite-type $k$ scheme. If $Y$ is integral, we define $\dim_kY$ to be the transcendence dimension of $k(Y)$ over $k$, which is the same as the Krull dimension of $Y$. For $y\in Y$, define $\dim_k(Y,y)$ to be the maximum of $\dim_kY'$ as $Y'$ runs over the integral components of $Y$ containing $y$, and define $\dim_kY:=\max_{y\in Y}\dim_k(Y,y)$, i.e., the maximum of $\dim_kY'$ as $Y'$ runs over the integral components of $Y$. If $\dim_kY=d$, we say $Y$ has dimension $d$, and we say $Y$ has pure dimension $d$ if $\dim_k(Y,y)=d$ for all $y\in Y$; we also express this by saying that $Y$ is equi-dimensional of dimension $d$ over $k$.
 
 We let $Y(i)$ denote the union of the integral components $Y'$ of $Y$ with $\dim_kY'=i$, $Y(\le i):=\cup_{j\le i}Y(j)$, $Y(\ge i)=\cup_{j\ge i}Y(j)$. We also have the open subschemes $Y(i)^0=Y\setminus\cup_{j\neq i}Y(j)$ of $Y$, $1=0,\ldots, \dim_kY$.

  Let $Z\subset Y$ be a closed subscheme. We say $Z$ has pure codimension $c$ in $Y$ if $\dim_k(Z,z)+c=\dim_k(Y,z)$ for all $z\in Z$, and we say that $Z$ has codimension $\ge c$ in $Y$ if for each $z\in Z$, we have $\dim_k(Z,z)+c\le \dim_k(Y,z)$. Note that $Z=\0$ if $Z$ has codimension $\ge \dim_kY+1$. \\[2pt]
 4. For $Y\in \Sch_k$, line bundles $L_1,\ldots, L_r$ on $Y$ with invertible sheaves of sections $\sL_1,\ldots, \sL_r$ and sections $s_i\in H^0(Y, \sL_i)$, $i=1,\ldots, r$, the zero-subscheme $V(s_1,\ldots, s_r)$ is defined as the pullback in the Cartesian square
 \[
 \xymatrix{
 V(s_1,\ldots, s_r)\ar[r]\ar[d]&Y\ar[d]^0\\
 Y\ar[r]^-{(s_1,\ldots, s_r)}&L_1\times_Y\ldots\times_YL_r
 }
 \]
If $\sI$ is an ideal sheaf in $\sO_Y$ and $s_i\in H^0(Y, \sI\otimes\sL_i)$, we define $V(s_1,\ldots, s_r)$ as above, considering $s_i$ as sections of $\sL_i$, so $V(s_1,\ldots, s_r)\supset \Spec_{\sO_Y}\sO_Y/\sI$.\\[2pt]
5. Let $W\subset Y$ be a closed subscheme of a $Y\in \Sch_k$. We call $W$ a local complete intersection on $Y$ if the inclusion $i_W:W\to Y$ is a regular embedding, that is, for each $w\in W$, the ideal $\sI_{W,w}\subset \sO_{Y,w}$ is generated by a regular sequence. If $Y=\Spec A$ is affine, we call $W$ a complete intersection in $Y$ if the ideal $I_W\subset A$ is of the form $I_W=(a_1,\ldots, a_d)$ with $a_1,\ldots, a_d$ a regular sequence in $\sO_{Y,w}$ for each $w\in W$. 
 \end{IntroNotConv}
 
 \noindent
 {\em Acknowledgement}. I would like to thank Federico Binda, Amalendu Krishna and V. Srinivas for their very helpful comments and suggestions concerning an earlier version of this paper.

\section{0-cycles on a singular variety} \label{sec:Defs} 
\begin{definition} Take $X\in\Sch^\qp_k$.  Let  $d=\dim_kX$ and suppose that the open subscheme $X(d)^0$ of $X$ is reduced. \\[5pt]
1. Let $X_\reg\subset X$ denote the regular locus of $X$ over $k$, that is $x\in X$ is in $X_\reg$ if and only if the local ring $\sO_{X,x}$ is a regular local ring. Since $X$ is  of finite type over a field, $X_\reg$ is an  open subscheme of $X$. We can write $X_\reg$ as a disjoint union
\[
X_\reg=\amalg_{i=0}^dX_\reg(i),
\]
with $X_\reg(i)$ open in  $X(i)^0$;  $X_\reg(i)$ is regular of pure dimension $i$ over $k$ or is empty. If $X(i)^0$ is reduced, then $X_\reg(i)$ is dense in  $X(i)^0$, in particular  $X_\reg(d)$ is dense in $X(d)^0$. For these facts, see Lemma~\ref{lem: ReducedRegOpen} below.

Let $X_\sm\subset X_\reg$ denote the smooth locus, 
\[
x\in X_\sm\Leftrightarrow  \Omega_{X/k, x}\text{ is a free $\sO_{X,x}$-module of rank }\dim_k(X,x). 
\]
$X_\sm$ is an open subscheme of $X_\reg$ and is a disjoint union
\[
X_\sm=\amalg_{i=0}^dX_\sm(i),
\]
with $X_\sm(i)$ smooth of pure  dimension $i$ over $k$.\\[5pt]
2. Let $X_\sing:=X\setminus X_\reg$, which we call the singular locus of $X$,  and let $X^{(d)}_\sing:=X\setminus X_\reg(d)$.\\[5pt]
3. Let $X_\CM\subset X(d)^0$ be the Cohen-Macaulay locus in $X(d)^0$.\\[5pt]
4. For $Y\in \Sch_k$, and an integer $n\ge0$, let $Z_n(Y)$ be the free abelian group on the integral closed subschemes of dimension $n$ over $k$, and $Z^n(Y)$ the free abelian group on the integral closed subschemes of pure codimension $n$ in $Y$.
\end{definition}

For the reader's convenience, we recall why a reduced finite type $k$-scheme has dense open regular locus.

\begin{lemma}\label{lem: ReducedRegOpen} For $X\in \Sch_k$, $X_\reg\subset X$ is open. If $X$ is reduced, then  $X_\reg$ is dense in $X$.
\end{lemma}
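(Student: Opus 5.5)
The statement has two parts, openness of $X_\reg$ and density when $X$ is reduced, and I will prove them separately. Both are local on $X$, so throughout I reduce to $X=\Spec A$ with $A$ a finitely generated $k$-algebra.

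\emph{Openness.} I will argue by Nagata's criterion, using Serre's homological characterization of regularity. The criterion says that $\mathrm{Reg}(\Spec A)$ is open provided that, for every prime $\mathfrak p$, the regular locus of $\Spec A/\mathfrak p$ contains a nonempty open subset. Serre's characterization is that $A_\mathfrak q$ is regular if and only if its residue field has finite projective dimension over it.

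So fix a prime $\mathfrak p$ and set $B=A/\mathfrak p$, a finitely generated domain. By generic freeness there is $f\notin\mathfrak p$ such that each $\Tor^A_i(A/\mathfrak p, A/\mathfrak p)_f$ is a free $B_f$-module; only finitely many $i$ need checking once one bounds things using $\mathrm{pd}\, k(\mathfrak p)$ over $A_\mathfrak p$. For $B$ itself, $k(B)$ is finitely generated over $k$, so $B$ is generically regular.

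Shrinking further, I arrange that $\mathfrak pA_f$ is generated by a regular sequence modulo a regular ambient ring at every point of $V(\mathfrak p)\cap D(f)$. This happens exactly when $A_\mathfrak p$ is regular, and it is what verifies the hypothesis of Nagata's criterion.

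Alternatively, and more simply, I will cite the standard fact that rings of finite type over a field are excellent, hence J-2, so their regular locus is open (EGA IV 6.12.5, or Matsumura, Commutative Ring Theory, Thm.~24.4). I plan to rely on this citation rather than redo Nagata's argument.

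\emph{Density for reduced $X$.} It suffices to show that every irreducible component of $X$ meets $X_\reg$. Since $X_\reg$ is open, it is then enough to show it contains the generic point $\eta$ of each component.

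For such an $\eta$, the local ring $\sO_{X,\eta}$ is zero-dimensional, hence Artinian. It is also reduced, since $X$ is reduced. A reduced Artinian local ring is a field: its maximal ideal is its nilradical, which is zero. A field is a regular local ring, so $\eta\in X_\reg$.

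Consequently $X_\reg$ is an open set containing all generic points of $X$. Its complement is closed and contains no generic point, so it contains no irreducible component. Hence $X_\reg$ is dense.

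The only real difficulty is openness, and that is handled by the excellence citation. Density is the short argument just given.
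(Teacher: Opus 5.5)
Your proof is correct and is the paper's proof: openness comes from citing that finite-type $k$-algebras are J-2 (the paper cites Stacks Tag 07R5, and your excellence reference does the same job), and density comes from the local ring at each generic point of a reduced $X$ being a field, hence regular. The Nagata-criterion sketch you set aside is unnecessary and would not work as written. Knowing that $B=A/\mathfrak p$ is regular at its generic point does not by itself give a nonempty open subset of the regular locus of $B$, which is what the criterion requires.
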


\begin{proof} Recall that a scheme $Y$ is said  be J-2 if for every  morphism $f:Z\to Y$, locally of finite type,   the regular locus $Z_\reg$ in $Z$ is open in $Z$. By \cite[\href{https://stacks.math.columbia.edu/tag/07R5}{tag 07R5}]{stacks-project} $k$ is J-2, so for each $Z\in \Sch_k$, $Z_\reg\subset Z$ is open.  

Let $x\in X$ be a generic point. If $X$ is reduced, the local ring $\sO_{X,x}$ is a field, hence a regular ring, so $x$ is in $X_\reg$. Thus $X_\reg$ is an open subscheme of $X$ containing each generic point, hence is dense in $X$.
\end{proof}

\begin{remark}\label{rem:Prelims}  Take $X\in\Sch^\qp_k$, let   $d=\dim_kX$ and suppose that $X(d)^0$ is reduced.\\[5pt]
1. $X^{(d)}_\sing=X_\sing\cup X(\le d-1)$.\\[2pt]
2. $X_\reg(d)=(X(d)^0)_\reg$, hence $X_\reg(d)$ is open in $X(d)$ and $X(d)_\sing\subset X_\sing\subset X^{(d)}_\sing$.\\[2pt]
3. By \cite[\href{https://stacks.math.columbia.edu/tag/00RF}{Tag 00RF}]{stacks-project},  $X_\CM$ is an open subset of $X(d)^0$, hence $X_\CM$ is also open in $X$. Since we are assuming that $X(d)^0$ is reduced, each local ring $\sO_{X,x}$ has depth $\ge1$, and thus the complement of $X_\CM$ in $X(d)^0$ has codimension at least two; as $X(d)\setminus X(d)^0$ has codimension $\ge2$ in $X(d)$,   $X(d)\setminus X_\CM$ in $X(d)$ has codimension at least two and $X(d)\setminus X_\CM\subset X_\sing$.  \\[2pt]
4. In general $X_\sm\subset X_\reg$, and the two are equal if $k$ is perfect. For $k\subset L$ a field extension, $(X\times_kL)_\sm=(X_\sm)\times_kL$. The same holds for $X_\reg$ if $L$ is separable over $k$, but not in general. 
\end{remark}

\begin{definition} A pair $(Y, Y^*)$ consisting of a  quasi-projective $k$-scheme $Y$  and a closed subset $Y^*$ of $Y$ is called an {\em admissible pair over $k$} if  $Y(\dim_kY)^0$ is reduced and $Y\setminus Y^*$ is a dense open subset of $Y_\reg(\dim_kY)$.
\end{definition}
Equivalently, $Y$ is in $\Sch^\qp_k$,   $Y(d)^0$ is  reduced, $Y\setminus Y^*$ is regular and of pure dimension $\dim_kY$ over $k$ and $\dim_kY^*<\dim_kY$.  For $d=\dim_kY$,  we often refer to elements of $Z^d(Y\setminus Y^*)$ as 0-cycles rather than codimension $d$ cycles.
As we are usually working in $\Sch_k$, we will usually drop the ``over $k$'' and refer simply to an admissible pair.

Take $Y\in\Sch_k$ and let $Y^*\subset Y$ be a closed subset such that
$Y\setminus Y^*$ is regular.
In \cite{LW}, with C. Weibel, we have defined the group of 0-cycles on $Y$ relative to $Y^*$, $\CH_0(Y,Y^*)$.\footnote{The definition in \cite{LW} also requires that $Y\setminus Y^*$ is dense in $Y$, but we omit this condition to allow for a comparison with $\CH^d(X,X^*)$ under less restrictive conditions.}   This is a quotient of the group of 0-cycles, $Z_0(Y\setminus Y^*)$,  modulo a subgroup $R_0(Y,Y^*)$, which is defined using the notion of a {\em Cartier curve} on $Y$ relative to $Y^*$ (see \cite[Definition 1.2]{LW}).

\begin{definition}\label{def:CartierCurve}  For $Y\in \Sch_k$, let $Y^*\subset Y$ be a closed subset such that $Y\setminus Y^*$ is regular. A {\em Cartier curve} on $Y$ relative to $Y^*$ is a purely 1-dimensional closed subscheme $C$ of $Y$ such that\\[5pt]
i. $C\cap Y^*$ is a finite set.\\[2pt]
ii.  At each point $x\in C\cap Y^*$, the ideal $\sI_{C,x}\subset \sO_{Y,x}$ is generated by a regular sequence.
\end{definition}

\begin{remark}\label{rem:CartierCurve}
1. Let $C\subset Y$ be a Cartier curve on $Y$ relative to $Y^*$ and take $y\in C\cap Y^*$. Then a regular sequence in $\sO_{Y,y}$ generating $\sI_{C,y}$ must have length $\dim_k(Y,y)-1$. Indeed, it follows from \cite[Lemma 5]{Matsumura} that if $\sI_{C,y}$ is generated by a regular sequence of length 
 $\ell$, then
\[
 1=\dim\sO_{Y,y}/\sI_{C,y}=\dim\sO_{Y,y}-\ell.
 \]   
2. Let $Y, Y^*$, $C$ and $y\in C\cap Y^*$ be as in (1) and let $Y'\subset Y$ be an integral component of $Y$ containing $y$. Suppose that $C$ has an integral component $C'$ with $C'\subset Y'$.  Then $\dim_kY'=\dim_k(Y,y)$. Indeed, since each generic point of $C$ is contained in $Y_\reg$, $Y'$ is the unique integral component of $Y$ containing $C'$. By (1), the $\sI_{C,y}$-depth of $\sO_{Y,y}$ is $\dim_k(Y,y)-1$ and by  \cite[Theorem 27, (15.G) Proposition]{Matsumura}, we have
\[
\dim_k(Y,y)-1\le \text{depth}\sO_{Y', C'}\le\dim \sO_{Y', C'}=\dim_kY'-1\le \dim_k(Y,y)-1,
\]
giving equality throughout. 

In consequence, if $C$ is connected and has one irreducible component supported in $Y(i)$ for some $i$, then $C$ is supported in $Y(i)\setminus Y(\ge i+1)$. 
\end{remark}

\begin{definition} For $C\in \Sch_k$, $\dim_kC=1$,   let $C_{(0)}$ denote the set of closed points of $C$. For $S\subset C_{(0)}$ a finite set of closed points, let $\sO_{C:S}$ denote the colimit
\[
\sO_{C:S}=\colim_{T\subset (C_{(0)}\setminus S)}\sO_C(C\setminus T).
\]
and let $\sO_{C,S}$ denote the usual semi-local ring
\[
\sO_{C,S}=\colim_{\substack{U\subset C\text{ open,}\\ S\subset U}}\sO_C(U).
\]
\end{definition}

\begin{remark}
If $C$ has generic points $\eta_1,\ldots, \eta_r$, then $\sO_{C:\0}$ is the total quotient ring $k(C)$, 
$k(C)=\prod_{i=1}^r\sO_{C,\eta_i}$.

For $S\subset C_{(0)}$ a finite set,  let $C_S$ be the union of the integral components $C'$ of $C$ with $S\cap C'\neq\0$ and let $C^c_S\subset C$ be the open subscheme $C\setminus C_S$. Then
\[
\sO_{C:S}=\sO_{C,S}\times k(C^c_S). 
\]
We let  $\alpha_S:\sO_{C:S}\to k(C)$ be the localization map.
\end{remark} 

\begin{definition} For $C\in \Sch_k$ with $\dim_kC=1$,  let $i: C_\red\hookrightarrow C$ be the associated reduced closed subscheme with irreducible components $C_{1,\red},\ldots, C_{r,\red}$, where $C_{i,\red}$ has generic point $\eta_i$, and let $i_j:C_{j,\red}\to C$ be the inclusion.  Then $k(C_\red)=\prod_{i=1}^r k(\eta_i)$. Let $m_i$ denote the length of the local ring  $\sO_{C,\eta_i}$ as $\sO_{C,\eta_i}$-module. For $f\in k(C)^\times$, let $f_j$ denote the restriction of $f$ to $k(\eta_i)^\times$, considered as a rational function on $C_{j,\red}$, and define
$\Div f\in Z_0(C)=Z_0(C_\red)$ as
\[
\sum_{j=1}^r i_{j*}m_j\Div f_j, 
\]
where $\Div f_j$ is the usual divisor of $f_j$ \cite[\S 1.3]{Fulton}; note that by definition, $\Div f_j=0$ if  $\dim_kC_{j, \red}=0$.  For $f\in \sO_{C:S}^\times$, define $\Div f=\Div\, \alpha_S(f)$.
\end{definition}

\begin{definition}\label{def:RatlEquiv} Let $(X, X^*)$ be an admissible pair. Define $R^d(X,X^*)\subset Z^d(X\setminus X^*)$ to be the subgroup generated by 0-cycles of the form $i_{C*}(\Div f)$, where $i_C:C\to X$ is the inclusion of a Cartier curve on $X$ relative to $X^*$, and $f$ is a unit in   $\sO_{C: C\cap X^*}$:
\begin{multline*}
R^d(X,X^*):=\<\{i_{C*}(\Div f)\mid i_C:C\to X\text{ is a Cartier curve relative to }X^*,\\\text{and } f\in \sO_{C:C\cap Y}^\times\}\>
\end{multline*}
\end{definition}

\begin{definition} For $(X, X^*)$ an admissible pair, define 
\[
\CH^d(X,X^*)=Z^d(X\setminus X^*)/R^d(X,X^*).
\] 
\end{definition}
Note that $X\setminus X^*\subset X_\reg(d)\subset X_\reg$ by the definition of admissible pairs.

\begin{remark} For $X$ reduced, the definition of $\CH^d(X, X^*)$ given here already appears in  \cite[\S 2]{BS98} in case $\dim_kX=2$, with $X^*=X_\sing\cup X(\le d-1)$ (this closed subset is denoted $X_\sing$ in {\it loc.\! cit.}), and is extended to  reduced $X$ of arbitrary dimension $d$ in \cite[\S 2]{BS99}, again with $X^*$ the minimal choice $X_\sing\cup X(\le d-1)$ (still denoted $X_\sing$ in \cite{BS99}). This definition is also used in \cite{ESV}.  In \cite{BS99}, they kindly cite \cite{LW} as source, even though this definition does not really appear in \cite{LW} as such, unless $X$ is equi-dimensional over $k$, as we will explain below.
\end{remark} 

We recall the definition of the Chow group of 0-cycles from \cite{LW}.
\begin{definition}[\hbox{\cite[Definition 1.2]{LW}}]\label{def:LW0Cyc} 1. Let $Y$ be a finite-type separated $k$-scheme and let $Y^*$ be a closed subset such that $Y\setminus Y^*$ is regular. Let $R_0(Y, Y^*)\subset Z_0(Y\setminus Y^*)$ be the subgroup generated by 0-cycles of the form $i_{C*}(\Div f)$, where $i_C:C\to Y$ is the inclusion of a Cartier curve on $Y$ relative to $Y^*$ and $f\in\sO_{C: C\cap Y^*}^\times$.\\[2pt]
2. With $Y, Y^*$ as in (1), define $\CH_0(Y, Y^*)=Z_0(Y\setminus Y^*)/R_0(Y, Y^*)$.
\end{definition}

\begin{remark}\label{rem:CartierCurve2} 1. Let $i_C:C\to X$ be a Cartier curve on $X$ relative to $X^*$ and take $f\in\sO_{C:C\cap X^*}^\times$. Then the support of $\Div f$ is contained in $C\setminus X^*$, so  $R^d(X,X^*)$ is indeed a subgroup of $Z^d(X\setminus X^*)$; similarly, $R_0(Y, Y^*)$ is indeed a subgroup of $Z_0(Y\setminus Y^*)$. \\[2pt]
2.  In addition to the conditions (i) and (ii) of Definition~\ref{def:CartierCurve}, the definition of Cartier curve on $Y$ relative to $Y^*$  given in, e.g., \cite{BS99} 
requires  
\\[5pt]
iii. $C$ has no embedded components in $C\cap (Y\setminus Y^*)$ and if 
$C'$ is an irreducible component of $C$ with $C'\cap Y^*=\0$, then $C'$ is reduced at its generic point.\\[5pt]
It is not hard to show that adding the requirement (iii) does not affect the resulting subgroup $R_0(Y,Y)$ of $Z_0(Y\setminus Y^*)$,  so the group $\CH_0(Y, Y^*)$ defined above agrees with that of {\it loc. cit.}  \\[2pt]
3. Since we are using codimension rather than dimension, one might think that a Cartier ``curve'' should rather be a closed subscheme $C$ of pure {\em codimension} $d-1$ on $X$,   satisfying the conditions (i) and (ii) of Definition~\ref{def:CartierCurve}. However,  such a $C$ decomposes a disjoint union
\[
C=C(d)\amalg C(d-1)
\]
where $C(d)$ is a purely one-dimensional closed subscheme of $X$, supported in $X(d)$, and $C(d-1)$ is a purely 0-dimensional closed subscheme of $X$, supported in $X(d-1)\setminus X(d)$. Then $C(d)$ is a Cartier curve relative to $X^*$ following  Definition~\ref{def:CartierCurve}. Moreover, letting $f(d)$ denote the restriction of $f$ to $C(d)$, we have
\[
i_{C*}(\Div f)=i_{C(d)*}(\Div f(d)),
\]
since $\dim_kX(d-1)=0$. Thus,  we end up with the same subgroup $R^d(X, X^*)$ whether we use purely dimension one closed subschemes or purely codimension $d-1$ closed subschemes to define $R^d(X, X^*)$; we can even omit the condition (ii) at points of $C(d-1)\subset X(d-1)\setminus X(d)$.
\\[2pt]
4. Suppose $X$ has connected components $X_1,\ldots, X_s$ and let $X^*_j=X^*\cap X_j$. Suppose that $\dim_kX_j=\dim_kX$  for $j=1,\ldots, r$ and $\dim_kX_j<\dim_kX$  for $j=r+1,\ldots, s$. Then
\[
\CH^d(X, X^*)=\oplus_{j=1}^r\CH^d(X_j, X_j^*)
\]
For this reason we often assume that $X$ is connected.
\end{remark} 

We compare $\CH^d(X, X^*)$  and $\CH_0(X, Y^*)$. Suppose we have an admissible pair $(X, X^*)$ and a closed subset $Y^*\subset X^*$ such that $X\setminus Y^*$ is regular. Let $d=\dim_kX$. Then both $\CH^d(X, X^*)$ and $\CH_0(X, Y^*)$ are defined, and the inclusion $j^{X^*, Y^*}:X\setminus X^*\to X\setminus Y^*$ induces the map
\[
j^{X^*, Y^*}_*:Z^d(X\setminus X^*)=Z_0(X\setminus X^*)\to Z_0(X\setminus Y^*).
\]
Since a Cartier curve relative to $X^*$ is  also a Cartier curve relative to $Y^*$,  we have $j^{X^*, Y^*}_*(R^d(X, X^*))\subset R_0(X, Y^*)$, giving the map
\[
j^{X^*,Y^*}_*:\CH^d(X, X^*)\to \CH_0(X, Y^*).
\]

\begin{proposition}\label{prop:Comparison} Let $(X, X^*)$ be an admissible pair with $X$ of dimension $d$ over $k$ and let $Y^*\subset X^*$ be a closed subset such that $X\setminus Y^*$ is regular. \\[5pt]
1.  If $Y^*\cap X(d)=X^*\cap X(d)$, then $j^{X^*,Y^*}_*$ is injective\\[2pt]
2. If   $Y^*=X^*$, then $j^{X^*,Y^*}_*$ is an isomorphism.
\end{proposition}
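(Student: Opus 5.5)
Part (2) is a matter of unwinding the definitions, and part (1) follows by building a left inverse to $j^{X^*,Y^*}_*$ at the level of cycle groups.

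\emph{Part (2).} If $Y^*=X^*$, the groups $Z^d(X\setminus X^*)$ and $Z_0(X\setminus X^*)$ coincide. The generators of $R^d(X,X^*)$ and of $R_0(X,X^*)$ are both the cycles $i_{C*}(\Div f)$, with $C$ a Cartier curve on $X$ relative to $X^*$ in the sense of Definition~\ref{def:CartierCurve} and $f\in\sO_{C:C\cap X^*}^\times$. So $R^d(X,X^*)=R_0(X,X^*)$ as subgroups, and $j^{X^*,X^*}_*$ is the identity map on the quotient. Part (2) therefore reduces to part (1) plus surjectivity, and surjectivity is automatic here.

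\emph{Part (1): splitting the cycle group.} Set $W:=(X\setminus Y^*)\setminus X(d)$. By the hypothesis $Y^*\cap X(d)=X^*\cap X(d)$, we have $(X\setminus Y^*)\cap X(d)=X(d)\setminus X^*$. This set equals $X\setminus X^*$, because $X^*\supset X(\le d-1)$ and so every point outside $X^*$ lies in $X(d)^0$. Since $X\setminus Y^*$ is regular, each local ring is a domain, so two distinct irreducible components of $X\setminus Y^*$ never meet. Hence
\[
X\setminus Y^*=(X\setminus X^*)\amalg W
\]
is a decomposition into open and closed subschemes. This gives $Z_0(X\setminus Y^*)=Z_0(X\setminus X^*)\oplus Z_0(W)$, with projection $p:Z_0(X\setminus Y^*)\to Z_0(X\setminus X^*)$ satisfying $p\circ j^{X^*,Y^*}_*=\id$. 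It then suffices to show $p(R_0(X,Y^*))\subset R^d(X,X^*)$. Once this holds, $p$ descends to a left inverse $\CH_0(X,Y^*)\to\CH^d(X,X^*)$, and injectivity follows.

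\emph{Key step: decomposing a Cartier curve.} Let $C$ be a Cartier curve on $X$ relative to $Y^*$ and let $f\in\sO_{C:C\cap Y^*}^\times$. Define $C(d)$ to be the union of the connected components of $C$ having an irreducible component contained in $X(d)$, and let $C'$ be the union of the remaining connected components, so that $C=C(d)\amalg C'$.

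I claim $C(d)$ is a Cartier curve relative to $X^*$. Take a connected component $C_0$ of $C(d)$.
\begin{enumerate}
\item At points of $C_0\cap Y^*$, Remark~\ref{rem:CartierCurve}(2) forces every irreducible component of $C_0$ through that point to lie in a single $X(i)$ with $i=\dim_k(X,y)$. Propagating along $C_0$, the fact that $C_0$ has a component in $X(d)$ forces $i=d$ at every such point.
\item At points of $C_0\setminus Y^*$, the regularity of $X\setminus Y^*$ prevents components lying in different $X(i)$ from meeting.
\end{enumerate}
Together these give $C_0\subset X(d)$, hence $C(d)\cap X^*=C(d)\cap Y^*$. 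This set is finite, and there $\sI_{C(d)}$ is generated by a regular sequence because $C$ and $C(d)$ agree locally. The same argument shows $C'\cap X(d)$ is finite and contained in $Y^*$, so $\Div(f|_{C'})$ is supported in $W$.

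Writing $\Div f=\Div(f|_{C(d)})+\Div(f|_{C'})$, we get
\[
p(i_{C*}\Div f)=i_{C(d)*}\Div(f|_{C(d)})\in R^d(X,X^*),
\]
using that $f|_{C(d)}\in\sO_{C(d):C(d)\cap X^*}^\times$.

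\emph{Main obstacle.} The delicate point is the connectedness argument showing that a connected Cartier curve cannot pass from $X(d)$ into lower-dimensional components. This is precisely where Remark~\ref{rem:CartierCurve}(2) (at points of $Y^*$) and the regularity of $X\setminus Y^*$ (elsewhere) are needed.
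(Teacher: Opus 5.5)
Your proposal is correct and is essentially the paper's argument. Both split a Cartier curve relative to $Y^*$ into connected components and use Remark~\ref{rem:CartierCurve}(2) together with the regularity of $X\setminus Y^*$. Each component then either lies in $X(d)$, in which case it is a Cartier curve relative to $X^*$ because $Y^*\cap X(d)=X^*\cap X(d)$, or it misses $X(d)$ entirely and can be discarded. Phrasing this as a retraction $p$, rather than working with a kernel element as the paper does, is only a reformulation, though it does show that $j^{X^*,Y^*}_*$ is split injective, consistent with Remark~\ref{rem:Comparison2}.
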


\begin{proof} It follows immediately from the definitions that, after identifying $Z^d(X\setminus X^*)$ with $Z_0(X\setminus X^*)$, we have $R^d(X, X^*)=R_0(X, X^*)$, proving (2).

For (1), if a zero-cycle $z\in Z^d(X\setminus X^*)=Z_0(X\setminus X^*)$ goes to zero in $\CH_0(X, Y^*)$, there are Cartier curves $C_1,\ldots, C_s$ on $X$ relative to $Y^*$ and functions $f_j\in \sO_{C_j: C_j\cap Y}^\times$, $j=1,\ldots, s$, such that
\[
z=\sum_{j=1}^si_{C_j*}\Div(f_j).
\]
We may assume that each $C_j$ is connected. 

Suppose that $C_1,\ldots, C_r$ each have some integral component supported in $X(d)$, and $C_{r+1},\ldots, C_s$ are all supported in $X(\le d-1)$. By Remark~\ref{rem:CartierCurve}(2), $C_i$ is supported in $X(d)$ for $i=1,\ldots, r$ and $C_i$ is supported in $X(\le d-1)\setminus X(d)$ for $i=r+1,\ldots, s$. Thus 
\[
z=\sum_{j=1}^ri_{C_j*}\Div(f_j).
\]
Since $C_j$ is a Cartier curve relative to $Y^*$ and $Y^*\cap X(d)=X^*\cap X(d)$ by assumption, we see that  $C_j$ is a Cartier curve relative to $X^*$ for $j=1,\ldots, r$, and thus $[z]=0$ in $\CH^d(X, X^*)$, showing that $j^{X^*, Y^*}_*$ is injective.
\end{proof}

\begin{remark}\label{rem:Comparison2} Following the argument used in the proof of Proposition~\ref{prop:Comparison}, we see that the subgroup of relations $R_0(X, Y^*)$ is a direct sum
\[
R_0(X, Y^*)=R^d(X, Y^*\cup X(\le d-1))\oplus R_0'(X(\le d-1), Y^*\cap X(\le d-1))
\]
where 
\[
R_0'(X(\le d-1), Y^*\cap X(\le d-1))\subset R_0(X(\le d-1), Y^*\cap X(\le d-1))
\]
is the subgroup generated by 0-cycles of the form $i_{C*}(\Div f)$, with  $i_C:C\to X(\le d-1)$ a Cartier curve relative to $Y^*\cap X(\le d-1)$ such that $C\cap X(d)=\0$, with $f\in \sO_{C: C\cap Y^*}^\times$. 

This refines Proposition~\ref{prop:Comparison} to give the description of $\CH_0(X, Y^*)$ as
\begin{multline*}
\CH_0(X, Y^*)=\CH^d(X, Y^*\cup X(\le d-1))\\\oplus Z_0(X(\le d-1)\setminus Y^*)/R_0'(X(\le d-1), Y^*\cap X(\le d-1)).
\end{multline*}
\end{remark}

\begin{ex}\label{ex:CH0} Let $X=\Spec k[x,y,z]/(z(x,y))$, so $X$ is the union of the $x$-$y$-plane with the $z$-axis. We take $Y^*$ to be the origin. Since a curve supported in $X(\le 1)$ must contain $Y^*=X(\le1)\cap X(2)$, it follows that the group of relations $R_0'(z\text{-axis}, \{(0,0,0)\})$ is zero and  as $\CH^2(X, z\text{-axis})=0$, we have
\[
\CH_0(X, \{(0,0,0)\})=Z_0(z\text{-axis}\setminus\{(0,0,0\}).
\]
\end{ex}

\begin{remark}\label{rem:CH1=Pic} Let $(X, X^*)$ be an admissible pair with $X$ of pure dimension one. Then sending a closed point $x\in X\setminus X^*$ to the invertible sheaf $\sO_X(x)$ induces an isomorphism
\[
\cl^1:\CH^1(X, X^*)\xrightarrow{\sim} \Pic(X)
\]
After identifying  $\CH^1(X, X^*)$ with $\CH_0(X, X^*)$, this is proven in \cite[Proposition 1.4]{LW}.
\end{remark}

\begin{remark} Let $Y\in \Sch_k$ be a Cohen-Macaulay scheme, that is, $\sO_{Y,y}$ is a Cohen-Macaulay local ring for each $y\in Y$, see \cite[(16.A)]{Matsumura}. Let $W\subset Y$ be a closed subscheme of pure codimension $c>0$. Then $W$ is a local complete intersection on $Y$ if and only if the ideal $\sI_{W,w}\subset \sO_{Y,w}$ is generated by $c$ elements for each $w\in W$. Indeed, since $\sO_{Y,w}$ is Cohen-Macaulay, elements $s_1,\ldots, s_c$ in the maximal ideal $\mathfrak{m}_w\subset \sO_{Y,w}$ form a regular sequence if and only if the ideal $(s_1,\ldots, s_c)$ has height $c$ \cite[Theorem 31]{Matsumura}.

We will be using this fact without further comment throughout the paper.
\end{remark}

Given admissible pairs $(X, X^*)$, $(X, X')$ with $X'\subset X^*$, the inclusion $j^{X^*, X'}:X\setminus X^*\to X\setminus X'$ induces the map $j^{X^*, X'}_*:Z^d(X\setminus X^*)\to Z^d(X\setminus X')$. As a Cartier curve on $X$ relative to $X^*$ is automatically a Cartier curve relative to $X'$, we see that
$j^{X^*, X'}_*(R^d(X, X^*))\subset R^d(X, X')$, giving the induced map ($d=\dim_kX$)
\[
j^{X^*, X'}_*:\CH^d(X, X^*)\to \CH^d(X, X').
\]

\begin{lemma}\label{lem:CHSurj} Let $(X, X^*)$, $(X, X')$ be admissible pairs with $X'\subset X^*$. Let $d=\dim_kX$. Then the map
\[
j^{X^*, X'}_*:\CH^d(X, X^*)\to \CH^d(X, X')
\]
 is surjective.
\end{lemma}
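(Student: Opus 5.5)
Since $Z^d(X\setminus X')$ is generated by the closed points $x\in X\setminus X'$, it suffices to show that every such closed point $x$ is congruent modulo $R^d(X,X')$ to a $0$-cycle supported in $X\setminus X^*$. Points of $X\setminus X^*$ are already in the image. So fix a closed point $x\in (X^*\setminus X')\cap(X\setminus X')$. Since $(X,X')$ is admissible, $x$ lies in the regular locus $X_\reg(d)$, and $\sO_{X,x}$ is a regular local ring of dimension $d$. The plan is to produce a Cartier curve $C$ on $X$ relative to $X'$ passing through $x$, together with a rational function $f\in\sO_{C:C\cap X'}^\times$ whose divisor equals $x$ plus a $0$-cycle supported in $X\setminus X^*$. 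Then $x\equiv -(\Div f - x)$ modulo $R^d(X,X')$, which lies in the image of $j^{X^*,X'}_*$.

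The curve $C$ is constructed by a Bertini-type argument; this is where we use that $k$ is infinite and $X$ is quasi-projective. Embed $X\subset \P^N$ and choose $d-1$ general hypersurface sections $H_1,\dots,H_{d-1}$ of large degree $m$ passing through $x$. With $C=X\cap H_1\cap\dots\cap H_{d-1}$, we require the following properties.
\begin{enumerate}
\item $C$ is purely one-dimensional and meets $X'$ in finitely many points. This is possible because $\dim X'<d$, so general hypersurfaces through $x$ cut each component of $X'$ properly; we may also arrange that $C$ misses $X(\le d-1)$ away from finitely many points, so that $C$ is supported in $X(d)$ near $X'$ by Remark~\ref{rem:CartierCurve}(2).
\item Locally at each point of $C\cap X'$, the ideal of $C$ is generated by the $d-1$ local equations of the $H_i$. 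These form a regular sequence because they cut out the expected codimension in the locus where $X$ is Cohen–Macaulay. Points of $X\setminus X_\CM$ can be avoided entirely, since that locus has codimension $\ge2$ in $X(d)$ (Remark~\ref{rem:Prelims}(3)), and general $H_i$ through $x$ cut it in dimension $\le 0$ avoiding everything except possibly finitely many points. Choosing the $H_i$ more carefully, so that $C\cap (X\setminus X_\CM)=\emptyset$ when $d\ge2$, is the delicate point.
\item $C$ is regular at $x$, which is possible since $x\in X_\reg$.
\end{enumerate}
This yields a Cartier curve relative to $X'$ through $x$.

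Next, choose $f$ on $C$. Let $S=(C\cap X^*)\cup (C\cap X')$; this is a finite set of closed points containing $x$, and $C$ is regular at each point of $S\setminus X'$. Using the semilocal ring $\sO_{C,S}$ and the Chinese remainder theorem, choose $f\in \sO_{C:C\cap X'}$ satisfying the following conditions:
\begin{enumerate}
\item $f$ is a unit at the points of $C\cap X'$.
\item $f$ is a uniformizer at $x$.
\item $f$ is a unit at the other points of $S$.
\item $f$ is a nonzerodivisor at the generic points of $C$.
\end{enumerate}
Then $\Div f = x + (\text{zeros away from }S) - (\text{poles})$. Arrange the poles to lie away from $S$ by taking $f=g/h$, with $g,h$ sections of a common very ample line bundle on $C$ chosen by prime avoidance so that $h$ is a unit along $S$. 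All zeros and poles other than $x$ then lie in $C\setminus (X^*\cup X')\subset X\setminus X^*$, as required.

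The main obstacle is the Bertini step ensuring condition (ii) of Definition~\ref{def:CartierCurve} at points of $C\cap X'$ that lie outside the Cohen–Macaulay locus or on lower-dimensional components. The first approach to try is to use the codimension-two estimate for $X(d)\setminus X_\CM$ together with $x\notin X(\le d-1)$. Then take the $H_i$ general among hypersurfaces containing $x$ (the linear system of degree $m$ forms through $x$ is base-point free away from $x$ for $m\gg0$), so that $C$ avoids $(X\setminus X_\CM)\cup X(\le d-1)$ entirely when $d\ge2$. The case $d=1$ is trivial: take $C$ to be the connected component of $X(1)$ containing $x$, which is Cartier relative to $X'$ by Remark~\ref{rem:CartierCurve2}(3).
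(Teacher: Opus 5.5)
Your argument is the paper's proof: cut a general complete-intersection curve $C$ through $x$ by $d-1$ hypersurfaces of large degree, regular at $x$, contained in $X_\CM$ and meeting $X^*$ in finitely many points (hence Cartier relative to $X^*$, a fortiori relative to $X'$). Then choose $f$ by the Chinese remainder theorem so that $\Div f=x-y$ with $y$ supported in $X\setminus X^*$.

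One correction: cut $X(d)$ rather than $X$, as the paper does. Intersecting $X$ itself can pick up isolated points on $(d-1)$-dimensional components, so that $C$ is neither purely one-dimensional nor disjoint from $X(\le d-1)$. Once you work in $X(d)$, the step you call delicate is immediate. The set $X(d)\setminus X_\CM$, which contains $X(d)\cap X(\le d-1)$, has dimension $\le d-2$ and does not contain $x\in X_\reg(d)$. Hence $d-1$ general hypersurfaces through $x$ miss it.
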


\begin{proof} Take a closed point $x\in X^*\setminus X'$ and note that $x$ is in $X_\reg(d)$. It suffices to show that $[x]\in \CH^d(X, X')$ is in the image of $j^{X^*, X'}_*:\CH^d(X, X^*)\to \CH^d(X, X')$. For this, choose a locally closed immersion $X\subset \P^N$ and let $C\subset X(d)$ be a general  intersection in $X(d)$ of $d-1$ hypersurfaces of large degree $e$, containing $y$. Taking $e\gg0$, we may assume that 
\begin{enumerate}
\item[(a)] $C$ is of pure codimension $d-1$ on $X(d)$.
\item[(b)] $x$ is a regular point of $C$.
\item[(c)]   $C\cap X^*$ is a finite set 
\item[(d)]  $C$ is contained in $X_\CM$.
\end{enumerate}
Note that $X^*$ has dimension $\le d-1$ and  $X(d)\setminus X_\CM$  has dimension $\le d-2$ (see Remark~\ref{rem:Prelims}  above), which is why we may assume that (c) and (d) hold.

Since $X_\CM\subset X(d)^0$, it follows from (a) and (d) that $C$ is locally defined by a regular sequence of length $d-1$ as a closed subscheme of $X$, and hence by (c), $C$ is a Cartier curve relative to $X^*$. By (b) and the usual arguments using the Chinese remainder theorem, we may find an $f\in \sO_{C: C\cap X'}^\times$ such that $i_{C*}(\Div f)=1\cdot x-y$ with $y$ supported in $X\setminus X^*$. This gives us the class $[y]\in \CH^d(X, X^*)$ with $1\cdot x-y$ in $R^d(X, X')$, hence $[x]=j^{X^*, X'}_*([y])$ in $\CH^d(X, X')$.
\end{proof}

\begin{definition} 1. Let $(X, X^*)$ be an admissible pair. Let $S\subset X$ be a closed subset and let $C$ be a Cartier curve on $X$ relative to $X^*$. We say that $C$ is {\em $S$-strict} if $C\cap S=\0$.\\[2pt]
2. Let $R^d(X, X^*)_S\subset R^d(X, X^*)$ be the subgroup generated by $i_{C*}(\Div f)$ with $C$ an $S$-strict Cartier curve on $X$ relative to $X^*$, and $f\in \sO_{C: C\cap X^*}^\times$.\\[2pt]
3. Define
\[
 R^d(X, X^*)^\CM:= R^d(X, X^*)_{X(d)\setminus X_\CM}
\] 
\end{definition}

A version of the following ``moving lemma''  is proven in  \cite[Lemma 2.1]{BS99}; as our context is a bit different we go through the argument, which is essentially the same as that given in {\it loc.\! cit}. 

\begin{lemma}[A moving lemma]\label{lem:MovingLemmaR0} Take $X\in \Sch_k^\qp$, let $d=\dim_kX$, and suppose $X(d)^0$ is reduced.  Let $Y^*\subset X$ be a closed subset such that $X\setminus Y^*$ is regular and $X(d)\setminus Y^*$ is dense in $X(d)$. Let $S\subset Y^*\cap X(d)$ be a closed subset such that $\codim_{X(d)}S\ge 2$.  Let $i_C:C\to X$ be a Cartier curve on $X$ relative to $Y^*$ such that $C$ is supported in $X(d)$, and take $f\in \sO_{C: C\cap Y^*}^\times$. Then there are Cartier curves on $X$ relative to $Y^*$, $i_{C_j}:C_j\to X$, and $f_j\in \sO_{C_j: C_j\cap Y^*}^\times$, $j=1,\ldots, s$, such that each $C_j$ is supported in $X(d)$, $C_j\cap S=\0$,  and we have
\[
i_{C*}(\Div f)=\sum_{j=1}^si_{C_j*}(\Div f_j).
\]
\end{lemma}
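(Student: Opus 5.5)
The plan is to realize $(C,f)$ as a member of a one-parameter family of Cartier curves and to transfer the relation $i_{C*}(\Div f)$ to a general member of that family, which will avoid $S$ for dimension reasons. I would do this via a surface containing $C$, following \cite[Lemma 2.1]{BS99}.

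\emph{Step 1: the surface.} Choose a locally closed embedding $X\subset\P^N$. Near each of the finitely many points $y\in C\cap Y^*$, the ideal $\sI_{C,y}$ is generated by a regular sequence $g_1,\ldots,g_{d-1}$ (Remark~\ref{rem:CartierCurve}(1), and $\dim_k(X,y)=d$ by Remark~\ref{rem:CartierCurve}(2) since $C\subset X(d)$). For $e\gg0$, take general sections $s_1,\ldots,s_{d-2}\in H^0(\sI_C(e))$ and set $W=V(s_1,\ldots,s_{d-2})\cap X(d)$. Serre vanishing makes $\sI_C(e)$ globally generated, so general sections give at each $y\in C\cap Y^*$ the first $d-2$ terms of a regular sequence generating $\sI_{C,y}$, after a general linear change of the $g_i$. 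The following properties of $W$ then hold.
\begin{enumerate}
\item $W$ is purely two-dimensional, contains $C$, and is locally a complete intersection in $X$ near $C\cap Y^*$.
\item $C$ is an effective Cartier divisor on $W$ near $C\cap Y^*$, cut out by a further general section $s_{d-1}$.
\item $W\cap Y^*$ has dimension $\le1$.
\item $W\cap S$ is finite. This uses $\codim_{X(d)}S\ge2$ and the generality of the sections away from $C$; note that $C\cap S$ itself is finite, since it lies in $C\cap Y^*$.
\end{enumerate}

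\emph{Step 2: moving $C$ inside $W$.} Take $h\in H^0(X,\sI_C(e'))$ general, $e'\gg e$, so that $V(h)\cap W=C\cup D$ with $D$ a curve. Genericity is arranged so that $D$ meets neither $S$ (possible because $W\cap S$ is finite) nor $C\cap Y^*$, and so that $D$ is Cartier relative to $Y^*$. Choose a second general section $h'$ of $\sO(e')$ with $C'=V(h')\cap W$ also avoiding the finite set $W\cap S$ and Cartier relative to $Y^*$. Consider the pencil $\lambda h+\mu h'$ on $W$. The rational function $h/h'$ on $W$ has divisor $C+D-C'$, supported appropriately.

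Lift $f$ to a function $F$ on $W$ that is a unit near $W\cap Y^*$ where it meets $C$. Then use a Weil reciprocity/Gersten-type identity on the surface $W$ (the tame symbol $\{F,h/h'\}$ has trivial total boundary) to write
\[
i_{C*}(\Div f)=-i_{D*}(\Div F|_D)+i_{C'*}(\Div F|_{C'})+\sum(\text{divisors on curves where }F\text{ has zeros/poles}).
\]
The last terms come from the divisor of $F$ on $W$. By choosing $F$ general, for instance $F=$ a ratio of general sections congruent to $f$ modulo $\sI_C$ to high order at $C\cap Y^*$, these curves are general hypersurface sections of $W$ and avoid $S$ as well. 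This is essentially the argument of \cite{BS99}. Reducing to $d=2$ first via $W$ clarifies the bookkeeping.

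\emph{Main obstacle.} The difficult point is the local analysis at $C\cap Y^*$ and at $W\cap S$. One must ensure all the auxiliary curves ($D$, $C'$, and the components of $\Div F$) are genuinely Cartier relative to $Y^*$, that the lifted functions are units at their intersections with $Y^*$, and that the reciprocity identity holds on the possibly singular, non-normal surface $W$.

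For the first two points I would make everything a local complete intersection near $C\cap Y^*$, using Step 1(1) and (2), and take every new divisor on $W$ to miss $C\cap Y^*$ entirely. The remaining intersections with $Y^*$ then lie in the curve part of $W\cap Y^*$, where general sections cut transversally, and at those points $W$ need only be lci. That can be arranged by also making $W\subset X_\CM$ away from a finite set, since $\codim(X(d)\setminus X_\CM)\ge2$. For the reciprocity, I would pass to the normalization of $W$, or verify the identity componentwise using the norm compatibility of divisors.
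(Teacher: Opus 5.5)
Your proposal is correct and is essentially the paper's own argument, which is itself modeled on \cite[Lemma 2.1]{BS99}: your $s_1,\ldots,s_{d-2}$, $h$, $h'$, $D$, $C'$ and $F$ correspond to the paper's $g_1,\ldots,g_{d-2}$, $g_{d-1}$, $g'_{d-1}$, $E$, $E'$ and $f_0/f_\infty$, and your reciprocity identity on the surface is exactly the paper's four-term formula, with the zero and pole curves $\hat C,\hat C'$ of $f_0/f_\infty$. The one step the paper makes explicit that you gloss over is the preliminary splitting-off of the components of $C$ that miss $Y^*$ (these are already $S$-strict); this guarantees that the chosen generators of $\sI_C$ near $C\cap Y^*$ cut out $C$ at all of its generic points, so that $[V(h)\cap W]=[C]+[D]$ holds with the correct multiplicities even when $C$ is non-reduced.
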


\begin{proof} If we write $C=C'\cup C''$ where   $C''$ is disjoint from $Y^*$, then clearly  $C''$ is an $S$-strict Cartier curve relative to $Y^*$, $f_{|C''}$ is in $\sO_{C'': C''\cap Y^*}^\times=k(C'')^\times$, $f_{|C'}$ is in $\sO_{C': C'\cap Y^*}^\times$ and
\[
i_{C*}(\Div f)=i_{C'*}(\Div f_{|C'})+i_{C''*}(\Div f_{|C'}).
\]
Thus we may assume from the start that each irreducible component of $C$ intersects $Y^*$. 

By assumption $C$ is supported in $X(d)$ and there is an affine open neighborhood $U$ of the finite set $C\cap Y^*$ in $X$ such that $\sI_C$ is defined by a regular sequence $g_1,\ldots, g_{d-1}$ on $U$, equivalently, the images of $g_1,\ldots, g_{d-1}$ in $\sI_C/\sI_C^2$ generate this sheaf on $U$.  Letting $\sO_X(1)$ be a very ample invertible sheaf on $X$, we may assume that the $g_i$ extend to global sections $g_i\in H^0(X, \sI_C(m))$ for some $m\gg0$. Again taking $m\gg0$, we may assume that $H^0(X, \sI_C(m))$ generates $\sI_C(m)$ globally. Thus we may assume that the linear system defined by $H^0(X, \sI_C(m))$ has only $C$ as base-locus. Since $k$ is infinite, we may assume that 
\[
\codim_{S\setminus C} V(g_1,\ldots, g_i)\cap (S\setminus C)\ge i
\]
for each $i=1,\ldots, d-1$. But since $\dim S\le d-2$, it follows that we may choose $g_1,\ldots, g_{d-1}$ such that  $V(g_1,\ldots, g_{d-1})\cap (S\setminus C)=\0$. 

Similarly, we may assume that 
\[
\codim_{Y^*\setminus C} V(g_1,\ldots, g_i)\cap (Y^*\setminus C)\ge i
\]
for each $i=1,\ldots, d-1$. In particular,  $V(g_1,\ldots, g_{d-1})\setminus C$ is disjoint from $X(i)$ for  $i<d-1$. Also, since $X(d-1)\cap X(j)$ has codimension $\ge2$ in $X$ for all $j\neq d-1$, we may assume that $V(g_1,\ldots, g_{d-1})\cap X(d-1)\setminus C$ is a finite set contained in $X(d-1)^0$.
Letting 
\[
\tilde{X}_\CM=X_\CM\amalg X(d-1)^0.
\]
we may therefore assume that $V(g_1,\ldots, g_{d-1})\setminus C\subset \tilde{X}_\CM$, and that $V(g_1,\ldots, g_{d-1})\setminus C$ has pure codimension $d-1$ in $\tilde{X}_\CM\setminus C$.  Since $C$ is supported in $X(d)$, $V(g_1,\ldots, g_{d-1})\setminus C$ is the disjoint union of closed subschemes,
\[
V(g_1,\ldots, g_{d-1})\setminus C=(V(g_1,\ldots, g_{d-1})\setminus C)_\CM\amalg V(g_1,\ldots, g_{d-1})_{d-1},
\]
with $(V(g_1,\ldots, g_{d-1})\setminus C)_\CM\subset X_\CM$ and 
$V(g_1,\ldots, g_{d-1})_{d-1}\subset  X(d-1)^0$. Similarly,  
\[
V(g_1,\ldots, g_{d-1})=V(g_1,\ldots, g_{d-1})_d\amalg V(g_1,\ldots, g_{d-1})_{d-1}
\]
with $V(g_1,\ldots, g_{d-1})_d$ supported in $X(d)$.  Since $X_\CM$ is Cohen-Macaulay, it follows that $g_1\ldots, g_{d-1}$ is a regular sequence at each point of $(V(g_1,\ldots, g_{d-1})\setminus C)_\CM$.

Since $C\setminus Y^*$ has pure codimension $d-1$ on $X\setminus Y^*$ and is contained in $X_\reg(d)$, we may also assume that  $V(g_1,\ldots, g_{d-1})\setminus (Y^*\cap C)$ has pure codimension $d-1$ in $X\setminus (Y^*\cap C)$, so   $g_1\ldots, g_{d-1}$ is a regular sequence at each point of $V(g_1,\ldots, g_{d-1})_d\setminus (Y^*\cap C)$. But since $g_1\ldots, g_{d-1}$ is also a regular sequence on $U$, it follows that $g_1\ldots, g_{d-1}$ is a regular sequence at each point of $V(g_1,\ldots, g_{d-1})_d$.  

Letting $E$ be the closure of 
$V(g_1,\ldots, g_{d-1})_d \setminus C$ in $X$, the assumption that $g_1,\ldots, g_{d-1}$ generates $\sI_C$ on $U$ shows that $E\cap C\subset X(d)\setminus U$. As $V(g_1,\ldots, g_{d-1})_{d-1}$ is a finite set disjoint from $X(d)$, we may assume that $U\cap V(g_1,\ldots, g_{d-1})_{d-1}=\0$, hence 
\[
V(g_1,\ldots, g_{d-1})\cap U=(C\cap U)\amalg (E\cap U),
\]
 in particular, 
\[
E\cap C\cap Y^*=\0,\ E\cap S=\0,
\]
$E$ is supported in $X(d)$, and is a local complete intersection on $U\cup (X\setminus C)$.  Since $Y^*\subset U\cup (X\setminus C)$,   we see that $E$ is an $S$-strict Cartier curve on $X$, supported in $X(d)$. 

Replacing $g_{d-1}$ with a sufficiently general element $g_{d-1}'\in H^0(X, \sO_X(m))$, we may assume as above that 
\[
V(g_1,\ldots, g_{d-1}')=V(g_1,\ldots, g_{d-1}')\cap X_\CM\amalg V(g_1,\ldots, g_{d-1}')\cap X(d-1)^0,
\]
writing  $V(g_1,\ldots, g_{d-1}')$ as
\[
V(g_1,\ldots, g_{d-1}')=V(g_1,\ldots, g_{d-1}')_\CM\amalg V(g_1,\ldots, g_{d-1}')_{d-1}.
\]
Let $E'=V(g_1,\ldots, g_{d-2},g_{d-1}')_\CM$. Arguing as above, $E'$ is a closed subscheme of $X$, $g_1,\ldots, g_{d-2}, g_{d-1}'$  is a regular sequence  at each point of $E'$, and $E'$ is an $S$-strict Cartier curve on $X$ relative to $Y^*$. We may also chose $g_{d-1}'$ so that $\Div g_{d-1}$ and $\Div g_{d-1}'$ have no common components in $X$, giving the well-defined rational function $g_{d-1}/g_{d-1}'$ on $X$.

We may write the rational function $f\in \sO_{C: C\cap Y^*}^\times$ as the restriction to $C$ of a fraction $f_0/f_\infty$, with  $f_0, f_\infty\in H^0(X, \sO_X(n))$ for some $n\gg0$ and with $f_0, f_\infty$ restricting to $\sO_{X, C\cap Y^*}^\times$ (after trivializing $\sO_X(n)$ in a neighborhood of $C\cap Y^*$.  As we may modify $f_0, f_\infty$ by adding any element $g\in H^0(X, \sI_C(n))$, we may argue as above to show that we may choose $f_0, f_\infty$ so that $\Div(f_0)$ and $\Div(f_\infty)$ have no common components on $X$, so we have the well-defined rational function $f_0/f_\infty$ on $X$, and in addition:
\\[5pt]
a. $V(g_1,\ldots, g_{d-2}, f_0)$ and $V(g_1,\ldots, g_{d-2}, f_\infty)$ have pure codimension $d-1$ on $X$, and are supported in $X_\CM\amalg X(d-1)^0$, writing them as above as
\[
V(g_1,\ldots, g_{d-2}, f_i)=V(g_1,\ldots, g_{d-2}, f_i)_\CM\amalg V(g_1,\ldots, g_{d-2}, f_i)_{d-1}
\]
for $i\in \{0, \infty\}$.\\[2pt]
b. $g_1,\ldots, g_{d-2}, f_0$,  is a regular sequence at each point of $V(g_1,\ldots, g_{d-2}, f_0)_\CM$, and  $g_1,\ldots, g_{d-2}, f_\infty$ is a regular sequence at each point of  $V(g_1,\ldots, g_{d-2}, f_\infty)_\CM$.\\[2pt]
c. $V(g_1,\ldots, g_{d-2}, f_0)\cap C\cap Y^*=\0$ and $V(g_1,\ldots, g_{d-2}, f_\infty)\cap C\cap Y^*=\0$.\\[2pt]
d. Let 
\[
\hat{C}:=V(g_1,\ldots, g_{d-2}, f_0)_\CM, \ \hat{C}':=V(g_1,\ldots, g_{d-2}, f_\infty)_\CM. 
\]
Then $\hat{C}$ and $\hat{C}'$  are supported in $X(d)$ and are $S$-strict Cartier curves on $X$, relative to $Y^*$.\\[2pt]
e. The rational function $f_0/f_\infty$ restricts to well-defined rational functions on $E$ and $E'$
\[
(f_0/f_\infty)_E\in \sO_{E: E\cap Y^*}^\times,\ (f_0/f_\infty)_{E'}\in \sO_{E': E'\cap Y^*}^\times
\]
f. The rational function $g_{d-1}/g_{d-1}'$ restricts to well-defined rational functions on $\hat{C}$ and $\hat{C}'$
\[
(g_{d-1}/g_{d-1}')_{\hat{C}}\in \sO_{\hat{C}: \hat{C}\cap Y^*}^\times,\ (g_{d-1}/g_{d-1}')_{\hat{C}'}\in \sO_{\hat{C}': \hat{C}'\cap Y^*}^\times
\]

An elementary computation shows that
\begin{multline*}
i_{C*}(\Div f)=i_{\hat{C}*}(\Div (g_{d-1}/g_{d-1}')_{\hat{C}})+i_{E*}(\Div(f_\infty/f_0)_E)\\+i_{E'*}(\Div(f_0/f_\infty)_{E'})+i_{\hat{C}'*}(\Div (g_{d-1}'/g_{d-1})_{\hat{C}'}).
\end{multline*}
Since $E, \hat{C}, E'$ and $\hat{C}'$ are all $S$-strict Cartier curves supported in $X(d)$, this completes the proof.
\end{proof}

\begin{lemma}[A moving lemma for \hbox{$\CH^d(X, X^*)$}]\label{lem:Mov1} Suppose that $k$ is an infinite field. Let $(X, X^*)$ be an admissible pair, let $d=\dim_kX$, and let $S\subset X^*\cap X(d)$ be a closed subset such that $\codim_{X(d)}S\ge2$. Then $R^d(X, X^*)_S= R^d(X, X^*)$.
\end{lemma}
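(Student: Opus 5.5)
The inclusion $R^d(X, X^*)_S\subset R^d(X, X^*)$ holds by definition, so it is enough to show that every generator $i_{C*}(\Div f)$ of $R^d(X, X^*)$ lies in $R^d(X, X^*)_S$. Here $C$ is a Cartier curve on $X$ relative to $X^*$ and $f\in\sO_{C: C\cap X^*}^\times$. The plan is to reduce to Lemma~\ref{lem:MovingLemmaR0} with $Y^*=X^*$. For an admissible pair, $X\setminus X^*$ is regular and dense in $X_\reg(d)$, hence dense in $X(d)$, and $X(d)^0$ is reduced. The hypotheses on $S$ are exactly those of Lemma~\ref{lem:MovingLemmaR0}. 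The only remaining hypothesis is that $C$ be supported in $X(d)$.

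First, we may assume that $C$ is connected. Decompose $C$ into its connected components $C_1,\ldots,C_r$. Each $C_j$ is again a Cartier curve relative to $X^*$. The ring $\sO_{C: C\cap X^*}$ is the product of the corresponding rings for the $C_j$, so $f$ decomposes as $(f_1,\ldots,f_r)$ and $i_{C*}(\Div f)=\sum_j i_{C_j*}(\Div f_j)$.

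Next, suppose $C$ is connected and some integral component of $C$ lies in $X(d)$. Then Remark~\ref{rem:CartierCurve}(2) shows that all of $C$ is supported in $X(d)$. Lemma~\ref{lem:MovingLemmaR0} now writes $i_{C*}(\Div f)$ as $\sum_j i_{C_j*}(\Div f_j)$, where the $C_j$ are Cartier curves relative to $X^*$ with $C_j\cap S=\0$. Each such term is a generator of $R^d(X,X^*)_S$, so $i_{C*}(\Div f)$ lies in $R^d(X,X^*)_S$.

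Finally, suppose $C$ is connected and contains no component lying in $X(d)$. Then Remark~\ref{rem:CartierCurve}(2) places $C$ in $X(\le d-1)\setminus X(d)$. This set is disjoint from $S\subset X(d)$, so $C$ is already $S$-strict. (Alternatively, $X(\le d-1)\subset X^*$, so $C\subset X^*$ would not even be a Cartier curve relative to $X^*$, as $C\cap X^*$ is finite.) In either reading, the generator lies in $R^d(X,X^*)_S$, which gives equality.

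The main obstacle is really Lemma~\ref{lem:MovingLemmaR0} itself, which is already proven. Here the only care needed is checking that $(X,X^*)$ satisfies its hypotheses and handling curves that are not supported in $X(d)$ via Remark~\ref{rem:CartierCurve}(2). The hypothesis that $k$ is infinite enters only through Lemma~\ref{lem:MovingLemmaR0}.
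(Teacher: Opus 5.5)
Your proposal is correct and follows the paper's route. The paper's proof is the single line ``apply Lemma~\ref{lem:MovingLemmaR0} with $Y^*=X^*$,'' and your argument spells out the hypothesis checks that this line leaves implicit. Your connectedness reduction and case split are more than needed. As your parenthetical notes, an admissible pair has $X(\le d-1)\subset X^*$, so every generic point of a Cartier curve relative to $X^*$ lies in $X\setminus X^*\subset X(d)^0$, and every such curve is automatically supported in $X(d)$.
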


\begin{proof} This follows directly from  Lemma~\ref{lem:MovingLemmaR0} with $Y^*=X^*$.
\end{proof}

\begin{proposition}\label{prop:CMMoving} Suppose that $k$ is an infinite field. Let $(X, X^*)$ be an admissible pair and let $d=\dim_kX$. Then   $R^d(X, X^*)=R^d(X, X^*)^\CM$. 
\end{proposition}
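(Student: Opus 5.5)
This follows from Lemma~\ref{lem:Mov1} applied with $S:=X(d)\setminus X_\CM$. Since $R^d(X,X^*)^\CM=R^d(X,X^*)_{X(d)\setminus X_\CM}$ by definition, and $R^d(X,X^*)_S\subset R^d(X,X^*)$ trivially, it suffices to check that this $S$ satisfies the hypotheses of Lemma~\ref{lem:Mov1}. That lemma then gives the reverse inclusion.

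The first check is that $S$ is closed in $X$. By Remark~\ref{rem:Prelims}(3), $X_\CM$ is open in $X(d)^0$, hence open in $X$. Therefore $X(d)\setminus X_\CM = X(d)\cap(X\setminus X_\CM)$ is closed in $X$.

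The second check is $\codim_{X(d)}S\ge2$, which is also part of Remark~\ref{rem:Prelims}(3). The argument is as follows. Because $X(d)^0$ is reduced, its local rings at non-generic points have depth $\ge1$. Hence every point of $X(d)^0$ of codimension $\le1$ is Cohen-Macaulay: codimension-zero points have Artinian, indeed field, local rings, and codimension-one points have depth $1=\dim$. Moreover, $X(d)\setminus X(d)^0=X(d)\cap X(\le d-1)$ has codimension $\ge2$ in $X(d)$ whenever it is a proper closed subset meeting lower-dimensional components, since any component of $X(\le d-1)$ has dimension $\le d-1$ and cannot contain a component of $X(d)$. I expect this to be the only point needing care: an intersection $X(d)\cap X(d-1)$ could a priori be a divisor in $X(d)$. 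I would handle this by noting that a $(d-1)$-dimensional component of $X$ meeting $X(d)$ in a codimension-one subset of $X(d)$ would be contained in $X(d)$, contradicting that it is an irreducible component. This gives $\dim (X(d)\cap X(\le d-1))\le d-2$.

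The third check is $S\subset X^*\cap X(d)$. Since $X\setminus X^*\subset X_\reg(d)\subset X(d)^0$ and regular local rings are Cohen-Macaulay, we have $X\setminus X^*\subset X_\CM$, so $S\subset X^*$. Clearly $S\subset X(d)$. Lemma~\ref{lem:Mov1} then yields $R^d(X,X^*)_S=R^d(X,X^*)$, which is exactly the assertion that $R^d(X,X^*)=R^d(X,X^*)^\CM$.
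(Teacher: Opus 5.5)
Your proposal is correct and matches the paper's proof, which just applies Lemma~\ref{lem:Mov1} with $S=X(d)\setminus X_\CM\subset X^*\cap X(d)$ and leaves the hypotheses implicit. Your checks of those hypotheses are sound: $S$ is closed, $\codim_{X(d)}S\ge2$ as in Remark~\ref{rem:Prelims}(3) (including your observation that a $(d-1)$-dimensional component cannot meet $X(d)$ in a divisor), and $S\subset X^*$ because $X\setminus X^*\subset X_\reg(d)\subset X_\CM$.
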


\begin{proof} This follows from Lemma~\ref{lem:Mov1}, taking $S=X(d)\setminus X_\CM\subset  X^*\cap X(d)$.
\end{proof}

As a complement to Proposition~\ref{prop:Comparison} we have one last comparison result.  Suppose $X$ is reduced. Let $Y^*\subset X$ be a closed subset such that  $X\setminus Y^*$ is regular and dense in $X$,  and let $X^*(\le i)=(Y^*\cap X(i))\cup X(\le i-1)$. If $i_C:C\to X$ is a connected Cartier curve relative to $Y^*$, then $C$ is supported in $X(i)\setminus X(\ge i+1)$ for some $i$ (see Remark~\ref{rem:CartierCurve}(2)). Via the identity
\[
Z_0(X\setminus Y^*)=\oplus_{i=0}^{\dim_kX}Z_0(X(\le i)\setminus X^*(\le i))
\]
this shows that the composition
\[
Z_0(X\setminus Y^*)=\oplus_{i=0}^{\dim_kX}Z^i(X(\le i)\setminus X^*(\le i))\to
\oplus_{i=0}^{\dim_kX}\CH^i(X(\le i), X^*(\le i))
\]
descends to a well-defined homomorphism
\begin{equation}\label{eqn:CH0CH*Comp}
\gamma_{X, Y^*}:\CH_0(X, Y^*)\to \oplus_{i=0}^{\dim_kX}\CH^i(X(\le i), X^*(\le i))
\end{equation}

\begin{proposition}\label{prop:CH0CH*} Suppose $k$ is an infinite field and $X$ is reduced. Let $Y^*\subset X$ be a closed subset such that $X\setminus Y^*$ is regular  and dense in $X$, and let $X^*(\le i)=(Y^*\cap X(i))\cup X(\le i-1)$. Suppose that  
\begin{equation}\label{eqn:AssumptionCH0CH*}
\codim_{X(i)}X(i)\cap X(j)\ge 2
\end{equation}
for all $0\le i<j\le \dim_kX$. Then the map $\gamma_{X, Y^*}$ in  \eqref{eqn:CH0CH*Comp} is an isomorphism.
\end{proposition}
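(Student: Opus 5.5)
The map $\gamma_{X,Y^*}$ is induced by the identity on generators
\[
Z_0(X\setminus Y^*)=\oplus_i Z^i(X(\le i)\setminus X^*(\le i)),
\]
so it is surjective. The plan is therefore to prove injectivity. Concretely, I will show that the relation subgroup $R_0(X,Y^*)$ contains (and hence equals) the direct sum $\oplus_i R^i(X(\le i),X^*(\le i))$. One inclusion, $R_0(X,Y^*)\subset\oplus_i R^i$, is exactly what the well-definedness of $\gamma_{X,Y^*}$ provides, via Remark~\ref{rem:CartierCurve}(2).

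For the reverse inclusion, fix $i$ and a Cartier curve $C$ on $X(\le i)$ relative to $X^*(\le i)$, together with $f\in\sO_{C:C\cap X^*(\le i)}^\times$. As in Remark~\ref{rem:CartierCurve2}(3), only the part of $C$ supported in $X(i)$ contributes to $\Div f$, so we may assume $C$ is supported in $X(i)$. The problem is that $C$ need not be a Cartier curve on $X$ relative to $Y^*$, for two reasons:
\begin{itemize}
\item[(a)] $C$ may meet $X(i)\cap X(j)$ with $j\neq i$, where the local ring of $X$ differs from that of $X(\le i)$;
\item[(b)] $C$ may meet $X(i)\cap X(\le i-1)$, where the condition in $X(\le i)$ is only imposed relative to the larger set $X^*(\le i)$.
\end{itemize}

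The key tool is the moving lemma, Lemma~\ref{lem:MovingLemmaR0}, applied to the scheme $X(\le i)$ (whose top-dimensional part is $X(i)$, reduced since $X$ is reduced), with $Y^*$ replaced by $X^*(\le i)$ and
\[
S=X(i)\cap\bigcup_{j\neq i}X(j).
\]
Hypothesis \eqref{eqn:AssumptionCH0CH*} gives $\codim_{X(i)}S\ge2$ for $j>i$. For $j<i$, $X(i)\cap X(j)$ has dimension at most $j$; but to get codimension $\ge2$ in $X(i)$ we need $j\le i-2$ or the assumption, and the assumption with roles $(j,i)$, $j<i$, only controls $\codim_{X(j)}$. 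This is the step I expect to be the main obstacle. The fix I would try first: if $X(i)\cap X(i-1)$ has codimension one in $X(i)$, it would be a component of $X(i-1)$ itself, contradicting \eqref{eqn:AssumptionCH0CH*} for the pair $(i-1,i)$. So in all cases $\dim(X(i)\cap X(j))\le i-2$ and $\codim_{X(i)}S\ge2$.

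Lemma~\ref{lem:MovingLemmaR0} then rewrites $i_{C*}\Div f$ as $\sum_j i_{C_j*}\Div f_j$ with each $C_j$ a Cartier curve on $X(\le i)$, supported in $X(i)$ and disjoint from $S$. The required hypothesis $S\subset X^*(\le i)\cap X(i)$ holds since $S\subset X(\le i-1)\cup X_\sing$. Near each $C_j$, the scheme $X$ coincides with the open subscheme $X(i)^0$, so the local rings and regular-sequence conditions for $X$ and for $X(\le i)$ agree. Moreover $C_j\cap Y^*\subset C_j\cap X^*(\le i)$ is finite, and the regularity condition holds there. Hence each $C_j$ is a Cartier curve on $X$ relative to $Y^*$. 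Finally, $f_j$ is a unit in the semilocal ring at $C_j\cap X^*(\le i)\supset C_j\cap Y^*$, and we may enlarge the semilocal set without harm, so $f_j\in\sO_{C_j:C_j\cap Y^*}^\times$. Thus $i_{C*}\Div f\in R_0(X,Y^*)$, proving the reverse inclusion and hence that $\gamma_{X,Y^*}$ is an isomorphism.
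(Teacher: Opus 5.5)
Your proposal is correct. It uses the same key input as the paper: Lemma~\ref{lem:MovingLemmaR0} applied to $X(\le i)$, with hypothesis \eqref{eqn:AssumptionCH0CH*} guaranteeing $\codim_{X(i)}S\ge 2$. The organization, however, is different.

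The paper first peels off the top-dimensional part via Remark~\ref{rem:Comparison2}, writing $\CH_0(X,Y^*)=\CH^d(X,X^*(\le d))\oplus Z_0(X(\le d-1)\setminus Y^*)/R_0'$. It then shows $R_0'=R_0(X(\le d-1),Y^*\cap X(\le d-1))$ by moving Cartier curves on $X(\le i)$, taken relative to $Y^*\cap X(\le i)$, off $S=X(i)\cap X(\ge i+1)$. Finally it inducts on $d$. You instead compare the relation subgroups for all $i$ at once, proving $R_0(X,Y^*)=\oplus_i R^i(X(\le i),X^*(\le i))$ directly, with no induction.

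Your concern (b) is not actually an obstruction. Since $C\subset X(i)$, you have $C\cap Y^*\subset C\cap X^*(\le i)$. Also, at any point off $X(\ge i+1)$ the local rings of $X$ and $X(\le i)$ coincide, because $X$ is reduced. So the paper's smaller choice $S=X(i)\cap X(\ge i+1)$ would already suffice for your argument.

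Enlarging $S$ to include $X(i)\cap X(\le i-1)$ costs you the extra codimension check for $j<i$. That check is fine, and it follows more directly than you wrote: the hypothesis for the pair $(j,i)$ gives $\dim(X(i)\cap X(j))\le j-2\le i-3$. In exchange, your moved curves $C_j$ lie in the open set $X(i)^0$, so the comparison of local rings and of the semilocal unit groups is immediate.

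The paper's route needs the hypothesis only to move curves off higher-dimensional components, and it passes through the decomposition of Remark~\ref{rem:Comparison2}, which holds without the codimension assumption. Your route gives the explicit identification of relation subgroups in a single step.
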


\begin{proof} Let $d=\dim_kX$. By  Remark~\ref{rem:Comparison2}, we have
\begin{multline*}
\CH_0(X, Y^*)=\CH^d(X, X^*(\le d))\\\oplus 
Z_0(X(\le d-1)\setminus Y^*)/R_0'(X(\le d-1), Y^*\cap X(\le d-1)),
\end{multline*}
where we recall that 
\[
R_0'(X(\le d-1), Y^*\cap X(\le d-1))\subset R_0(X(\le d-1), Y^*\cap X(\le d-1))
\]
 is the subgroup generated by 0-cycles of the form $i_{C*}(\Div f)$, where $i_C:C\to X(\le d-1)$ is a Cartier curve relative to $Y^*\cap X(\le d-1)$, $f\in \sO_{C: C\cap Y^*}^\times$ and $C\cap X(d)=\0$.

Assuming \eqref{eqn:AssumptionCH0CH*}, we claim that 
\begin{equation}\label{eqn:ClaimR'=R}
R_0'(X(\le d-1), Y^*\cap X(\le d-1))=R_0(X(\le d-1), Y^*\cap X(\le d-1)).
\end{equation}
For this, consider a connected Cartier curve $i_C:C\to X(\le d-1)$ on $X(\le d-1)$, relative to $Y^*\cap X(\le d-1)$ and function $f\in \sO_{C: C\cap Y^*}^\times$. By Remark~\ref{rem:CartierCurve}(2), $C$ is supported in $X(i)$ for some $i\le d-1$ and $C\cap \cup_{j=i+1}^{d-1}X(j)=\0$; in particular $C$ is a Cartier curve on $X(\le i)$ and $f\in \sO_{C: C\cap Y^*\cap X(\le i)}^\times$. Moreover $Y^*\cap X(i)\supset X(i)\cap X(\ge i+1)$, and $X(i)\cap X(\ge i+1)$ has codimension $\ge 2$ on  $X(i)$. We apply Lemma~\ref{lem:MovingLemmaR0} (with $X=X(\le i)$, $S=X(i)\cap X(\ge i+1)$ and $Y^*\cap X(\le i)$ replacing $Y^*$). This gives us Cartier curves $\iota_j:C_j\to X(\le i)$ relative to $Y^*\cap X(\le i)$, supported on $X(i)$,  and functions $f_j\in \sO_{C_j: C_j\cap Y^*}^\times$, $j=1,\ldots, s$, with $C_j\cap X(\ge i+1)=\0$ and
\[
i_{C*}(\Div f)=\sum_{j=1}^s i_{C_j*}(\Div f_j).
\]
But then $C_j$ is a Cartier curve on $X(\le d-1)$ relative to $Y^*\cap X(d-1)$, with $C_j\cap X(d)=\0$, hence $i_{C*}(\Div f)$ is in $R_0'(X(\le d-1), Y^*\cap X(\le d-1))$.

As such 0-cycles $i_{C*}(\Div f)$ generate $R_0(X(\le d-1), Y^*\cap X(\le d-1))$, this verifies \eqref{eqn:ClaimR'=R}, and thus 
\[
Z_0(X(\le d-1)\setminus Y^*)/R_0'(X(\le d-1), Y^*\cap X(\le d-1))=\CH_0(X(\le d-1), Y^*\cap X(\le d-1))).
\]
By induction on $d$, 
\[
\CH_0(X(\le d-1), Y^*\cap X(\le d-1)))=\oplus_{i=0}^{d-1}\CH^i(X(\le i), X^*(\le i))
\]
and thus
\[
\CH_0(X, Y^*)=\oplus_{i=0}^d\CH^i(X(\le i), X^*(\le i)).
\]
\end{proof}

\section{Another presentation of $\CH^d(X, X^*)$}\label{sec:Presentation}

In this section, $k$ is an infinite  field.  

Let  $(X, X^*)$ be an admissible pair and let  $d:=\dim_kX$. Given a Cartier curve $C$ on $X$  relative to $X^*$ with $C\subset X_\CM$,  and an $h\in \sO_{C: C\cap X^*}^\times$, taking the closure of the graph of $h$ gives the pure dimension one closed subscheme $W:=\gamma_{C,h}\subset X\times \P^1$, with the following properties: 
\begin{equation}\label{eqn:Properties}
\vbox{
\noindent
i. $W\subset X_\CM\times\P^1$, $p_1:W\to X$ is finite, $W\cap X^*\times\{0,\infty\}=\0$ and $W\cap X^*\times\P^1$ is supported in a finite set.   \\[2pt]
ii. In an open neighborhood of $W\cap X^*\times\P^1$, $W$ is defined by a regular sequence of length $d$.\\[2pt]
iii. $W\cap X\times\{0,\infty\}$ is a closed subscheme of $(X\setminus X^*)\times \{0,\infty\}$, finite over $k$. \\[2pt]
iv. Letting $|W|$ denote the cycle associated to $W$, we have
\[
p_{1*}(|W|\cdot (X  \times \{0\} - X \times \{\infty\})) =i_{C*}(\Div h)
\]
in $Z^d(X\setminus X^*)$.
}
\end{equation}

\begin{definition}\label{def:RelationsII} For an admissible pair $(X, X^*)$, let $\tilde{R}^d(X,X^*)$ be the subgroup of $Z^d(X\setminus X^*)$ generated by cycles of the form
\[
p_{1*}(|W|\cdot (X \times \{0\} - X \times \{\infty\})) \in Z^d(X\setminus X^*),
\]
where $W\subset X\times\P^1$ is a closed subscheme of pure dimension one, satisfying properties \eqref{eqn:Properties}(i.-iii.).
\end{definition}

From \eqref{eqn:Properties},  taking the graph of $f\in \sO_{C: C\cap X^*}^\times$ for a Cartier curve $C$ on $X$ relative to $X^*$ gives an inclusion  $R^d(X,X^*)^\CM\subset \tilde{R}^d(X,X^*)$. By Proposition~\ref{prop:CMMoving}, we have
\[
R^d(X,X^*)\subset \tilde{R}^d(X,X^*).
\]

\begin{remark} \label{rem:CodimNotDim} A closed subscheme $W$ of $X\times\P^1$ of pure codimension $d$  has pure dimension one support in $X(d)\times\P^1$, but could also have dimension zero support in $X(d-1)\times\P^1$. For some arguments later on, it may be useful to index via codimension rather than dimension, and allow closed subschemes $W'\subset X\times\P^1$ of pure codimension $d$, rather than pure dimension one. In this case, we modify \eqref{eqn:Properties}(i,ii) to: \\[5pt]
(i$'$) $W'\subset X\times\P^1$ has pure codimension $d$. We have $W'\subset 
(X_\CM\amalg X(d-1)^0)\times\P^1$,  $W'\cap X^*\times\{0,\infty\}=\0$ and $W'\cap X^*\times\P^1$ is supported in a finite set.\\[2pt]
(ii$'$) In an open neighborhood of $W'\cap (X^*\cap X_\CM)\times \P^1$, $W'$ is defined by a regular sequence of length $d$.
\\[5pt]
and still require  \eqref{eqn:Properties}(iii.).

Given such a $W'$, note that $W'$ is a pure codimension $d$ closed subscheme of the disjoint union $X_\CM\times\P^1\amalg X(d-1)^0\times\P^1$,  the component $W:=W'\cap X(d)\times\P^1=W'\cap X_\CM\times\P^1$ has pure dimension one, satisfies 
\eqref{eqn:Properties}(i.-iii.) and
\[
p_{1*}(|W'|\cdot (X \times \{0\} - X \times \{\infty\}))= p_{1*}(|W|\cdot (X \times \{0\} - X \times \{\infty\}))\in Z^d(X\setminus X^*). 
\]
Thus, this change of conditions does not alter the subgroup $\tilde{R}^d(X,X^*)\subset Z^d(X\setminus X^*)$; compare with Remark~\ref{rem:CartierCurve2}.
\end{remark}

\begin{theorem}\label{thm:Relations} Let $(X, X^*)$ be an admissible pair and let  $d=\dim_kX$. Then we have $R^d(X,X^*)=\tilde{R}^d(X,X^*)$.
\end{theorem}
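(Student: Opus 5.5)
The inclusion $R^d(X,X^*)\subset\tilde{R}^d(X,X^*)$ was already established just before Definition~\ref{def:RelationsII}, via graphs and Proposition~\ref{prop:CMMoving}. So the plan is to prove the reverse inclusion. Fix $W\subset X\times\P^1$ of pure dimension one satisfying \eqref{eqn:Properties}(i.-iii.). I will show that $p_{1*}(|W|\cdot(X\times\{0\}-X\times\{\infty\}))$ lies in $R^d(X,X^*)$. The idea is to realize $W$ as a Cartier curve on $X\times\P^1$ and push the relation down to $X$.

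\textbf{Step 1.} Let $t$ be the coordinate on $\P^1$. The function $t$ is a unit on $W$ near $W\cap(X^*\times\P^1)$, since this set misses $X^*\times\{0,\infty\}$. Moreover, $|W|\cdot(X\times\{0\}-X\times\{\infty\})$ is the cycle $\Div(t|_W)$ on $W$; this holds because $W$ is Cohen--Macaulay where it matters, so the intersection multiplicities are lengths. Thus we need the pushforward statement
\[
p_{1*}(\Div(t|_W))\in R^d(X,X^*).
\]

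\textbf{Step 2.} Replace $W$ by a Cartier curve on $X$. Write $p_1(W)=C_0$, a closed curve in $X_\CM$ that meets $X^*$ in a finite set. Because $p_1:W\to X$ is finite, $p_{1*}\Div(t|_W)=\Div(\Nm_{W/C_0}(t))$ on $C_0$, at least where $p_1$ is flat; the norm is taken on $p_{1*}\sO_W$ as a locally free module over a suitable curve. The difficulty is that $C_0$ need not be a Cartier curve near $X^*$, and $W\to C_0$ need not be flat there.

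To fix this, I will use condition (ii) and construct a Cartier curve $C\supset p_1(W)$ on $X$ over which $W$ is finite and flat near $X^*$. Near each of the finitely many points of $p_1(W\cap X^*\times\P^1)$, $W$ is cut out in $X\times\P^1$ by $d$ regular elements. Using the finiteness of $p_1$, I eliminate $t$: choose $d-1$ general combinations, in $\sO_X$, of the norms or characteristic-polynomial coefficients. Concretely, take the ideal of $X$-functions vanishing on $p_{1*}\sO_W$ (the Fitting ideal), and pick $g_1,\dots,g_{d-1}$ in it, globalized as sections of $\sI(m)$ as in Lemma~\ref{lem:MovingLemmaR0}. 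Then $C=V(g_1,\dots,g_{d-1})$ is a Cartier curve in $X_\CM$ relative to $X^*$ containing $p_1(W)$. Next, $W\subset C\times\P^1$ is a relative Cartier divisor, cut out by one equation $F(t)$ near $X^*$, since $C\times\P^1$ is Cohen--Macaulay of dimension $2$ and $W$ has codimension one there. Then the resultant or norm $h:=\Nm(t)$, i.e.\ the ratio of the constant term to the leading coefficient of $F$ (up to sign), defines $h\in\sO_{C:C\cap X^*}^\times$. By the standard norm/Weil reciprocity computation (as in Fulton, Prop.~1.4), we get $p_{1*}\Div(t|_W)=i_{C*}\Div h$ plus divisors of functions on the extra components of $C$. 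Any residual components are handled by adding the same kind of relation with the roles exchanged, exactly as in the proof of Lemma~\ref{lem:MovingLemmaR0}.

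\textbf{Main obstacle.} The hardest part will be the local analysis at points of $X^*$ in Step 2. One must show that $W$ is locally a principal divisor on some lci curve-times-$\P^1$, and that the norm function is a unit at the points of $C\cap X^*$. The unit property should follow from $W\cap X^*\times\{0,\infty\}=\0$. The first claim is where I will attempt a general-position argument with $k$ infinite. If it fails, the fallback is to move $W$ first: using Proposition~\ref{prop:CMMoving}-type arguments on $X\times\P^1$, I would replace $W$ by a sum of graphs $\gamma_{C_j,h_j}$ modulo relations whose pushforwards visibly lie in $R^d(X,X^*)$.
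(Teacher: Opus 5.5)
The gap is in Step~2. Your argument needs $W$ to be finite and flat over a Cartier curve $C$ near $C\cap X^*$, equivalently Cartier in $C\times\P^1$ there, so that the norm of $t$ lies in $\sO_{C:C\cap X^*}^\times$. The reason you give, that a codimension-one subscheme of the two-dimensional Cohen--Macaulay scheme $C\times\P^1$ is Cartier, is not a valid implication. The claim itself also fails in general.

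Here is a counterexample. Take $X=\A^2$ with coordinates $u,v$, let $X^*$ be the origin, and let $W$ be the image of $s\mapsto(s^2,s^3,1+s)$ in $X\times\P^1$. With $\tau=t-1$:
\begin{itemize}
\item $W$ is cut out by the regular sequence $u-\tau^2,\,v-\tau^3$.
\item $W$ meets $X^*\times\P^1$ only at $x=(0,0,1)$ and misses $X^*\times\{0,\infty\}$.
\item $W\to p_1(W)$ is the normalization of the cusp $v^2=u^3$.
\end{itemize}
So $W$ satisfies \eqref{eqn:Properties}(i.--iii.). Any curve $C\subset X$ with $W\subset C\times\P^1$ has ideal contained in $(v^2-u^3)$, and
\[
v^2-u^3=(v-\tau^3)(v+\tau^3)-(u-\tau^2)(u^2+u\tau^2+\tau^4)\in\mathfrak{m}_x\sI_{W,x}.
\]
Hence $\sI_W\sO_{C\times\P^1,x}$ still needs two generators, and $W$ is not Cartier in $C\times\P^1$ at $x$ for any such $C$. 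Correspondingly, $W\to p_1(W)$ is not flat at the cusp, and the norm of $t$ is $1+v/u$, which is not regular there. So the projected norm does not give an element of $R^d(X,X^*)$. No general choice of $g_1,\dots,g_{d-1}$ pulled back from $X$ repairs this, and your fallback does not say what replaces it.

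The missing idea, which is how the paper argues (following Biswas--Srinivas), is never to project $W$ itself. Instead you link it on $X\times\P^1$ to residual curves that are in general position with respect to $p_1$.
\begin{itemize}
\item First split off the components of $W$ that miss $X^*\times\P^1$; for these your norm argument is exactly what the paper uses. The case $d\le 1$ is handled via $\Pic$.
\item Take general sections $g_i\in H^0(X\times\P^1,\sI_W(m_i))$ with $m_1\ll\dots\ll m_d$. These are sections on $X\times\P^1$, not pulled back from $X$. Also take a general $g_d'\in H^0(X\times\P^1,\sO(m_d))$.
\item Lemma~\ref{lem:GenPos} gives $V(g_1,\dots,g_d)=W\cup E$ near $X^*\times\P^1$. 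It also makes $p_1$ a closed immersion on $V(g_1,\dots,g_i)$, $i\ge2$, over an open $U_2$ with $E\cap(X^*\times\P^1)\subset U_2\times\P^1$.
\item Consequently the residual curves $E$ and $E'=V(g_1,\dots,g_{d-1},g_d')_{\CM}$ map isomorphically near $X^*$ onto Cartier curves $\hat E,\hat E'$. On these, the norms $f,f'$ of $t$ are units.
\item The slices $E_0,E_\infty$ of the surface $V(g_1,\dots,g_{d-1})$ by $X\times\{0\}$ and $X\times\{\infty\}$ are Cartier curves on $X$. They carry the restrictions $f_0,f_\infty$ of $g_d/g_d'$.
\end{itemize}
Comparing $|W|+|E|$ with $|E'|$ on these slices yields
\[
p_{1*}(|W|\cdot(X\times\{0\}-X\times\{\infty\}))=i_{E_0*}\Div f_0+i_{\hat E'*}\Div f'-i_{E_\infty*}\Div f_\infty-i_{\hat E*}\Div f\in R^d(X,X^*).
\]
This general-position control of the projection on the residual curves is the ingredient your proposal lacks.
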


In case $X$ of pure dimension two and $k$ algebraically closed, this is  \cite[Proposition 1]{BS98}. The proof of {\it loc.\! cit.\!} works just as well  assuming only that $k$ is infinite. The proof given here follows the lines of the argument of {\it loc.\! cit.}; a version for general $d$, roughly along the same lines,  appears in the unpublished work \cite{L85}. The proof proceeds in a series of Lemmas, following \cite[\S3]{BS98}.

\begin{remark}\label{rem:CaseDim1} For an admissible pair $(X, X^*)$ with $X$ of dimension one, Theorem~\ref{thm:Relations}  is classical. If we write $X=X_0\amalg X_1$, it is easy to see that 
Theorem~\ref{thm:Relations} for $(X_1, X^*\cap X_1)$ implies this result for $(X, X^*)$, so we may assume that $X$  has pure dimension one. Then $X_\sing$ is the finite set of singular points of $X$ and $X^*$ is thus any finite set of closed points of $X$  such that $X\setminus X^*$ is regular.  Following Remark~\ref{rem:CH1=Pic}  (going back to \cite[Proposition 1.4]{LW}) we have the isomorphism $\CH^1(X,X^*)\cong \Pic(X)$ sending the class of a 0-cycle  $z=\sum_in_ix_i \in Z^1(X\setminus X^*)$ to the class of the invertible sheaf $\sO_X(z)$; in particular, for each invertible sheaf $\sL$ on $X$, there is a $z\in Z^1(X\setminus X^*)$ with $\sL\cong  \sO_X(z)$.  Now, each  $W\subset X\times\P^1$ satisfying properties \eqref{eqn:Properties}(i.-iii.) is a Cartier divisor, giving the invertible sheaf $\sO_{X\times\P^1}(W)$, and 
we have isomorphisms
\[
\sO_X(W\cdot X\times \{0\})\cong i_0^*\sO_{X\times\P^1}(W),\  \sO_X(W\cdot X\times \{\infty\})\cong i_\infty^*\sO_{X\times\P^1}(W),
\]
where $i_0, i_\infty$ are the closed immersions $X\times\{0\}\hookrightarrow X\times\P^1$, $X\times\{\infty\}\hookrightarrow X\times\P^1$. By the $\P^1$-bundle formula, the projections $p_1:X\times\P^1\to X$, $p_2:X\times\P^1\to \P^1$ induce an isomorphism
\[
\Pic(X)\times \Pic(\P^1)\xrightarrow{p_1^*\otimes p_2^*}\Pic(X\times\P^1)
\]
so $\sO_X(W)\cong p_1^*\sO_X(z)\otimes p_2^*\sO_{\P^1}(m)$ for some $z\in Z^1(X\setminus X^*)$, $m\in \Z$. Thus 
\[
[W\cdot X\times \{0\}]=[z]=[W\cdot X\times \{\infty\}]\in \CH^1(X, X^*)
\]
so $W\cdot (X\times \{0\}-X\times \{\infty\})$ is in $R^1(X,X^*)$ and thus $\tilde{R}^1(X, X^*)\subset R^1(X, X^*)$.

In case $X$ has pure dimension zero, we have   $R^0(X, X^*)=\tilde{R}^0(X,X^*)=\{0\}$, so Theorem~\ref{thm:Relations} holds for all $X$ of dimension $\le 1$.
\end{remark}

Since we are possibly working in positive characteristic, we recall the version of the Bertini theorem that we will need.

\begin{lemma}[Bertini Theorem \hbox{\cite[Cor. 3.4.14]{FOV}}]\label{lem:Bertini} Let $Y\subset \P^N$ be a reduced quasi-projective scheme over an infinite field $k$, and let $\check{\P}^N$ denote the dual projective space of hyperplanes. Then there is a Zariski dense open subset $U\subset \check{\P}^N$ such that for each $H\in U(k)$, the scheme-theoretic intersection $H\cap Y$ is reduced.
\end{lemma}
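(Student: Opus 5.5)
The plan is to deduce the statement from the reducedness of the \emph{generic} hyperplane section, via the universal hyperplane section. Then spread this reducedness out to a dense open set of $k$-rational hyperplanes, using constructibility results from EGA IV together with the infinitude of $k$. Consider the incidence scheme
\[
\sZ:=\{(y,H)\mid y\in H\}\subset Y\times\check{\P}^N,
\]
with projections $q:\sZ\to Y$ and $\pi:\sZ\to\check{\P}^N$. The fibre of $\pi$ over a $k$-point $H$ is the scheme-theoretic intersection $H\cap Y$. The map $q$ is a Zariski-locally trivial $\P^{N-1}$-bundle: for $y\in Y$ its fibre is the linear space of hyperplanes through $y$. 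Hence $q$ is smooth, and $\sZ$ is reduced because $Y$ is. Let $\eta$ be the generic point of $\check{\P}^N$. The generic fibre $\sZ_\eta=\pi^{-1}(\eta)$ is a localization of $\sZ$ (an inverse limit of open subschemes $\pi^{-1}(V)$), so $\sZ_\eta$ is also reduced. This is the generic-hyperplane version of the statement.

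Next I would split reducedness as $R_0+S_1$: a noetherian scheme is reduced if and only if it is generically reduced and has no embedded associated points. For the $S_1$ part, note first that by prime avoidance a dense open set of hyperplanes $H$ avoids all the (finitely many) associated points of $Y$. For such $H$, the equation of $H$ is a nonzerodivisor on $Y$, so $\pi$ is flat over this open set. I would then invoke the constructibility results of EGA IV, \S 9.7–9.9, for flat families of finite presentation. These say that the sets of points $t\in\check{\P}^N$ where $\sZ_t$ has no embedded points, resp.\ is reduced at its generic points, are constructible, at least in their geometric form. Since both properties hold at $\eta$, each set contains a dense open subset, and their intersection gives the required $U$. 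Because $k$ is infinite, $U(k)$ is Zariski dense, which is what the lemma uses.

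The main obstacle is the imperfect-field case. EGA's constructibility statements concern \emph{geometric} reducedness, but $Y$ is only assumed reduced, not geometrically reduced. In characteristic $p$ the generic fibre $\sZ_\eta$ need not be geometrically reduced even though it is reduced. To handle this I would compare with $Y$ itself rather than with a point: $\sZ\to Y$ is smooth, so any non-reducedness of $\sZ_\eta\otimes\overline{k(\eta)}$ comes from the non-reducedness of $Y\otimes_k\bar{k}$, uniformly in the family. Concretely, I would replace geometric reducedness by the relative condition ``$\sZ_t\to Y$ has the same nilpotent behaviour as $\sZ\to Y$''. A more hands-on variant would be to work on an affine cover and test reducedness of $H\cap Y$ via the associated primes of $A/(\ell_H)$ and generic regularity. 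Failing a clean argument of this kind, I would fall back on the argument of \cite[Cor.\ 3.4.14]{FOV}, which treats exactly this situation for an arbitrary infinite field.
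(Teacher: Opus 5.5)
Your outline is complete in characteristic $0$: there $\sZ_\eta$ reduced means geometrically reduced, and the local constructibility of the geometrically-reduced-fibre locus (EGA IV, 9.7.7) gives $U$ directly. The $S_1$ half of your argument is fine in every characteristic. The gap is the $R_0$ half in characteristic $p$. That half is the passage from \emph{$\sZ_\eta$ is reduced} to \emph{$H\cap Y$ is reduced for $H$ in a dense open set of $k$-points}, and it is the entire content of the lemma. The difficulty is not only that $k$ may be imperfect. Even for $k=\bar k$ the field $k(\eta)=k(u_1,\ldots,u_N)$ is imperfect, so reducedness of $\sZ_\eta$ gives no geometric reducedness. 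Moreover, the set of $t$ with reduced fibre $\sZ_t$ need not even be constructible.

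Indeed, everything you establish ($q$ a $\P^{N-1}$-bundle, $\sZ$ and $\sZ_\eta$ reduced, $\pi$ flat over a dense open) holds verbatim for the incidence correspondence of an \emph{arbitrary} morphism $Y\to\P^N$. For the Frobenius $F:\P^1_k\to\P^1_k$, $[x_0:x_1]\mapsto[x_0^p:x_1^p]$, the conclusion fails. Here $\sZ\cong\P^1$ and $\sZ_\eta$ is the spectrum of a purely inseparable extension of $k(\eta)$. Yet $F^{-1}([e^p:1])=V((x_0-ex_1)^p)$ is non-reduced for every $e\in k$, and these points are Zariski dense. For $k$ perfect, the reduced-fibre locus is just $\{\eta\}$. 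So no argument resting only on these properties can work, and that includes your suggested fix via smoothness of $q$, which holds for $F$ too. The hypothesis that $Y\hookrightarrow\P^N$ is an immersion must enter essentially. For instance, one must show that for general $H$ the linear form $\ell_H$ is a uniformizer of the discrete valuation ring $\sO_{Y,y}$ at every generic point $y$ of $Y\cap H$. These points lie in $Y_\reg$ once $H$ contains no component of $Y\setminus Y_\reg$. For geometrically reduced $Y$ this is Jouanolou's Bertini theorem for unramified morphisms. For $Y$ merely reduced over an imperfect field one needs a direct argument of the kind in \cite{FOV}, since then the sections $Y\cap H$ are typically not geometrically reduced and constructibility of geometric reducedness says nothing. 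The paper gives no proof of its own and simply quotes \cite[Cor.~3.4.14]{FOV}, so your final fallback agrees with it. The sketch preceding that fallback, however, is not a proof in positive characteristic.
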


\begin{remark} Given a quasi-projective $k$-scheme $Y$ and coherent sheaves $\sF_1,\ldots, \sF_m$, when we say that a property $P$ holds for all sufficiently general $(s_1,\ldots, s_m)\in \prod_{i=1}^mH^0(Y, \sF_i)$, we mean there is projective closure $j:Y\hookrightarrow \bar{Y}$ of $Y$, coherent sheaves $\bar{\sF}_1,\ldots, \bar{\sF}_m$ on $\bar{Y}$ with $j^*\bar{\sF}_i\cong \sF_i$ and a Zariski open subset $U\subset \prod_{i=1}^mH^0(\bar{Y}, \bar{\sF}_i)$ (viewing this product as a product of the associated affine spaces) such that $P$ holds for all $(s_1,\ldots, s_m)\in \prod_{i=1}^mH^0(Y, \sF_i)$ with $s_i=j^*\bar{s}_i$ and with $(\bar{s}_1,\ldots, \bar{s}_m)\in U(k)$. 
\end{remark}

\begin{lemma}\label{lem:ClosedImmersion} Let $f:V\to U$ be a finite \'etale morphism in $\Sch_k$ and let $i:Z\to V$ be a closed subscheme of $V$. Let $u\in U$ be a point and let  $\bar{u}:\Spec F\to U$ be geometric point  factoring through $u$. Suppose that the projection $Z\times_U\Spec F\to \Spec F$ is an isomorphism. Then there is a Zariski open neighborhood $U_u$ of $u$ in $U$ such that $Z\times_UU_u\to U_u$ is a closed immersion.
\end{lemma}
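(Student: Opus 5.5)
The plan is to show that the composite $g:=f\circ i:Z\to U$ is finite, that its fiber over $u$ is exactly $\Spec k(u)$, and then to use Nakayama's lemma to spread this out to a neighborhood of $u$. I expect that only finiteness of $f$ will be needed, and not that $f$ is \'etale.

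\emph{Step 1 (finiteness).} Since $i$ is a closed immersion and $f$ is finite, $g=f\circ i$ is finite. In particular $g$ is affine and $g_*\sO_Z$ is a coherent $\sO_U$-algebra. We have the structure map $\phi:\sO_U\to g_*\sO_Z$, and $g$ is a closed immersion over an open $U'\subset U$ exactly when $\phi_{|U'}$ is surjective.

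\emph{Step 2 (the fiber over $u$).} Let $Z_u:=Z\times_U\Spec k(u)=\Spec A$, where $A=(g_*\sO_Z)_u\otimes_{\sO_{U,u}}k(u)$ is a finite $k(u)$-algebra. Since $\bar{u}$ factors through $u$, we have $Z\times_U\Spec F=\Spec(A\otimes_{k(u)}F)$. By hypothesis the map $F\to A\otimes_{k(u)}F$ is an isomorphism, and this map is the base change of $k(u)\to A$ along the field extension $k(u)\subset F$. Since $F$ is faithfully flat over $k(u)$, the map $k(u)\to A$ is itself an isomorphism; concretely, comparing $k(u)$-dimensions with $F$-dimensions gives $\dim_{k(u)}A=1$, and $A\neq0$. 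Hence $\phi_u\otimes k(u)$ is surjective.

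\emph{Step 3 (Nakayama and spreading out).} Put $M=(g_*\sO_Z)_u$, a finitely generated $\sO_{U,u}$-module. The cokernel $Q$ of $\phi_u:\sO_{U,u}\to M$ is finitely generated. By right exactness of $-\otimes k(u)$ and Step 2, $Q\otimes k(u)=0$, so $Q=0$ by Nakayama's lemma. Thus $\phi$ is surjective at $u$. The cokernel sheaf $\mathcal{Q}:=\mathrm{coker}(\phi)$ is coherent, so its support is closed and does not contain $u$. Set $U_u:=U\setminus\supp\mathcal{Q}$. Over $U_u$ the map $\phi$ is surjective, so $Z\times_UU_u=\Spec_{U_u}(g_*\sO_Z)_{|U_u}\to U_u$ is a closed immersion.

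The only point needing any care is the descent in Step 2: the hypothesis is stated for a geometric point, and one must pass from $F$ back to the residue field $k(u)$. Faithful flatness of field extensions, or simply counting dimensions, handles this.
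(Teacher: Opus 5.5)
Your proof is correct, but it takes a different route from the paper's.

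The paper uses the \'etale hypothesis in an essential way. It base-changes to the strict henselization $\sO^{sh}_{U,u}$, where $V$ splits as a disjoint union of copies of $\Spec\sO^{sh}_{U,u}$. Since each copy is local, the hypothesis on the geometric fiber forces $Z\times_U\Spec\sO^{sh}_{U,u}$ to lie in a single copy, so $f^{sh}\circ i^{sh}$ is a closed immersion. The paper then descends the vanishing of the cokernel $\sK$ of $\sO_U\to(f\circ i)_*\sO_Z$ along the faithfully flat map $\sO_{U,u}\to\sO^{sh}_{U,u}$.

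You instead descend only along the field extension $k(u)\subset F$, where faithful flatness is trivial. You get surjectivity of $\sO_{U,u}\to(g_*\sO_Z)_u$ from Nakayama, and then finish with the same coherent-cokernel spreading-out step as the paper.

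What your approach buys:
\begin{itemize}
\item It is shorter and avoids henselization entirely.
\item It sidesteps a small imprecision in the paper, which identifies the closed points of the components over $\Spec\sO^{sh}_{U,u}$ with $\Spec\overline{k(u)}$. The residue field of $\sO^{sh}_{U,u}$ is actually a separable closure of $k(u)$, not an algebraic closure.
\item It proves a more general statement. Only finiteness of $f\circ i$ is used, so the \'etale hypothesis is superfluous.
\end{itemize}

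In the one application, Lemma~\ref{lem:GenPos}(iv)(b), the map $V(g_1)\to X$ is finite \'etale over $U_1$, so either version suffices there.
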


\begin{proof} We may assume that $F$ is an algebraic closure $\overline{k(u)}$ of $k(u)$. Let $\sO^{sh}_{U,u}$ be the strict henselization of the local ring $\sO_{U,u}$, giving us the canonical extension of $\bar{u}$ to a morphism  $u^{sh}:\Spec\sO^{sh}_{U,u}\to U$ and the  diagram with all squares cartesian
\[
\xymatrix{
Z\times_U\Spec \overline{k(u)}\ar[d]^{\bar{i}}\ar[r]&Z\times_U\Spec\sO^{sh}_{U,u}\ar[d]^{i^{sh}}\ar[r]&Z\ar[d]^i\\
V\times_U\Spec \overline{k(u)}\ar[d]^{\bar{f}}\ar[r]&V\times_U\Spec\sO^{sh}_{U,u}\ar[d]^{f^{sh}}\ar[r]&V\ar[d]^f\\
\Spec \overline{k(u)}\ar[r]\ar@/_20pt/[rr]_{\bar{u}}&\Spec\sO^{sh}_{U,u}\ar[r]^-{u^{sh}}&U
}
\]
Since $f$ is finite (say of degree $d$)  and \'etale, $V\times_U\Spec\sO^{sh}_{U,u}$ is isomorphic to a finite disjoint union of $d$ copies of $\Spec\sO^{sh}_{U,u}$, with $f^{sh}$ the product of $d$ identity maps
\[
V\times_U\Spec\sO^{sh}_{U,u}=\amalg_{i=1}^d\Spec\sO^{sh}_{U,u}\xrightarrow{\prod_{i=1}^d\id} \Spec\sO^{sh}_{U,u}.
\]
The map $\bar{f}$ breaks up as a disjoint union of identity maps $\Spec \overline{k(u)}\to \Spec \overline{k(u)}$ and the maps $\bar{i}$ and $i^{sh}$ similarly break up into a disjoint union of closed immersions. By assumption, the map $\bar{f}\bar{i}:Z\times_U\Spec \overline{k(u)}\to \Spec \overline{k(u)}$ is an isomorphism, so the map $\bar{i}$ maps $Z\times_U\Spec \overline{k(u)}=\Spec \overline{k(u)}$ to a single component in $V\times_U\Spec \overline{k(u)}$; as each component in $V\times_U\Spec\sO^{sh}_{U,u}$ is local with closed point $V\times_U\Spec \overline{k(u)}\cong \Spec \overline{k(u)}$, this implies that $i^{sh}:Z\times_U\Spec\sO^{sh}_{U,u}\to V\times_U\Spec\sO^{sh}_{U,u}$ is a closed immersion of $Z\times_U\Spec\sO^{sh}_{U,u}$ into a single component of $V\times_U\Spec\sO^{sh}_{U,u}$. Thus $f^{sh}\circ i^{sh}:Z\times_U\Spec\sO^{sh}_{U,u}\to \Spec\sO^{sh}_{U,u}$ is a closed immersion.

On the other hand, the map $f\circ i:Z\to U$ is a finite morphism. Let $\sK$ be the cokernel of the canonical map $\sO_U\to  (f\circ i)_*\sO_Z$. Then $f\circ i$ is a closed immersion over some open neighborhood $U_u$ of $u$ if and only if  $\sK|_{U_u}=0$.  Since $f^{sh}\circ i^{sh}$ is a closed immersion, we have $\sK\otimes_{\sO_U}\sO^{sh}_{U,u}=0$ and since $u^{sh}$ is flat, this implies that $\sK\otimes_{\sO_U}\sO_{U,u}=0$. Since $\sK$ is a coherent sheaf, there is thus an open neighborhood $U_u$ of $u$ such that $\sK|_{U_u}=0$, and thus $f\circ i$ is a closed immersion over $U_u$.
\end{proof}

For $V$ a finite dimensional $k$-vector space of dimension $n$, we have the affine scheme $\V(\Sym^*V^\vee)\cong \A^n$; we will sometimes consider $V$ as this affine space, speaking for example of a Zariski open subset of $V$ when we mean a Zariski open subset of $\V(\Sym^*V^\vee)$.

Given an admissible pair  $(X, X^*)$, let $X^*(d-1)$ be the union of the irreducible components of $X^*$ that are of dimension $d-1$ and contained in $X(d)$.

\begin{lemma}[cf. \hbox{\cite[Lemma 3]{BS98}}] \label{lem:GenPos} Let $(X, X^*)$ be an admissible pair, suppose that $X$ is connected and $d:=\dim_kX\ge 2$. Let $W\subset X\times \P^1$ be a pure dimension one closed subscheme  satisfying properties \eqref{eqn:Properties}(i.-iii.); we assume in addition that each irreducible component of $W$ has non-empty intersection with $X^*\times\P^1$.   Let $\hat{W}=p_1(W)$.  Fix a (locally closed) projective embedding  $i:X\times\P^1\to \P^N$, with corresponding very ample invertible sheaf $\sO_{X\times\P^1}(1)$ on $X\times\P^1$.
Then for  $0\ll m_1\ll m_2\ll\ldots\ll m_d$ sufficiently large, and $g_1,\ldots, g_d\in \prod_iH^0(X\times\P^1, \sI_W(m_i))$ sufficiently general, we have 
\begin{equation}\label{eqn:GenPos}
V(g_1,\ldots, g_d)\subset X_\CM\times \P^1\amalg X(d-1)^0\times\P^1.
\end{equation}
Let $H:=V(g_1)\cap X(d)^0\times \P^1$, let  $V(g_1,\ldots, g_d)_H:=V(g_1,\ldots, g_d)\cap H$ and let $E$ be the closure of $V(g_1,\ldots, g_d)_H\setminus W$ in $H$. We  have in addition:\\[5pt]
i.  $H$ and $E$ are both reduced, $H$ is a Cartier divisor on $X(d)^0\times\P^1$. Both  $V(g_1,\ldots, g_d)_H$ and  $E$ are  closed subschemes of  pure codimension $d$ in $X\times \P^1$.\\[2pt]
ii. For each $i=1,\dots, d$, $V(g_1,\ldots, g_i)$ has pure codimension $i$ on $X\times\P^1$, $g_1,\ldots, g_i$ form a regular sequence at each point of $V(g_1,\ldots, g_i)\cap X(d)^0\times \P^1$ and $V(g_1,\ldots, g_d)=W$ in a neighborhood of $W\cap X^*\times\P^1$.\\[2pt]
iii. $E\cap (X^*\times\{0,\infty\})=\0$, $E\cap W\cap  (X^*\times\P^1)=\0$ and $E$ and $W$ have no common components.\\[2pt]
iv. Let $p_1:X\times\P^1\to X$  be the first projection and let $\hat{W}=p_1(W_\red)$. Then we have
\begin{enumerate}
\item[(a)] $p_1:V(g_1)\to X$ is finite. There is an open subscheme $U_1$ of $X(d)^0\setminus \hat{W}$ that contains   each generic point of $X^*\cap X(d)^0\setminus \hat{W}$ and such that $p_1:H\to X(d)^0$ is \'etale over $U_1$.
\item[(b)]   There is an open subscheme $U_2$ of $U_1$,   containing each generic point of $X^*(d-1)$, such that,  for $i=2,\ldots, d$,  $V(g_1,\ldots, g_i)\cap U_2\times\P^1$ is dense in $V(g_1,\ldots, g_i)\cap  X(d)^0\times\P^1\setminus W)$ and  $p_1: V(g_1,\ldots, g_i)\cap (U_2\times\P^1)\to U_2$ is a closed immersion.
\end{enumerate}
v.  $E\cap (X^*\times\P^1)\subset U_2\times\P^1$.\\[2pt]
vi. $E$ is a local complete intersection in $H$ in a neighborhood of $E\cap (X^*\times\P^1)$.\\[2pt]
vii. Let $C:=V(g_1,\ldots, g_d)_H$. Then $C$ is a local complete intersection in $H$, and we have the identity of 0-cycles on $X\times\{0,\infty\}$.
\[
[|C|\cdot (X\times\{0,\infty\})]=[|W|\cdot(X\times\{0,\infty\})]+[|E|\cdot (X\times\{0,\infty\})]\\
\]
\end{lemma}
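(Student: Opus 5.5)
The plan is a Bertini-type general position argument for sections of the twisted ideal sheaves $\sI_W(m_i)$. The sections $g_1,\ldots,g_d$ are chosen one at a time, with $m_1\ll m_2\ll\cdots\ll m_d$. At each stage we use that for $m\gg0$ the sheaf $\sI_W(m)$ is globally generated, so the base locus of $H^0(X\times\P^1,\sI_W(m_i))$ is exactly $W$. Since $k$ is infinite, a general member then avoids any prescribed finite collection of generic points of strata lying outside $W$.

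\emph{Step 1: construct the sequence and verify (ii) and the containment \eqref{eqn:GenPos}.} Fix the following finite stratification of $X\times\P^1$: the non-CM locus $(X(d)\setminus X_\CM)\times\P^1$, which has codimension $\ge2$ by Remark~\ref{rem:Prelims}(3); the pieces $X(j)\times\P^1$ for $j<d$; and the products $X^*$-strata $\times\P^1$. Choosing each $g_i$ general drops the dimension of every stratum minus $W$ by one.
\begin{itemize}
\item After $d$ cuts, $V(g_1,\ldots,g_d)\setminus W$ misses the non-CM locus and all $X(j)$ with $j<d-1$. It meets $X(d-1)$ only in finitely many points of $X(d-1)^0$, as in the proof of Lemma~\ref{lem:MovingLemmaR0}. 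Together with (i) of \eqref{eqn:Properties}, this gives \eqref{eqn:GenPos}.
\item The pure codimension statement in (ii) follows the same way. On the CM locus, codimension $i$ is equivalent to being a regular sequence.
\item Near the finite set $W\cap X^*\times\P^1$, $W$ is cut out by a regular sequence of length $d$ by (ii) of \eqref{eqn:Properties}. Taking the $g_i$ general and $m_i$ large, we may arrange that $g_1,\ldots,g_d$ generate $\sI_W/\sI_W^2$ at these points. By Nakayama, $V(g_1,\ldots,g_d)=W$ near them.
\end{itemize}

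\emph{Step 2: reducedness, finiteness and the \'etale/closed immersion properties in (i) and (iv).} For finiteness of $p_1:V(g_1)\to X$, take $m_1$ large enough that $g_1$ has positive degree in the $\P^1$-direction. Then $V(g_1)$ contains no fibre $\{x\}\times\P^1$, since these are not contained in $W$ because $W$ is finite over $X$. Reducedness of $H$ follows from the Bertini Lemma~\ref{lem:Bertini} applied to the reduced scheme $X(d)^0\times\P^1$, via the embedding by $\sI_W(m_1)$ (blown up along $W$ if needed). Reducedness of $E$ is then obtained by iterating Bertini on $H$ away from $W$. The Cartier divisor property holds because $X(d)^0$ is reduced and $g_1$ is a nonzerodivisor by generality.

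For (iv)(a), note that over each generic point $\xi$ of $X^*\cap X(d)^0\setminus\hat W$ the fibre $\{\xi\}\times\P^1$ misses $W$. So the restriction of $g_1$ there is a general binary form of degree $m_1$, which is separable, and $p_1$ is \'etale over a neighbourhood $U_1$. For (iv)(b), over the generic points of $X^*(d-1)$ (where $H\to X$ is \'etale) we choose $g_2,\ldots,g_i$ so that $V(g_1,\ldots,g_i)$ meets the geometric generic fibre in a single point. Lemma~\ref{lem:ClosedImmersion} then produces $U_2$ on which $p_1$ is a closed immersion. Density follows from dimension counts.

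\emph{Step 3: (iii), (v)--(vii).}
\begin{itemize}
\item (iii): $E$ avoids the finite set $X^*\times\{0,\infty\}$ by generality. It avoids $W\cap X^*\times\P^1$ by the local equality in Step 1, and it shares no components with $W$ by construction.
\item (v): $E\cap X^*\times\P^1$ is finite and lies over generic points of the $(d-1)$-dimensional part of $X^*$. This is arranged by cutting all lower-dimensional $X^*$-strata away first, which puts these points inside $U_2\times\P^1$.
\item (vi) and the first half of (vii): these follow from (ii), since $g_2,\ldots,g_d$ form a regular sequence on the CM scheme $H$ near the relevant points.
\item The cycle identity in (vii): $|C|=|W|+|E|$ as cycles near $X\times\{0,\infty\}$, because $C=W\cup E$, the intersection $W\cap E$ avoids $X\times\{0,\infty\}$, and $C$ is generically reduced off $W$. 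Intersecting with $X\times\{0,\infty\}$ then gives the identity.
\end{itemize}

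\emph{Expected obstacle.} The main difficulty is (iv)(b) together with (v): making $p_1$ a closed immersion on $V(g_1,\ldots,g_i)$ over a neighbourhood of the generic points of $X^*(d-1)$ while keeping all the other genericity conditions. This requires the careful ordering $m_1\ll m_2\ll\cdots$ so that later sections can separate the $m_1$ sheets of $H$ over such generic points, combined with Lemma~\ref{lem:ClosedImmersion}.
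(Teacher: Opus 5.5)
Your overall strategy matches the paper's. You take general sections of $\sI_W(m_i)$ with base locus $W$, apply Bertini on the blow-up along $W$, generate $\sI_W/\sI_W^2$ at the points of $W\cap X^*\times\P^1$, use separable binary forms for (iv)(a), and use Lemma~\ref{lem:ClosedImmersion} for (iv)(b). However, two steps fail as written.

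\emph{Finiteness of $p_1:V(g_1)\to X$.} Positive degree along $\P^1$, together with the fact that no fibre lies in the base locus, only shows that a general $g_1$ does not contain a \emph{given} fibre. The fibres form a $d$-dimensional family, so this is not enough. For example, every section $t_0A(x)+t_1B(x)$ of $\sO(1,1)$ on $\P^2\times\P^1$ contains the fibre over the point $A=B=0$. What is needed is the incidence count the paper does. Choose $m_1$ so that $H^0(\sI_{\bar W}(m_1))\to H^0(\{x\}\times\P^1,\sI_W\otimes\sO(rm_1))$ is surjective for every $x$ and the target has dimension $>d$. Then the $g_1$ that contain some fibre lie in a proper closed subset.

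\emph{(iv)(b) and (v): this is the real gap.} Over $U_1$, the restriction of $H^0(\sI_W(m_2))$ to each fibre $\{u\}\times\P^1$ is surjective. So for general $g_2$, the scheme $Z=V(g_1,g_2)$ has \emph{empty} fibre over the generic point $\eta_T$ of each component $T$ of $X^*(d-1)$. Your condition ``$V(g_1,\ldots,g_i)$ meets the geometric generic fibre in a single point'' therefore cannot be imposed, and read literally it is vacuous.

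The points of $E\cap X^*\times\P^1$ are closed points lying over $p_1(Z)\cap T$, which has dimension $d-2$. They do not lie over generic points of $X^*(d-1)$. What is actually needed is that the locus $T'\subset p_1(Z)\cap T$ over which $Z$ has a fibre of degree $\ge2$ has dimension $\le d-3$.

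The paper proves this with the incidence correspondences $P'\subset P\subset U_1\times\P^M_0$. Since $H\to X$ is \'etale of degree $rm_1$ over $U_1$ and $H^0(\sI_W(m_2))$ surjects onto each fibre, the divisors meeting $H_u$ in degree $\ge2$ form a union of codimension-two linear subspaces for every $u\in U_1$. The theorem on the dimension of fibres then gives $\dim T'\le d-3$. Lemma~\ref{lem:ClosedImmersion} then gives the closed immersion over a neighbourhood of $T\setminus T'$.

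Your argument for (v), ``cut lower-dimensional $X^*$-strata away first'', cannot replace this. The set $T'$ is only known after $g_1,g_2$ are chosen, and it lies inside $p_1(V(g_1,g_2))$, so it is not in general position with respect to $V(g_1,g_2)$. If $\dim T'=d-2$, the remaining $d-2$ cuts $g_3,\ldots,g_d$ would leave points of $E$ over $T'$. Only the bound $\dim T'\le d-3$ lets general $g_3,\ldots,g_d$ miss $T'\times\P^1$, and for $d=2$ it makes $T'$ empty.

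\emph{Smaller points.}
\begin{itemize}
\item Bertini on the blow-up only controls reducedness of $V(g_1)\setminus W$. Reducedness along $W$ needs two further facts: $X(d)^0\times\P^1$ is Cohen--Macaulay near $W$ (by \eqref{eqn:Properties}(i)), so $V(g_1)$ is $S_1$ there; and $W$ contains no generic point of $V(g_1)$, since $d\ge2$.
\item In (vii) you do not need, and have not justified, that $W\cap E$ misses $X\times\{0,\infty\}$. The equality $|C|=|W|+|E|$ holds as $1$-cycles on all of $X\times\P^1$. This is because $C=W$ near $W\cap X^*\times\P^1$, which meets every component of $W$, and $C=E$ off $W$. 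The intersection with a Cartier divisor depends only on the cycle.
\end{itemize}
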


\begin{proof} This closely follows the proof of \cite[Lemma 3]{BS98}. 

It follows from property \eqref{eqn:Properties}(i) that $W\subset X_\CM\times\P^1\subset X(d)^0\times\P^1$; since each irreducible component of $W$ intersects $X^*\times\P^1$ in a finite set, $W$ contains no fibers $\{x\}\times\P^1$, $x\in X$. Also, $\hat{W}\subset X_\CM$ and $\hat{W}\cap X^*$ is finite.

 As in the proof of Lemma~\ref{lem:MovingLemmaR0}, we can assume that  $V(g_1,\ldots, g_d)\cap U=W\cap U$ for some neighborhood $U$ of $W\cap X^*\times\P^1$ in $X_\CM\times\P^1$ (depending on $g_1,\ldots, g_d$), and $V(g_1,\ldots, g_d)\cap T=\0$ for $T$ a  given closed subset of $X\times\P^1\setminus U$ of dimension $\le d-1$.  As 
 \[
 \dim_k(X\setminus (X_\CM\times\P^1\amalg X(d-1)^0\times\P^1))\le d-1
 \]
 this  proves \eqref{eqn:GenPos}.

Let $\{x_1,\ldots, x_t\}=W_\red\cap X^*\times\P^1$. At each $x_i$,   $\sI_{W,x_i}/\sI_{W,x_i}^2$ is a $k(x_i)$-vector space of dimension $d$. 

For (i), consider the blow-up $\mu:(X\times\P^1)_W\to X\times\P^1$ with exceptional (Cartier) divisor $D$. The invertible sheaf $\sO_{(X\times\P^1)_W}(-D)$ is relatively very ample, so  $\mu^*\sO_{X\times\P^1}(m_1)(-D)$ is very ample on $(X\times\P^1)_W$ for $m_1\gg0$. We take a corresponding locally closed embedding $(X\times\P^1)_W\subset \P^M$ with $\sO_{(X\times\P^1)_W}(1)=\mu^*\sO_{X\times\P^1}(m_1)(-D)$.

Moreover, since $W$ is a complete intersection in a neighborhood of each $x_i$, the map $D\to W$ is a $\P^{d-1}$-bundle over a neighborhood of each $x_i$, with fibers embedded as linear subspaces of $\P^M$. Letting $D_{x_i}\cong \P^{d-1}$ denote the fiber over $x_i$, we have
\[
\mu^*\sO_{X\times\P^1}(m_1)(-D)\otimes \sO_{D_{x_i}}\cong \sO_{\P^{d-1}}(1)
\]
and the restriction map 
\[
H^0(\P^M, \sO_{\P^M}(1))\to H^0(D_{x_i}, \sO_{D_{x_i}}(1))
\]
is surjective (we trivialize $\sO_{X\times\P^1}(m_1)$ in a neighborhood of $x_i$ to avoid writing the twist). Identifying $H^0(D_{x_i}, \sO_{D_{x_i}}(1))$ with $\sI_{W,x_i}/\sI_{W,x_i}^2\otimes_{\sO_W}k(x_i)$,  and recalling that 
\[
\mu_*(\mu^*\sO_{X\times\P^1}(m_1)(-D))=\sI_W(m_1)
\]
we see that there is a dense open subset $\tilde{V}$ of $H^0(\P^M, \sO_{\P^M}(1))$ such that for $y\in\tilde{V}(k)$, the  corresponding global section $g_y$ of  $\sI_W(m_1)$ restricts to a non-zero element of $\sI_{W,x_i}/\sI_{W,x_i}^2\otimes_{\sO_W}k(x_i)$ for each $i=1,\ldots, r$.

In what follows, for $s_1,\ldots, s_j\in H^0(\P^N, \sO_{\P^N}(m))$, we write $V(s_1,\ldots, s_j)$ for the closed subscheme of $X\times\P^1$ defined by the restrictions of the $s_i$; similarly, for sections  $s_1,\ldots, s_j\in H^0(\P^M, \sO_{\P^M}(r))$, we have $V(s_1,\ldots, s_j)\subset (X\times\P^1)_W$.

 Note that $(X(d)^0\times\P^1)_W\subset (X\times\P^1)_W$ is reduced (see \cite[\href{https://stacks.math.columbia.edu/tag/0808}{Tag 0808}]{stacks-project}).    By the Bertini theorem (Lemma~\ref{lem:Bertini}), there is  a dense open subset of $H^0(\P^M, \sO_{\P^M}(1))$ whose $k$-points $y$ give sections $\tilde{g}_y$ of $(\mu^*\sO_{X\times\P^1}(m_1))(-D)$ such that $V(\tilde{g}_y)\cap (X(d)^0\times\P^1)_W$ is reduced. By the above, we may take $y$ so that the corresponding global section $g_1:=g_y$ of  $\sI_W(m_1)$ also restricts to a non-zero element of $\sI_{W,x_i}/\sI_{W,x_i}^2\otimes_{\sO_W}k(x_i)$ for each $x_i$. By our assumption on $W$, we see that $X(d)^0\times\P^1$ is Cohen-Macaulay in a neighborhood of each $x_i$, and hence by unmixedness, $V(g_1)\cap X(d)^0\times\P^1$ is reduced outside of a closed subset $W_0$ of $W$ contained in $W\setminus X^*\times\P^1$. Since $W$ has dimension one,  and each irreducible component of $W$ intersects $X^*\times\P^1$, $W_0$ is a finite set of points of $(X(d)^0\setminus X^*)\times\P^1\subset X_\reg(d)\times\P^1$. Thus, for each $x\in W_0$,   $\sO_{X\times\P^1,x}$ is a regular local ring of dimension $d+1$, hence Cohen-Macaulay of dimension $>2$, so $V(g_1)$ is reduced at $x\in W_0$ as well. Thus taking $H=V(g_1)\cap X(d)^0\times\P^1$, $H$ is thus a reduced Cartier divisor on $X(d)^0\times\P^1$.
 
We have $(W\cap X^*\times\P^1)\cup (X\setminus X^*)\times\P^1\subset X_\CM\times\P^1$ As mentioned in Remarks~\ref{rem:Prelims}, $X_\CM$ is an open subset of $X(d)^0$ and of $X$, and  the complement of $X_\CM$ in $X(d)$ has codimension at least two. 

Let $H_\CM$ be the Cohen-Macaulay locus in $H$. Since $g_1$ is a non-zero divisor on $X(d)^0\times\P^1$, we have
\[
H_\CM\supset  H\cap X_\CM\times\P^1\supset (W\cap X^*\times\P^1)\cup (H\cap (X\setminus X^*)\times\P^1).
\]

Let  $\overline{X\times\P^1}$ and $\bar{W}\subset \overline{X\times\P^1}$  be the closures of $X\times\P^1$ and $W$ in $\P^N$, and let $\sI_{\bar{W}}\subset \sO_{\overline{X\times\P^1}}$ be the ideal sheaf.  It follows by an easy induction that for $m_1\ll m_2\ll\ldots\ll m_d$, there is a dense open subset 
\[
U\subset \prod_{j=1}^dH^0(\overline{X\times\P^1}, \sI_{\bar{W}}(m_1))\times\ldots\times H^0(\overline{X\times\P^1}, \sI_{\bar{W}}(m_d))
\]
such that, for $(g_1,\ldots, g_d)\in U(k)$, we have:\\[5pt]
a. \eqref{eqn:GenPos} holds.\\[2pt]
b. For each $i=1,\ldots, d$,  $g_1,\ldots, g_i$ restricts to a regular sequence on $X(d)^0\times\P^1$, $V(g_1,\ldots, g_i)\cap X(d)^0\times\P^1$ is a reduced closed subscheme of $X(d)^0\times\P^1$, of pure codimension $i$, and $V(g_1,\ldots, g_i)\cap(X_\CM \times\P^1)$ is Cohen-Macaulay.  \\[2pt]
c.  $g_1,\ldots, g_d$ generate $\sI_{W, x}$ at each point $x\in W\cap X^*\times\P^1$.\\[2pt]
\\[2pt]
By \eqref{eqn:GenPos} the closed subscheme $V(g_1,\ldots, g_d)$ of $X\times\P^1$ is a disjoint union of two closed subschemes $V(g_1,\ldots, g_d)_H\subset X_\CM\times\P^1$ and $V(g_1,\ldots, g_d)\cap (X(d-1)^0\times\P^1)$, hence   $V(g_1,\ldots, g_d)_H$ is also closed in $X\times\P^1$. Since $E$ is closed in $V(g_1,\ldots, g_d)_H$, this implies that $E$ is also closed in $X\times\P^1$. Together with (a)-(c), this  implies (i) and (ii). 

In addition, by (c), there is an open neighborhood $V$ of $W\cap X^*\times\P^1$ in $X\times\P^1$ such that $V(g_1,\ldots, g_d)\cap V=W\cap V$. This shows that $E\cap W\cap X^*\times\P^1=\0$. If we now consider $X^*\times\{0,\infty\}$, since $W\cap X^*\times\{0,\infty\}=\0$, we may assume that 
\[
\codim_{X^*\times\{0,\infty\}}H\cap X^*\times\{0,\infty\}\ge 1
\]
and then by induction on $i$, that 
\begin{equation}\label{eqn:CodimXsing}
\vbox{
$\codim_{X^*\times\{0,\infty\}}V(g_1,\ldots, g_i)\cap X^*\times\{0,\infty\}\ge i$ for all $i=1,\ldots, d$.
}
\end{equation}
 Taking $i=d$, and noting that $\dim X^*\le d-1$, we see that $V(g_1,\ldots, g_d)\cap X^*\times\{0,\infty\}=\0$. Since $E\cap W\cap X^*=\0$ and each component of $W$ intersects $X^*\times\P^1$, we see that $E$ and $W$ have no common components,  and we have proven (iii).

For (iv)(a), consider the curves $\{x\}\times\P^1\subset \overline{X\times\P^1}$, $x\in X$. Since $X$ is connected, there is an integer $r\ge1$ such that, for all $x\in X$,  $\sO_{\overline{X\times\P^1}}(1)$ restricted to $\{x\}\times\P^1=\P^1_x$ is isomorphic to $\sO_{\P^1_x}(r)$, that is, the curves $\{x\}\times\P^1$ are all curves of degree $r$ in $\P^N$. 

There is an integer $m_0>0$ such that  for all $x\in X$, and all $m\ge m_0$ the restriction map
\[
H^0(\overline{X\times\P^1}, \sI_{\bar{W}}(m))\to H^0(\{x\}\times\P^1, \sI_W\otimes_{\sO_{X\times\P^1}}\sO_{\{x\}\times\P^1}(rm))
\]
is  surjective, and $H^1(\{x\}\times\P^1, \sI_W\otimes_{\sO_{X\times\P^1}}\sO_{\{x\}\times\P^1}(rm))=0$. Choosing a section $s\in H^0(\{x\}\times\P^1, \sO_{\{x\}\times\P^1}(r))$ with $V(s)\cap W=\0$, we have the exact sequence
\[
0\to \sI_W\otimes_{\sO_{X\times\P^1}}\sO_{\{x\}\times\P^1}(rm)\xrightarrow{\times s}
\sI_W\otimes_{\sO_{X\times\P^1}}\sO_{\{x\}\times\P^1}(r(m+1))\to 
\sO_{V(s)}\to 0, 
\]
giving 
\begin{multline*}
\dim_kH^0(\{x\}\times\P^1, \sI_W\otimes_{X\times\P^1}\sO_{\{x\}\times\P^1}(rm))\\=
H^0(\{x\}\times\P^1, \sI_W\otimes_{\sO_{X\times\P^1}}\sO_{\{x\}\times\P^1}(rm_0))+(m-m_0)r
\end{multline*}
for all $m\ge m_0$. In particular, for all integers $m_1> m_0+\dim_kX/r$, we have
\[
\dim_kH^0(\{x\}\times\P^1, \sI_W\otimes_{X\times\P^1}\sO_{\{x\}\times\P^1}(rm_1))>\dim_kX
\]
We have the constructible subset
\[
F_0^0:=\{g\in H^0(\overline{X\times\P^1}, \sI_{\bar{W}}(m_1))\setminus\{0\}\mid \exists x\in X\text{ with } V(g)\supset \{x\}\times\P^1\}
\]
of $H^0(\overline{X\times\P^1}, \sI_{\bar{W}}(m_1))$. By the theorem on the dimension of fibers \cite[\S 6.3, Theorem 1.25]{Shafarevich}, the closure $F_0$ of $F^0_0$
has codimension at least one in $H^0(\overline{X\times\P^1}, \sI_{\bar{W}}(m_1))$. This gives us the dense open subset
\[
V_0= H^0(\overline{X\times\P^1}, \sI_{\bar{W}}(m_1))\setminus F_0\subset H^0(\overline{X\times\P^1}, \sI_{\bar{W}}(m_1)), 
\]
such that,  for all $g\in V_0(k)$, the projection $p_1:V(g)\to X$ is quasi-finite and proper, hence finite. This proves the first assertion in (iv)(a).

For the rest of (iv)(a),  if we take $x\in X\setminus \hat{W}$, then $\sI_W\otimes\sO_{\{x\}\times\P^1}(m_1)=\sO_{\{x\}\times\P^1}(m_1)$,  and the restriction map
\[
H^0(\overline{X\times\P^1}, \sI_{\bar{W}}(m_1))\to H^0(\{x\}\times \P^1, \sO_{\{x\}\times\P^1}(rm_1))
\] 
is surjective. For each irreducible component $X^*_i$ of $X^*\cap X(d)^0\setminus \hat{W}$, choose a closed point $x_i\in X^*_i\setminus W$, not lying in $X^*_j$ for each $j\neq i$. Increasing $m_1$ if necessary, the restriction map
\[
H^0(\overline{X\times\P^1}, \sI_{\bar{W}}(m_1))\to \oplus_i H^0(\{x_i\}\times \P^1, \sO_{\P^1_x}(rm_1))
\] 
is still surjective.

Let $X_0, X_1$ be  the standard coordinates on $\P^1$, identifying $H^0(\P^1_x, \sO_{\P^1_x}(rm_1))$ with homogeneous polynomials of degree $m_1r$ in $X_0, X_1$.  Then the subset of $H^0(\P^1_x, \sO_{\P^1_x}(rm_1))$ consisting of polynomials $f(X_0, X_1)$ with $m_1r$ distinct roots (in a suitable algebraic closure) is open and dense. Choose for each irreducible component $X^*_j$ of $X^*\cap X(d)^0\setminus \hat{W}$ a closed point $y_j\in X_j^*\cap X(d)^0\setminus \hat{W}$.  Then there is a dense open subset $V$ of $H^0(\overline{X\times\P^1}, \sI_{\bar{W}}(m_1))$ such that, for each $g_1\in V(k)$, $V(g_1)\cap X(d)^0\times\P^1$ is reduced  Cartier divisor on $X(d)^0\times\P^1$, $p_1:V(g_1)\to X$ is finite and the restriction of $g_1$ to $\{y_j\}\times\P^1$ has $m_1r$ distinct roots for each $y_j$. This implies that $p_1:V(g_1)\to X$ is \'etale over an open subscheme $U_1$ of $X(d)^0$ containing each $y_j$, hence containing each generic point of $X^*\cap X(d)^0\setminus \hat{W}$.  Since $H=V(g_1)\cap X(d)^0\times\P^1$, this completes the proof of (iv)(a).

For (iv)(b), we may remove from $X$ any closed subset $T$ such that $T\cap W\cap X^*=\0$ and $T$ contains no component of $X^*(d-1)$. In particular, we   may assume that $X=X(d)=X(d)^0$ and $X^*=X^*(d-1)$. Since $d\ge2$, $X^*$ has no zero-dimensional components, and no generic point of $X^*$ is in $W$. We may further assume that $X=X_\CM$ and $X\setminus W=U_1$. Taking $(g_1,\ldots, g_d)$ as above, we have $H=V(g_1)$,   $H\subset X\times\P^1$ is finite over $X$, and  $H$ is \'etale over $U_1=X\setminus W$.

It suffices to prove the case $i=2$. Indeed, assuming the case $i=2$,  and given $i$, $2<i\le d$, $V(g_1,\ldots, g_i)\cap U_2\times \P^1$ is a closed subscheme of $V(g_1, g_2)\cap U_2\times\P^1$, so $p_1:V(g_1,\ldots, g_i)\cap U_2\times \P^1\to U_2$ is still a closed immersion. As the base-locus of $H^0(\overline{X\times\P^1}, \sI_{\bar{W}}(m_j))$ is $\bar{W}$ (for all $m_j$ sufficiently large),  we need only take $g_3,\ldots, g_i$ sufficiently general so that   $V(g_1,\ldots, g_i)\cap X\times \P^1\setminus W$ intersects each component of $(X\setminus U_2)\times\P^1\setminus W$ properly, and (iv)(b) is satisfied. 

For the case $i=2$, take $m_2\gg m_1$, chosen so that the restriction map  
\begin{equation}\label{eqn:ResSurjectivity}
H^0(\overline{X\times\P^1}, \sI_W(m_2))\to H^0(\{u\}\times\P^1, \sO_{\{u\}\times\P^1}(m_2r))
\end{equation}
is surjective for all $u\in U_1$. 

Let $M=\dim_kH^0(\overline{X\times\P^1}, \sI_{\bar{W}}(m_2))-1$, giving us the corresponding linear system $\P^M$ of divisors  on $\overline{X\times\P^1}$. Inside $\P^M$ we have the dense open subset $\P^M_0$ consisting of points $[D]$ such that the corresponding divisor $D$  does not contain any fiber $\{x\}\times\P^1$, $x\in X$. Let $I\subset \overline{X\times\P^1}\times \P^M$ be the incidence correspondence, and let $I'=I\cap (U_1\times \P^1\cap H)\times\P^M_0\subset U_1\times\P^1\times\P^M_0$. Since $H\to X(d)^0$ is finite, the map $p_{13}:I'\to U_1\times\P^M_0$ is finite, giving us the reduced closed subscheme $P:=p_{13}(I')\subset U_1\times\P^M_0$, with geometric points
\[
\{(u, [D])\in U_1\times \P^M_0\mid D\cap H\cap \{u\}\times\P^1\neq\0\}.
\]
Let $P'\subset P$ be the subset  with geometric points
\[
\{(u, [D])\mid \deg_{k(u)}(D\cap H\cap \{u\}\times\P^1)\ge 2\};
\]
the upper-semi-continuity of dimension for the proper pushforward of coherent sheaves \cite[Theorem 12.8]{Hartshorne} implies that $P'$ is a closed subset of $P$. We consider $P'$ as a reduced closed subscheme of $P$. For  $T\subset U_1$ a locally closed subscheme, let $P(T)=(P\cap T\times\P^M_0)_\red$, $P'(T)=(P'\cap T\times\P^M_0)_\red$. Since $H\to X(d)^0$ is finite and \'etale of degree $m_1r$ over $U_1$, the surjectivity of \eqref{eqn:ResSurjectivity} tell us that, for each $u\in U_1$, $P(u)$ is the union of $m_1r$ distinct codimension one linear subspaces of $\P^M_{\overline{k(u)}}$, intersected with $\P^M_{0, k(u)}$, and $P'(u)\subset P(u)$ is a similar union of codimension two linear subspaces of $\P^M_{\overline{k(u)}}$, intersected with $\P^M_{0, k(u)}$. Thus $\dim_{k(u)}P(u)=M-1$ or $P(u)=\0$, and $\dim_{k(u)}P'(u)=M-2$ or $P'(u)=\0$. 

We apply the theorem on the dimension of fibers  \cite[\S 6.3, Theorem 1.25]{Shafarevich} to the projection to $U_1$, which tells us that, for $T\subset U_1$ an closed subset of pure dimension $e$,  $P(T)$ has dimension $e+M-1$ (or is empty), and  $P'(T)$ has dimension $e+M-2$ (or is empty). Taking the projection to $\P^M_0$, this implies there is a dense open subscheme $V_T\subset \P^M_0$ such that the projection $P(T)\to \P^M_0$ has fiber dimension  $e-1$ over $V_T$ (or has all fibers  over $V_T$ empty), and $P'(T)\to \P^M_0$ has fiber dimension  $e-2$ over $V_T$ (or has all fibers  over $V_T$ empty). 

We apply this to $T$ an irreducible component of $X^*\cap U_1$.  We choose $g_2\in H^0(\overline{X\times\P^1}, \sI_{\bar{W}}(m_2))\setminus\{0\}$ so that  $[V(g_2)]\in \P^M$ is in $V_T\subset \P^M_0$. Let $Z=V(g_1, g_2)\subset X\times\P^1$, and let $\bar{Z}$ be the image of $Z$ in $X$. Suppose $\bar{Z}\cap T$ is not empty. Since $T$ has dimension $d-1$,  it follows that $Z \cap T\times\P^1$ and $\bar{Z}\cap T$ both have pure dimension $d-2$, and there is a  closed subset $T'$ of $\bar{Z}\cap T$ of dimension $\le d-3$ such that $\bar{u}\times\P^1\cap Z$ is a single point  for each geometric point $\bar{u}\to \bar{Z}\setminus T'$. Since $V(g_1)\to X$ is \'etale and finite over $U_1$, we map apply Lemma~\ref{lem:ClosedImmersion} to find an open subset $U_T$ of $U_1$ with $U_T\cap T=T\setminus T'$, such that $V(g_1, g_2)\to X$ is a closed immersion over $U_T$. Taking $U_2$ to be the union of the $U_T$ over the irreducible components of $X^*\cap U_1$ proves (iv)(b).
 
 For (v), let $F=(X^*\setminus \hat{W})\setminus U_1$, so by (iv)(a), $F$ contains no generic point of $X^*$, in particular each component of $F$ has dimension $\le d-2$. Letting $F_1=F\cup (X^*\cap \hat{W})$,    $F_1\times\P^1$ contains $W\cap X^*\times\P^1$ and each component of $F_1$ has dimension $\le d-2$. Taking the $m_i$ sufficiently large, we may assume that the base-locus of the linear system associated to $H^0(\overline{X\times\P^1}, \sI_{\bar{W}}(m_i))$ is exactly $\bar{W}$, so by taking $g_1,\ldots, g_d$ sufficiently general, we have that
 \[
 \codim_{F_1\times \P^1\setminus W}(V(g_1,\ldots, g_i)\cap (F_1\times \P^1\setminus W))\ge i
 \]
 for $i=1,\ldots, d$, hence
 \[
 (V(g_1,\ldots, g_d)\setminus W)\cap F_1\times \P^1=\0.
 \]
 As $V(g_1,\ldots, g_d)\setminus W=E\setminus E\cap W$ and  $E\cap W\cap X^*\times\P^1=\0$ by (iii), we see that
 \[
 E\cap X^*\times\P^1\subset U_1\times\P^1,
 \]
hence $p_1:H\to X$ is \'etale at each point of $E\cap X^*\times\P^1$. 

Similarly, let $F_2=X^*\cap X(d)^0\setminus U_2$. Then since $U_2 \cap X^*(d-1)$ is dense in $X^*(d-1)$, we see that each component of $F_2$ has dimension $\le d-2$, and thus $E\cap X^*\times\P^1\subset U_2\times\P^1$ for all $g_1,\ldots, g_d$ sufficiently general.

The assertion (vi) follows from (ii), (iii) and the fact that $V(g_1,\ldots, g_d)_H=W\amalg E$ in a neighborhood of $X^*\times \P^1$. To show (vii), we have $V(g_1,\ldots, g_d)_H=W\cup E$ (up to possible 0-dimensional embedded components) and $W$ and $E$ have no common irreducible components, which
 implies the identity of associated cycles
\[
|C|=|W|+|E|.
\]
Since the intersection 0-cycle of $C$, resp. $W$, resp. $E$ with the Cartier divisor $X\times0-X\times\infty$ on $X\times\P^1$ only depends on $|C|$, resp. $|W|$, resp. $|E|$, (vii) follows.
\end{proof}

\begin{proof}[Proof of \hbox{Theorem~\ref{thm:Relations}}] We may assume that $X$ is connected, and by Remark~\ref{rem:CaseDim1}, we may assume that $X$ has dimension $d\ge2$.

Let $W\subset X\times\P^1$ be one of the generating curves for $\tilde{R}^d(X,X^*)$. Let $W''$ be the union of the irreducible components of $W$ that have empty intersection with $X_\sing\times\P^1$, and $W'$ those with non-empty intersection. Let $\hat{W}''=p_1(W'')$, and let
\[
t:=\Nm_{W''/\hat{W}''}(X_1/X_0)
\]
where $X_0, X_1$ are the standard homogeneous coordinates on $\P^1$, with $\Div(X_1/X_0)=0-\infty$. Standard arguments (see, e.g., \cite[Proposition 1.4]{Fulton}) show that
\[
\Div(t)=p_{1*}(|W''|\cdot (X\times\{0\}-X\times\{\infty\}))
\]
and since $\hat{W}''\cap X^*=\0$, we see that 
$p_{1*}(|W''|\cdot (X\times\{0\}-X\times\{\infty\}))$ is in $R^d(X,X^*)$. Thus we may assume from the start that each irreducible component of $W$ intersects $X^*\times\P^1$, putting us in the setting covered by Lemma~\ref{lem:GenPos}; We retain the notations used in the proof of that lemma.

Take $0\ll m_1\ll\ldots\ll m_d$ and $g_i\in H^0(X\times\P^1, \sI_W(m_i))$, to satisfy the conclusions of Lemma~\ref{lem:GenPos}; in particular, we have the reduced closed curve $E\subset X\times\P^1$, supported in $X_\CM\times\P^1$, with $V(g_1,\ldots, g_d)_H=E\cup W$ (up to 0-dimensional embedded components). Since the linear systems defined by $H^0(X\times\P^1, \sI_W(m_i))$
are base-point free on $X\times\P^1\setminus W$, and $W\subset X_\CM\times\P^1$,  we may assume in addition that\\[5pt]
a. $V(g_1,\ldots, g_{d-1})\cap X\times\{0,\infty\}$ is a disjoint union of  closed subschemes
\[
V(g_1,\ldots, g_{d-1})\cap X\times\{0,\infty\}=E_0\amalg E_\infty\amalg E^*
\]
with $E_0$ a local complete intersection supported in $X_\CM\times\{0\}$, $E_\infty$ a local complete intersection supported in $X_\CM\times\{\infty\}$ and $E^*$ supported in a finite set contained in $X(d-1)^0\times \{0,\infty\}$.   \\[2pt]
b. $V(g_1,\ldots, g_{d-1})\cap X^*\times\{0,\infty\}$ is a finite set of points\\[5pt]
Taking $g_d'$ to be a sufficiently general element of $H^0(X\times\P^1, \sO_{X\times\P^1}(m_d))$, by the same arguments we may assume that\\[5pt]
c. $V(g_1,\ldots, g_{d-1}, g_d')$ is a disjoint union of closed subschemes 
\[
V(g_1,\ldots, g_{d-1}, g_d')=V(g_1,\ldots, g_{d-1}, g_d')_\CM\amalg V(g_1,\ldots, g_{d-1}, g_d')_{d-1}
\]
with $V(g_1,\ldots, g_{d-1}, g_d')_\CM$ a local complete intersection supported in $X_\CM\times\P^1$ and $V(g_1,\ldots, g_{d-1}, g_d')_{d-1}$ supported in a finite subset of $X(d-1)^0\times\P^1$.\\[2pt]
d. $V(g_1,\ldots, g_{d-1}, g_d')_\CM\cap X^*\times\P^1$ is a finite set of points.\\[2pt]
e. $V(g_1,\ldots, g_{d-1}, g_d')_\CM\cap X\times\{0,\infty\}$ is supported in $(X\setminus X^*)\times\{0,\infty\}$ and is finite over $k$.\\[5pt]
In addition:\\[5pt]
f. If $d=2$,  then $p_1:V(g_1,\ g_2')\to X(2)^0$ is a closed immersion over some open neighborhood of $p_1(V(g_1, g_2'))\cap X^*\cap X(2)^0$. \\[5pt]
The proof of (f) follows from the same argument used to show that $p_1:V(g_1, g_2)\to X$ is a closed immersion over the open subset $U_2$ given in Lemma~\ref{lem:GenPos}, where the only difference is that $g_2$ was a sufficiently general section of $H^0(X\times\P^1, \sI_W(m_2))$ rather than one of $H^0(X\times\P^1, \sO_{X\times\P^1}(m_2))$.  

As is standard, for a finite morphism $t:C\to \P^1$, we set $\Div_0(t)=t^*(0)$, $\Div_\infty(t)=t^*(\infty)$ and $\Div(t)=\Div_0(t)-\Div_\infty(t)$. 

Let $E':=V(g_1,\ldots, g_{d-1}, g_d')_\CM$. Let $\hat{E}'=p_1(E')$ and let $f'=\Nm_{E'/\hat{E}'}(X_1/X_0)$. 

By Lemma~\ref{lem:GenPos},  $p_1:E'\to \hat{E}'$ is an isomorphism over a neighborhood of $\hat{E}'\cap X^*$, so it follows from (a)-(f)  that $\hat{E}'\cap X^*$ is a finite set of points, all contained in $X_\CM$, $\hat{E}'$ is a complete intersection  on $X$ in a neighborhood of $\hat{E}'\cap X^*$, and $f'$ is in $\sO_{\hat{E}': \hat{E}'\cap X^*}^\times$. Thus  $\hat{E}'$ is a Cartier curve on $X$ relative to $X^*$, and we have the element $i_{\hat{E}'*}(\Div f')\in R^d(X, X^*)$.

The curve $E_0$ is supported in $X(d)=X(d)\times\{0\}$, satisfies the conditions of Definition~\ref{def:CartierCurve}, and has empty intersection with $X(\le d-1)$, hence $E_0$ is a Cartier curve on $X$ relative to $X^*$. Let $f_0$ be the restriction of $g_d/g_d'$ to $E_0$. Then $f_0$ is in $\sO_{E_0: E_0\cap X^*}^\times$, giving the element $i_{E_0*}(\Div f_0)\in R^d(X, X^*)$.
Similarly,  $E_\infty$ is a Cartier on $X$ relative to $X^*$, and  letting $f_\infty$ be the restriction of $g_d/g_d'$ to $E_\infty$, $f_\infty$ is in $\sO_{E_\infty: E_\infty\cap X^*}^\times$ and we have the element $i_{E_\infty*}(\Div f_\infty)\in R^d(X, X^*)$.

Let $\hat{E}=p_1(E)\subset X_\CM$ and let $f=\Nm_{E/\hat{E}}(X_1/X_0)$. By Lemma~\ref{lem:GenPos} again,  $\hat{E}$ is a Cartier curve on $X$ relative to $X^*$, and $f$ is in $\sO_{\hat{E}: \hat{E}\cap X^*}^\times$, so we have the element $i_{\hat{E}*}(\Div f)\in R^d(X, X^*)$.

Finally, we have
\begin{align*}
&(|W|+|E|)\cdot X\times\{0\}=i_{E_0*}(\Div_0(f_0))\\
&(|W|+|E|)\cdot X\times\{\infty\}=i_{E_\infty*}(\Div_0(f_\infty))\\
&|E'|\cdot X\times\{0\}=i_{E_0*}(\Div_\infty(f_0))\\
&|E'|\cdot X\times\{\infty\}=i_{E_\infty*}(\Div_\infty(f_\infty))\\
&i_{\hat{E}'*}(\Div f')=p_{1*}(|E'|\cdot (X\times\{0\}-X\times\{\infty\}))\\
&i_{\hat{E}*}(\Div f)=p_{1*}(|E|\cdot (X\times\{0\}-X\times\{\infty\})).
\end{align*}
Thus
\[
p_{1*}(W\cdot (X\times\{0\}-X\times\{\infty\}))= i_{E_0*}(\Div f_0)+i_{\hat{E}'*}(\Div f')-i_{E_\infty*}(\Div f_\infty)-i_{\hat{E}*}(\Div f),
\]
showing that $p_{1*}(|W|\cdot (X\times\{0\}-X\times\{\infty\}))$ is in $R^d(X, X^*)$, as desired.
\end{proof}

\section{Limited functoriality}\label{sec:Functoriality} 

\begin{lemma}\label{lem:EtalePullback} Let $(X, X^*)$ and $(Y, Y^*)$ be admissible pairs with $\dim_kX=\dim_kY=d$, and let $f:Y\to X$ be a morphism.  Suppose\\[5pt]
i.  $f^{-1}(X^*)\supset Y^*$ \\[2pt]
ii. $f$ is flat over $X\setminus X^*$\\[2pt]
iii. There is a closed subset $S\subset X^*\cap X(d)$ with $\codim_{X(d)}S\ge2$ and an open neighborhood $U$ of $f(Y^*)\cap X(d)^0\setminus S$ in $X(d)^0$  such that $f$ is flat over $U$.\\[5pt]
 Then
\begin{enumerate}
\item $f^{-1}(X\setminus X^*)\subset Y\setminus Y^*$.
\item For $z\in Z^d(X\setminus X^*)$, the pullback $f^*(z)$ is a well-defined 0-cycle on $Y$, supported in $Y\setminus Y^*$, giving the group homomorphism
\[
f^*:Z^d(X\setminus X^*)\to Z^d(Y\setminus Y^*)
\]
\item The homomorphism $f^*$ of (2) descends to a homomorphism
\[
f^*:\CH^d(X, X^*)\to \CH^d(Y, Y^*)
\]
\item Given a second admissible pair $(Z, Z^*)$, a morphism $g:Z\to Y$  and a closed subset $S'\subset Y(d)\cap Y^*$, satisfying (i)-(iii), there is a closed subset $S''$ of $X(d)\cap X^*$ such that  $fg:Z\to X$ and $S''$ satisfy (i)-(iii), hence $(fg)^*: \CH^d(X, X^*)\to \CH^d(Z, Z^*)$  is defined. Moreover,  we have
\[
(fg)^*=g^*f^*:\CH^d(X, X^*)\to \CH^d(Z, Z^*)
\]
\end{enumerate}
\end{lemma}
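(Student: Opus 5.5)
Items (1) and (2) are formal, while (3) needs the moving lemma and is the real content; (4) then follows by transitivity of flat pullback.

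For (1), take $y\in f^{-1}(X\setminus X^*)$. By (i), $y\notin f^{-1}(X^*)\supset Y^*$, so $y\in Y\setminus Y^*$.

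For (2), take a closed point $x\in X\setminus X^*$. Since $f$ is flat over $X\setminus X^*$, the scheme-theoretic fiber $f^{-1}(x)$ is a closed subscheme of $Y$. By (1) it is supported in $Y\setminus Y^*$, which is regular of pure dimension $d$. Flatness of $\sO_{X,x}\to\sO_{Y,y}$ with $\dim\sO_{X,x}=d=\dim\sO_{Y,y}$ for $y\in Y\setminus Y^*$ forces the fiber dimension to be $0$. Hence $f^{-1}(x)$ is a finite scheme, and we set $f^*(x):=|f^{-1}(x)|\in Z^d(Y\setminus Y^*)$. This is the usual flat pullback of cycles, extended additively.

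For (3), by Lemma~\ref{lem:Mov1} we have $R^d(X,X^*)=R^d(X,X^*)_{S}$. We may enlarge $S$ to $S\cup(X(d)\setminus X_\CM)$, still of codimension $\ge2$, and apply Lemma~\ref{lem:MovingLemmaR0} (or Proposition~\ref{prop:CMMoving} combined with Lemma~\ref{lem:Mov1}). So it suffices to treat $i_{C*}(\Div h)$ with $C$ an $S$-strict Cartier curve relative to $X^*$, supported in $X_\CM\subset X(d)^0$, and $h\in\sO_{C:C\cap X^*}^\times$. Set $D:=f^{-1}(C)=C\times_XY$. I would show that $D$ is a Cartier curve on $Y$ relative to $Y^*$. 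The points of $D\cap Y^*$ map to $C\cap f(Y^*)\subset U$, since $C\cap S=\emptyset$ and $C\subset X(d)^0$. Over $U$ and over $X\setminus X^*$ the map $f$ is flat, so $f$ is flat in a neighborhood of all of $C\cap X^*$ and of $C\setminus X^*$. A regular sequence generating $\sI_C$ at $x$ therefore pulls back to a regular sequence generating $\sI_D$ at each $y\in f^{-1}(x)$ (flat base change preserves regularity), and $D$ has pure dimension one there by the dimension formula for flat maps. Likewise $D\cap Y^*\subset f^{-1}(C\cap X^*)$ is finite, because $f$ is quasi-finite over the flat locus near points of $C$, by equal dimensions. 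Then $h$ pulls back to $f^*h\in\sO_{D:D\cap Y^*}^\times$. The key identity is
\[
f^*(i_{C*}\Div h)=i_{D*}\Div(f^*h),
\]
which is the standard compatibility of flat pullback with divisors of functions (\cite[Prop.~1.7]{Fulton}), applied to the flat finite map $D\to C$ near the support of $\Div h$ and near $C\cap X^*$. The main obstacle is making this rigorous where $f$ is only flat on $U\cup(X\setminus X^*)$. I would handle it by noting that $C\subset U\cup(X\setminus X^*)$ after the moving step, so $D\to C$ is flat everywhere. Both sides of the identity are then computed componentwise via lengths, as in Fulton's proof, including the multiplicities $m_j$ at non-reduced generic points.

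For (4), set $S'':=S\cup f(S')$, or rather the closure of $S\cup f(S')$ intersected with $X^*\cap X(d)$; we need $\codim_{X(d)}\ge2$, which holds since flatness over the relevant loci preserves dimension, at least after shrinking. Then check (i)--(iii) for $fg$ by composing flat loci: $fg$ is flat over $U\cap\{x: f^{-1}(x)\subset U'\}$, and this open set contains $fg(Z^*)\setminus S''$. Compatibility $(fg)^*=g^*f^*$ already holds on $Z^d(X\setminus X^*)$ by transitivity of flat pullback of cycles (\cite[Thm.~1.7]{Fulton}, functoriality of fiber products), so it descends.
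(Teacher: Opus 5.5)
Your arguments for (1) and (2) are fine, but (3) has a genuine gap. You assert that after moving, $f$ is flat near all of $C\cap X^*$, i.e.\ that $C\subset U\cup(X\setminus X^*)$, and you conclude that $D=f^{-1}(C)\to C$ is flat. Hypothesis (iii) does not give this. $U$ is only a neighborhood of $f(Y^*)\cap X(d)^0\setminus S$, so points of $C\cap X^*$ outside $f(Y^*)$ need not lie in $U$, and nothing is assumed about $f$ over them. The moving lemma cannot remove such points either, since it only avoids closed subsets of codimension $\ge2$ in $X(d)$.

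For example, when $Y^*=\emptyset$ one may take $U=\emptyset$. This is the case of Remark~\ref{rem:OpenIm}(2), which the proof of Theorem~\ref{thm:RR} uses. If $f:\tilde X\to X$ is a resolution of a surface $X$ that is non-normal along a curve $\Gamma\subset X^*$, then $f$ is flat over no point of $\Gamma$, and Cartier curves will in general meet $\Gamma$. Over such points $f^{-1}(C)$ need not be flat over $C$ and may contain components lying in fibres of $f$. With $S$ as given, it may even fail to be one-dimensional. Take the blow-up of an ordinary double point $p$ of a threefold, with $X^*=\{p\}$ and $S=U=\emptyset$. A Cartier curve $C$ through $p$ survives your moving step, and $f^{-1}(C)$ contains the exceptional $\P^1\times\P^1$. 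So $f^{-1}(C)$ is in general not a curve, and Fulton's compatibility of flat pullback with $\Div$ does not apply to it.

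The missing idea is to set these points aside and use that $h$ is a unit there; this is exactly how the paper argues.
\begin{itemize}
\item Write $C\cap X^*=(C\cap X^*\cap U)\amalg B$, and let $D$ be the closure in $Y$ of $f^{-1}(C\setminus B)$. Since $C\setminus B\subset U\cup(X\setminus X^*)$, over which $f$ is flat of relative dimension zero, $D$ is purely one-dimensional and quasi-finite over $C$.
\item Because $C\subset X(d)^0\setminus S$ and $B\cap U=\emptyset$, we have $B\cap f(Y^*)=\emptyset$. Hence $D\cap Y^*\subset f^{-1}(C\cap U)$. On $f^{-1}(U)$ the scheme $D$ agrees with $f^{-1}(C)$, and there your flat base-change argument does show that $D$ is a Cartier curve relative to $Y^*$.
\item Since $h\in\sO_{C:C\cap X^*}^\times$, it is a unit in the semi-local ring $\sO_{C,B}$. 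So $f^*h$ is a unit along $D\cap f^{-1}(B)$, and $\Div(f^*h)$ is supported over $C\setminus B$. There $D\to C$ is flat, and the identity $f^*(i_{C*}\Div h)=i_{D*}\Div(f^*h)$ holds.
\end{itemize}

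Separately, in (4) the bound $\codim_{X(d)}\overline{f(S')}\ge2$ needs no flatness: $\dim\overline{f(S')}\le\dim S'\le d-2$.
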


\begin{proof} (1) is our assumption (i) that $f^{-1}(X^*)\supset Y^*$. The assumption (ii) implies   (2) since we have a well-defined pullback map $f^*:Z^d(X\setminus X^*)\to Z^d(f^{-1}(X\setminus X^*))$, for the flat map $f:f^{-1}(X\setminus X^*)\to X\setminus X^*$, and we then apply (1), showing that $f^*(z)$ is supported in $Y\setminus Y^*$ if $z$ is in $Z^d(X\setminus X^*)$.

For (3), let  $T=X(d)\cap X(\le d-1)$. Then $T$ is a closed subset of $X^*\cap X(d)$, with $\codim_{X(d)}T\ge 2$;  note that $X(d)\setminus (S\cup T)=X(d)^0\setminus S$ is an open subset of $X(d)^0$; $f$ is flat over $U\setminus S$ by assumption.

By Lemma~\ref{lem:Mov1} we have
\[
R^d(X, X^*)=R^d(X, X^*)_{T\cup S},
\]
with $R^d(X, X^*)_{T\cup S}$ the subgroup of $Z^d(X\setminus X^*)$ generated by 0-cycles $i_{C*}(\Div h)$, where $i_C:C\to X$ is a Cartier curve on $X$ relative to $X^*$, $h\in \sO_{C: C\cap X^*}^\times$, and $C$ is supported in $X(d)\setminus (S\cup T)=X(d)^0\setminus S$. Given such a pair $(i_C:C\to X, h\in \sO_{C: C\cap X^*}^\times)$, let  $(C\cap X^*)_U=(C\cap X^*)\cap U$, write the finite set $C\cap X^*$ as a disjoint union
\[
C\cap X^*=(C\cap X^*)_U\amalg B,
\]
and let $V= (X\setminus X^*)\cup U$. Then $B\subset X(d)\setminus (S\cup T)=X(d)^0\setminus S$ and $V$ is an open neighborhood of $C^0:=C\setminus B$ in $X(d)$. Moreover, $f$ is flat over $V$.  

Let $C_Y^0=f^{-1}(C^0)\subset f^{-1}(V)$ and let $C_Y\subset Y$ be the closure of $C_Y^0$ in $Y$. Since $f_C$ is flat of relative dimension zero  over $C\cap V$, $C_Y$ is a pure dimension one closed subscheme of $f^{-1}(C)$, and $f$ restricts to a quasi-finite morphism $f_C:C_Y\to C$. We also have the flat quasi-finite morphism $f_{C^0}:C_Y^0\to C^0$. Let $h_{C_Y}$ be the rational function $f_C^*(h)$ on $C_Y$ and 
$h_{C^0_Y}$ the rational function $f_{C^0}^*(h)$ on $C_Y^0$. Let $i_{C_Y}:C_Y\to Y$, $i_{C^0_Y}:C^0_Y\to f^{-1}(V)$ be the respective closed immersions.

 Note that $(C_Y\setminus C_Y^0)\cap f^{-1}(U)=\0$, so $C_Y\cap Y^*\subset f^{-1}(C\cap U)$. As $f$ is  flat over $U$ and $Y^*\subset f^{-1}(X^*)$, this implies that $C_Y\cap Y^*$ is a finite set and that $C_Y$ is defined by a regular sequence in a neighborhood of $C_Y\cap Y^*$ in $Y$. Thus $C_Y$ is a Cartier curve on $Y$ relative to $Y^*$.   Moreover,    $h_{C_Y}$ is in $\sO_{C_Y: C_Y\cap Y^*}^\times$ since $h\in \sO_{C: C\cap X^*}^\times$. Since $h$ is a unit in $\sO_{C, B}$, it follows that $h_{C_Y}$ is also a unit in $\sO_{C_Y, C_Y\cap f^{-1}(B)}$ so
\[
\Div_{C_Y}(h_C)=\Div_{C^0_Y}(h_C^0)\in Z_0(C_Y).
\]

Let $f_V:f^{-1}(V)\to V$ be the restriction of $f$, and let $i_{C\cap V}:C\cap V\to V$, $i_{C_Y^0}:C_Y^0\to f^{-1}(V)$ be the inclusions. As $f_V$ is flat,  we have
\[
f_V^*(i_{C\cap V*})(\Div h|_{C\cap V}))=i_{C^0_Y*}(\Div h_{C_Y^0})
\]
by standard results of classical intersection theory (see, e.g., \cite[\href{https://stacks.math.columbia.edu/tag/02S1}{Tag 02S1}]{stacks-project}). Thus
\[
f^*(i_{C*}(\Div h))=
f_V^*(i_{C\cap V*}(\Div h|_{C\cap V}))=i_{C^0_Y*}(\Div h_{C_Y^0})=
i_{C_Y*}(\Div h_{C_Y}). 
\]
Thus $f^*:Z^d(X\setminus X^*)\to Z^d(Y\setminus Y^*)$ satisfies
\[
f^*(R^d(X, X^*))\subset R^d(Y, Y^*),
\]
and therefore $f^*$ descends uniquely to a map
\[
f^*:\CH^d(X, X^*)\to \CH^d(Y, Y^*)
\]
on the  quotients.

Given a second  admissible pair $(Z, Z^*)$, a subset $S'\subset Y^*$ and morphism $g:Z\to Y$ satisfying (i)-(iii), we have the pullback map $g^*:\CH^d(Y, Y^*)\to \CH^d(Z, Z^*)$. We first check that $(fg)^*:\CH^d(X, X^*)\to \CH^d(Z, Z^*)$ is also defined.

The condition (i) for $fg$ follows directly from (i) for $f$ and for $g$. Next, by (ii),  $f:Y\to X$ is flat over $X\setminus X^*$ and $g:Z\to Y$ is flat over $Y\setminus Y^*$. Since $Y\setminus Y^*\supset f^{-1}(X\setminus X^*)$, it follows that $fg$ is flat over $X\setminus X^*$.

Let  $S\subset X^*\cap X(d)$ be a closed subset  and  $U\subset X$ an open neighborhood of  $f(Y^*)\cap X(d)^0\setminus S$ in $X(d)^0$  satisfying (iii) for $f$, and let  $S'\subset Y^*\cap Y(d)$, $U'\subset Y$ be given similarly for $g$. Let $\overline{f(S')}\subset X^*$ be the closure of $f(S')$ and let $S''=S\cup \overline{f(S')}$. Since $g(Z^*)\subset Y^*$, we have $fg(Z^*)\subset f(Y^*)$ and it is then easy to see that $S''$ and $U'':=U$ satisfy (iii) for $fg$ and hence $(fg)^*:\CH^d(X, X^*)\to \CH^d(Z, Z^*)$ is defined.

The functoriality for flat pullback
\[
(fg)^*=g^*f^*:Z^d(X\setminus X^*)\to Z^d(Z\setminus Z^*)
\]
on the level of cycles then implies
\[
(fg)^*=g^*f^*:\CH^d(X, X^*)\to \CH^d(Z, Z^*). 
\]
\end{proof}

\begin{remark}\label{rem:OpenIm} 1. We may apply Lemma~\ref{lem:EtalePullback} to the case of an open immersion $j:U\to X$, with $\dim_kU=d$, taking $S=\0$, to give a well-defined restriction map
\[
j^*:\CH^d(X,X^*)\to \CH^d(U,U\cap X^*),
\]
functorial for open immersions.\\[2pt]
2. We may also apply Lemma~\ref{lem:EtalePullback} to the case of a morphism $f:Y\to X$, with $Y$  regular and of pure dimension $d=\dim_kX$ over $k$ and $f:Y\to X$ flat over $X\setminus X^*$, where we take $Y^*=S=U=\0$. The gives the well-defined pullback map
\[
f^*:\CH^d(X, X^*)\to \CH^d(Y),
\]
satisfying $(fg)^*=g^*f^*$ for flat morphisms $g:Z\to Y$ in $\Sch^\qp_k$ with $Z$ regular and of pure dimension $d$. 
\end{remark}

\begin{lemma}[Base-change]\label{lem:base-change} Let $(X, X^*)$ be an admissible pair over $k$, let $d=\dim_kX$, and let $k\hookrightarrow F$ be an extension of fields. Let $X_F=X\times_kF$ with projection $p_X:X_F\to X$ and let $X_F^*=p_X^{-1}(X^*)$. Suppose that $X_F(d)^0$ is reduced and $X_F\setminus X_F^*$ is regular. Then\\[5pt]
1. $(X_F, X_F^*)$ is an admissible pair over $F$ and $\dim_FX_F=\dim_kX$.\\[2pt]
2. Flat pullback for the restriction $p_X:X_F\setminus X_F^*\to X_F\setminus X_F^*$ gives a well-defined map $p_X^*:Z^d(X\setminus X^*)\to Z^d(X_F\setminus X_F^*)$, which descends to $p_X^*:\CH^d(X, X^*)\to \CH^d(X_F, X_F^*)$.
\end{lemma}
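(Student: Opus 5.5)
For (1), I will check the clauses of the definition of an admissible pair for $(X_F, X_F^*)$ one at a time. Dimension is preserved under field extension: for each integral component $X'$ of $X$, every irreducible component of $X'_F$ has dimension $\dim_kX'$. Hence $\dim_FX_F=\dim_kX=d$, and $X_F(i)=p_X^{-1}(X(i))$ set-theoretically. In particular $X_F(d)^0=p_X^{-1}(X(d)^0)$, which is reduced by hypothesis. $X_F$ is quasi-projective over $F$ because $X$ is quasi-projective over $k$.

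Next, $X_F\setminus X_F^*=p_X^{-1}(X\setminus X^*)=(X\setminus X^*)_F$. This is regular by hypothesis, and it is of pure dimension $d$ because $X\setminus X^*$ is. It is dense in $X_F(d)$: $p_X$ is flat and open, so every generic point of $X_F(d)$ maps to a generic point of $X(d)$, and that generic point lies in the dense open $X\setminus X^*$. Equivalently, $\dim_FX^*_F=\dim_kX^*<d$. Being regular of dimension $d$, this open set lies in $(X_F)_\reg(d)$, so $(X_F,X_F^*)$ is admissible.

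For (2), the map $p_X$ is flat of relative dimension zero, so flat pullback of cycles $Z_0(X\setminus X^*)\to Z_0(X_F\setminus X_F^*)$ is defined. It sends a closed point $x$ to the cycle of the zero-dimensional scheme $\Spec(k(x)\otimes_kF)$, and this cycle is supported in $X_F\setminus X_F^*$ because $X_F\setminus X_F^*=p_X^{-1}(X\setminus X^*)$.

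The substance is showing that $p_X^*(R^d(X,X^*))\subset R^d(X_F,X_F^*)$. Let $C$ be a Cartier curve on $X$ relative to $X^*$ and take $h\in\sO_{C:C\cap X^*}^\times$. I will check three things.
\begin{enumerate}
\item $C_F$ is a purely one-dimensional closed subscheme of $X_F$.
\item $C_F\cap X_F^*=(C\cap X^*)_F$ is finite.
\item At each point $y\in C_F\cap X_F^*$, lying over $x$, a regular sequence $a_1,\ldots,a_{d-1}$ generating $\sI_{C,x}$ stays regular in $\sO_{X_F,y}$ and generates $\sI_{C_F,y}$. This holds because $\sO_{X,x}\to\sO_{X_F,y}$ is flat, and regular sequences are preserved under flat local homomorphisms since the Koszul complex stays exact.
\end{enumerate}
So $C_F$ is a Cartier curve on $X_F$ relative to $X_F^*$. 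Moreover $h_F:=p_X^*h$ lies in $\sO_{C_F:C_F\cap X_F^*}^\times$: $h$ is a unit on an open neighborhood of $C\cap X^*$, and its pullback is a unit on the preimage, which is a neighborhood of $C_F\cap X_F^*$.

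It remains to show $p_X^*(i_{C*}\Div h)=i_{C_F*}\Div h_F$. This is the compatibility of flat pullback with principal divisors, as in the proof of Lemma~\ref{lem:EtalePullback} and \cite[Tag 02S1]{stacks-project}, applied to the flat morphism $C_F\to C$. One must use the multiplicity-weighted definition of $\Div$ on the possibly nonreduced $C_F$, matching the lengths $m_j$ at generic points; this is exactly the multiplicity convention of the standard flat pullback statement. Since the pullback is additive, it descends to $\CH^d(X,X^*)\to\CH^d(X_F,X_F^*)$.

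The main obstacle is keeping track of multiplicities in this divisor compatibility when $F/k$ is inseparable, because then $C_F$ can be nonreduced even though $C$ is reduced at its generic points. I would handle this by citing the general flat-pullback compatibility for Weil divisors of rational functions on one-dimensional schemes, stated with lengths, rather than working component by component.
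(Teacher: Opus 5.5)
Your proposal is correct and follows the paper's proof essentially step for step. For (1), you track components and dimensions under $k\hookrightarrow F$. For (2), you base-change a Cartier curve (pure dimension one, finite intersection with $X_F^*$, regular sequences preserved by the flat local map $\sO_{X,x}\to\sO_{X_F,y}$) and use that flat pullback is compatible with divisors of rational functions. Your extra care with the density of $X_F\setminus X_F^*$ and with the length multiplicities on a possibly nonreduced $C_F$ supplies details that the paper only asserts.
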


\begin{proof} For $Z\in \Sch_k$ let $Z_F:=Z\times_kF$. We have $\dim_kZ=\dim_FZ_F$, so in particular $X_F(i)=X(i)_{F, \red}$ for all $i$ and  $\dim_FX_F=\dim_kX=d$. Thus $X_F^*\supset X_F(i)$ for all $i<d$. Since $X_F(d)^0$ is reduced and $X_F\setminus X_F^*$ is regular by assumption, $(X_F, X_F^*)$ is thus an admissible pair, which proves (1).

For (2), $p_X:X_F\to X$ is flat and thus $p_X:X_F\setminus X_F^*\to X_F\setminus X_F^*$ is flat and we have the well-defined pullback $p_X^*:Z^d(X\setminus X^*)\to Z^d(X_F\setminus X_F^*)$. If $i_C:C\to X$ is a Cartier curve on $X$ relative to $X^*$, then $C_\red$ is a  closed subscheme of $X(d)$ of pure codimension $d-1$, so the base-change $C_F$ has support  a closed subset of $X_F(d)$ of pure codimension $d-1$. Thus $i_{C_F}:C_F\to X_F$ has pure dimension one and is supported in $X_F(d)$. Similarly, $C_F\cap X_F^*$ is the finite set of points $p_X^{-1}(C\cap X^*)$, and by flatness, the fact that $C$ is a complete intersection on $X$ in a neighborhood of $C\cap X^*$ implies that $C_F$ is a complete intersection on $X_F$ in a neighborhood of $C_F\cap X_F^*$, that is, $i_{C_F}:C_F\to X_F$ is a Cartier curve on $X_F$ relative to $X_F^*$. Finally, if we have $f\in \sO_{C: C\cap X^*}^\times$, then letting $p_C:C_F\to C$ be the restriction of $p_X$, we have $p_C^*(f)\in \sO_{C_F: C_F\cap Y^*}^\times$ and
\[
i_{C_F*}(\Div p_C^*(f))=p_X^*(i_{C*}(\Div f)),
\]
so $p^*(R^d(X, X^*))\subset R^d(X_F, X_F^*)$.  Thus  $p_X^*:Z^d(X, X^*)\to Z^d(X_F, X_F^*)$ descends to $p_X^*:\CH^d(X, X^*)\to \CH^d(X_F, X_F^*)$, proving (2).
\end{proof}

\begin{remark} In case $k$ is perfect, or more generally, if $F$ is separable over $k$, the assumptions on $X_F(d)^0$ and $X_F\setminus X_F^*$ in Lemma~\ref{lem:base-change} are automatically satisfied.
\end{remark}

\begin{lemma}[Localization sequence]\label{lem:Localization} Let $(X, X^*)$ be an admissible pair, let $d=\dim_kX$ and let $i:W\to X$ be a closed immersion, with $i(W)\subset X\setminus X^*$. Then the pushforward $i_*:Z_0(W)\to Z^d(X\setminus X^*)$ descends to a group homomorphism 
\[
i_*:\CH_0(W)\to \CH^d(X,X^*)
\]
Moreover, letting $U:=X\setminus W$ with open immersion $j:U\to X$, $(U, X^*)$ is an admissible pair, and the sequence
\begin{equation}\label{eqn:Localization}
\CH_0(W)\xrightarrow{i_*}\CH^d(X,X^*)\xrightarrow{j^*}\CH^d(U,X^*)\to 0
\end{equation}
is exact.
\end{lemma}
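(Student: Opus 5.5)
The proof has three parts: $i_*$ is well defined, $(U,X^*)$ is admissible, and the sequence \eqref{eqn:Localization} is exact. Here $W$ is closed in $X$ and contained in $X\setminus X^*$, so $W\cap X^*=\0$. In particular $W$ is closed in the regular, pure $d$-dimensional scheme $X\setminus X^*$.

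\emph{Well-definedness of $i_*$.} The group $R_0(W)$ is generated by $i_{C*}(\Div f)$, where $C\subset W$ is an integral curve and $f\in k(C)^\times$. Such a $C$ is a closed integral curve on $X$ with $C\cap X^*=\0$. So conditions (i) and (ii) of Definition~\ref{def:CartierCurve} hold vacuously, and $C$ is a Cartier curve on $X$ relative to $X^*$. It is supported in $X\setminus X^*\subset X_\reg(d)$, so it is a curve of the required type. Moreover $\sO_{C:\0}^\times=k(C)^\times$. Hence $i_*(R_0(W))\subset R^d(X,X^*)$, and $i_*$ descends to $\CH_0(W)\to\CH^d(X,X^*)$.

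\emph{Admissibility of $(U,X^*)$.} $U$ is open in $X$, so it is quasi-projective. We have $X^*\subset U$ and $U\setminus X^*=(X\setminus X^*)\setminus W$. This set is regular of pure dimension $d$. It is dense in $U_\reg(d)$, provided $W$ contains no component of $X\setminus X^*$; if some such component lies in $W$, we interpret $\dim_kU$ accordingly, or else exclude this degenerate case. Also $U(d)^0$ is open in $X(d)^0$, hence reduced. The restriction $j^*$ is then the map of Remark~\ref{rem:OpenIm}(1). Concretely, it sends a point of $X\setminus X^*$ lying in $U$ to itself and sends points of $W$ to $0$. Surjectivity of $j^*$ is clear on generators, since $Z^d(U\setminus X^*)\subset Z^d(X\setminus X^*)$.

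\emph{Exactness in the middle.} We have $j^*i_*=0$ on cycles. Conversely, take $z\in Z^d(X\setminus X^*)$ with $j^*z\in R^d(U,X^*)$. Write $z=z_U+z_W$, with $z_W$ supported on $W$. Then $z_U=\sum_l i_{C_l*}(\Div f_l)$, where each $C_l$ is a Cartier curve on $U$ relative to $X^*$ and $f_l\in\sO_{C_l:C_l\cap X^*}^\times$. Let $\bar C_l$ be the scheme-theoretic closure of $C_l$ in $X$.

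Since $C_l$ is closed in $U=X\setminus W$, the new points $\bar C_l\setminus C_l$ lie in $W\subset X\setminus X^*$. Hence $\bar C_l\cap X^*=C_l\cap X^*$ is still finite. The regular-sequence condition at these points is local, so it holds in $X$ as well. Thus $\bar C_l$ is a Cartier curve on $X$ relative to $X^*$. The function $f_l$ extends to an element of $\sO_{\bar C_l:\bar C_l\cap X^*}^\times$, since that ring allows arbitrary poles and zeros away from $\bar C_l\cap X^*$.

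Then $i_{\bar C_l*}(\Div f_l)=i_{C_l*}(\Div f_l)+w_l$, with $w_l$ supported on $W$. Therefore $z\equiv z_W-\sum_l w_l$ modulo $R^d(X,X^*)$, and the right-hand side lies in $i_*Z_0(W)$. This proves exactness.

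\emph{Main obstacle.} The main point to check is this closure step. One must verify that taking the closure creates no embedded or lower-dimensional components meeting $X^*$. This holds because all new points lie in $W$, which is disjoint from $X^*$, and the Cartier conditions only concern points of $X^*$.
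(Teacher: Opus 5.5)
Your proof is correct and is essentially the paper's argument: integral curves in $W$ are Cartier curves relative to $X^*$ for vacuous reasons. Exactness in the middle comes from replacing a Cartier curve on $U$ by its closure in $X$, which remains Cartier relative to $X^*$ because the added points lie in $W$, and the function extends because $\sO_{\bar C:\bar C\cap X^*}=\sO_{C:C\cap X^*}$. Your density caveat for $(U,X^*)$ is stronger than needed: a dense open subset of $X_\reg(d)$ is dense in every open subset of it, so the only degenerate case is $\dim_kU<d$ (i.e.\ $X(d)\subset W$), which the paper also passes over and where the sequence holds trivially.
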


\begin{proof} It is clear that for $C\subset W$ an integral curve, $i(C)\subset X$ is a Cartier curve relative to $X^*$, and for $h\in k(C)^\times$, $i_*(i_{C*}(\Div h))$ is in $R^d(X,X^*)$. This shows that $i_*:Z_0(W)\to Z^d(X\setminus X^*)$ descends to $i_*:\CH_0(W)\to \CH^d(X,X^*)$.

The proof of the exactness of the sequence \eqref{eqn:Localization} is exactly the same as for the usual Chow groups:  The sequence
\[
0\to Z_0(W)\xrightarrow{i_*}Z^d(X\setminus X^*)\xrightarrow{j^*}Z^d(U\setminus X^*)\to 0
\]
is exact, and if $C_U$ is a Cartier curve on $U$ relative to $X^*$, with $h\in \sO_{C_U:C_U\cap X^*}^\times$, then since $U\cap X^*=X^*$, we see that the closure $C$ of $C_U$ in $X$  is a Cartier curve on $X$ relative to $X^*$, $h$ determines a unique rational function $h_C\in \sO_{C: C\cap X^*}^\times$ on $C$, and if $z\in Z^d(X\setminus X^*)$ has $j^*(z)=i_{C_U*}(\Div  h)$, then $z-i_{C*}(\Div h_C)=i_*(w)$ for a unique $w\in Z_0(W)$.
\end{proof}

We will need a technical, partial version of functoriality, as follows.

\begin{lemma}\label{lem:PartialFunct} Let $(X, X^*)$ be an admissible pair. Take $z, z'\in Z^d(X\setminus X^*)$, let $i_C:C\to X$ be a Cartier curve on $X$ relative to $X^*$ and take $h\in \sO_{C:C\cap X^*}^\times$ with $i_{C*}(\Div h)=z-z'$. Let $f:Y\to X$ be a morphism of quasi-projective varieties over $k$, and suppose that $Y$ is reduced, of pure dimension $d:=\dim_kX$ over $k$, and that there is an open subscheme $U$ of $X\setminus X^*$ such that, letting $V=f^{-1}(U)$ and $f_U:V\to U$ the restriction of $f$, we have\\[5pt]
i. $f_U:V\to U$ is flat\\[2pt]
ii. $U$ contains $\supp(z)\cup \supp(z')$, all closed points $p$ of $C$ such that $f$ is not a unit in $\sO_{C,p}$ and all generic points of $C$.
\\[5pt]
Let $f_{U\cap C}:f_U^{-1}(C\cap U)\to C\cap U$ be the restriction of $f_U$ and let  $D$ be the closure in $Y$ of $f_U^{-1}(C\cap U)$. Then $f_{U\cap C}$ is quasi-finite, so we have the well-defined rational function $f_{U\cap C}^*(h)$ on $f_U^{-1}(C\cap U)$ 

Let $f^*(h)\in k(D)^*$ be the rational function with $f^*(h)_{|f_U^{-1}(C\cap U)}=f_{U\cap C}^*(h)$. Then $f^*(z)$ and $f^*(z')$ are defined as cycles, and 
\[
f^*(z)-f^*(z')=i_{D*}(\Div(f^*(h)))
\]
In particular, $f^*(z-z')=0$ in $\CH^d(Y)$.
\end{lemma}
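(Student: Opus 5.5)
The plan is to reduce everything to the classical compatibility of flat pullback with divisors of rational functions (the same fact from \cite[\href{https://stacks.math.columbia.edu/tag/02S1}{Tag 02S1}]{stacks-project} used in the proof of Lemma~\ref{lem:EtalePullback}). This compatibility will be applied over the open set $U$. We then check that nothing is lost or gained when passing to the closure $D$ of $f_U^{-1}(C\cap U)$. Here we read hypothesis (ii) as saying that $U$ contains every closed point $p$ of $C$ at which $h$ fails to be a unit in $\sO_{C,p}$.

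First, quasi-finiteness. Since $f_U$ is flat and $Y$, $X(d)$ both have dimension $d$, the fibers of $f_U$ over points of $U\subset X_\reg(d)$ have dimension $0$. Hence $f_{U\cap C}$ is quasi-finite and flat. Since $C\cap U$ is one-dimensional, $f_U^{-1}(C\cap U)$ has pure dimension one, and each of its generic points lies over a generic point of $C$. Flatness implies that non-zero-divisors pull back to non-zero-divisors. As $h$ is a unit at the generic points of $C$, and these all lie in $U$ by (ii), the pullback $f_{U\cap C}^*(h)$ is a unit in the total quotient ring of $f_U^{-1}(C\cap U)$. Since $D$ is the closure of $f_U^{-1}(C\cap U)$, we have $k(D)=k(f_U^{-1}(C\cap U))$, so $f^*(h)\in k(D)^\times$ is well defined. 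The cycles $f^*(z)$ and $f^*(z')$ are defined by flat pullback along $f_U$, because their supports lie in $U$; their closures in $Y$ are $0$-cycles.

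Second, the cycle identity. Flat pullback commutes with proper pushforward of divisors of rational functions. Applied to $f_U$ and the curve $C\cap U$, this gives
\[
f_U^*\bigl(i_{C\cap U*}\Div(h|_{C\cap U})\bigr)=i_{f_U^{-1}(C\cap U)*}\Div(f_{U\cap C}^*(h))
\]
in $Z_0(V)$. The left side equals $f^*(z)-f^*(z')$. To see this, note that $i_{C*}\Div h=z-z'$, and by (ii) every point of $\supp\Div h$ lies in $U$, since these are the points where $h$ is not a unit. So restricting $i_{C*}\Div h$ to $U$ loses nothing.

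Third, we compare with $i_{D*}\Div(f^*(h))$ on all of $Y$. The points of $D\setminus f_U^{-1}(C\cap U)$ are closed points lying over points of $C\setminus U$. At each such point $p$, $h$ is a unit in $\sO_{C,p}$. The map $f|_D:D\to C$ is a morphism of curves sending generic points to generic points, so pulling back a unit gives a unit. Hence $f^*(h)$ is a unit in the local ring of $D$ at every point over $C\setminus U$. This requires identifying $f^*(h)$ with the pullback of $h$ along $D\to C$, i.e.\ that $D\to C$ is defined and dominant on each component, which holds since $D\subset f^{-1}(C)$. Consequently $\Div f^*(h)$ is supported in $f_U^{-1}(C\cap U)\subset V$ and agrees there with the right side of the displayed identity, which gives the stated equality.

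For the final assertion, $D$ is a curve on $Y$, and $Y$ is reduced of pure dimension $d$. Therefore $i_{D*}\Div f^*(h)$ is a rational equivalence in the usual Chow group, using the multiplicity convention for $\Div$ on non-reduced $D$ from Fulton. So $f^*(z-z')=0$ in $\CH^d(Y)$.

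The main obstacle is this third step, namely that no new zeros or poles of $f^*(h)$ appear on $D$ outside $V$. The key subtlety is that $f$ need not be flat there, and $D$ might be non-reduced or have embedded points. I would handle this by working with $D_\red$ and the multiplicities $m_j$ at generic points, which are computed inside $V$ where flatness holds, and arguing pointwise with units as above.
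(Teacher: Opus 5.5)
Your proposal is correct and follows the same route as the paper. Both arguments apply the compatibility of flat pullback with divisors of rational functions (Stacks Tag 02S1) over $U$, and then observe that at each point $p$ of $D$ lying over $C\setminus U$ the function $h$ is a unit in $\sO_{C,f(p)}$, so $f^*(h)$ is a unit in $\sO_{D,p}$ and $\Div f^*(h)$ is supported in $f_U^{-1}(C\cap U)$. Your reading of (ii) with $h$ in place of the misprinted $f$ is exactly what the paper's proof uses. Your justification of the well-definedness of $f^*(h)$ via flatness of $f_U$, rather than quasi-finiteness alone, is if anything slightly more careful than the paper's.
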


\begin{proof} $D$ is clearly a closed subscheme of the closed subscheme $f^{-1}(C)$ of $Y$, so the restriction of $f$ to $D$ defines a quasi-finite morphism $f_D:D\to C$. Thus  $f^*_D(h)$ is a well-defined  rational function on $D$, with $f^*_D(h)$ equal to $f_{U\cap C}^*(h_{| U\cap C})$ on $f_U^{-1}(C\cap U)$, so $f_D^*(h)=f^*(h)$. In addition, at each point $p$ of $D\setminus f^{-1}(U\cap C)$, $h$ is in $\sO_{C, f(p)}^\times$, so $f^*(h)$ is a unit in $\sO_{D, p}$, and thus $\Div(f^*(h))$ is supported on $f^{-1}(U\cap C)$. Since 
\[
f^*(z)-f^*(z')=i_{f^{-1}(U\cap C)*}(\Div f_{U\cap C}^*(h))
\]
by \cite[\href{https://stacks.math.columbia.edu/tag/02S1}{Tag 02S1}]{stacks-project}, we see that
\[
f^*(z)-f^*(z')=i_{D*}(\Div(f^*(h))).
\]
\end{proof}

\section{Functoriality for morphisms to homogeneous spaces}\label{sec: Homogeneous}

Let $(X, X^*)$ be an admissible pair and let $Y$ be smooth and quasi-projective  over $k$. Given a morphism $f:X\to Y$, we might expect to have a well-defined pullback map
\[
f^*:\CH^d(Y)\to \CH^d(X, X^*),\ , =\dim_kX,
\]
with  $f^*g^*=(gf)^*$ for an additional morphism $g:Y\to Z$ of smooth, quasi-projective $k$-schemes. Our main technical tool is the construction of such pullback maps for $Y$ a quasi-projective homogeneous space for a suitable type of linear algebraic group $G$. We closely follow the statements and arguments from \cite[\S 4]{BS98} and \cite[\S 6]{BKS}.

For $G$ a group-scheme over $k$, and $A, B$ subschemes of $G$, we let $A*B$ denote the image of $A\times B$ under the product map $G\times G\to G$. Similarly, we write $A(k)*B(k)$ for the image of $A(k)\times B(k)\subset G(k)\times G(k)$ under the product map $G(k)\times G(k)\to G(k)$. As a matter of notation,  given $k$-schemes $A, B$ and a point $q\in A\times_k B$, we write $q =(a,b)\in A\times_kB$ to mean $a=p_1(q)$, $b=p_2(q)$.

\begin{definition} 1. Let $G$ be a linear algebraic group over $k$, i.e., $G$ is a group-scheme over $k$ that admits a description as a closed subgroup scheme of $\GL_n/k$ for some $n$. Recall that $G$ is called {\em $k$-rational} if $G$ is smooth over $k$, geometrically connected, and there is a dense open subscheme $U$ of $G$ that is isomorphic to an open dense subscheme in a projective space $\P^{\dim_kG}_k$ over $k$. A $k$-rational linear algebraic group $G$ over $k$ is called {\em strongly $k$-rational} if given dense open subschemes $U, V\subset G$, we have $U(k)*V(k)=(U*V)(k)$.\\[5pt]
2. Let $G$ be a linear algebraic group over $k$. For us a {\em homogeneous space}  for $G$ is a smooth, finite-type $k$-scheme $H$ with $H(k)\neq\0$, endowed with a $G$-action $\rho:G\times_kH\to H$ such that, for $x\in H(k)$, the isotropy subgroup $G_x$ is a reduced subgroup-scheme of $G$ and the action map $\rho_x:G=G\times_kx\to H$, $\rho_x(g)=g\cdot x$, identifies $H$ with the quotient $G/G_x$.\end{definition}

\begin{remarks}\label{rem:Star} 1. Our definition of the term ``homogeneous space'' is non-standard in positive characteristic, in that we require smoothness over $k$ and the isotropy group $G_x$ to be reduced for each $k$-point $x$, but we use it to avoid encumbering the notation with additional adjectives. In our applications, $G$ will always be a product of general linear groups, and $H$ with always be a product of Grassmannians or flag varieties, which does satisfy our additional requirements.\\[5pt]
2. Let $G$ be a $k$-rational linear algebraic group over $k$ that satisfies Hilbert's Theorem 90 over $k$, that is, $H^1_\et(k, G)=\{*\}$. Then $G$ is strongly $k$-rational. Indeed, letting $\mu:G\times G\to G$ be the multiplication map, then for $x\in G(k)$, the fiber $\mu^{-1}(x)$ is a torsor for $G$ over $\Spec k$, $G$ acting by $g\cdot (y,z):=(yg^{-1}, gz)$. Thus $\mu^{-1}(x)\cong G$ as $k$-scheme, and in particular, every dense open subscheme $U$ of $\mu^{-1}(x)$ has $U(k)$ Zariski dense in $U$. We apply this to $A, B$ dense open subsets of $G$, $x\in (A*B)(k)$ and $U=A\times B\cap \mu^{-1}(x)$, which gives us a point $(a,b)\in A(k)\times B(k)$ with $\mu(a,b)=x$, showing $G$ is strongly $k$-rational.
\end{remarks}

For example, products of linear algebraic groups over $k$,  $G=\prod_{i=1}^r\PGL_{n_i}$, $\prod_{i=1}^r\GL_{n_i}$, or $\prod_{i=1}^r\SL_{n_i}$  are all  $k$-rational and $\prod_{i=1}^r\GL_{n_i}$, $\prod_{i=1}^r\SL_{n_i}$ are strongly $k$-rational.  The Grassmann varieties $\Gr(m, N)$  are homogeneous spaces for $\GL_N$, $\SL_N$ and $\PGL_N$, as are the partial flag varieties $\Fl(m_1, m_2, N)$.

\begin{definition} Let $Z$ be a quasi-projective $k$-scheme, and let $A, B\subset Z$ be closed subsets. We say that $A$ and $B$ intersect properly in $Z$ if for each point $z\in A\cap B$ we have
\[
\dim_k(A\cap B, z)=\dim_k(A,z)+\dim_k(B,z)-\dim_k(Z,z).
\]
Equivalently,
\[
\codim_Z(A\cap B, z)=\codim_Z(A,z)+\codim_Z(B,z).
\]
Note that $A$ and $B$ intersect properly if $A\cap B=\0$.
\end{definition}

\begin{remark}\label{rem:lci}
For $i:W\to Z$ a closed immersion in $\Sch_k$, the subset 
\[
W_\lci:=\{w\in W\mid \sI_{W,w}\subset \sO_{Z,w}\text{ is generated by a regular sequence}\}
\]
is open in $W$ and is equal to the maximal open subscheme $U$ of $W$ such that $U\to Z$ is a regular immersion (see, e.g., \cite[Proposition 16.9.10]{EGAIV4} or \cite[Lemma 31.21.8(1,5)\href{https://stacks.math.columbia.edu/tag/063I}{Tag 063I}]{stacks-project}).
Let $W_\nlci:=W\setminus W_\lci$.
\end{remark}

\begin{lemma}[\hbox{\cite[Lemma 4]{BS98}}]\label{lem:TranslateCycle} Let $G$ be a $k$-rational linear algebraic group, $H$ a homogeneous space for $G$. Take $Z\in \Sch_k$, let   $f :Z\to H$ be a morphism, and let $Y$ be a reduced closed subscheme of $H$ of
pure codimension $i$. Let $Z^*\subset Z$ be a closed subset with $\dim_kZ^*<\dim_kZ$ and let $Z_0\subset Z$ be a closed subset of dimension $<i$. Then there is a dense open subset $O(f , Y, Z^*, Z_0)\subset G$ such that for each
$g \in O(f, Y, Z^*, Z_0)(k)$, the following properties hold.
\begin{enumerate}
\item  $f^{-1}(gY)$ has pure codimension $i$ in $Z$.
\item  $f^{-1}(gY)$ intersects $Z^*$  properly in $Z$ and $f^{-1}(gY)\cap Z_0=\0$.
\item $f^{-1} (gY)$ is a local complete intersection in $Z$ at each generic point of
$f^{-1}(gY)\cap Z^*$.
\item  The sheaves $\Tor^{\sO_H}_i(\sO_{gY}, \sO_Z) $ vanish for all $i>0$, that is, $\sO_{gY}$ and $\sO_Z$ are Tor-independent over $\sO_H$.
\end{enumerate}
Moreover, $O(f,Y, Z^*, Z_0)(k)$ is Zariski dense in $O(f,Y, Z^*, Z_0)$.
\end{lemma}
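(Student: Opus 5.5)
The plan is to use the Kleiman-style transversality argument for the action map. Consider
\[
\Phi: G\times Z\to H,\quad \Phi(g,z)=g^{-1}\cdot f(z),
\]
so that $\Phi^{-1}(Y)\cap(\{g\}\times Z)=f^{-1}(gY)$. The main point is that $\Phi$ is smooth. Write it as the composition of the automorphism $(g,z)\mapsto (g, f(z))$ of $G\times H$ over $G$, after base change, with the action map $G\times H\to H$, $(g,h)\mapsto g^{-1}h$. The action map is isomorphic to the projection, via $(g,h)\mapsto(g,g^{-1}h)$, so it is smooth. Hence $\Phi$ is the base change of a smooth morphism along $f$, and is therefore flat with fibers smooth of dimension $\dim G-\dim H+\dim Z$, more precisely $\dim G$ relative to $Z$ shifted appropriately. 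Reducedness of the isotropy groups and smoothness of $H$ are used exactly here: $\rho_x:G\to H$ is smooth. Consequently $\mathcal{Y}:=\Phi^{-1}(Y)\subset G\times Z$ has pure codimension $i$, and is reduced, with $\Phi$ flat.

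The proof then proceeds property by property, each time using generic flatness or generic fiber dimension for the projection $q:\mathcal{Y}\to G$.

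For (1) and the first part of (2), apply the same construction to each irreducible component $T$ of $Z^*$, of $Z$, and of $Z_0$ in place of $Z$. The preimage of $Y$ in $G\times T$ has codimension $i$ there, so by the theorem on dimension of fibers a dense open set of $g$ gives a fiber of codimension $i$ in $T$ (or an empty fiber). For $Z_0$ we have $\dim(G\times Z_0)-i<\dim G$, so the image in $G$ is not dense; removing its closure gives $f^{-1}(gY)\cap Z_0=\emptyset$.

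For (4), flatness of $\Phi$ gives $\Tor^{\sO_H}_j(\sO_Y,\sO_{G\times Z})=0$ for $j>0$. By generic flatness, $\sO_{\mathcal{Y}}$ is flat over a dense open subset of $G$. Restricting to the fiber over $g$ then gives Tor-independence of $\sO_{gY}$ and $\sO_Z$ over $\sO_H$. This uses the standard base change spectral sequence, or equivalently the fact that the Tor's are computed on $G\times Z$ and restrict to fibers when $\mathcal{Y}$ and $G\times Z$ are both flat over $G$.

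For (3), the lci locus is the main obstacle. Since $Y$ is reduced, $Y_\lci$ is dense in $Y$ (it contains $Y_\reg$), and $Y_\nlci$ has codimension $\ge i+1$ in $H$. Preimages under flat maps of regular immersions are regular immersions, so $\Phi^{-1}(Y_\lci)\to G\times Z$ is a regular immersion, and for general $g$ its fiber is lci in $Z$. Applying the dimension count of (2) to $Y_\nlci$ and $T\subset Z^*$, we get that $f^{-1}(gY_\nlci)\cap Z^*$ has codimension $\ge i+1$ in $Z^*$, so it contains no generic point of $f^{-1}(gY)\cap Z^*$. What remains is to check that the fiber of a regular immersion over general $g$ stays regular. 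This follows from the Tor-vanishing of (4), or from the fact that the Koszul complex restricts exactly once flatness over $G$ holds, after shrinking to a dense open where $\mathcal{Y}\cap\Phi^{-1}(Y_\lci)$ is flat over $G$.

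Finally, intersect all the dense opens to get $O(f,Y,Z^*,Z_0)$. Its $k$-points are Zariski dense because $G$ is $k$-rational and $k$ is infinite: a dense open of $\P^{\dim G}$ has dense $k$-points.
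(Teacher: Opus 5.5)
Your strategy is sound, but its key step is misstated and misjustified: $\Phi(g,z)=g^{-1}f(z)$ is \emph{not} smooth in general. Take $H=G$ acting on itself by left translation. The fibre of $\Phi$ over $e$ is the graph of $f$, which is isomorphic to $Z$, so $\Phi$ fails to be smooth as soon as $Z$ is singular. For the same reason the fibres of $\Phi$ are not smooth, and $\Phi^{-1}(Y)$ need not be reduced (take $Z$ non-reduced).

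Your factorization does not rescue this. The map $\id_G\times f\colon G\times Z\to G\times H$ is not an automorphism, and precomposing the smooth map $a(g,h)=g^{-1}h$ with it preserves neither smoothness nor flatness. Nor is $\Phi$ a base change of $a$.

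What is true, and is all you actually use, is that $\Phi$ is flat:
\begin{itemize}
\item The map $\Psi=(\Phi,\mathrm{pr}_Z)\colon G\times Z\to H\times Z$ is the base change along $f$ of the $H$-morphism $\mu\colon G\times H\to H\times H$, $\mu(g,h)=(g^{-1}h,h)$, with both sides viewed over the second factor.
\item The map $\mu$ is flat, because $\mu\circ(\id_G\times\rho_x)=(\rho_x\times\rho_x)\circ\beta$, where $\beta(g,g')=(g^{-1}g',g')$ is an automorphism of $G\times G$ and $\id_G\times\rho_x$ is faithfully flat.
\item Hence $\Psi$ is flat, and so is $\Phi=\mathrm{pr}_H\circ\Psi$.
\end{itemize}
With this substitution your arguments go through:
\begin{itemize}
\item for (1) and (2), pure codimension $i$ of the preimage in each $G\times T$, then the fibre-dimension theorem;
\item for (4), derived base change along $\{g\}\to G$ plus generic flatness of $\Phi^{-1}(Y)\to G$;
\item for (3), as you wrote it.
\end{itemize}

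The paper does not reprove (1), (2) or (4). It quotes \cite[Lemma 4]{BS98} (Kleiman) over $\bar k$ and descends to $k$, using $k$-rationality for density of $k$-points and faithful flatness of $k\to\bar k$ for the Tor-vanishing. Only (3) is argued directly, and there your argument is the paper's:
\begin{itemize}
\item choose an affine $U=\Spec A\subset H$ on which $Y$ is cut out by a regular sequence;
\item apply (2) to $Y$ and $Y\setminus U$ (your $Y_\nlci$) to place the generic points of $f^{-1}(gY)\cap Z^*$ over $gU$;
\item use (4) to see that the translated Koszul complex pulls back to a resolution, hence to a regular sequence.
\end{itemize}

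Your direct route for (1), (2), (4) is longer but self-contained, and it works over $k$ from the outset, so the open set is visibly defined over $k$ (a point the paper's descent leaves implicit). It also shows that these parts need only flatness of $\mu$, which holds for any $G/G_x$. Smoothness of $H$ enters only in (3), to make $Y_\lci$ dense in $Y$.
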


\begin{proof} This is a version of Kleiman's transversality theorem \cite[Theorem 2]{Kleiman}. The case of   $k$ to be algebraically closed, with minor changes in the statement,  is \cite[Lemma 4]{BS98} and the result in general follows easily from the case of algebraically closed $k$, as we now explain. 

 Let $k\to \bar{k}$ be an algebraic closure. (1) and (2) follow directly from the case of algebraically closed $k$, since given a dense open subset $V\subset G$ such that (1) and (2) are satisfied for all $g\in V(\bar{k})$, the fact that $G$ is $k$-rational implies that $V(k)$ is dense in $V$, and we can consider each $k$-point of $V$ as a $\bar{k}$-point.

 (4) follows similarly from the algebraically closed case, using the fact that $k\to \bar{k}$ is faithfully flat, so we can deduce the vanishing of the Tor-sheaves from their vanishing after base-extension by $k\to \bar{k}$. 
 
 For (3),   since $H$ is smooth over $k$,  the local ring $\sO_{H,x}$  a regular local ring for all $x\in H$ \cite[\href{https://stacks.math.columbia.edu/tag/056S}{tag 056S}]{stacks-project}. In particular, for each generic point $y$ of $Y$, 
 the local ring $\sO_{H, y}$ is a regular local ring, of dimension $i$, so the maximal ideal is generated by a regular sequence of length $i$ and as $Y$ is reduced, it follows that $y$ is in $Y_\lci$. Following Remark~\ref{rem:lci},  $Y_\lci$ is open and dense in $Y$. Thus,  there is an affine open subset $U=\Spec A\subset H$ with $U\cap Y$ dense in $Y$, such that the ideal $I\subset A$ of the closed subscheme $Y\cap U$ of $U$ is generated by a regular sequence $s_1,\ldots, s_i$. By \cite[\S (18.D) Corollary]{Matsumura} the Koszul complex $\text{Kos}_A(s_1,\ldots, s_i)$ defines a finite free resolution of $A/I$ as $A$-module. 
 
 Let $Y_0=Y\setminus Y\cap U$. By (4), there is a Zariski dense open subscheme $U_1$ of $G$ such that $\sO_{gY}$ and $\sO_Z$ are Tor-independent over $\sO_H$ for all $g\in U_1(k)$; by (2) applied to $Y$ and $Y_0$, we may assume that $f^{-1}(g(Y\setminus Y_0))\cap Z^*$ contains each generic point of $f^{-1}(gY)\cap Z^*$. Thus, the Koszul complex $g\cdot \text{Kos}_A(s_1,\ldots, s_i)$ pulls back via $f$ to give a free resolution of $\sO_{f^{-1}(gY)}$ on some affine open neighborhood $V$ of the generic points of $f^{-1}(gY)\cap Z^*$ in $Z$, hence $f^{-1}(gY)$ is defined by the regular sequence $f^*((g\cdot s_1)),\ldots, f^*(g\cdot s_i)$ on $V$ (here $g\in G(k)$ acts on the Koszul complex and the $s_i$ by $(g^{-1})^*(-)$).
 
 Taking $O(f, Y, Z^*, Z_0)$ to be the intersection of the open subsets satisfying (1)-(4) completes the proof.
\end{proof}
We sometimes omit $Z^*$ or $Z_0$ from the notation $O(f, Y, Z^*, Z_0)$ if these are empty.

\begin{lemma}[\hbox{\cite[Lemma 6]{BS98}}]\label{lem:TranslateRelation} Let $G$ be a $k$-rational algebraic group, and $H$ a $G$-homogeneous space. Let  $(X, X^*)$ be an admissible pair and    $f : X \to H$ a morphism. Let $G$ act on $H\times\P^1$ by the trivial
action on $\P^1$ (that is, $g . (y, u) = (gy, u)$). Let $W \subset H \times\P^1$ be a reduced closed subscheme of pure codimension $i$, flat over a neighborhood of $\{0,\infty\}$ in $\P^1$, and let $S\subset X$ be a closed subset of dimension $\le i-2$. For $g\in G(k)$, let
$W(g) := (f \times\id_{\P^1})^{-1}(gW ) \subset X\times\P^1$.
Then there exists a dense open subset $U(f, W, X^*, S) \subset G$ such that for all $g \in U(f,W, X^*,S)(k)$, the following
properties hold.
\begin{enumerate}
\item $W(g)$ is of pure  codimension $i$ in $X\times\P^1$.
\item $W(g)$ intersects $X^* \times\P^1$ and $X^* \times \{0,\infty\}$ properly, and
$W(g)\cap S\times\P^1=\0$.
\item $W(g)$ is a complete intersection in $X\times\P^1$ in a neighborhood of each generic point of
$W(g) \cap X^* \times\P^1$.
\item $W(g)$ is flat over a neighborhood of $\{0,\infty\}\subset\P^1$.
\end{enumerate}
Moreover, $U(f, W, X^*, S)(k)$ is Zariski dense in $U(f, W, X^*, S)$.
\end{lemma}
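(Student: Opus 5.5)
The plan is to reduce everything to Lemma~\ref{lem:TranslateCycle}, applied to the morphism $F:=f\times\id_{\P^1}:X\times\P^1\to H\times\P^1$ and to auxiliary restrictions of it. The space $H\times\P^1$ is a homogeneous space for $G$ with trivial action on $\P^1$, in the sense needed: it is smooth, the isotropy groups are the same, and translation by $G$ is transitive on each fiber $H\times\{u\}$. Lemma~\ref{lem:TranslateCycle} uses transitivity only through Kleiman's theorem, so I would rerun that argument with the group $G$ acting on $H\times \P^1$ fiberwise over $\P^1$. This works because the map $G\times (X\times\P^1)\to H\times\P^1$, $(g,x,u)\mapsto (g^{-1}f(x),u)$, is flat, and so is its restriction over each open $V\times\P^1$ and over $X\times\{0,\infty\}$. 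Generic flatness plus the fiber-dimension theorem over $G$ then gives (1): for general $g$, $W(g)$ is of pure codimension $i$. The same argument, applied to $X^*\times\P^1$ in place of $X\times\P^1$, makes $W(g)\cap X^*\times\P^1$ of codimension $i$ in $X^*\times\P^1$. Applied to $S\times\P^1$, it gives codimension $i$ in a space of dimension $\le i-1$, hence emptiness. This proves (2) except for the statement at $\{0,\infty\}$.

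For the part of (2) at $\{0,\infty\}$, and for (4), I would use the flatness of $W$ over a neighborhood $N$ of $\{0,\infty\}$. The fibers $W_0, W_\infty\subset H$ are then of pure codimension $i$ in $H$, and they are reduced after possibly shrinking $N$, by generic reducedness; in any case they are of pure codimension $i$. Lemma~\ref{lem:TranslateCycle}, applied to $f:X\to H$ with $Y=(W_0)_{\red}$, $(W_\infty)_\red$ and with $Z^*=X^*$, gives open sets of $g$ for which $f^{-1}(gW_u)$ has pure codimension $i$ in $X$ and meets $X^*$ properly. This is exactly proper intersection of $W(g)$ with $X^*\times\{u\}$ for $u\in\{0,\infty\}$. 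For (4), since $W(g)\cap X\times\{u\}$ has codimension $i$ in $X$ while $W(g)$ has codimension $i$ in $X\times\P^1$, no component of $W(g)$ lies in $X\times\{u\}$. However, flatness over a curve needs more: there should be no embedded associated points in the fibers. I would get this from Tor-independence, part (4) of Lemma~\ref{lem:TranslateCycle} applied to $F$ and $W$, together with flatness of $W\to\P^1$ near $u$. Since $\Tor^{\sO_{H\times\P^1}}_{>0}(\sO_{gW},\sO_{X\times\P^1})=0$, we have $\sO_{W(g)}=\sO_{gW}\otimes^L\sO_{X\times\P^1}$. Tensoring over $\sO_{\P^1}$ with $k(u)$ then commutes, and $\Tor_1^{\sO_{\P^1}}(\sO_{W(g)},k(u))$ is computed from $\sO_{gW}\otimes^L_{\sO_{\P^1}}k(u)$, which is concentrated in degree $0$ by the flatness of $gW$. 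This uses the associativity of derived tensor products over the triangle $\P^1\leftarrow H\times\P^1$, $X\times\P^1$, and it yields $\Tor_1^{\sO_{\P^1}}(\sO_{W(g)},k(u))=0$ near $u=0,\infty$, that is, flatness.

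For (3), I would copy the proof of Lemma~\ref{lem:TranslateCycle}(3). $W$ is reduced in the smooth scheme $H\times\P^1$, so $W_\lci$ is dense open (Remark~\ref{rem:lci}). Hence on an affine open $U'$ meeting $W$ densely, $W$ is cut out by a regular sequence of length $i$. Set $W_0':=W\setminus U'$, which has codimension $\ge i+1$. The properness argument above, applied to $W_0'$ and $X^*\times\P^1$, shows that for general $g$ the preimage of $gW_0'$ contains no generic point of $W(g)\cap X^*\times\P^1$. The Koszul complex of $g\cdot(s_1,\ldots,s_i)$ pulls back, by Tor-independence, to a resolution near those generic points, so the pulled-back sequence is regular there.

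Finally, $U(f,W,X^*,S)$ is the intersection of the finitely many dense opens obtained. Its $k$-points are Zariski dense because $G$ is $k$-rational: every dense open of $G$ contains a dense open of $\P^{\dim G}_k$, and $k$ is infinite. As in Lemma~\ref{lem:TranslateCycle}, the case of a general $k$ is reduced to $\bar k$ by the same base-change argument. I expect the main obstacle to be the flatness (4), specifically the Tor bookkeeping linking Tor-independence over $H\times\P^1$ with flatness over $\P^1$.
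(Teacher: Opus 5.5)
Your treatment of (1)--(3) is fine. In one respect it is more careful than the paper, which applies Lemma~\ref{lem:TranslateCycle} directly to $f\times\id_{\P^1}$ even though $G$ is not transitive on $H\times\P^1$; rerunning the argument with the flat map $G\times X\times\P^1\to H\times\P^1$ settles that point.

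\textbf{The gap is in (4).} Tor-independence of $gW$ and $X\times\P^1$ over $H\times\P^1$, together with flatness of $gW$ over $\P^1$, does not imply flatness of $W(g)$. The groups $\Tor^{\sO_{\P^1}}_{\ge1}(\sO_{gW},k(u))$ vanish, as you say. But carrying out your base change correctly, using that $gW$, $H\times\P^1$ and $X\times\P^1$ are all flat over $\P^1$ near $u$, gives
\[
\sO_{W(g)}\otimes^{L}_{\sO_{\P^1}}k(u)\simeq \sO_{gW_u}\otimes^{L}_{\sO_H}\sO_X ,
\]
so that $\Tor_1^{\sO_{\P^1}}(\sO_{W(g)},k(u))\cong \Tor_1^{\sO_H}(\sO_{gW_u},\sO_X)$. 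Flatness of $gW$ only makes the first factor a module; it does not make its derived tensor product with $\sO_X$ over $\sO_H$ a module, and nothing in your open set controls this.

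Here is a concrete failure. Take $H=\A^3$ (acted on by translations), $X=V(x+y+z)$, $X^*=S=\emptyset$, and $f$ the inclusion. Let $W\subset\A^3\times\P^1$ be the closure of $V(y,z)\cup V(x,z-t)\subset\A^3\times\A^1_t$.
\begin{itemize}
\item $W$ is reduced, of pure codimension $2$, and flat over $\P^1$ near $0$ and near $\infty$.
\item Near $t=0$ its ideal is $(xy,xz,yz-yt,z^2-zt)$, so $W_0=V(xy,xz,yz,z^2)$ has an embedded point at the origin.
\item For $g=\id$: $x+y+z$ is a non-zero-divisor on $\sO_W$, so Tor-independence holds. $W(\id)$ is two curves dominating $\A^1_t$, of pure codimension $2$. $W(\id)\cap X\times\{0\}$ is a point.
\end{itemize}
So every condition your open set guarantees holds at $g=\id$. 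Yet in $\sO_{W(\id)}$ one has $t\cdot x=0$ with $x\neq0$, so $W(\id)$ is not flat at $t=0$. The obstruction is exactly $\Tor_1^{\sO_{\A^3}}(\sO_{W_0},\sO_X)\neq0$: the element $z$ is killed by $x+y+z$ in $\sO_{W_0}$.

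\textbf{The fix is what the paper does.} Shrink $U$ further so that $\Tor^{\sO_H}_{>0}(\sO_{gW_u},\sO_X)=0$ for $u=0,\infty$. That is, apply Lemma~\ref{lem:TranslateCycle}(4) to $f$ and the scheme-theoretic fibres $W_0,W_\infty$; the paper phrases this as Lemma~\ref{lem:TranslateCycle}(4) for $f\times\id_{\{0,\infty\}}$.

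This must be done for $W_u$ itself, not for $(W_u)_{\red}$:
\begin{itemize}
\item $W_u$ need not be reduced, as the example shows. Your claim that it becomes reduced after shrinking $N$ is wrong, since $N$ must contain $u$.
\item Tor-vanishing for $(W_u)_{\red}$ says nothing about $W_u$.
\item The Tor part of that lemma does not need reducedness: it is generic flatness over $G$ of the pullback of $W_u$ along the flat map $G\times X\to H$.
\end{itemize}

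With this added, your computation gives $\Tor_1^{\sO_{\P^1}}(\sO_{W(g)},k(u))=0$ at every point of $W(g)$ over $u$. The non-flat locus of $W(g)\to\P^1$ is closed in $W(g)$, and by generic flatness it maps to finitely many closed points of $\P^1$, none equal to $0$ or $\infty$. This gives flatness over a neighbourhood of $\{0,\infty\}$.
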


\begin{proof} The items (1)-(3) follow from  Lemma~\ref{lem:TranslateCycle}, taking
\[
U(f, W, X^*, S)= O(f\times\id_{\P^1},W, X^*\times\P^1, S\times\P^1)\cap 
O(f\times\id_{\{0,\infty\}},W, X^*\times\{0,\infty\}, \0). 
\]
For (4), by generic flatness, $W(g)$ fails to be flat over $\P^1$ on a proper closed subset of $\P^1$, so we need only check flatness at $\{0,\infty\}$. This is a consequence of the  flatness of $W\subset H\times\P^1$ over a neighborhood of $\{0,\infty\}$ and Lemma~\ref{lem:TranslateCycle}(4) for the map $f\times\id_{\{0,\infty\}}$.

One could also reduce to   \cite[Lemma 6]{BS98},  by the same base-change argument we used to reduce the proof of Lemma~\ref{lem:TranslateCycle} to \cite[Lemma 4]{BS98}.
\end{proof}

\begin{lemma}\label{lem:GenPosRelation} Let $(X, X^*)$ be an admissible pair and let $d=\dim_kX$. 
Let $Y\in \Sch^\qp_k$ be smooth over $k$, and let $W\subset Y\times\P^1$ be a reduced closed subscheme of pure codimension $d$. Let $f:X\to Y$ be a morphism. Let $W_f:=(f\times\id)^{-1}(W)$. suppose that
\begin{enumerate}
\item[(a)] $W_f$ is of pure codimension $d$ on $X\times\P^1$.
\item[(b)]  $W_f\subset (X_\CM\amalg X(d-1)^0)\times\P^1$
\item[(c)] $W_f\cap X^* \times\P^1$ is a finite set 
\item[(d)] $(f\times\id)^{-1}(W_\nlci)\cap X^* \times\P^1=\0$
\item[(e)]  $W_f\cap X \times \{0,\infty\}$  is a closed subscheme of $(X\setminus X^*)\times \{0,\infty\}$, finite over $k$.
\end{enumerate}
Then $W_f\subset X\times\P^1$ satisfies the conditions \eqref{eqn:Properties}(iii.) and Remark~\ref{rem:CodimNotDim}(i$'$, ii$'$). Moreover, the 0-cycle
\[
p_{1*}(|W_f|\cdot (X\times\{0\}-X\times\{\infty\})
\]
is in $R^d(X, X^*)$.
\end{lemma}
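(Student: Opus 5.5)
The plan is to check the conditions of Remark~\ref{rem:CodimNotDim} directly, and then reduce membership in $R^d(X,X^*)$ to Theorem~\ref{thm:Relations}.

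\emph{Step 1: conditions (i$'$) and (iii).} Condition (i$'$) is essentially a restatement of (a), (b) and (c). The only point to add is that $W_f\cap X^*\times\{0,\infty\}=\0$, and this follows from (e), since $W_f\cap X\times\{0,\infty\}$ lies in $(X\setminus X^*)\times\{0,\infty\}$. Condition \eqref{eqn:Properties}(iii) is exactly hypothesis (e).

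\emph{Step 2: condition (ii$'$), which is the main point.} Take $w\in W_f\cap (X^*\cap X_\CM)\times\P^1$ and set $y=(f\times\id)(w)$. By (d), $y\notin W_\nlci$, so $y\in W_\lci$. Since $Y\times\P^1$ is smooth and $W$ has pure codimension $d$, in a neighborhood of $y$ the scheme $W$ is cut out by a regular sequence $s_1,\ldots,s_d$; its length is $d$ by Remark~\ref{rem:CartierCurve}(1)-type dimension counting in the regular local ring. Pulling back, $W_f$ is defined near $w$ by $(f\times\id)^*s_1,\ldots,(f\times\id)^*s_d$. The local ring $\sO_{X\times\P^1,w}$ is Cohen--Macaulay, because $X_\CM\times\P^1$ is Cohen--Macaulay. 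By (a), $W_f$ has codimension $d$ at $w$, so these $d$ elements generate an ideal of height $d$, and by \cite[Theorem 31]{Matsumura} they form a regular sequence. Since this holds at each of the finitely many points in question (finitely many by (c)), we can take an open neighborhood of $W_f\cap(X^*\cap X_\CM)\times\P^1$ on which $W_f$ is defined by a regular sequence of length $d$. The subtle point here is that $W_f$ need not be reduced. That causes no trouble: the regular-sequence property is a statement about the scheme defined by the pulled-back equations, and that scheme is exactly the scheme-theoretic preimage $W_f$.

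\emph{Step 3: membership in $R^d(X,X^*)$.} By Remark~\ref{rem:CodimNotDim}, $W:=W_f\cap X(d)\times\P^1$ is a pure one-dimensional closed subscheme satisfying \eqref{eqn:Properties}(i)--(iii). Moreover, $p_{1*}(|W_f|\cdot(X\times\{0\}-X\times\{\infty\}))$ equals the corresponding cycle for $W$, and so it lies in $\tilde R^d(X,X^*)$. Theorem~\ref{thm:Relations} then gives $\tilde R^d(X,X^*)=R^d(X,X^*)$, which finishes the argument.

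I expect the only real obstacle to be the bookkeeping in Step 2. One has to verify that the components supported over $X(d-1)^0$ play no role: they are excluded from (ii$'$), and they do not contribute to the pushforward cycle because they are zero-dimensional.
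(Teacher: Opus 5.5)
Your proposal is correct and follows the paper's proof essentially step for step. You obtain (i$'$) and \eqref{eqn:Properties}(iii) from the hypotheses. You get (ii$'$) by using (d) to pull back a length-$d$ regular sequence defining $W$ near a point of $W_\lci$, and then applying \cite[Theorem 31]{Matsumura} in the Cohen--Macaulay local ring $\sO_{X\times\P^1,w}$. The final claim comes from Remark~\ref{rem:CodimNotDim} together with Theorem~\ref{thm:Relations}. You also correctly note that the condition $W_f\cap X^*\times\{0,\infty\}=\0$ in (i$'$) needs (e), not only (a)--(c) as the paper's proof states, which is a small but correct refinement.
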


\begin{proof} By Remark~\ref{rem:lci}, $W_\lci$ is dense in $W$.

Remark~\ref{rem:CodimNotDim}(i$'$) follows from (a), (b) and (c). \eqref{eqn:Properties}(iii.) is (e). 

For Remark~\ref{rem:CodimNotDim}(ii$'$) let $W_{f\CM}=W_f\cap X_\CM\times\P^1$.  It follows from (b) that $W_{f\CM}$ is closed in $X\times\P^1$ and is supported in the Cohen-Macaulay locus of $X\times\P^1$. Take $x\in W_{f\CM}\cap X^* \times\P^1$, and let $\sI_{W_f,x}\subset \sO_{X\times\P^1,x}$ be the ideal of $W_f$ in the local ring $\sO_{X\times\P^1,x}$. By (d),  $y:=(f\times\id)(x)$ is in $W_\lci$, so $W$ is defined by a regular sequence of length $d$, $s_1,\ldots, s_d$,  in a Zariski open neighborhood  of $y$ in $Y\times\P^1$, giving the generators $f^*(s_1),\ldots, f^*(s_d)$ of $\sI_{W_f,x}$. Since $W_f$ has pure codimension $d$ and $\sO_{X\times\P^1,x}$ is Cohen-Macaulay of dimension $\ge d$, this implies that $f^*(s_1),\ldots, f^*(s_d)$ is a regular sequence in $\sO_{X\times\P^1,x}$; this verifies Remark~\ref{rem:CodimNotDim}(ii$'$).

The fact that $p_{1*}(|W_f|\cdot (X\times\{0\}-X\times\{\infty\})$ is in $R^d(X, X^*)$ follows from Remark~\ref{rem:CodimNotDim} and Theorem~\ref{thm:Relations}.
\end{proof}

A version of the following result is proven in \cite[Lemma 7]{BS98}, and an even closer version is done in \cite[\S 5]{BKS}, for the case $G=\prod_{i=1}^r\GL_{n_i}$ (which is the only case we need anyway) with essentially the same proof as below. 

\begin{lemma}\label{lem:Parametrize} Let $(X, X^*)$ be an admissible pair and  let $d=\dim_kX$. Let $G$ be a   $k$-rational  linear algebraic group over $k$, $H$ a quasi-projective homogeneous space for $G$, $Y\subset H$ a reduced closed subscheme of pure codimension $d$ and $f:X\to H$ a morphism. Then there is a dense open subscheme $B$ of $G\times G$, a dominant  morphism   $v:B\times G\to G\times G$, and a reduced closed subscheme $\Sigma\subset B\times G\times H\times \P^1$ such that \\[5pt]
i. The projections $p_{12}:\Sigma\to B\times G$   and $p_4:\Sigma\to \P^1$ are both flat, \\[2pt]
ii. For $(b, g)\in B\times G$, the scheme-theoretic fiber $\Sigma_{b,g}$ is reduced and of pure codimension $d$ in $(H\times\P^1)_{k(b,g)}$.\\[2pt]
iii. For $(b,g)\in (B\times G)$, write $ v(b,g))=(g_1, g_2)\in G\times_kG$. Then we have the identity of fibers 
\[
\Sigma_{b,g}\otimes_{\sO_{\P^1}}k(0)=g_1\cdot Y,\ \Sigma_{b,g}\otimes_{\sO_{\P^1}}k(\infty)=g_2\cdot Y.
\]
iv. Let $G$ act on $B\times G\times H\times \P^1$ by $g'\cdot (b,g, h,t)=(b,g'\cdot g, g'\cdot h, t)$. Then $\Sigma$ is stable under this $G$-action and $(g'\Sigma)_{b,g}=\Sigma_{b, g'g}$. 
\end{lemma}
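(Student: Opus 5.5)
The plan is to build $\Sigma$ from a rational curve in $G$ joining two group elements, using the $k$-rationality of $G$. Fix a dense open $U_0\subset G$ and an open immersion $\phi:U_0\hookrightarrow\P^n$, $n=\dim_kG$. For a pair $(a_1,a_2)\in U_0\times U_0$ with $\phi(a_1)\neq\phi(a_2)$, take the line $\ell_{a_1,a_2}\subset\P^n$ through $\phi(a_1),\phi(a_2)$, parametrized by $\lambda_{a_1,a_2}:\P^1\to\P^n$ with $0\mapsto\phi(a_1)$ and $\infty\mapsto\phi(a_2)$. This rational map $\P^1\dashrightarrow G$ is defined on $\lambda^{-1}(\phi(U_0))$, which contains $0$ and $\infty$.

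Let $B\subset U_0\times U_0$ be the dense open set where $\phi(a_1)\ne\phi(a_2)$, and define $v:B\times G\to G\times G$ by $v(b,g)=(ga_1,ga_2)$. Then $v$ is dominant, since already $B\times\{e\}$ maps onto the dense set $B$. Let $\Gamma\subset B\times\P^1\times G$ be the closure of the graph of $(b,t)\mapsto\phi^{-1}\lambda_b(t)$. Over the open set $\Gamma^0$ where this map is defined, define $\Sigma^0\subset B\times G\times H\times\P^1$ by
\[
\Sigma^0=\{(b,g,h,t)\mid g^{-1}h\in\gamma_b(t)\cdot Y\},
\]
where $\gamma_b(t)=\phi^{-1}\lambda_b(t)$. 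Then take $\Sigma$ to be the closure of $\Sigma^0$.

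Equivalently, $\Sigma$ is the image of the closed subscheme
\[
B\times G\times \Gamma'\times Y,
\]
where $\Gamma'$ is the incidence variety, under the action map. The action map is an isomorphism given by $(b,g,t,y)\mapsto(b,g,g\gamma_b(t)y,t)$. Property (iv) is then immediate, because left multiplication by $g'$ acts only on the $g$-coordinate and on $h$. The fiber identities (iii) follow from $\gamma_b(0)=a_1$ and $\gamma_b(\infty)=a_2$. At $t=0$ and $t=\infty$ the fibers are translates of $Y$, which are reduced of pure codimension $d$.

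For (i) and (ii), the key observation is that over the locus where $\gamma_b(t)$ is defined, $\Sigma$ is isomorphic, via $(b,g,h,t)\mapsto(b,g,t,\gamma_b(t)^{-1}g^{-1}h)$, to an open subscheme of $B\times G\times\P^1\times Y$. So there $\Sigma$ is reduced and flat over $B\times G\times\P^1$, hence over $B\times G$ and over $\P^1$, with every fiber a translate of $Y$. The difficulty is that $\gamma_b$ need not be defined on all of $\P^1$, because the line may leave $\phi(U_0)$. This is the main obstacle.

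To handle it, I will first try to avoid extension altogether. Following \cite[Lemma 7]{BS98} and \cite[\S 5]{BKS}, in the case $G=\prod\GL_{n_i}$ one takes $U_0=G$ inside $\prod M_{n_i}\subset\P^n$ and replaces $\Sigma$ by the closure in $H\times\P^1$ of the translates over the affine part of the line. The points where the line meets $\det=0$ are finitely many values of $t$, and these avoid $0$ and $\infty$. I will then take $\Sigma$ to be the closure. Reducedness of $\Sigma$ holds because $\Sigma$ is the closure of a reduced scheme. For flatness over $\P^1$, it suffices that $\Sigma$ is integral on each component and dominates $\P^1$, since $\P^1$ is a Dedekind scheme: $\Sigma$ has no component lying over a point. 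For flatness over $B\times G$ together with (ii), I will use generic flatness and shrink $B$, invoking $G$-equivariance (iv) so that $G$ need not be shrunk. Fibers over a dense open of $B\times G$ then have the expected pure codimension $d$ in $H\times\P^1$, and they are geometrically reduced after shrinking, since the generic fiber is reduced; in characteristic $p$ I would check geometric reducedness using that the fibers over $t\neq$ bad points are translates of $Y$. After shrinking $B$, I still need to verify that $v$ remains dominant, and this is automatic.
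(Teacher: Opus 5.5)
Your construction is the same as the paper's. You take a line in a rational chart of $G$ through $\phi(a_1),\phi(a_2)$, let $\Sigma$ be the closure of $\{(b,g,h,t)\mid h\in g\gamma_b(t)Y\}$, set $v(b,g)=(ga_1,ga_2)$, and shrink $B$ while using $G$-equivariance.

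The paper first shrinks $B$ so that the curve extends to a morphism $p:B\times\P^1\to\bar G$, and then refers to \cite{BKS} for (i)--(iv). You skip the extension. That is harmless: the locus where $\gamma$ is defined is open and contains $B\times\{0,\infty\}$, and every component of $\Sigma^0$ meets the preimage of any dense open of it. So the closure and the fibres over $0,\infty$ are unchanged.

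You then argue (i) and the dimension part of (ii) directly:
\begin{itemize}
\item $\Sigma$ is reduced, being the closure of $\Sigma^0$, which is isomorphic to an open subscheme of $B\times G\times\P^1\times Y$.
\item $\Sigma$ is flat over $\P^1$ because it is reduced and every component dominates a Dedekind base.
\item $\Sigma$ is flat over some $B'\times G$ by generic flatness plus equivariance.
\item The fibres then have pure codimension $d$ because $\Sigma$ is equidimensional.
\end{itemize}
All of this is fine. None of it uses $G=\prod_i\GL_{n_i}$, so that specialization is unnecessary.

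The step that fails as written is reducedness of the fibres in (ii). Reducedness of the generic fibre of $\Sigma\to B\times G$ does not spread out to nearby fibres. What spreads out (EGA IV, 9.7.7, constructibility of the geometrically-reduced-fibre locus) is geometric reducedness of the generic fibre. Equivariance then upgrades the resulting dense open to one of the form $B'\times G$. This is also the only thing that controls the finitely many bad values of $t$, where your ``translate of $Y$'' description says nothing.

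Since $k(B\times G)$ is purely transcendental over $k$, the generic points of the generic fibre have residue fields $\mathrm{Frac}\bigl(k(B\times G)\otimes_k k(Y_i)(t)\bigr)$. These are separable over $k(B\times G)$ if and only if $k(Y_i)$ is separable over $k$. So your argument works exactly when $Y$ is geometrically reduced, which is automatic if $k$ is perfect.

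If $Y$ is merely reduced, the good part of $\Sigma_{b,g}$ is an open piece of $\P^1\times_k Y$ base-changed to $k(b,g)$. This is non-reduced at closed points $(b,g)$ with a suitable inseparable residue field, and such points lie in every dense open of $B\times G$. So (ii), asserted for all $(b,g)$, requires the extra hypothesis that $Y$ be geometrically reduced. The paper's sketch, which defers to \cite{BKS}, does not address this point either.
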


\begin{proof} In case $k$ is algebraically closed, this is proven in the beginning of the proof of \cite[Lemma 7]{BS98}, except that $B$ is not shown to be an open subscheme of $G\times G$; this is because it is not assumed in {\it loc. cit.} that $G$ is $k$-rational. We give here a construction of $B$, following the ideas in the proof of \cite[Lemma 7]{BS98} and the construction in \cite[\S 5.2]{BKS}. 

Let $d=\dim_k G$. Let $\bar{G}$ be a projective closure of $G$. By assumption, there is a dense open subscheme $U_1\subset G$ with $U_1$ isomorphic to a dense open subscheme of $\A^d\subset \P^d$, via an open immersion $j:U_1\to \A^d$.  We have the morphism
\[
\lambda:(U_1\times U_1\setminus \Delta_{U_1})\times\P^1\to \P^d
\]
sending $(g_1, g_2)\times\P^1$ to the line $\ell_{j(g_1), j(g_2)}\subset \P^d$ spanned by $j(g_1)\neq j(g_2)\in \P^d$; the restriction of $\lambda$ to $(g_1, g_2)\times\P^1$ is uniquely determined by  the identities
\[
\lambda((g_1, g_2), 0)=j(g_1),\ \lambda((g_1, g_2), \infty)=j(g_2),\ \lambda((g_1, g_2), 1)=\ell_{j(g_1), j(g_2)}\cap(\P^d\setminus\A^d).
\]

Over $V:=\lambda^{-1}(j(U_1))$, we can lift $\lambda$ to a morphism
\[
\lambda_V:V\to U_1, j\circ\lambda_V=\lambda|_V.
\]
For $\eta$ the generic point of $U_1\times U_1$,   $\eta\times \P^1\cap V$ is a dense open subscheme of $\eta\times\P^1$, hence the restriction of  $\lambda_V$ to $\eta\times \P^1\cap V$ extends uniquely to a morphism $\lambda_\eta:\eta\times\P^1\to \bar{G}$. Thus, there is a dense open subscheme $B$ of $U_1\times U_1\setminus\Delta_{U_1}$ such that the restriction of $\lambda_V$ to $B\times\P^1\cap V$ extends to a morphism
\[
p:B\times\P^1\to \bar{G}.
\]

By construction, $p(B\times\{0,\infty\})\subset U_1\subset G$, with
\[
p((g_1, g_2)\times\{0\})=g_1,\ p((g_1, g_2)\times\{\infty\})=g_2
\]
for $(g_1, g_2)\in B\subset U_1\times U_1$. Moreover the projections $p_1, p_2:B\to U_1$ both have dense image and since $B\subset U_1\times U_1$ and $U_1$ is an open subset of an affine space, this implies that $p_1, p_2:B(k)\to U_1(k)$ both have dense image as well.

Let $\Sigma\subset B\times G\times H\times\P^1$ be the closure of 
\[
\Sigma^0:=\{(b, g, h, t)\in B\times G\times H\times\P^1\mid p(b,t)\in G, h\in g\cdot p(b,t)\cdot Y\}
\]
with projection $\pi:\Sigma\to G\times B\times \P^1$ mapping $(b, g, h, t)$ to $(g, b,t)$. We have the open subscheme $p^{-1}(G)\subset B\times\P^1$, giving the open subscheme 
$G\times p^{-1}(G)$ of $G\times B\times\P^1$, with
\[
\pi(\Sigma^0)=G\times p^{-1}(G)
\]
and with $\Sigma^0$ closed in $\pi^{-1}(G\times p^{-1}(G))$. Moreover, for $(b,t)\in p^{-1}(G)$, we have
\[
\pi^{-1}(g,b,t)=\{(b,g)\}\times g\cdot p(b,t)\cdot Y\times \{t\}\cong g\cdot p(b,t)\cdot Y,
\]
the isomorphism as schemes over $k(g,b,t)$. 

Let $\tau$ be the automorphism of $G\times G\times G$ with $\tau(g_1, g_2, g)=(g, g_1, g_2)$, and define the  morphism $v:B\times G\to G\times G$ as the composition
\[
B\times G\hookrightarrow (G\times G)\times G\xrightarrow{\tau}G\times (G\times G)\xrightarrow{(\mu\circ p_{12}, \mu\circ p_{13})}G\times G
\]
that is 
\[
v(b, g)=v((g_1, g_2), g)=(gg_1, gg_2).
\]

In case $G$ is a product of general linear groups, the open immersion $\GL_n\subset M_{n\times n}=\A^{n^2}$ writes $G$ as an open subscheme of an affine space $\A^N$, so one can take $U_1=G$, $\bar{G}=\P^N$; this is the setting used in \cite{BKS}. The arguments used to prove  \cite[\S 3.3, Lemmas 5.6, 5.7, 5.9]{BKS} carry over with minor changes to give a proof of Lemma~\ref{lem:Parametrize}; note that we may need to shrink $B$ so that (i)-(iv) all hold.
\end{proof}

\begin{proposition}[\hbox{\cite[Lemma 7]{BS98}}]\label{prop:TranslationPullback}
Let $(X, X^*)$ be an admissible pair and  let $d=\dim_kX$.  Let $H$ be a quasi-projective homogeneous space for a strongly $k$-rational linear algebraic group $G$, and let
$Y$ be a reduced closed subscheme  of $H$ of pure codimension $d$. Let  $f:X \to H$ be a morphism. There is a dense open subset $V(f, Y, X^*)$ of $G$  such that, for all $g_1, g_2\in V(f,Y, X^*)(k)$, we have
\begin{enumerate}
\item  The subschemes $f^{-1}(g_1Y)$, $f^{-1}(g_2)(Y)$ of $X$  are of pure codimension $d$, supported in $X\setminus X^*$. 
\item Letting $[f^{-1}(g_iY)]\in \CH^d(X, X^*)$ denote the class of the zero-cycle 
\[
|f^{-1}(g_iY)|\in Z^d(X\setminus X^*),\ i=1,2,
\] 
we have
\[
[f^{-1}(g_1Y)]=[f^{-1}(g_2Y)]\in \CH^d(X, X^*).
\]
\end{enumerate}
\end{proposition}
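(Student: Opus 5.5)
The plan is to connect the two translates $g_1Y$ and $g_2Y$ by the family $\Sigma$ of Lemma~\ref{lem:Parametrize}, pull back a general member of this family to $X\times\P^1$, and read off the relation via Lemma~\ref{lem:GenPosRelation}, which places the resulting cycle in $R^d(X,X^*)$ by Theorem~\ref{thm:Relations}.

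\textbf{Step 1: the open set where (1) holds.} Define $V(f,Y,X^*)$ first as a dense open subset of $O(f,Y,X^*,S)$ from Lemma~\ref{lem:TranslateCycle}. Here $S$ is chosen to contain $X(d)\setminus X_\CM$ and $X(d)\cap X(\le d-1)$, and we further impose that $f^{-1}(gY)\cap X^*=\0$. This is possible because $X^*$ has dimension $<d$, and since $f^{-1}(gY)$ has pure codimension $d$ and meets $X^*$ properly, the intersection is empty. For $g$ in this set, $f^{-1}(gY)$ is then a pure codimension $d$ subscheme supported in $X\setminus X^*$, which gives (1).

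\textbf{Step 2: the family and its pullback.} Take $B$, $v$ and $\Sigma$ from Lemma~\ref{lem:Parametrize}. For $(b,g)\in B\times G$, set $W_{b,g}:=\Sigma_{b,g}\subset H\times\P^1$. This is reduced of pure codimension $d$, and its fibers over $0$ and $\infty$ are $g_1Y$ and $g_2Y$, where $(g_1,g_2)=v(b,g)$. By Lemma~\ref{lem:Parametrize}(iv), $W_{b,g}=g\cdot W_{b,1}$. We apply Lemma~\ref{lem:TranslateRelation} to $W_{b,1}$, taking as exceptional set the locus $S$ of dimension $\le d-2$. We also apply Lemma~\ref{lem:TranslateCycle} to the non-lci locus $(W_{b,1})_\nlci$, which has codimension $\ge d+1$ in $H\times\P^1$. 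Together these give a dense open set of $g$ for which $W_f=(f\times\id)^{-1}(gW_{b,1})$ has the following properties: it has pure codimension $d$, it avoids $S\times\P^1$, it meets $X^*\times\P^1$ in a finite set, it avoids $(f\times\id)^{-1}(gW_\nlci)\cap X^*\times\P^1$ entirely, and its fibers over $0$ and $\infty$ avoid $X^*$. These are exactly hypotheses (a)--(e) of Lemma~\ref{lem:GenPosRelation}. Hypothesis (b) follows from avoiding $S$. The conclusion is that
\[
p_{1*}(|W_f|\cdot(X\times\{0\}-X\times\{\infty\}))=|f^{-1}(g_1Y)|-|f^{-1}(g_2Y)|\in R^d(X,X^*).
\]
This equality uses the flatness from Lemma~\ref{lem:TranslateRelation}(4) and the Tor-independence from Lemma~\ref{lem:TranslateCycle}(4), which identify the intersection with the fibers over $0$ and $\infty$ with the cycles $|f^{-1}(g_iY)|$.

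\textbf{Step 3: spreading out over the parameter space (the main obstacle).} The genericity conditions in Step 2 depend on $b$, so we need an open set uniform in the family. We run the generic-translation arguments relatively over $B$ to get a dense open $\Omega\subset B\times G$ on which all the conditions hold. Then $v(\Omega)$ contains a dense open subset of $G\times G$, because $v$ is dominant. This is the point where we need the open set to be of product form, and I expect it to be the hardest step. Strong $k$-rationality enters here: $v(b,g)=(gg_1,gg_2)$, so we must realize a prescribed pair $(g_1',g_2')$ with $k$-points $g\in G(k)$ and $b\in B(k)$. The equation $g^{-1}g_1'\in p_1(B)$ requires $k$-points in a product of open sets, which $U(k)*V(k)=(U*V)(k)$ supplies.

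To avoid needing every pair in $V\times V$ to lie in $v(\Omega)$, I would use a chaining argument. Shrink $V(f,Y,X^*)$ so that, for each $g_1'\in V(k)$, the set of $g'$ with $(g_1',g')\in v(\Omega)(k)$ is dense open. Then for given $g_1,g_2\in V(k)$, choose a common $k$-point $g'$ linked to both. Applying Step 2 twice then yields
\[
[f^{-1}(g_1Y)]=[f^{-1}(g'Y)]=[f^{-1}(g_2Y)].
\]
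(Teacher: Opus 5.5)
Your proposal is correct and is essentially the paper's argument: pull back the family $\Sigma_{b,g}$ of Lemma~\ref{lem:Parametrize}, use Lemmas~\ref{lem:TranslateRelation} and \ref{lem:GenPosRelation} together with Theorem~\ref{thm:Relations} to put $|f^{-1}(g_1Y)|-|f^{-1}(g_2Y)|$ in $R^d(X,X^*)$ for $(b,g)$ in a dense open subset of $B\times G$, push that open set forward by $v$, and lift $k$-points. The differences are minor: the paper gets the uniform open set by showing that the good locus $T\subset B\times G$ is constructible and contains a dense open subset of each slice $b\times G$ with $b\in B(k)$, where you sketch a relative Kleiman argument instead; and the paper chains every $g_i$ through a single fixed $g_0\in G(k)$, taking $V(f,Y,X^*)$ to be the slice $\{g : (g_0,g)\in v(U)\}$, where you choose a common neighbor $g'$ for each pair.
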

 
\begin{proof} Our proof closely follows the proof of \cite[Lemma 7]{BS98} and the discussion in \cite[\S 6.1, 6.2]{BKS}.

From Lemma~\ref{lem:TranslateCycle} we have the dense open subset $O(f, Y, X^*,\0)$ of $G$ such that, for all $h_1, h_2\in O(f, Y, X^*,\0)(k)$, the closed subschemes  $f^{-1}(h_1Y)$ and $f^{-1}(h_2Y)$ of $X$  are of pure codimension $d$, supported in $X\setminus X^*$. Indeed,  by Lemma~\ref{lem:TranslateCycle}(1), $f^{-1}(h_iY)$ has pure codimension $d$ for $i=1,2$. Since each component of $X^*$ has dimension $\le d-1$,  $f^{-1}(h_iY)\cap X^*=\0$ by Lemma~\ref{lem:TranslateCycle}(2b).

For $(b,g)\in B\times G$, recall from Lemma~\ref{lem:Parametrize} that the fiber $\Sigma_{b,g}$ is a reduced closed subscheme of $(H\times\P^1)_{k(b,g)}$. Let $\widetilde{\Sigma}_{b,g}:=(f\times\id)^{-1}(\Sigma_{b,g})$.

 Recall the dominant morphism $v:B\times G\to G\times G$ of Lemma~\ref{lem:Parametrize} and
let $T\subset v^{-1}(O(f, Y, X^*,\0)\times O(f, Y, X^*,\0))\subset B\times G$ be the subset consisting of points $(b,g)$ such that
\begin{enumerate}
\item[(a)] $\widetilde{\Sigma}_{b,g}$ is of pure codimension $d$ on $(X\times\P^1)_{k(b,g)}$.
\item[(b)]  $\widetilde{\Sigma}_{b,g}\subset (X_\CM\amalg X(d-1)^0)\times\P^1_{k(b,g)}$
\item[(c)] $\widetilde{\Sigma}_{b,g}\cap X^* \times\P^1_{k(b,g)}$ is a finite set 
\item[(d)] $(f\times\id)^{-1}((\Sigma_{b,g})_\nlci)\cap X^* \times\P^1_{k(b,g)}=\0$
\item[(e)]  $\widetilde{\Sigma}_{b,g}\cap X_{k(b,g)} \times \{0,\infty\}$  is a closed subscheme of $(X\setminus X^*)_{k(b,g)}\times \{0,\infty\}$, finite over $k(b,g)$.
\end{enumerate}

As noted in \cite[Lemma 6.1]{BKS} and in the  proof of \cite[Lemma 7]{BS98},   $T$ is a constructible subset of $B\times G$. Indeed, let 
\[
\tilde{\Sigma}=(\id_{B\times G}\times f\times\id_{\P^1})^{-1}(\Sigma)\subset B\times G\times X\times\P^1. 
\]
Then conditions (a), (b), (c), (e) can be phrased as saying that the fiber of  
$\tilde{\Sigma}\cap B\times G\times F$ has dimension $\le r$ for suitable closed subsets $F$ of $X\times\P^1$ and integers $r$, a constructible condition. Condition (d) is similar, replacing $\Sigma$ with $\Sigma_\nlci$,  replacing $\tilde{\Sigma}$ with $(\id_{B\times G}\times f\times\id_{\P^1})^{-1}(\Sigma_\nlci)$ and taking $F=X^* \times\P^1$, $r=-1$. This shows that (d) defines a constructible condition, since   $(\Sigma_{b,g})_\nlci= (\Sigma_\nlci)_{b,g}$ because $\Sigma\to B\times G$ is flat.

By Theorem~\ref{thm:Relations} and Lemma~\ref{lem:GenPosRelation},   the 0-cycle
\[
p_{1*}(|\widetilde{\Sigma}_{b,g}|\cdot(X\times\{0\}-X\times\{\infty\})
\]
is a well-defined element of $R^d(X,X^*)$ for all $(b,g)\in T(k)$. Moreover,  if $v(b,g)=(g_1, g_2)$, we have
\begin{equation}\label{eqn:ParamRelation}
|f^{-1}(g_1Y)|-|f^{-1}(g_2Y)|=p_{1*}(|\widetilde{\Sigma}_{b,g}|\cdot (X\times\{0\}-X\times\{\infty\})
\end{equation}
hence $|f^{-1}(g_1Y)|-|f^{-1}(g_2Y)|$ is in $R^d(X, X^*)$.

We claim that $T$ contains an open dense subset $U$ of $B\times G$. To see this,  let $S=X\setminus  (X_\CM\amalg X(d-1)^0)$; following Remark~\ref{rem:Prelims}, we see that $\dim_kS\le d-2$. Now take an arbitrary $(b,g)\in B(k)\times G(k)$, giving $\Sigma_{b,g}\subset H\times \P^1$. Applying Lemma~\ref{lem:TranslateRelation}, we have the open subset $U(f, \Sigma_{b,g}, X^*, S)$ of $G$ such that for all $g'\in U(f, \Sigma_{b,g}, X^*, S)$, $(f\times\id)^{-1}(g'\Sigma_{b,g})$ satisfies the conditions (a)-(e) above. But since $g'\Sigma_{b,g}=\Sigma_{b, g'g}$, it follows that $T\cap b\times G$ contains the open subscheme $b\times U(f, \Sigma_{b,g}, X^*,S)\cdot g$, and since $T$ is constructible and $B(k)$ is Zariski dense in $B$, it follows that $T$ must contain a dense open subset $U$ of $B\times G$.

Since $v:B\times G\to G\times G$ is an open morphism, we have the dense open subset $v(U)\subset G\times G$. Fix a $g_0\in G(k)$ such that $g_0\times G\cap v(U)$ is open in $g_0\times G$ and let 
\[
V(f, Y, X^*)=p_2(g_0\times G\cap v(U)). 
\]
Note that the map $v:U(k)\to v(U)(k)$ is surjective, since $G$ is strongly $k$-rational. For $g_1, g_2\in V(f,Y, X^*)(k)$, choose $(b_i, h_i)\in U(k)$ with $v(b_i, h_i)=(g_0, g_i)$, $i=1,2$. Then by \eqref{eqn:ParamRelation}, the cycles
\[
|f^{-1}(g_0Y)|, |f^{-1}(g_1Y)|, |f^{-1}(g_2Y)|
\]
are all well-defined 0-cycles in $Z^d(X\setminus X^*)$ and
\[
[f^{-1}(g_iY)]=[f^{-1}(g_0Y)]\in \CH^d(X, X^*),\ i=1,2,
\]
hence $[f^{-1}(g_1Y)]=[f^{-1}(g_2Y)]$, which completes the proof.
\end{proof}

\begin{theorem}\label{thm:HomogFunct} Let $(X, X^*)$ be an admissible pair and  let $d=\dim_kX$.   Let $H$ be a quasi-projective homogeneous space for a strongly $k$-rational linear algebraic group $G$, and let $f:X\to H$ be a morphism. There is a group homomorphism $f^*:\CH^d(H)\to \CH^d(X, X^*)$ such that,  given $z\in Z^d(H)$ with class $[z]\in \CH^d(H)$, there a dense open subset $U_{f,z}$ of $G$ such that for all $g\in U_{f,z}(k)$,   we have
\begin{enumerate}\label{thm:1}
\item[(a)] $f^{-1}(\supp(g\cdot z))\subset X\setminus X^*$,
\item[(b)] $f^{-1}(\supp(g\cdot z))$ has pure codimension $d$ on $X$ and the cycle-theoretic pull-back $f^*(g\cdot z)\in Z^d(X\setminus X^*)$ is defined. 
\item[(c)] $[f^*(g\cdot z)]=f^*([z])\in \CH^d(X,X^*)$.
\end{enumerate}
\end{theorem}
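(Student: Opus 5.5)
We define $f^*$ first on the cycle level, using general translates, and then check that it is independent of all choices and kills rational equivalence on $H$. For an integral closed subscheme $Y\subset H$ of codimension $d$, Proposition~\ref{prop:TranslationPullback} gives a dense open $V(f,Y,X^*)\subset G$. For $g\in V(f,Y,X^*)(k)$, the cycle $|f^{-1}(gY)|$ lies in $Z^d(X\setminus X^*)$, and its class in $\CH^d(X,X^*)$ does not depend on $g$. Lemma~\ref{lem:TranslateCycle}(4) gives Tor-independence for $g$ in a smaller dense open set, so $|f^{-1}(gY)|$ equals the intersection-theoretic pullback $f^*(g\cdot Y)$; indeed Serre's Tor formula reduces to the length of $\sO_{f^{-1}(gY)}$, since $X\setminus X^*$ is regular. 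We set $f^*([Y]):=[f^{-1}(gY)]$ and extend linearly to $Z^d(H)$. For $z=\sum n_iY_i$ we take $U_{f,z}$ to be the intersection of the opens attached to the $Y_i$; it is nonempty and dense because $G$ is irreducible, and it has dense $k$-points by $k$-rationality. Then (a), (b) hold by Lemma~\ref{lem:TranslateCycle}(1),(2), using $\dim X^*<d$, and (c) holds by construction.

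The main step is to show that $f^*$ kills $R^d(H)$. Rational equivalence on the smooth variety $H$ is generated by cycles $W\cdot(H\times 0-H\times\infty)$, where $W\subset H\times\P^1$ is integral of codimension $d$ and dominant over $\P^1$, hence flat over $\P^1$. By Lemma~\ref{lem:TranslateRelation}, applied with $S=X\setminus(X_\CM\amalg X(d-1)^0)$, which has $\dim S\le d-2$ by Remark~\ref{rem:Prelims}, there is a dense open of $g\in G$ for which $W(g)=(f\times\id)^{-1}(gW)$ has pure codimension $d$ and meets $X^*\times\P^1$ properly, hence in finitely many points. It also misses $X^*\times\{0,\infty\}$ and $S\times\P^1$, and it is lci near the points over $X^*$. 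We shrink this open so that $(f\times\id)^{-1}((gW)_\nlci)$ misses $X^*\times\P^1$; this is possible by Lemma~\ref{lem:TranslateCycle}(2) applied to $W_\nlci$, which has codimension $\ge d+1$. Conditions (a)--(e) of Lemma~\ref{lem:GenPosRelation} then hold, so $p_{1*}(|W(g)|\cdot(X\times0-X\times\infty))\in R^d(X,X^*)$. Flatness at $0,\infty$ and Tor-independence identify this cycle with $|f^{-1}(gW_0)|-|f^{-1}(gW_\infty)|$, where $W_0,W_\infty$ are the fibres of $W$ over $0,\infty$, regarded as cycles on $H$. For $g$ also lying in $U_{f,W_0}\cap U_{f,W_\infty}$, these represent $f^*([W_0])$ and $f^*([W_\infty])$, so the two classes agree.

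Independence of the choice of $g$ and additivity are then formal, since any two dense opens of $G$ meet in $k$-points. The main obstacle is the careful genericity bookkeeping in the second step: we need a single translate $g$ that simultaneously makes $W(g)$ admissible in the sense of \eqref{eqn:Properties} and Remark~\ref{rem:CodimNotDim}, and makes both fibre pullbacks generic enough for their classes to be computed via Proposition~\ref{prop:TranslationPullback}. We expect this to be handled by intersecting finitely many dense opens, together with the density of $k$-points coming from $k$-rationality.
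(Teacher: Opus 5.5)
Your proposal is correct and follows essentially the same route as the paper: define $f^*$ on $Z^d(H)$ via generic translates using Proposition~\ref{prop:TranslationPullback}, then show it kills rational equivalence by pulling back a generic translate of a cycle $W$ on $H\times\P^1$ and invoking Lemmas~\ref{lem:TranslateRelation} and~\ref{lem:GenPosRelation}. The differences are minor. You check vanishing on the generators $[W_0]-[W_\infty]$ of the rational-equivalence subgroup of $Z^d(H)$, rather than choosing a $W$ that realizes $g\cdot z-g'\cdot z'$; this lets you bypass the triviality of the $G(k)$-action on $\CH^d(H)$. You also spell out the non-lci and Tor-independence bookkeeping that the paper summarizes as ``follows easily.''
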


\begin{proof} Write $z=\sum_{i=1}^nm_iY_i$ with the $Y_i$ codimension $d$ integral closed subschemes of $H$. We have the dense open subsets $V(f, Y_i, X^*)\subset G$ given by Proposition~\ref{prop:TranslationPullback}. Let $U_{f,z}=\cap_{i=1}^nV(f, Y_i, X^*)$, and choose $g\in U_{f,z}(k)$. Then (a), (b) for $g\cdot z$ follow from Proposition~\ref{prop:TranslationPullback}, giving us the well-defined cycle $f^*(g\cdot z)\in Z^d(X\setminus X^*)$. Moreover, for each $i$, the class $[f^*(g\cdot Y_i)]\in \CH^d(X, X^*)$ is independent of the choice of $g\in U_{f,z}(k)$, hence the same follows for the class $[f^*(g\cdot z)]\in \CH^d(X, X^*)$. Thus, sending $z\in Z^d(H)$ to $[f^*(g\cdot z)]\in \CH^d(X, X^*)$ for some $g\in U(f,z)(k)$ gives  a well-defined group homomorphism
\[
\tilde{f}^*:Z^d(H)\to \CH^d(X, X^*).
\]

Now suppose we have a second codimension $d$ cycle $z'=\sum_{i=1}^{n'}m'_iY'_i$, and take $g'\in U_{f,z'}$, giving the well-defined  cycle $f^*(g'\cdot z')\in Z^d(X\setminus X^*)$, with class $[f^*(g'\cdot z')]\in \CH^d(X, X^*)$ independent of the choice of $g'$. Supposing that $[z]=[z']\in \CH^d(H)$, we claim that 
\begin{equation}\label{eqn:RationalEquivPullback}
[f^*(g\cdot z)] = [f^*(g'\cdot z')]\in \CH^d(X, X^*),
\end{equation}
which will show that $\tilde{f}^*$ descends to a well-defined group homomorphism
\[
f^*:\CH^d(H)\to \CH^d(X, X^*),
\]
proving (c). 

Since $G$ is $k$-rational, the action of $G(k)$ on $H$ induces the trivial action on $\CH^d(H)$, so $[z]=[z']\in \CH^d(H)$ implies that $[h\cdot z]= [h'\cdot z']$ for all $h, h'\in G(k)$. To verify \eqref{eqn:RationalEquivPullback}, choose a codimension $d$ cycle $W\subset H\times\P^1$ with
\[
g\cdot z-g'z'=p_{1*}(W\cdot (H\times\{0\}-H\times\{\infty\})).
\]
It follows easily from Lemmas~\ref{lem:TranslateRelation}, \ref{lem:GenPosRelation} that there is a dense open subset $U(f, W, X^*)\subset G$ such that, for all $g''\in U(f, W, X^*)(k)$, the pullback cycle $((g''\cdot f)\times\id_{\P^1})^*(W)$ is a well-defined codimension $d$ cycle on $X\times\P^1$, the intersection products
\[
p_{1*}((f\times\id_{\P^1})^*((g''\times\id)\cdot W)\cdot (X\times\{0\})),\ 
p_{1*}((f\times\id_{\P^1})^*((g''\times\id)\cdot W)\cdot (X\times\{\infty\}))
\]
are well defined and supported in $X\setminus X^*$, and the cycle
\[
p_{1*}((f\times\id_{\P^1})^*((g''\times \id)W)\cdot (X\times\{0\}-X\times\{\infty\}))
\]
is in $R^d(X, X^*)$. We may take $g''\in U(f, W, X^*)(k)$ such that $g''g\in U_{f, z}(k)$ and $g''g'\in U_{f, z'}(k)$, so
\[
p_{1*}((f\times\id_{\P^1})^*((g''\times \id)W)\cdot (X\times\{0\}-X\times\{\infty\})) = f^*(g''gz)-f^*(g''g'z')
\] 
and thus
\[
[f^*(gz)]=[f^*(g''gz)]=[f^*(g''g'z')]=[f^*(g'z')]\in \CH^d(X, X^*),
\]
completing the proof.
\end{proof}

\begin{remark} \label{rem:SmoothCase}Theorem~\ref{thm:HomogFunct} is a partial generalization of a classical result in intersection theory.

Let $f:Y\to Z$ be a morphism of a smooth $k$-schemes. We have the well-defined group homomorphism
\[
f^*:\CH^d(Z)\to \CH^d(Y).
\]
Moreover, if $z\in Z^d(Z)$ is a codimension $d$ cycle such that $f^{-1}(\supp(z))$ is a pure codimension $d$ closed subset of $Y$, then the cycle-theoretic pullback $f^*(z)\in Z^d(Y)$ is defined and $f^*([z])=[f^*(z)]\in \CH^d(Y)$ (see \cite[\S 20.4]{Fulton}).

If we now take $Z=H$, a homogeneous space for a $k$-rational linear algebraic group $G$, then $G(k)$ acts trivially on $\CH^d(H)$, that is $[z]=[g\cdot z]\in \CH^d(H)$ for $z\in Z^d(H)$, $g\in G(k)$
Moreover, by Kleiman's transversality theorem \cite[Theorem 2]{Kleiman}, there is a dense open subset $U_{f, z}\subset G$ such that for $g\in U_{f, z}(k)$, $f^{-1}(\supp(gz))$ has pure codimension $d$ on $Y$, so $f^*(g\cdot z)$ is a well-defined codimension $d$ cycle on $Y$. Thus,  by the above discussion, we have
\[
f^*([z])=f^*([g\cdot z])=[f^*(g\cdot z)].
\]
\end{remark}

\begin{proposition}\label{prop:HomogFunctoriality} Let $(X, X^*)$ be an admissible pair and  let $d=\dim_kX$. Suppose we have a homomorphism of strongly $k$-rational linear algebraic groups $\rho:G_1\to G_2$, quasi-projective homogeneous spaces $H_i$ for $G_i$, $i=1,2$, a morphism $p:H_1\to H_2$ that is  $G_1$-equivariant via $\rho$, and finally, a morphism $f:X\to H_1$. Then the diagram
\[
\xymatrix{
\CH^d(H_2)\ar[rr]^{p^*}\ar[dr]_{(pf)^*}&&\CH^d(H_1)\ar[dl]^{f^*}\\
&\CH^d(X, X^*)
}
\]
commutes.
\end{proposition}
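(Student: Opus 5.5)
To prove $f^*p^*=(pf)^*$, I would compare both sides on a single integral generator $Y\subset H_2$ of codimension $d$ and compute both pullbacks using translates by $G_1(k)$, so that one group element serves for both. Since $\CH^d(H_2)$ is generated by such classes $[Y]$, this suffices.

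First I pick a translate that computes $(pf)^*[Y]$. The map $pf:X\to H_2$ is a morphism to a $G_2$-homogeneous space. By Theorem~\ref{thm:HomogFunct}, there is a dense open $U_{pf,Y}\subset G_2$ such that for $g_2\in U_{pf,Y}(k)$ we have $(pf)^*[Y]=[(pf)^*(g_2Y)]$. The translates I can actually use, however, come from $G_1$ acting through $\rho$. So I would redo the transversality argument with $G_1$ in place of $G_2$. The point is that $H_2$ is a homogeneous space for $G_1$ only if $\rho$ is suitably surjective, so I cannot simply quote the results with $G_1$.

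The cleaner route is to compute $p^*[Y]$ first. By Remark~\ref{rem:SmoothCase} and Kleiman's theorem applied to $p:H_1\to H_2$, for $g_2$ in a dense open subset of $G_2$ we get a cycle $p^*(g_2Y)$ of pure codimension $d$ with $p^*[Y]=[p^*(g_2Y)]$. Then, by Theorem~\ref{thm:HomogFunct} for $f$, there is a dense open $U_{f,p^*(g_2Y)}\subset G_1$ such that for $g_1$ in it, $f^*p^*[Y]=[f^*(g_1\cdot p^*(g_2Y))]$. Equivariance gives
\[
g_1\cdot p^{-1}(g_2Y)=p^{-1}(\rho(g_1)g_2Y),
\]
and likewise for the cycles, so the representative is $f^*p^*(\rho(g_1)g_2Y)$. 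Under the properness conditions in Theorem~\ref{thm:HomogFunct}(a),(b), associativity of cycle-theoretic pullback turns this into $(pf)^*(\rho(g_1)g_2Y)$. That associativity holds because the Tor-independence of Lemma~\ref{lem:TranslateCycle}(4) identifies everything with the structure-sheaf pullback of $\sO_{gY}$ through $X\to H_1\to H_2$, whose fibre product is $(pf)^{-1}(gY)$.

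It remains to show that $[(pf)^*(hY)]=(pf)^*[Y]$ for $h=\rho(g_1)g_2$. This is where the main obstacle lies. I would choose $g_2$ generically: the set of pairs $(g_1,g_2)$ with $g_2$ in Kleiman's open set and $g_1\in U_{f,p^*(g_2Y)}$ contains a dense open subset of $G_1\times G_2$, by constructibility as in the proof of Proposition~\ref{prop:TranslationPullback}. The map $(g_1,g_2)\mapsto\rho(g_1)g_2$ is dominant, indeed surjective on $k$-points, since for fixed $g_1$ it is a translation of $G_2$. Hence I can pick $(g_1,g_2)$ with $h\in U_{pf,Y}(k)$. I would first try fixing $g_1=e$ in a suitable open set and letting $g_2$ vary, intersecting with $\rho(g_1)^{-1}U_{pf,Y}$. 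This would give both identities for the same cycle on $X\setminus X^*$, and hence equality in $\CH^d(X,X^*)$.
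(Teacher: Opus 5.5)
Your argument is essentially the paper's proof. You compute $p^*[Y]$ via a $G_2$-translate $g_2$ and then $f^*$ of that via a $G_1$-translate $g_1$. Equivariance, $g_1\cdot p^{-1}(g_2Y)=p^{-1}(\rho(g_1)g_2Y)$, together with associativity of cycle pullback, then identifies the representative with $(pf)^*(\rho(g_1)g_2Y)$, once $\rho(g_1)g_2\in U_{pf,Y}(k)$.

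The paper simply asserts that this last choice can be made, and you do not need your family-constructibility claim, which is unjustified since $U_{f,z'}$ is built with auxiliary choices. Fixing $g_1=e$ also gets the quantifiers backwards, because the admissible $g_1$ depend on $g_2$. Instead, first take $g_2\in (U_{p,Y}\cap U_{pf,Y})(k)$, with $U_{p,Y}$ Kleiman's open set for $p$. Then $\rho^{-1}(U_{pf,Y}\,g_2^{-1})$ is an open subset of the irreducible $G_1$ containing $e$. It therefore meets the dense open $U_{f,p^*(g_2Y)}$ in a dense open subset with dense $k$-points, and you pick $g_1$ there.
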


\begin{proof} Take a codimension $d$ cycle $z\in Z^d(H_2)$. Applying Remark~\ref{rem:SmoothCase} to the morphisms $p$, we have the dense open subset $U_{p,z}\subset G_2$ such that for all $g_2\in  U_{p,z}(k)$ $p^{-1}(\supp(g\cdot z))$ has pure codimension $d$ on $H_1$, giving the well-defined codimension $d$ cycle $p^*(g_2\cdot z)$ on $H_1$, and we have
\[
p^*[z]=[p^*(g_2\cdot z)]\in \CH^d(H_1).
\]

Applying Theorem~\ref{thm:HomogFunct} to the morphism $f$, we see that for each $g_1\in U_{f, p^*(g_2\cdot z)}(k)$, the cycle $f^*(g_1\cdot p^*(g_2\cdot z))$ is a well-defined 0-cycle on $X$, supported in $X\setminus X^*$, and 
\[
[f^*(g_1\cdot p^*(g_2\cdot z))]=f^*([p^*(g_2\cdot z))]).
\]
We may assume that $\rho(g_1)g_2$ is in $U_{pf, z}(k)\cap U_{p,z}(k)$, so $(pf)^*(\rho(g_1)g_2\cdot z)$ is a well-defined 0-cycle on $X$, supported in $X\setminus X^*$,  with
\[
(pf)^*([z])=[(pf)^*(\rho(g_1)g_2\cdot z)]\in \CH^d(X, X^*),
\]
and $p^*(\rho(g_1)g_2\cdot z)$ is a well-defined codimension $d$ cycle on $H_1$, with
\[
p^*[z]=[p^*(\rho(g_1)g_2\cdot z)]\in \CH^d(H_1).
\]

We have the identity of 0-cycles on $X\setminus X^*$
\[
(pf)^*(\rho(g_1)g_2\cdot z)=f^*(p^*(\rho(g_1)g_2\cdot z))=f^*(g_1\cdot p^*(g_2\cdot z)),
\]
which gives the identity of classes in $\CH^d(X, X^*)$
\begin{align*}
(pf)^*([z])&=[(pf)^*(\rho(g_1)g_2\cdot z)]\\
&=[f^*(p^*(\rho(g_1)g_2\cdot z))]\\
&=[f^*(g_1\cdot p^*(g_2\cdot z))]\\
&=f^*([p^*(g_2\cdot z)])\\
&=f^*(p^*([z])).
\end{align*}
\end{proof}

\begin{proposition} \label{prop:PullbackFunct3} Let $(X, X^*)$, $(Y, Y^*)$ be admissible pairs with   $\dim_kX=\dim_kY$, and let $q:Y\to X$ be a morphism; we suppose  the conditions (i)-(iii) of Lemma~\ref{lem:EtalePullback} are satisfied for $q$, giving the pullback map 
\[
q^*:\CH^d(X, X^*)\to \CH^d(Y, Y^*),\ d=\dim_kX.
\]
Let $f:X\to H$ be a morphism to a homogeneous space $H$ for a strongly $k$-rational linear algebraic group $G$,  as in Theorem~\ref{thm:HomogFunct}. Then all the morphisms in the diagram
\[
\xymatrix{
&\CH^d(H)\ar[dl]_{f^*}\ar[dr]^{(fq)^*}\\
\CH^d(X, X^*)
\ar[rr]^{q^*}&&\CH^d(Y, Y^*)\ 
}
\]
are  defined, and the diagram commutes.
\end{proposition}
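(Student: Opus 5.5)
The maps $f^*$ and $(fq)^*$ exist by Theorem~\ref{thm:HomogFunct}, applied to the admissible pairs $(X,X^*)$ and $(Y,Y^*)$ with the morphisms $f:X\to H$ and $fq:Y\to H$. The map $q^*$ exists by Lemma~\ref{lem:EtalePullback}(3). So everything is defined, and it remains to prove commutativity. I will check it on a class $[z]$ with $z\in Z^d(H)$, by choosing a single translate $g\cdot z$ that computes both $f^*([z])$ and $(fq)^*([z])$ at the level of cycles.

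Take $z=\sum_i m_iY_i$. Theorem~\ref{thm:HomogFunct} gives dense open subsets $U_{f,z}$ and $U_{fq,z}$ of $G$. Since $G$ is $k$-rational, $G(k)$ is dense, so we may choose $g\in (U_{f,z}\cap U_{fq,z})(k)$. For this $g$:
\begin{enumerate}
\item[(a)] $f^*(g\cdot z)\in Z^d(X\setminus X^*)$ is a well-defined cycle with class $f^*([z])$;
\item[(b)] $(fq)^*(g\cdot z)\in Z^d(Y\setminus Y^*)$ is a well-defined cycle with class $(fq)^*([z])$.
\end{enumerate}
By Lemma~\ref{lem:EtalePullback}(2), $q^*$ on cycles is flat pullback along $q^{-1}(X\setminus X^*)\to X\setminus X^*$. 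So the statement reduces to the identity of cycles
\[
q^*\bigl(f^*(g\cdot z)\bigr)=(fq)^*(g\cdot z)\quad\text{in } Z^d(Y\setminus Y^*),
\]
after possibly shrinking the open set from which $g$ is chosen.

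To prove this identity, first compare supports. By (a), $(fq)^{-1}(gY_i)=q^{-1}(f^{-1}(gY_i))$ lies in $q^{-1}(X\setminus X^*)$, which is contained in $Y\setminus Y^*$ by Lemma~\ref{lem:EtalePullback}(1). Everything therefore happens over the open set $X\setminus X^*$, which is regular, and $q$ is flat over it. The cycle $f^*(gY_i)$ is the cycle $|f^{-1}(gY_i)|$ of a scheme $Z_i$. The pullbacks involved are justified as follows.
\begin{enumerate}
\item[(c)] By the Tor-independence in Lemma~\ref{lem:TranslateCycle}(4), possibly after shrinking the set of allowed $g$ to the intersection with $O(f,Y_i)$ and $O(fq,Y_i)$, $|Z_i|$ equals the intersection-theoretic pullback.
\item[(d)] Likewise $(fq)^*(gY_i)=|(fq)^{-1}(gY_i)|=|q^{-1}(Z_i)|$.
\item[(e)] Flat pullback of the cycle associated to a closed subscheme equals the cycle associated to its scheme-theoretic inverse image \cite[Lemma 1.7.1]{Fulton}. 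Hence $q^*|Z_i|=|q^{-1}Z_i|$.
\end{enumerate}
Summing over $i$ with the multiplicities $m_i$ gives the identity of cycles, and passing to classes gives $q^*f^*([z])=(fq)^*([z])$.

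The main obstacle is the identification of the cycle-theoretic pullbacks with the fundamental cycles of scheme-theoretic inverse images, that is, making sure the multiplicities agree. On $X\setminus X^*$ and on $Y\setminus Y^*$, which are regular, the proper intersection multiplicity is Serre's Tor formula. Tor-independence, which holds for general $g$, collapses this formula to the length of $\sO_{Z_i}$, respectively of $\sO_{q^{-1}Z_i}$. Flatness of $q$ then preserves both Tor-vanishing and lengths, because lengths multiply by the lengths of the fibers, which is exactly the definition of flat pullback. I would verify this compatibility locally at each point of the finite set $q^{-1}(Z_i)$.
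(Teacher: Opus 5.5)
Your proof is correct and follows the paper's argument. Like the paper, you choose a single $g\in (U_{f,z}\cap U_{fq,z})(k)$ and reduce commutativity to the cycle identity $q^*(f^*(g\cdot z))=(fq)^*(g\cdot z)$ in $Z^d(Y\setminus Y^*)$. The paper simply asserts that identity from the pure-codimension condition on supports and the functoriality of cycle pullback; you justify the multiplicities explicitly, via Tor-independence and Fulton's Lemma 1.7.1 for flat pullback of the cycle of a subscheme, which is a sound and welcome elaboration.
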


\begin{proof} Note that $(fq)^*:\CH^d(H)\to \CH^d(Y, Y^*)$, $f^*:\CH^d(H)\to \CH^d(X, X^*)$ are defined following Theorem~\ref{thm:HomogFunct}, and $q^*:\CH^d(X, X^*)\to \CH^d(Y, Y^*)$ is defined following Lemma~\ref{lem:EtalePullback}. For $z\in Z^d(H)$, take $g\in U_{z,f}(k)\cap U_{z,fq}(k)$, so the cycle pullbacks $f^*(g\cdot z)\in Z^d(X\setminus X^*)$ and $(fq)^*(g\cdot z)\in Z^d(Y\setminus Y^*)$ are defined and
\[
f^*([z])=[f^*(g\cdot z)]\in \CH^d(X, X^*),\ (fq)^*([z])=[(fq)^*(g\cdot z)]\in \CH^d(Y, Y^*).
\]
Moreover $f^{-1}(\supp(g\cdot z))$ has pure codimension $d$ on $X$ and $(fq)^{-1}(\supp(g\cdot z))$ has pure codimension $d$ on $Y$. Thus $q^{-1}(\supp(f^*(g\cdot z))\subset (fq)^{-1}(\supp(g\cdot z))$ also has pure codimension $d$ on $Y$, so $q^*(f^*(g\cdot z))$ is defined and 
$q^*(f^*(g\cdot z))=(fq)^*(g\cdot z)\in Z^d(Y\setminus Y^*)$. 
Since $q^*([f^*([z])])=[q^*(f^*(g\cdot z))]\in \CH^d(Y, Y^*)$, we thus have
\[
q^*([f^*([z])])=(fq)^*([z])
\]
in $\CH^d(Y, Y^*)$, as claimed.
\end{proof}

\begin{proposition}\label{prop:BCCompat} Let $k\hookrightarrow F$ be an extension of fields, let $(X, X^*)$ be an admissible pair over $k$, and let $d=\dim_kX$. We suppose the conditions of Lemma~\ref{lem:base-change} are satisfied, giving the pullback map $p_X^*:\CH^d(X, X^*)\to \CH^d(X_F, X_F^*)$.\\[5pt]
1. Let $f:X\to H$ be a morphism to a homogeneous space $H$ for a strongly $k$-rational linear algebraic group $G$, giving the base-change $f_F:X_F\to H_F:=H\times_kF$. Then $G_F:G\times_kF$ is a strongly $F$-rational linear algebraic group over $F$ and  $H_F$ is a homogeneous space for $G_F$. Moreover, letting $p_H:H_F\to H$, $p_X:X_F\to X$ be the projections, the diagram
\[
\xymatrix{
\CH^d(H)\ar[r]^{f^*}\ar[d]^{p_H^*}&\CH^d(X, X^*)\ar[d]^{p_X^*}\\
\CH^d(H_F)\ar[r]^{f_F^*}&\CH^d(X_F, X_F^*)
}
\]
commutes.   \\[2pt]
2. Let  $(Y, Y^*)$ be an admissible pair over $k$ with $\dim_kY=\dim_kX$ and let $f:Y\to X$ be a morphism satisfying the conditions of Lemma~\ref{lem:EtalePullback}, giving the pullback map $f^*:\CH^d(X, X^*)\to \CH^d(Y, Y^*)$. Suppose that $(X_F, X_F^*)$ and  $(Y_F, Y_F^*)$ satisfy the conditions of Lemma~\ref{lem:base-change}, giving the pullback maps $p_X^*:\CH^d(X, X^*)\to \CH^d(X_F, X_F^*)$, $p_Y^*:\CH^d(Y, Y^*)\to \CH^d(Y_F, Y_F^*)$. Then  $f_F:Y_F\to X_F$ satisfies the conditions of Lemma~\ref{lem:EtalePullback}, and the diagram
\[
\xymatrix{
\CH^d(X, X^*)\ar[r]^{f^*}\ar[d]^{p_X^*}&\CH^d(Y, Y^*)\ar[d]^{p_Y^*}\\
\CH^d(X_F, X_F^*)\ar[r]^{f_F^*}&\CH^d(Y_F, Y_F^*)
}
\]
commutes, where the vertical maps are the base-change maps of Lemma~\ref{lem:base-change}.
\end{proposition}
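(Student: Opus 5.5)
Both parts reduce to a comparison of cycle-level pullbacks. In each case the base-change map $p_X^*$ of Lemma~\ref{lem:base-change} (and $p_Y^*$) is flat pullback on cycles, and flat pullback is compatible with the other cycle-level operations involved. The plan is to choose representatives over $k$ for which every relevant pullback is defined on cycles, and then compare.

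For (1), I first check the structural claims. $G_F$ is smooth and geometrically connected. The dense open $U\subset G$ isomorphic to an open of $\P^{\dim G}_k$ base-changes to an open $U_F\subset G_F$ of the same kind, so $G_F$ is $F$-rational. For strong $F$-rationality I would not try to transfer the property from $G$ directly, since opens of $G_F$ need not be defined over $k$. Instead I would use that in all our applications $G$ is a product of $\GL_n$'s, and invoke Remarks~\ref{rem:Star}(2) via Hilbert 90 over $F$. If strong $F$-rationality must hold in full generality, I would flag it as an extra hypothesis. $H_F$ is smooth with $H_F(F)\supset H(k)\neq\0$. The isotropy group $(G_F)_x=(G_x)_F$ is reduced for $x\in H(k)$, hence geometrically reduced because it is a smooth group; points of $H_F(F)$ not coming from $k$ I handle via homogeneity.

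Next, for $z\in Z^d(H)$ I take $g\in U_{f,z}(k)$, using that $U_{f,z}(k)$ is dense. Then $f^{-1}(\supp(gz))$ has pure codimension $d$ and lies in $X\setminus X^*$. Base change preserves pure codimension and the condition $f_F^{-1}(\supp(g z_F))\subset X_F\setminus X_F^*$. Since $p_X$ is flat, $p_X^*f^*(gz)=f_F^*(g\cdot p_H^*z)$ as cycles.

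It remains to identify $[f_F^*(g\cdot z_F)]$ with $f_F^*([z_F])$. Theorem~\ref{thm:HomogFunct} gives this only for $g$ in a dense open $U_{f_F,z_F}\subset G_F$, which need not contain $g$. This is the main obstacle. To handle it I would choose $g\in U_{f,z}(k)\cap U_{f_F,z_F}(F)$. This set is nonempty because $U_{f_F,z_F}\cap (U_{f,z})_F$ is a dense open of $G_F$ and $G(k)$ is Zariski dense in $G_F$; the latter holds since $G$ is $k$-rational and $k$ is infinite. With this choice, $f_F^*(p_H^*[z])=[f_F^*(g z_F)]=p_X^*[f^*(gz)]=p_X^*f^*[z]$.

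For (2), I first verify conditions (i)–(iii) of Lemma~\ref{lem:EtalePullback} for $f_F$:
\begin{itemize}
\item preimages commute with base change, which gives (i);
\item flatness over $X\setminus X^*$ is stable under base change, which gives (ii);
\item the sets $S_F=p_X^{-1}(S)$ and $U_F$ work for (iii), since codimension is preserved under field extension and $f_F(Y_F^*)\subset p_X^{-1}(f(Y^*))$.
\end{itemize}
Then for $z\in Z^d(X\setminus X^*)$, functoriality of flat pullback on cycles gives $p_Y^*f^*z=(f\circ p_Y)^*z=(p_X\circ f_F)^*z=f_F^*p_X^*z$. This holds at the cycle level on the open sets where everything is flat, so it descends to the Chow groups.
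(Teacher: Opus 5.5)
Your argument is essentially the paper's proof for both parts. In (1) you establish the cycle-level identity $p_X^*f^*(g\cdot z)=f_F^*(g\cdot p_H^*z)$ from compatibility of pullback with flat base change; the paper phrases this via lci pullback on $X_\reg$ and \cite[Theorem 6.2]{Fulton}. You then choose one $g\in G(k)$ that is good both for $(f,z)$ over $k$ and for $(f_F,z_F)$ over $F$. In (2) you use flat base change and functoriality of flat pullback, as the paper does. Your justification for the existence of $g$ is cleaner than the paper's: you intersect $(U_{f,z})_F$ with $U_{f_F,z_F}$ and use that $G(k)$ is Zariski dense in $G_F$, since $G$ is $k$-rational and $k$ is infinite. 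The paper only asserts a dense open $U\subset U_{f,z}$ all of whose $k$-points land in $U_{f_F,z_F}$.

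The paper's proof does not treat the structural assertions about $G_F$ and $H_F$, and two points in your handling of them need correcting.

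First, strong $F$-rationality needs no extra hypothesis. In the argument of Remark~\ref{rem:Star}(2), the torsor $\mu^{-1}(x)$ always has the $F$-point $(1,x)$, so it is trivial without Hilbert 90. The argument then uses only that $G_F$ is $F$-rational, which you showed, and that $F$ is infinite.

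Second, the step ``$(G_x)_F$ is reduced, hence geometrically reduced because it is a smooth group'' is circular. Reducedness of $G_x$ over an imperfect field does not give smoothness, and the claim that $H_F$ is homogeneous in the paper's sense can genuinely fail when $F/k$ is inseparable. For example, take $t\in k\setminus k^p$ and let $G=\mathbb{G}_a^2$ act on $H=\mathbb{G}_a$ by $(x,y)\cdot h=h+x^p-ty^p$. The stabilizer $\{x^p=ty^p\}$ is integral over $k$, but it becomes non-reduced over $F=k(t^{1/p})$.

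So this part needs the stabilizers to be smooth. Smoothness at one point then propagates to all $F$-points, since stabilizers are conjugate over $\bar F$, which also takes care of your vague ``homogeneity'' step. Smooth stabilizers do hold in the only cases the paper uses: products of $\GL_n$'s acting on products of projective spaces, Grassmannians and flag varieties, where the stabilizers are parabolic.
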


\begin{proof} For (1), if we have a cycle $z\in Z^d(H)$ such that $f^{-1}(\supp z)$ has codimension $d$ on $X$ and is contained in $X_\reg$, then the base-change $p_H^*(z)$ is in $Z^d(H_F)$ and $f_F^{-1}(\supp(p_H^*(z)))$ also has codimension $d$ on $X_F$. Thus the pull-backs $f^*(z)$, $f_F^*(p_H^*(z))$ are defined. Moreover, the restriction of $f$ to  $f:X_\reg\to H$ is an lci-morphism, since a locally closed immersion $X_\reg\hookrightarrow \P^N_k$ is a regular embedding, by \cite[Proposition 19.1.1]{EGAIV4}.  Thus $f_F:(X_\reg)_F\to H_F$ is also an lci morphism and 
 \[
p_X^*(f^*(z))=f_F^*(p_H^*(z))
\]
as cycles on $X_F$, using  \cite[Theorem 6.2, Theorem 6.6]{Fulton} and the fact that lci pullback is induced by the cycle-theoretic pullback when the latter is defined \cite[\S 7.1, \S 20.4]{Fulton}.

On the other hand, given an arbitrary  $y=\sum_in_iY_i\in Z^d(H)$ with the $Y_i$ integral, there is a dense open subset $U$ of $U_{f,y}:=\cap_iV(f,Y_i,X^*)$ such that for $g\in U(k)$, if we consider $g$ as in $G(F)=G_F(F)$ via the map $\Spec F\to \Spec k$ induced by $k\subset F$, $g$ is in $U_{f_F, p_H^*(y)}:=\cap_iV(f_F,(Y_i\times_kF)_\red,X^*)$. Thus $f^*([y])=[f^*(g\cdot y)]$, $f_F^*([p_H^*(y)])=[f_F^*(g\cdot p_H^*(y))]$,  and hence
\begin{multline*}
f_F^*(p_H^*([y]))=f_F^*([p_H^*(y)])=[f_F^*(g\cdot p_H^*(y))]=[f_F^*( p_H^*(g\cdot y))]\\
=[p_X^*(f^*(g\cdot y))]=p_X^*([f^*(g\cdot y)])=p_X^*(f^*([y])).
\end{multline*}

For (2), it is an easy consequence of well-known properties of flat base-change that $f_F:Y_F\to X_F$ satisfies the conditions of Lemma~\ref{lem:EtalePullback} since $f:Y\to X$ does.  Then (2) follows by the functoriality of cycle pullback for compositions of flat morphisms. \end{proof}

For $(X, X^*)$ an admissible pair with $X$ of dimension $d$, and a morphism $f:X\times\P^1\to H$, we now show that the pullback of a suitable translate of a reduced codimension $d$ closed subscheme $W$ on $H$ gives us a rational equivalence
\[
p_{1*}(|f^{-1}(gW)|\cdot(X\times\{0\}-X\times\{\infty\})\in R^d(X, X^*).
\]

\begin{proposition}\label{prop:RatEquivPullback}
Let $(X, X^*)$ be an admissible pair and  let $d=\dim_kX$. Let $H$ be a quasi-projective homogeneous space for a strongly $k$-rational linear algebraic group $G$,  let $f:X\times\P^1\to H$ be a morphism and let $W\subset H$ be a reduced closed subscheme of pure codimension $d$. Then there is a dense open subset $U^*_{f,W, X^*}$ of $G$ such that, for all $g\in U^*_{f,W, X^*}(k)$, we have
\begin{enumerate}
\item $f^{-1}(g\cdot W)$ is a  closed subscheme of $X\times\P^1$ of pure codimension $d$,   supported in $(X_\CM\amalg X(d-1)^0)\times\P^1$.
\item $f^{-1}(g\cdot W)\cap (X\times\{0,\infty\})$ is a closed subscheme  of pure codimension $d$, supported in $(X\setminus X^*)\times\{0,\infty\}$.
\item $f^{-1}(g\cdot W)\cap (X^*\times\P^1)$ is a finite set, and $f^{-1}(g\cdot W)$ is defined by a regular sequence (of length $d$) in a neighborhood of $f^{-1}(W)\cap X^*\times\P^1$.
\item Let $|gW|\in Z^d(H)$ denote the cycle associated to the closed subscheme $gW$, and let $f_0, f_\infty:X\to H$ be the restriction of $f$ to $X\times\{0\}$, $X\times\{\infty\}$, respectively. The cycle-theoretic pull-backs $f^*(|gW|)$, $f_0^*(|gW|)$ and $f_\infty^*(|gW|)$ are all defined, and 
$f_0^*(|gW|)$ and $f_\infty^*(|gW|)$ are supported in $X\setminus X^*$.
\item For $i=0,\infty$,  the cycle class $[f^*_i(|gW|)]\in \CH^d(X, X^*)$ satisfies
\[
[f^*_i(|g\cdot W|)]=f_i^*([W])\in\CH^d(X, X^*),
\]
where $f_i^*:\CH^d(H)\to \CH^d(X, X^*)$ is the pullback map of Theorem~\ref{thm:HomogFunct}
\item $p_{1*}(|f^{-1}(gW)|\cdot(X\times\{0\}-X\times\{\infty\})=f_0^*(|gW|)-f_\infty^*(|gW|)$, and this 0-cycle 
is in $R^d(X, X^*)$.
\end{enumerate}
\end{proposition}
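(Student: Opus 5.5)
The plan is to obtain $U^*_{f,W,X^*}$ as a finite intersection of dense open subsets of $G$, each supplied by one of the Kleiman-type transversality results, Lemma~\ref{lem:TranslateCycle} and Lemma~\ref{lem:TranslateRelation}. The two required rational-equivalence statements then follow from Lemma~\ref{lem:GenPosRelation} (hence Theorem~\ref{thm:Relations}) and from Theorem~\ref{thm:HomogFunct}.

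Set $S:=X\setminus(X_\CM\amalg X(d-1)^0)$. By Remark~\ref{rem:Prelims}, $\dim_kS\le d-2$. Apply Lemma~\ref{lem:TranslateCycle} to the morphism $f:X\times\P^1\to H$ and the reduced subscheme $W$ of pure codimension $d$, with $Z^*=X^*\times\P^1$ and $Z_0=S\times\P^1$; note $\dim_kZ_0\le d-1<d$. This gives an open set $O_1$. For $g\in O_1(k)$:
\begin{itemize}
\item $f^{-1}(gW)$ has pure codimension $d$ and misses $S\times\P^1$, which is (1).
\item $f^{-1}(gW)$ meets $X^*\times\P^1$ properly. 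Since $\dim X^*\times\P^1\le d$, the intersection is finite.
\item $f^{-1}(gW)$ is lci at the generic points of that intersection, which are all of its points. Together with the Cohen--Macaulay property at points of $X_\CM$, this gives the regular sequence of length $d$ near $f^{-1}(gW)\cap X^*\times\P^1$, which is (3).
\end{itemize}
To also get the condition of Lemma~\ref{lem:GenPosRelation}(d), apply Lemma~\ref{lem:TranslateCycle} once more with $Y$ replaced by each component of $W_\nlci$ (codimension $>d$) and $Z_0=X^*\times\P^1$, giving an open set $O_1'$; for $g\in O_1'(k)$, $f^{-1}(gW_\nlci)\cap X^*\times\P^1=\0$.

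Next, apply Lemma~\ref{lem:TranslateCycle} to $f_0$ and $f_\infty$ with $Z^*=X^*$, getting opens $O_0,O_\infty$. Since $\dim X^*<d$, properness forces $f_i^{-1}(gW)\cap X^*=\0$. With Tor-independence this gives (2), and with (4) of that lemma, that the cycle-theoretic pullbacks $f^*(|gW|)$, $f_0^*(|gW|)$, $f_\infty^*(|gW|)$ are defined; this is (4).

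For (5), intersect further with the open sets $U_{f_0,|W|}$ and $U_{f_\infty,|W|}$ of Theorem~\ref{thm:HomogFunct}. By Theorem~\ref{thm:HomogFunct}(c), for $g$ there, $[f_i^*(g\cdot|W|)]=f_i^*([W])$. Take $U^*_{f,W,X^*}$ to be the intersection of all these opens. It is dense, and by $k$-rationality of $G$ it has dense $k$-points.

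For (6), first establish the cycle identity
\[
p_{1*}(|f^{-1}(gW)|\cdot X\times\{i\})=f_i^*(|gW|),\qquad i=0,\infty.
\]
Because $f^{-1}(gW)$ meets $X\times\{i\}$ properly, the intersection is finite and supported off $X^*$. Over the open set $X_\reg\times\P^1$ containing it, everything is lci, so the identity follows from associativity of intersection products (Fulton, Ch.~7--8) together with Tor-independence for the pullback of $gW$ along both $f$ and $f_i=f\circ i_i$. Membership in $R^d(X,X^*)$ then follows from Lemma~\ref{lem:GenPosRelation} applied with $Y=H\times\P^1$, the morphism $(f,p_2):X\times\P^1\to H\times\P^1$, and the cycle $gW\times\P^1$ pulled back. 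Alternatively, one can argue directly: hypotheses (a)--(e) of that lemma are exactly (1)--(3), the lci condition secured via $O_1'$, and (2).

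The main obstacle is this last identity together with checking that the Lemma~\ref{lem:GenPosRelation} framework applies, since $f$ is not of the product form $f'\times\id$. I would handle it by replacing $f$ with the graph morphism $(f,p_2):X\times\P^1\to H\times\P^1$ and $W$ with $W\times\P^1$. The multiplicity comparison uses Tor-independence and the Cohen--Macaulay structure near $X\times\{0,\infty\}$.
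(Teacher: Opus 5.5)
Your proposal is correct and follows essentially the paper's route. The paper takes $U^*_{f,W,X^*}=U(f,W,X^*,S)\cap U_{f_0,|W|}\cap U_{f_\infty,|W|}$ with $S=X\setminus(X_\CM\amalg X(d-1)^0)$, gets (1) and (3) from Lemma~\ref{lem:TranslateRelation}, (2) and (5) from Theorem~\ref{thm:HomogFunct}, and (4) and (6) from Remark~\ref{rem:CodimNotDim} together with Theorem~\ref{thm:Relations}. Your version applies Lemma~\ref{lem:TranslateCycle} directly to $f\colon X\times\P^1\to H$, which sidesteps the $f\times\id_{\P^1}$ form of Lemma~\ref{lem:TranslateRelation}, and it justifies the cycle identity in (6) explicitly, both of which the paper leaves implicit; your extra open set for $W_\nlci$ is harmless but unnecessary, since (3) already gives condition (ii$'$) of Remark~\ref{rem:CodimNotDim}.
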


\begin{proof} Let $S:= X\setminus (X_\CM\amalg X(d-1)^0)$ and let 
\[
U^*_{f,W, X^*}=U(f,W, X^*, S)\cap   U_{f_0,|W|}\cap U_{f_\infty, |W|}. 
\]
By Lemma~\ref{lem:TranslateCycle}, $U^*_{f,W, X^*}$ is a dense open subset of $G$ and $U^*_{f,W, X^*}(k)$ is dense in $U^*_{f,W, X^*}$. 

 (4) and (6) follow from (1)-(3), using Remark~\ref{rem:CodimNotDim}. Similarly, since $U^*_{f, W, X^*}\subset  U_{f_0,|W|}\cap U_{f_\infty, |W|}$, (5) follows from the definition of the pullback maps $f_0^*$, $f_\infty^*$ given in  Theorem~\ref{thm:HomogFunct}. 

For $g\in U(f,W, X^*, S)(k)$, the properties (1)  and (3) are satisfied by Lemma~\ref{lem:TranslateRelation} and similarly, property (2) is satisfied  for $g\in (U_{f_0,|W|}\cap U_{f_\infty, |W|})(k)$ by Theorem~\ref{thm:HomogFunct}. 
\end{proof}

\section{A presentation of $K_0$}   One main goal of this paper is to construct a Chern class map
\begin{equation}\label{eqn:ChernClassMap}
c_d:K_0(X)\to \CH^d(X, X^*)
\end{equation}
for $(X, X^*)$ an admissible pair with  $d=\dim_kX$. The idea is to use the classical Chern class map
\[
c_d:K_0(H)\to \CH^d(H)
\]
for $H$ a homogeneous space for a strongly $k$-rational linear algebraic group, and then take the pullback $f^*$, using the pullback map constructed in Theorem~\ref{thm:HomogFunct}. Roughly speaking, we will produce a representable presentation of $K_0(X)$ for connected, quasi-projective $X$, and use that to define the Chern class map $c_{\dim_kX}$. In more detail,  suppose we have a  rank $m$ locally free sheaf  $E$ and a globally generated invertible sheaf $L$ on $X$ such that  $L\otimes E$ is globally generated. Choosing global generators $e_1,\ldots, e_M$ for $L\otimes E$ and global generators $s_0,\ldots, s_N$ for $L$ gives  morphisms 
\[
\phi:X\to \Gr(m, M),\ \psi:X\to \P^N
\]
The map to the product
\[
f:=(\psi, \phi):X\to \P^N\times\Gr(m, M)
\]
then satisfies   $E\cong  f^*(\sO_{\P^N}(-1)\boxtimes E_{m, M})$, where $E_{m, M}$ is the tautological rank $m$ quotient sheaf on $\Gr(m, M)$.   We then ``define''
\begin{equation}\label{eqn:cd"def"}
c_d(E)= f^*(c_d(\sO_{\P^N}(-1)\boxtimes E_{m, M})\in \CH^d(X, X^*).
\end{equation}
Our task in this section is to give a presentation of $K_0(X)$, built out of maps of $X$ to suitable homogeneous spaces, building on the association of the vector bundle $f^*(\sO_{\P^N}(-1)\boxtimes E_{m, M})$ to a map $f:X\to\P^N\times\Gr(m, M)$. In the next section, we use this presentation and the pullback maps constructed in \S\ref{sec: Homogeneous} to construct $c_d$ as a map of pointed sets \eqref{eqn:ChernClassMap}, extending \eqref{eqn:cd"def"}.  

We now proceed with constructing our presentation of $K_0(X)$. In this section, we take $k$ to be an arbitrary field.

We have the Grassmannian $\Gr(m,M)$ with its universal quotient sheaf $\pi_{m,M}:\sO_{\Gr(m,M)}^n\to E_{m,M}$ with $E_{m,M}$ locally free of rank $m$. Recall that $\Gr(m,M)$ represents the presheaf of sets on $\Sch_k$
\[
Z\mapsto \{(E, (f_1,\ldots, f_M))\}/\sim,
\]
where $E$ is a locally free sheaf on $Z$ of rank $m$,  $f_1,\ldots, f_M\in H^0(Z, E)$ are global generators of $E$, and
\[
(E, (f_1,\ldots, f_M))\sim (E',(f'_1,\ldots, f'_M))
\]
if there is an isomorphism $\phi:E\to E'$ with $\phi(f_i)=f_i'$, $i=1,\ldots, M$. Identifying a choice of 
generating sections $f_1,\ldots, f_M\in H^0(Z, E)$ of $E$ with a surjection $f_\bullet:\sO_Z^M\to E$, the representability is given by sending a morphism $\phi:Z\to \Gr(m,M)$ to $\phi^*(\pi_{m,M}):\sO_Z^M\to \phi^*(E_{m,M})$. 

We have the action of $\GL_M$ on $\Gr(m,M)$ induced by change of basis for $\sO_{\Gr(m, M)}^M$, giving the induced action of $\GL_M(k)$ on $\Hom_k(Z, \Gr(m,M))$, natural in $Z$.

For integers $0<M_1\le M_2$, we have the closed immersion  $i_{m, M_2, M_1}:\Gr(m,M_1)\to \Gr(m,M_2)$
representing the surjection
\[
\sO_{\Gr(m,M_1)}^{M_2}=\sO_{\Gr(m,M_1)}^{M_1}\oplus \sO_{\Gr(m,M_1)}^{M_2-M_1}\xrightarrow{\pi_{m,M_1}\circ p_1}E_{m, M_1},
\]
where the first equality is the map $(s_1,\ldots, s_{M_2})\mapsto ((s_1,\ldots, s_{M_1}), (s_{M_1+1},\ldots, s_{M_2}))$. We have the corresponding homomorphism of group-schemes $\rho_{M_2, M_1}:\GL_{M_1}\to \GL_{M_2}$, inserting an $(M_2-M_1)\times (M_2-M_1)$ identity matrix in the low right corner, and the map $i_{m, M_2, M_1}$ is $\rho_{M_2, M_1}$ equivariant.

\begin{definition}\label{def:EquivRel1} We let $\sim_{m,M}$ denote the equivalence relation on the presheaf 
\[
\Hom_k(-, \Gr(m,M)):\Sch_k^\op\to \Sets
\]
generated by  the two relations:
\\[5pt]
1. $\phi\sim_{m,M}  g\cdot \phi$ for $\phi\in \Hom_k(Z, \Gr(m,M))$, $g\in \GL_M(k)$\\[2pt]
2. Given $M'<M$,  $\phi':Z\to \Gr(m, M')$ represented by $(E, f_1,\ldots, f_{M'})$,  and $M-M'$ generating sections $f_{M'+1},\ldots, f_M$ of $E$, let $\phi:Z\to \Gr(m,M)$ be the morphism given by $(E, f_1,\ldots, f_M)$. Then $\phi\sim_{m,M} i_{m, M,M'}\circ \phi'$.
\end{definition}

Let $\Vect_m^\gl(Z)$ denote the set of isomorphism classes of globally generated rank $m$ locally free sheaves on $Z$, defining the presheaf
\[
\Vect_m^\gl:\Sch_k^\op\to \Sets
\]
via pullback: for $f:Z'\to Z$, sending  $E$ to $f^*(E)$ on representing locally free sheaves.

\begin{lemma}\label{lem:GloballyGen}  Sending $f:Z\to \Gr(m,M)$ to the class of $f^*E_{m,M}$ in $\Vect_m^\gl(Z)$ induces a bijection
\[
\Phi^\gl(Z):\colim_M\Hom(Z, \Gr(m,M))/\sim_{m,M}\to \Vect_m^\gl(Z)
\]
natural in $Z$. Here the maps in the colimit are induced by the closed immersions $i_{M_2,M_1}$.
\end{lemma}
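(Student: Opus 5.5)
We have to check four things: the assignment is compatible with the colimit maps, it respects the relation $\sim_{m,M}$, it is surjective, and it is injective. Naturality in $Z$ should then be formal.

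\emph{Well-definedness.} The assignment $\phi\mapsto \phi^*E_{m,M}$ lands in $\Vect_m^\gl(Z)$: $\phi^*E_{m,M}$ is locally free of rank $m$ and globally generated by the images of the $M$ standard sections, via $\phi^*(\pi_{m,M})$.

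Compatibility with the transition maps: $i_{m,M_2,M_1}^*E_{m,M_2}\cong E_{m,M_1}$, since by definition $i_{m,M_2,M_1}$ classifies the quotient $\sO^{M_2}\to E_{m,M_1}$ obtained from $\pi_{m,M_1}$ by killing the last $M_2-M_1$ basis vectors. So the maps into $\Vect_m^\gl(Z)$ are compatible with the colimit.

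The two generating relations are respected:
\begin{itemize}
\item For $g\in\GL_M(k)$, $g\cdot\phi$ corresponds to $(E,f_\bullet\circ g^{-1})$, with the same underlying sheaf $E$.
\item In relation (2), both $\phi$ and $i_{m,M,M'}\circ\phi'$ pull back $E_{m,M}$ to $E$.
\end{itemize}
Naturality in $Z$ holds because pullback of the universal quotient commutes with composition.

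\emph{Surjectivity.} Given a globally generated $E$ of rank $m$ on $Z$, choose generators $f_1,\dots,f_M$. Here $M$ is finite because $Z$ is quasi-compact and $E$ is coherent. By the moduli description of $\Gr(m,M)$, the surjection $f_\bullet$ gives $\phi:Z\to\Gr(m,M)$ with $\phi^*E_{m,M}\cong E$.

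\emph{Injectivity.} This is the main point. Suppose $\phi_1:Z\to\Gr(m,M_1)$ and $\phi_2:Z\to\Gr(m,M_2)$ correspond to $(E_1,f_\bullet)$ and $(E_2,f'_\bullet)$, and that there is an isomorphism $E_1\cong E_2$. By the representability statement, $(E_1,f_\bullet)$ and $(E_2,f'_\bullet)$ define the same morphism as soon as they agree up to isomorphism. So we may transport along the isomorphism and assume $E_1=E_2=E$, keeping two generating tuples $f_1,\dots,f_{M_1}$ and $f'_1,\dots,f'_{M_2}$.

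Apply relation (2) twice:
\begin{itemize}
\item Appending the $f'_j$ to $(f_1,\dots,f_{M_1})$ gives $\phi_1\sim \psi_1$, where $\psi_1$ is given by $(E;f_1,\dots,f_{M_1},f'_1,\dots,f'_{M_2})$ in $\Gr(m,M_1+M_2)$.
\item Appending the $f_i$ to $(f'_1,\dots,f'_{M_2})$ gives $\phi_2\sim\psi_2$, where $\psi_2$ is given by $(E;f'_1,\dots,f'_{M_2},f_1,\dots,f_{M_1})$.
\end{itemize}
These two tuples differ by a permutation of the sections, which is the action of a permutation matrix in $\GL_{M_1+M_2}(k)$. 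Relation (1) then gives $\psi_1\sim\psi_2$.

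Relation (2) produces elements equivalent to $i\circ\phi'$. So in the colimit the class of $\phi_1$ equals the class of $\psi_1$, and likewise $\phi_2$ equals $\psi_2$. Hence $[\phi_1]=[\phi_2]$.

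The only delicate point is using the correct convention for how $g$ acts on tuples of sections (via $g$ versus $g^{-1}$). This is immaterial, since permutation matrices are closed under inverses. Otherwise the argument is formal.
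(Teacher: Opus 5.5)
Your proof is correct and follows the paper's route: surjectivity by choosing finitely many generators, and injectivity by concatenating the two generating tuples in $\Gr(m,M_1+M_2)$. Your explicit use of relation (1) with a permutation matrix is genuinely needed there, because relation (2) only appends sections at the end; the paper uses this step without comment.

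The one check the paper makes that you skip is that $i_{m,M_2,M_1}$ actually induces a map $\Hom(Z,\Gr(m,M_1))/\sim_{m,M_1}\to\Hom(Z,\Gr(m,M_2))/\sim_{m,M_2}$. Without it, the colimit in the statement is not defined. Verifying that $\Phi^\gl$ is compatible with the transition maps does not cover this. The check is short:
\begin{itemize}
\item Relation (1) is preserved because $i_{m,M_2,M_1}$ is $\rho_{M_2,M_1}$-equivariant.
\item For relation (2), the morphisms $i_{m,M_2,M_1}\circ\phi$ and $i_{m,M_2,M_1}\circ i_{m,M_1,M_1'}\circ\phi'$ are given by $(E;f_1,\dots,f_{M_1},0,\dots,0)$ and $(E;f_1,\dots,f_{M_1'},0,\dots,0)$. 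These are related by relation (2) in $\Gr(m,M_2)$, using the extra sections $f_{M_1'+1},\dots,f_{M_1},0,\dots,0$.
\end{itemize}
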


\begin{proof} We first check that $i_{m,M_2, M_1 *}:\Hom(-, \Gr(m, M_1))\to \Hom(-, \Gr(m, M_2))$ descends to a map $\Hom(-, \Gr(m, M_1))/\sim_{m, M_1}\to \Hom(-, \Gr(m,M_2))/\sim_{m, M_2}$. The relation (1) of Definition~\ref{def:EquivRel1} is respected since $i_{m,M_2, M_1}$ is $\rho_{M_2, M_1}$-equivariant. Suppose we are given $M'_1<M_1$, a rank $m$ locally free sheaf $E$ with generating sections  $f_1,\ldots, f_{M_1'}$, and $f_{M_1'+1},\ldots, f_{M_1}$ as in Definition~\ref{def:EquivRel1}(2),  giving morphisms $\phi':Z\to \Gr(m, M_1')$ representing $(E, 
f_1,\ldots, f_{M_1'})$ and $\phi:Z\to \Gr(m, M_1)$ representing $(E, 
f_1,\ldots, f_{M_1})$. We then have $\phi\sim i_{m, M_1,M_1'}\phi'$, and we need to show that $i_{m, M_2,M_1}\phi\sim_{m, M_2} i_{m, M_2, M_1}(i_{m, M_1, M_1'}\phi')$. 

We have $i_{m, M_2, M_1}\circ i_{m, M_1, M_1'}=i_{m, M_2, M_1'}$, 
$i_{m, M_2,M_1}\phi$ is the morphism representing $(E, f_1,\ldots, f_{M_1}, 0_{M_1+1},\ldots, 0_{M_2})$ and $i_{m, M_2, M_1'}\phi'$ is the morphism representing 
$(E, f_1,\ldots, f_{M_1'}, 0_{M_1'+1},\ldots, 0_{M_2})$. Using the relation $\sim_{m, M_2}$ for the generating sections $f_1,\ldots, f_{M_1'}$ and $(f_{M_1'+1},\ldots, f_{M_1},  0_{M_1+1},\ldots, 0_{M_2})$ gives
\[
i_{m, M_2,M_1}\phi\sim_{m, M_2} i_{m, M_2, M_1'}\phi'= i_{m, M_2, M_1}(i_{m, M_1, M_1'}\phi')
\]
as desired.

Since the relation $\sim_{m,M}$ and the maps $i_{m, M_2,M_1}$ only change the choice of 
generating sections, and not the isomorphism class of the locally free sheaf, the map $(f:Z\to \Gr(m,M))\mapsto f^*E_{m, M}$ descends to a well-defined natural transformation 
\[
\Phi^\gl:\colim_M\Hom(-, \Gr(m,M))/\sim_{m,M}\to \Vect_m^\gl. 
\]
As every  rank $m$ vector bundle $E$ generated by $M$ sections is isomorphic to $f^*E_{m,M}$ for suitable $f:Z\to \Gr(m,M)$, we see that $\Phi^\gl$ is surjective.

Now, given a globally generated rank $m$ $E$ on $Z$, and two sets of global generators $(f_1,\ldots , f_M)$, $(f_1',\ldots , f'_{M'})$ with corresponding morphisms $\phi$, $\phi'$, we have the global generators $(f_1,\ldots , f_{M+M'})$, with $f_{M+i}=f'_i$ for $i=1,\ldots, M'$, with corresponding morphism $\phi''$, and  
\[
i_{m, M+M',M}\circ \phi\sim_{m, M+M'}
\phi''\sim_{m, M+M'}i_{m, M+M',M'}\circ\phi'.
\]
Thus,  $\phi$ and $\phi'$ map to the same element in  
$\colim_M\Hom(Z, \Gr(m,M))/\sim_{m,M}$, and hence $\Phi^\gl$ is injective.
\end{proof}

We now promote this presentation of $\Vect_m^\gl(-)$ to a presentation of $\Vect_m(-)$ as a presheaf on $\Sch_k^\qp$. For the remainder of this section, we work in the category $\Sch_k^\qp$.

Consider the product $\P^N\times\Gr(m, M)$. We have the universal rank $m$ quotient sheaf $\sO_{\Gr(m,M)}^M\to E_{m,M}$ on $\Gr(m,M)$ and its twist $E_{m,M}(-1):=\sO_{\P^N}(-1)\boxtimes E_{m,M}$.

Let $E$ be a locally free coherent sheaf of rank $m$ on $X$, and let $L$ be a globally generated invertible sheaf on $X$ such that $E\otimes L$ is globally generated. Let $V_L\subset H^0(X, L)$ be a finite dimensional $k$-subspace that generates $L$ under the canonical map 
\[
V_L\otimes_k\sO_X\to H^0(X, L)\otimes_k\sO_X\to L
\]
and let $V_{E\otimes L}\subset H^0(X, E\otimes L)$ be a  finite dimensional $k$-subspace that generates $E\otimes L$ under the similarly defined map $V_{E\otimes L}\otimes\sO_X\to E\otimes L$. Choosing isomorphisms $V_L\cong k^{N+1}$, $V_{E\otimes L}\cong k^M$, we have the morphisms
\[
g_{V_L}:X\to \P^N,\ g_{V_{E\otimes L}}:X\to \Gr(m, M)
\]
representing the surjections 
\[
\sO_X^{N+1}\cong V_L\otimes \sO_X\to L,\ \sO_X^M\cong V_{E\otimes L}\otimes \sO_X\to E\otimes L.
\]
Let $f_{V_L, V_{E\otimes L}}:=(g_{V_L}, g_{V_{E\otimes L}}):X\to \P^N\times \Gr(m, M)$. Then
\begin{equation}\label{eqn:Pullback1}
g_{V_L}^*(\sO_{\P^N}(1))\cong L,\ g_{V_{E\otimes L}}^*E_{m, M}\cong E\otimes L,\ f_{V_L, V_{E\otimes L}}^*(E_{m,M}(-1))\cong E.
\end{equation}

Let 
\[
\ver_{N,N'}:\P^N\times \P^{N'}\to \P^{N''},\quad N''=(N+1)(N'+1)-1,
\]
be the Veronese embedding given by a choice of $k$-basis for $H^0(\P^N\times\P^{N'}, \sO(1,1))=H^0(\P^N\times, \sO(1))\otimes_kH^0(\P^{N'}, \sO(1))$.

We have the twists $E_{m,M}(-1,0)$ and $E_{m,M}(0,1)\cong E_{m,M}(-1,0)(1,1)$ on $\P^N\times \P^{N'}\times \Gr(m,N)$. 

By the K\"unneth formula,  the product map
\[
H^0(\P^N, \sO(1))\otimes_k H^0(\Gr(m,M), E_{m,M})\to H^0(\P^N\times\Gr(m, M), E_{m, M}(1))
\]
is an isomorphism, in particular, the product
\[
H^0(\P^N, \sO(1))\otimes_k H^0(\P^{N'}, \sO(1))\to H^0(\P^N\times\P^{N'}, \sO(1,1))
\]
is an isomorphism.

The sheaf $E_{m,M}(1)$ on $\P^{N'}\times \Gr(m,M)$ is generated by global sections, and thus the 
canonical surjection $H^0(\P^{N'}\times \Gr(m,M), E_{m,M}(1))\otimes\sO_{\P^{N'}\times \Gr(m,M)}\to E_{m,M}(1)$ gives the morphism
\[
\phi_{N', m, M}:\P^{N'}\times \Gr(m,M)\to  \Gr(m,M'')
\]
where 
\[
M'':=\dim_kH^0(\P^{N'}\times \Gr(m,M), E_{m,M}(1))=(N'+1)\cdot M,
\]
and we choose a basis for $H^0(\P^{N'}\times \Gr(m,M), E_{m,M}(1))$.

After choosing a basis for $H^0(\P^{N}\times\P^{N'},\sO(1,1))$, we have the corresponding Veronese embedding
\[
\ver_{N, N'}:\P^N\times\P^{N'}\to \P^{N''};\ N''=(N'+1)(N+1)-1,
\]
giving us the map
\begin{equation}\label{Veronese2}
\lambda_{N, N', m,M}:\P^N\times\P^{N'}\times\Gr(m, M)\to \P^{N''}\times \Gr(m, M''),
\end{equation}
where
\[
\lambda_{N, N', m,M}:=(\ver_{N, N'}\circ p_{12}, \phi_{N', m, M}\circ p_{23}).
\]

\begin{lemma} \label{lem:TensorIdentity} We have\\[5pt]
1. $\lambda_{N, N', m,M}^*(E_{m, M''}(-1))\cong E_{m, M}(0,1)(-1,-1)\cong E_{m, M}(-1,0)=p_{13}^*(E_{m, M}(-1))$.\\[2pt]
2. Give $\P^N$, $\P^{N'}$, $\Gr(m, M)$ and $\Gr(m, M'')$ the usual actions of $\GL_{N+1}$,  $\GL_{N'+1}$, $\GL_M$ and $\GL_{M''}$.  Let $\rho:=\rho_{N,N'}:\GL_{N+1}\times \GL_{N'+1}\to \GL_{N''+1}$ be the tensor product representation, corresponding to the isomorphism induced by $\psi_{N, N'}$
\[
H^0(\P^N, \sO(1))\otimes_kH^0(\P^{N'}, \sO(1))\cong H^0(\P^{N''}, \sO(1))
\]
and let $\tau:=\tau_{N', m, M}: \GL_{N'+1}\times \GL_M\to \GL_{M''}$ be the representation induced by the canonical isomorphism
\begin{align*}
H^0(\P^{N'}, \sO(1))\otimes_k H^0(\Gr(m, M), E_{m, M})&\cong H^0(\P^{N'}\times \Gr(m, M), E_{m, M}(1))\\&\cong H^0(\Gr(m, M''), E_{m, M''})
\end{align*}
given by $\phi_{N', m, M}$. Then $\lambda_{N, N', m,M}$ is equivariant with respect to the homomorphism
\[
\GL_{N+1}\times \GL_{N'+1}\times \GL_M\to \GL_{N''+1}\times \GL_{M''}
\]
sending $(g_1, g_2, g_3)$ to $(\rho(g_1, g_2), \tau(g_2, g_3))$.\\[2pt]
3. Identify $\P^N$ with $\Gr(1, N+1)$, and write $i_{N', N}:\P^N\to \P^{N'}$ for $i_{1, N'+1, N+1}:\Gr(1, N+1)\to \Gr(1, N'+1)$.

For $N_1\le N_2$, $N_1'\le N_2'$,  $M_1\le M_2$, we have $N_1''\le N_2''$,  $M_1''\le M_2''$ and the diagram
\[
\xymatrixcolsep{70pt}
\xymatrix{
\P^{N_1}\times\P^{N_1'}\times \Gr(m, M_1)\ar[d]^{i_{N_2, N_1}\times i_{N_2', N_1'}\times i_{m, M_2, M_1}}\ar[r]^{\lambda_{N_1, N_1', m, M_1}}&
\P^{N_1''}\times\Gr(m, M_1'')\ar[d]^{i_{N_2'', N_1''}\times i_{m, M_2'', M_1''}}\\
\P^{N_2}\times\P^{N_2'}\times\Gr(m, M_2)\ar[r]^{\lambda_{N_2, N_2', m, M_2}}
&\P^{N_2''}\times\Gr(m, M_2'')
}
\]
commutes, up to the action of some $g\in\GL_{N_2''+1}(k)\times \GL_{M_2''}(k)$ on $\P^{N_2''}\times\Gr(m, M_2'')$.
\end{lemma}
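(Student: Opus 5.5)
The three assertions of Lemma~\ref{lem:TensorIdentity} are all consequences of the functor-of-points description of Grassmannians and projective spaces recalled before Lemma~\ref{lem:GloballyGen}. The plan is to identify each morphism in sight by the pair (locally free quotient, ordered list of generating sections) it represents, and then to compare these data.

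For (1), recall that $\ver_{N,N'}$ represents the quotient $\sO^{N''+1}\to \sO(1,1)$ given by the chosen basis of $H^0(\P^N,\sO(1))\otimes_k H^0(\P^{N'},\sO(1))$, so $\ver_{N,N'}^*\sO_{\P^{N''}}(1)\cong \sO(1,1)$. Likewise $\phi_{N',m,M}$ represents the quotient
\[
H^0(\P^{N'}\times\Gr(m,M),E_{m,M}(1))\otimes\sO\to E_{m,M}(1),
\]
so $\phi_{N',m,M}^*E_{m,M''}\cong p^*\sO_{\P^{N'}}(1)\otimes E_{m,M}$. Pulling back through $p_{12}$ and $p_{23}$ and tensoring gives
\[
\lambda^*(E_{m,M''}(-1))\cong E_{m,M}(0,1)\otimes\sO(-1,-1)\cong E_{m,M}(-1,0)=p_{13}^*(E_{m,M}(-1)).
\]
This is a direct check.

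For (2), I would check equivariance factor by factor. For $\ver_{N,N'}$, an element $(g_1,g_2)$ acts on $H^0(\sO(1,1))=H^0(\sO(1))\otimes H^0(\sO(1))$ by $g_1\otimes g_2$. Since a morphism to $\P^{N''}$ is determined by its ordered sections, $\ver_{N,N'}\circ(g_1,g_2)=\rho(g_1,g_2)\circ\ver_{N,N'}$. For $\phi_{N',m,M}$, an element $(g_2,g_3)$ transforms the sections $s\otimes e$ of $E_{m,M}(1)$ by $g_2\otimes g_3$ through the Künneth isomorphism; this is exactly $\tau(g_2,g_3)$ acting on the chosen basis. Combining the two factors gives the stated equivariance.

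For (3), I would compare the data represented by the two composites. Going down then right: the inclusions $i$ pad the section lists with zeros. The Künneth bases for $(N_1,N_1',M_1)$ then embed as subsets of the bases for $(N_2,N_2',M_2)$: products $x_i y_j$ with $i\le N_1$, $j\le N_1'$, and $y_j\otimes e_l$ with $l\le M_1$. The extra basis elements restrict to zero on the image, since the coordinates with index beyond $N_1$, $N_1'$, $M_1$ vanish there. So $\lambda_{N_2,\ldots}\circ i$ represents the same quotient sheaves with sections listed in some order, with zeros in the positions not coming from the smaller basis. Going right then down represents the same quotient, with the nonzero sections first and zeros appended. The two lists differ by a permutation of indices, together with any change of the chosen basis. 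Hence the composites agree up to an element $g\in\GL_{N_2''+1}(k)\times\GL_{M_2''}(k)$, by the equivariance of the representing functor.

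The main obstacle is (3). The point needing care is that the padding zeros of $i$ really correspond, through the Künneth isomorphism, to basis sections vanishing on the image. Once that is set up, (1) and (2) are formal.
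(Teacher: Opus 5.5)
Your proposal is correct and follows the paper's approach. Part (1) is exactly the paper's computation, using $\ver_{N,N'}^*\sO(1)\cong\sO(1,1)$ and $\phi_{N',m,M}^*E_{m,M''}\cong E_{m,M}(1)$, and the paper dismisses (2) and (3) as easy computations, which your comparison of (quotient sheaf, ordered list of generating sections) carries out correctly. For arbitrary chosen bases in (3), the fact doing the work is that any two surjections $k^n\to V$ onto the same finite-dimensional space differ by an element of $\GL_n(k)$, not merely by a permutation.
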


\begin{proof} For  (1), we have
\begin{align*}
\lambda_{N, N', m,M}^*(E_{m, M''}(-1))&=
(p_{12}^*\ver_{N, N'}^*\sO(-1)\otimes  p_{23}^*\phi_{N', m, M}^*E_{m, M''})\\
&=p_{23}^*E_{m, M}\otimes \sO(0,1)\otimes \sO(-1,-1)\\
&=E_{m, M}(-1,0).
\end{align*}
(2) and (3) are easy computations.
\end{proof}

\begin{Not} Identifying $\P^N$ with $\Gr(1, N+1)$, we write $\sim_N$ for the equivalence relation $\sim_{1, N+1}$ on $\Hom(-, \Gr(1, N+1))$. We let $\sim_{N, m, M}$ be the equivalence relation $\sim_{N}\times \sim_{m, M}$ on $\Hom(-, \P^N\times\Gr(m,M))$, and write $\overline{\Hom}(-, \P^N\times\Gr(m, M))$ for ${\Hom}(-, \P^N\times\Gr(m, M))/\sim_{N, m, M}$.
\end{Not}

As above,  let $\N_+$ be the set of positive integers, ordered by the usual $\le$. We have the functor
\[
\P^{?_1}\times\Gr(m, ?_2):\N_+\times\N_+\to \Sch_k;\quad (N, M)\mapsto \P^N\times\Gr(m, M)
\]
where $(N,M)\le (N', M')$ gets sent to $i_{N', N}\times i_{m, M', M}$. The induced functor
\[
\Hom(-, \P^{?_1}\times\Gr(m, ?_2)):\N_+\times\N_+\to \Fun(\Sch^\op_k, \Sets)
\]
passes to the functor 
\[
\overline{\Hom}(-, \P^{?_1}\times\Gr(m, ?_2)):\N_+\times\N_+\to \Fun(\Sch^\op_k, \Sets).
\]

Similarly, we have the functor
\[
\Hom(-, \P^{?_1}\times\P^{?_2}\times\Gr(m, ?_3)):\N_+\times\N_+\times\N_+\to \Fun(\Sch_k^\op, \Sets)
\]
Defining the relation $\sim'_{N, N', M}$ on $\Hom(-, \P^N\times\P^{N'}\times\Gr(m, M))$ by
\[
f\sim'_{N , N', M} (g_1, g_2, g_3)\circ f,\ (g_1, g_2, g_3)\in \GL_{N+1}(k)\times\GL_{N'+1}(k)\times\GL_M(k)
\]
gives the functor
\[
\widetilde{\Hom}(-, \P^{?_1}\times\P^{?_2}\times\Gr(m, ?_3)):=\Hom(-, \P^{?_1}\times\P^{?_2}\times\Gr(m, ?_3))/\sim'_{?_1,?_2,?_3}
\]

\begin{lemma}  The maps $p_{13}:\P^N\times\P^{N'}\times\Gr(m, M)\to \P^N\times\Gr(m, M)$ and $\lambda_{N, N', m, M}: \P^N\times\P^{N'}\times\Gr(m, M)\to \P^{N''}\times \Gr(m, M'')$ induce maps of presheaves on $\Sch_k$
\begin{multline*}
\colim_{\N_+\times\N_+\times\N_+}\widetilde{\Hom}(-, \P^{?_1}\times\P^{?_2}\times\Gr(m, ?_3))\\
\xymatrix{\ar@<5pt>[r]^{p_{13}(-)}\ar@<-5pt>[r]_{\lambda(-,m)}&}
\colim_{\N_+\times\N_+}\overline{\Hom}(-, \P^{?_1}\times\Gr(m, ?_2)).
\end{multline*}
Here the map $p_{13}(-)$ uses the map $p_{13}:\N_+\times\N_+\times\N_+\to \N_+\times\N_+$ and the map $\lambda(-,m)$ uses the map $(N, N', M)\mapsto (N'', M'')$ on the index posets to define the maps on the colimits. 
\end{lemma}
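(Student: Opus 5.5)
To prove the lemma, I would check two things for each of the two maps. First, on a fixed index $(N,N',M)$, the map is compatible with the relation $\sim'_{N,N',M}$ on the source and $\sim_{N,m,M}$ (resp.\ $\sim_{N'',m,M''}$) on the target. Second, the resulting maps on the quotients are compatible with the transition maps of the two colimit diagrams, via the indicated maps of index posets. Once both hold, the universal property of the colimit (a filtered colimit of presheaves, computed objectwise) gives the induced maps of presheaves. Naturality in the scheme variable is automatic, since all maps are given by post-composition with fixed morphisms of $k$-schemes.

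For $p_{13}(-)$, take $f:Z\to \P^N\times\P^{N'}\times\Gr(m,M)$ and $(g_1,g_2,g_3)$ in $\GL_{N+1}(k)\times\GL_{N'+1}(k)\times\GL_M(k)$. Then
\[
p_{13}\circ((g_1,g_2,g_3)\circ f)=(g_1,g_3)\circ p_{13}\circ f,
\]
which is $\sim_{N,m,M}$-equivalent to $p_{13}\circ f$ by Definition~\ref{def:EquivRel1}(1). The map $p_{13}$ also commutes on the nose with the closed immersions $i_{N_2,N_1}\times i_{N_2',N_1'}\times i_{m,M_2,M_1}$ and $i_{N_2,N_1}\times i_{m,M_2,M_1}$. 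Therefore $p_{13}(-)$ is well defined on the colimits. The transition maps in the target colimit are well defined on $\overline{\Hom}$ by the first paragraph of the proof of Lemma~\ref{lem:GloballyGen}, applied to each factor.

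For $\lambda(-,m)$, I would use Lemma~\ref{lem:TensorIdentity}(2). It gives
\[
\lambda_{N,N',m,M}\circ(g_1,g_2,g_3)=(\rho(g_1,g_2),\tau(g_2,g_3))\circ\lambda_{N,N',m,M},
\]
so $\lambda\circ((g_1,g_2,g_3)\circ f)$ and $\lambda\circ f$ differ by the action of an element of $\GL_{N''+1}(k)\times\GL_{M''}(k)$. Hence they are equivalent under $\sim_{N'',m,M''}$. For compatibility with the transitions, Lemma~\ref{lem:TensorIdentity}(3) gives commutativity of the square up to the action of some $g\in\GL_{N_2''+1}(k)\times\GL_{M_2''}(k)$ on the target. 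Since such translates are identified by relation (1) in $\overline{\Hom}(-,\P^{N_2''}\times\Gr(m,M_2''))$, the square commutes after passing to the quotient. One also needs the index map $(N,N',M)\mapsto(N'',M'')=((N+1)(N'+1)-1,(N'+1)M)$ to be order-preserving, which is clear. That makes the colimit map well defined.

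The only point requiring care is the ``up to $g$'' in Lemma~\ref{lem:TensorIdentity}(3). The element $g$ depends only on the chosen bases and not on $Z$ or $f$, so the identification is natural in $Z$. Beyond confirming that, I expect no real obstacle: the argument is bookkeeping, and the substantive input is already contained in Lemma~\ref{lem:TensorIdentity}.
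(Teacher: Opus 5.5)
Your proposal is correct and is essentially the paper's argument: the paper only says that well-definedness of $p_{13}(-)$ and $\lambda(-,m)$ follows directly from Lemma~\ref{lem:TensorIdentity}. You have written out the bookkeeping behind that claim: part (2) for compatibility with $\sim'$, part (3) together with relation (1) of Definition~\ref{def:EquivRel1} for the transition maps, and the evident checks for $p_{13}$.
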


\begin{proof} The fact that $p_{13}(-)$ and $\lambda(-,m)$ are well-defined follows directly from Lemma~\ref{lem:TensorIdentity}.
\end{proof}

\begin{proposition}\label{prop:VectmPresentation} Let $X$ be a quasi-projective $k$-scheme. Sending $f:X\to \P^N\times\Gr(m, M)$ to $f^*E_{m, M}(-1)\in \Vect_m(X)$ defines a bijection $\Psi(X, m)$:
\begin{align*}
\coeq[\colim_{\N_+\times\N_+\times\N_+}&\widetilde{\Hom}(X, \P^{?_1}\times\P^{?_2}\times\Gr(m, ?_3))\\&\xymatrix{\ar@<5pt>[r]^{p_{13}(X)}\ar@<-5pt>[r]_{\lambda(X,m)}&}
\colim_{\N_+\times\N_+}\overline{\Hom}(X, \P^{?_1}\times\Gr(m, ?_2))]\xymatrix{\ar[r]^{\Psi(X, m)}_\sim&} \Vect_m(X),
\end{align*}
natural in $X\in \Sch^\qp_k$.
\end{proposition}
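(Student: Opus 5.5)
The plan is to first check that $\Psi(X,m)$ is well defined, then prove surjectivity, and finally injectivity, which carries the real content. Naturality in $X$ is immediate, since everything is defined by composing with morphisms out of $X$ and pulling back.

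\textbf{Well-definedness.} Let $f=(\psi,\phi):X\to \P^N\times\Gr(m,M)$. Then $f^*E_{m,M}(-1)\cong \phi^*E_{m,M}\otimes\psi^*\sO(1)^{-1}$, and we check invariance under each relation.
\begin{enumerate}
\item Changing the generating sections of $\psi^*\sO(1)$ or of $\phi^*E_{m,M}$, through relations (1) and (2) of Definition~\ref{def:EquivRel1}, does not change these sheaves up to isomorphism, by the argument of Lemma~\ref{lem:GloballyGen}.
\item The transition maps $i_{N',N}\times i_{m,M',M}$ pull back $\sO(1)$ and $E_{m,M'}$ to $\sO(1)$ and $E_{m,M}$, so the map passes to the colimit.
\item For the coequalizer, take $F:X\to \P^N\times\P^{N'}\times\Gr(m,M)$. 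Lemma~\ref{lem:TensorIdentity}(1) gives $F^*\lambda^*E_{m,M''}(-1)\cong F^*p_{13}^*E_{m,M}(-1)$, so $p_{13}\circ F$ and $\lambda\circ F$ have the same image. The $\sim'$-relation is compatible with both arrows by Lemma~\ref{lem:TensorIdentity}(2) and (3).
\end{enumerate}

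\textbf{Surjectivity.} Since $X$ is quasi-projective, it has an ample invertible sheaf. For a rank $m$ locally free $E$, choose a globally generated $L$ (a high power of a very ample sheaf) with $E\otimes L$ globally generated. The map $f_{V_L,V_{E\otimes L}}$ then satisfies $f^*E_{m,M}(-1)\cong E$ by \eqref{eqn:Pullback1}.

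\textbf{Injectivity.} Suppose $f_1=(\psi_1,\phi_1)$ and $f_2=(\psi_2,\phi_2)$ satisfy $f_1^*E_{m,M}(-1)\cong f_2^*E_{m,M}(-1)\cong E$. Write $L_i=\psi_i^*\sO(1)$, so that $\phi_i^*E_{m,M}\cong E\otimes L_i$ and both $L_i$ and $E\otimes L_i$ are globally generated. The key step is to connect $f_1$ to $f_2$ through a zig-zag of coequalizer relations; for this we introduce $L_3=L_1\otimes L_2$.
\begin{enumerate}
\item Form $F_1=(\psi_1,\psi_2,\phi_1):X\to\P^{N}\times\P^{N'}\times\Gr(m,M)$. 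Then $p_{13}\circ F_1=f_1$.
\item The map $\lambda\circ F_1$ has first component $\ver\circ(\psi_1,\psi_2)$, given by the products of the generators of $L_1$ and $L_2$, which generate $L_3$.
\item Its second component $\phi_{N',m,M}\circ(\psi_2,\phi_1)$ classifies $E\otimes L_1\otimes L_2$ with generators given by the products of the generators of $L_2$ and of $E\otimes L_1$.
\item Symmetrically, $F_2=(\psi_2,\psi_1,\phi_2)$ satisfies $p_{13}\circ F_2=f_2$, and $\lambda\circ F_2$ classifies the same pair $(L_3,\ E\otimes L_3)$, with possibly different generating sections and possibly an ordering of the Veronese factors that differs by a permutation matrix.
\end{enumerate}
By Lemma~\ref{lem:GloballyGen}, applied to each factor, two maps classifying isomorphic globally generated sheaves are equivalent under $\sim_{N}\times\sim_{m,M}$ in the colimit. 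Hence $\lambda\circ F_1$ and $\lambda\circ F_2$ agree in $\colim\overline{\Hom}$, and therefore $f_1\equiv\lambda F_1\equiv\lambda F_2\equiv f_2$ in the coequalizer.

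The main obstacle is bookkeeping rather than any single deep step. One must check that the chosen isomorphisms $E\otimes L_i\cong\phi_i^*E_{m,M}$ are absorbed by $\sim_{m,M}$. One must also check that the basis choices implicit in $\ver$ and in $\phi_{N',m,M}$ change the maps only by $\GL(k)$-translates, which relation (1) kills; this is Lemma~\ref{lem:TensorIdentity}(2) and (3).
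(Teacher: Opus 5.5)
Your proposal is correct and follows essentially the same route as the paper. The paper uses Lemma~\ref{lem:GloballyGen} to identify $\colim\overline{\Hom}(X,\P^{?_1}\times\Gr(m,?_2))$ with pairs $(L,E)$ such that $E\otimes L$ is globally generated, and shows that the coequalizer imposes exactly $(L,E)\sim(L\otimes L',E)$. Its injectivity step passes through the common pair $(L\otimes L',E)$, which is precisely what your zig-zag through $F_1=(\psi_1,\psi_2,\phi_1)$ and $F_2=(\psi_2,\psi_1,\phi_2)$ realizes concretely.
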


\begin{proof} Let $\Pic^+(X)$ be the set of isomorphism classes of globally generated invertible sheaves on $X$, that is, $\Pic^+(X)=\Vect_1^\gl(X)$. It follows from Lemma~\ref{lem:GloballyGen} that sending $f:X\to \P^N$ to $f^*\sO_{\P^N}(1)$ induces a bijection
\begin{equation}\label{eqn:PicDescr}
\colim_{\N_+}\overline{\Hom}(X, \P^N)\xrightarrow{\sim} \Pic^+(X).
\end{equation}
Fixing one $L\in \Pic^+(X)$, let $\overline{\Hom}(X, \P^N)_L$ be the subset of $\overline{\Hom}(X, \P^N)$ consisting of the equivalence classes of morphisms $f:X\to \P^N$ such that $f^*\sO_{\P^N}(1)\cong L$. Similarly, let $\Vect_m^L(X)$ be the set of isomorphism classes of rank $m$ locally free sheaves $E$ such that $E\otimes L$ is globally generated and let $\overline{\Hom}(X, \P^N\times\Gr(m, M))_L$ be the subset of $\overline{\Hom}(X, \P^N\times\Gr(m, M))$ consisting of equivalence classes of maps $f:X\to \P^N\times \Gr(m, M)$ such that $p_1\circ f$ is in 
$\overline{\Hom}(X, \P^N)_L$. Then it is easy to see that for each $f\in \overline{\Hom}(X, \P^N\times\Gr(m, M))_L$, $(p_2\circ f)^*(E_{m, M})$ is isomorphic to a sheaf of the form $E\otimes L$, with $E\in \Vect_m^L(X)$, and   $f^*(E_{m, M}(-1))\cong E$. Thus, we have the well-defined map
\[
\phi_{N, m, M}^L:\overline{\Hom}(X, \P^N\times\Gr(m, M))_L\to \Vect_m^L(X).
\]

Arguing as in the proof of Lemma~\ref{lem:GloballyGen}, we see that the maps $\phi_{N, m, M}^L$ induce a bijection
\[
\phi^L:\colim_{\N_+\times\N_+}\overline{\Hom}(X, \P^N\times\Gr(m, M))_L\to \Vect_m^L(X).
\]
If we now consider the entire colimit, we have the map
\[
\phi:\colim_{\N_+\times\N_+}\overline{\Hom}(X, \P^N\times\Gr(m, M))\to \Pic^+(X)\times\Vect_m(X).
\]
sending $f:X\to \P^N\times\Gr(m, M)$ to the pair of classes $((p_1\circ f)^*(\sO_{\P^N}(1)), f^*(E_{m, M}(-1)))$. From our description of $\phi_L$, we see that $\phi$ is injective, with image of $\phi$ contained the subset 
\[
[\Pic^+(X)\times \Vect_m(X)]^+=\{(L, E)\mid E\otimes L\text{ is globally generated}\}\subset 
\Pic^+(X)\times \Vect_m(X).
\]
From \eqref{eqn:PicDescr}, it follows that 
\[
\phi:\colim_{\N_+\times\N_+}\overline{\Hom}(X, \P^N\times\Gr(m, M))\to [\Pic^+(X)\times \Vect_m(X)]^+
\]
is a bijection.

Let $\le^*$ be partial order on $\Pic^+(X)$ defined by $L\le^* L\otimes L'$ for $L, L'\in \Pic^+(X)$. Note that $(\Pic^+(X), \le^*)$ is right filtering since
\[
L\le^* L\otimes L',\ L'\le^* L'\otimes L=L\otimes L'
\]
for $L, L'\in \Pic^+(X)$. 
Given $L'\in \Pic^+(X)$, we have the map
\[
-\otimes L':[\Pic^+(X)\times \Vect_m(X)]^+\to [\Pic^+(X)\times \Vect_m(X)]^+
\]
sending $(L, E)$ to $(L\otimes L', E)$. This gives us the $(\Pic^+(X), \le^*)$ parametrized directed system $L'\mapsto -\otimes L'$. We claim that the projection $p_2:[\Pic^+(X)\times \Vect_m(X)]^+\to \Vect_m(X)$ induces a bijection
\[
\colim_{\Pic^+(X)} [\Pic^+(X)\times \Vect_m(X)]^+\xymatrix{\ar[r]^{\colim\, p_2}_\sim&}\Vect_m(X)
\]
First of all, as $X$ is quasi-projective, each $E\in \Vect_m(X)$ admits an $L\in \Pic^+(X)$ with $E\otimes L$ globally generated, so $p_2$ is surjective, hence $\colim\, p_2$ is also surjective. For injectivity, if two elements $(L, E)$, $(L', E')$ both go to the same element $E''\in \Vect_m(X)$, then $E=E'=E''$. Moreover, we have 
\[
(-\otimes L')(L, E)=(L\otimes L', E)=(-\otimes L)(L', E)
\]
so $(L,E)=(L',E)$ in the colimit. 

We factor the map 
\[
\Hom(X, \P^N\times\Gr(m, M))\to \Vect_m(X),\quad f\mapsto f^*(E_{m,M}(-1)),
\]
through  $p_2:[\Pic^+(X)\times \Vect_m(X)]^+\to \Vect_m(X)$ by sending $f$ to the pair
\[
((p_1\circ f)^*\sO_{\P^N}(1), f^*E_{m,M}(-1))\in [\Pic^+(X)\times \Vect_m(X)]^+.
\]

To conclude,  let $f:X\to \P^N\times\P^{N'}\times\Gr(m, M)$ be a morphism, and let $E=(p_{13}\circ f)^*(E_{m, M}(-1))$, $L=(p_1\circ f)^*(\sO_{\P^N}(1))$ and $L'=(p_2\circ f)^*(\sO_{\P^{N'}}(1))$. By construction, we have $E\otimes L=(p_3\circ f)^*(E_{m, M})$. We  have the map \eqref{Veronese2}
\[
\lambda_{N,N', ,M}:\P^N\times\P^{N'}\times\Gr(m,M)\to \P^{N''}\times\Gr(m, M'')
\]
with
\[
(p_1\circ \lambda_{N,N', ,M}\circ f)^*(\sO_{\P^{N''}}(1))= (\ver_{N,N'}\circ f)^*(\sO_{\P^{N''}}(1))=L\otimes L',
\]
and by Lemma~\ref{lem:TensorIdentity}(1)
\[
(\lambda_{N,N', ,M}\circ f)^*(E_{m, M''}(-1))\cong (p_{13}\circ f)^*(E_{m, M}(-1)) = E.
\]
Thus we have 
\[
((p_1\circ f)^*(\sO_{\P^N}(1)), (p_{13}\circ f)^*(E_{m, M}(-1))=(L, E)\in \Pic^+(X)\times \Vect_m(X)]^+
\]
and
\begin{multline*}
((p_1\circ\lambda_{N,N', ,M}\circ f)^*(\sO_{\P^{N''}}(1)),  (\lambda_{N,N', ,M}\circ f)^*(E_{m, M''}(-1))\\=(L\otimes L', E)\in [\Pic^+(X)\times \Vect_m(X)]^+.
\end{multline*}
This gives us
\begin{multline*}
(L\otimes L', E)= ((p_1\circ\lambda(X,m)(f))^*(\sO_{\P^{N''}}(1)),  \lambda(X,m)(f)^*(E_{m, M''}(-1))\\ = (-\otimes L')((p_1\circ f)^*(\sO_{\P^N}(1)), p_{13}(X)(f)^*(E_{m, M}(-1))\\=(-\otimes L')(L,E)\in [\Pic^+(X)\times \Vect_m(X)]^+.
\end{multline*}
Moreover, each such pair of elements $(L\otimes L', E)$, $(L, E)$ in  $[\Pic^+(X)\times \Vect_m(X)]^+$, $L, L'\in \Pic^+(X)$, arises in this way from some morphism 
\[
f:X\to \P^N\times\P^{N'}\times \Gr(m, M).
\]
Thus, we have the identification
\[
\coeq(p_{13}(X), \lambda(X, m))\cong \colim_{\Pic^+(X)} [\Pic^+(X)\times \Vect_m(X)]^+\cong \Vect_m(X),
\]
completing the proof.
 \end{proof}

Finally, we promote $\Psi(X,m)$ to give a presentation of $K_0(X)$. For simplicity, we will assume that $X$ is connected.

 Let $\ses_m(X)$ denote the set of isomorphism classes of short exact sequences of locally free sheaves on $X$,
\[
E_\bullet:=0\to E'\to E\to E''\to 0,
\]
with $E$ of rank $m$. We have the two morphisms
\[
\alpha, \beta:\ses_m(X)\to \Vect_m(X)
\]
with
\[
\alpha(E_\bullet)=E'\oplus E'',\ \beta(E_\bullet)=E.
\]

For $(F_1, F_1')\in \Vect_{m_1}(X)\times\Vect_{m_1'}(X)$, $(F_2, F_2')\in \Vect_{m_2}(X)\times\Vect_{m_2'}(X)$, we write
\[
(F_1, F_1')\sim_{K_0}(F_2, F_2')
\]
if there are $E_\bullet\in \ses_m(X)$, $E'_\bullet\in \ses_{m'}(X)$ with
\[
F_2\oplus \alpha(E'_\bullet)=F_1\oplus \alpha(E_\bullet),\ F_2'\oplus \beta(E'_\bullet)=F_1'\oplus \beta(E_\bullet)
\]
It is easy to see that $\sim_{K_0}$ defines an equivalence relation on $\amalg_{m,m'\ge0}\Vect_m(X)\times\Vect_{m'}(X)$.

Let $\Sets_\bullet$ denote the category of pointed sets and let $\Sch_k^{\qp,\con}\subset \Sch_k^\qp$ denote the full subcategory of connected $k$-schemes. For $F\in \Vect_m(X)$, we have the $K_0$-class $[F]\in K_0(X)$.

\begin{lemma} Take $X\in \Sch_k^{\qp,\con}$.   Then sending $(F, F')\in \Vect_{m}(X)\times\Vect_{m'}(X)$ to   $[F]-[F']\in K_0(X)$ defines a bijection in  $\Sets_\bullet$. 
\[
\amalg_{m,m'\ge0}\Vect_m(X)\times\Vect_{m'}(X)/\sim_{K_0}\xrightarrow{\sim}K_0(X),
\]
natural in $X\in \Sch_k^{\qp,\con}$.
Here $K_0(X)$ is pointed by 0 and $\amalg_{m,m'\ge0}\Vect_m(X)\times\Vect_{m'}(X)/\sim_{K_0}$ is pointed by the class of $(0,0)\in \Vect_0(X)\times\Vect_0(X)$.
\end{lemma}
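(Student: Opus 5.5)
The plan is to compare the given presentation with the standard construction of $K_0(X)$ as the group completion of the monoid of isomorphism classes of locally free sheaves modulo short exact sequences. Recall that $K_0(X)$ is the free abelian group on isomorphism classes $[F]$ of locally free sheaves, divided by the relations $[E]=[E']+[E'']$ for exact sequences $0\to E'\to E\to E''\to0$.

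First, well-definedness. The map $(F,F')\mapsto [F]-[F']$ respects $\sim_{K_0}$. If $F_2\oplus\alpha(E'_\bullet)=F_1\oplus\alpha(E_\bullet)$ and $F_2'\oplus\beta(E'_\bullet)=F_1'\oplus\beta(E_\bullet)$, then taking classes gives
\[
[F_2]-[F_2']=[F_1]-[F_1']+\bigl([\alpha(E_\bullet)]-[\beta(E_\bullet)]\bigr)-\bigl([\alpha(E'_\bullet)]-[\beta(E'_\bullet)]\bigr).
\]
Each bracket vanishes in $K_0(X)$ by the short exact sequence relation. The map clearly sends $(0,0)$ to $0$, and naturality follows because pullback preserves direct sums and exact sequences of locally free sheaves.

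Second, surjectivity. Every element of $K_0(X)$ is a finite combination $\sum_i[F_i]-\sum_j[G_j]$. Taking $F=\oplus_iF_i$ and $F'=\oplus_jG_j$ gives a preimage. Connectedness of $X$ ensures that each summand, and hence each direct sum, has a constant rank, so it lies in some $\Vect_m(X)$.

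Third, injectivity, which is the main point. Suppose $[F_1]-[F_1']=[F_2]-[F_2']$. Then $[F_1]+[F_2']=[F_2]+[F_1']$ in $K_0(X)$, which is the group completion of the monoid $\amalg_m\Vect_m(X)$ modulo short exact sequences. Unwinding the free-group presentation, there are finite families of short exact sequences $E_{\bullet,a}$ and $E'_{\bullet,b}$ such that the following isomorphism holds, possibly after adding a common sheaf $P$ to both sides:
\[
F_1\oplus F_2'\oplus\bigoplus_a\alpha(E_{\bullet,a})\oplus\bigoplus_b\beta(E'_{\bullet,b})\cong F_2\oplus F_1'\oplus\bigoplus_a\beta(E_{\bullet,a})\oplus\bigoplus_b\alpha(E'_{\bullet,b}).
\]
The common summand $P$ can be absorbed as the split sequence $0\to P\to P\to0\to0$, for which $\alpha=\beta=P$.

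Direct sums of short exact sequences are again short exact, and $\alpha$ and $\beta$ are additive. So we may replace each family by a single sequence, written $E_\bullet$ and $E'_\bullet$. The remaining task is the bookkeeping that rearranges this single isomorphism into the two separate identities required by $\sim_{K_0}$. For this, add split sequences built from $F_1,F_1',F_2,F_2'$ to $E_\bullet$ and $E'_\bullet$, so that each side of the isomorphism splits into an ``unprimed'' component and a ``primed'' component. We also need to check that $\sim_{K_0}$ is compatible with adding the same split sequence to both $F$ and $F'$; this holds because split sequences satisfy $\alpha=\beta$.

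I expect this last rearrangement to be the only delicate step. I would first try to handle it with the standard Grothendieck-group argument showing that $[A]-[B]=[C]-[D]$ iff $A\oplus D\oplus S\cong C\oplus B\oplus S$ modulo exact-sequence relations, and then distribute the stabilizing sequences between the two coordinates.
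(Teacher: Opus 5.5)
Your route is the paper's. Both treat $K_0(X)$ as the group completion of the monoid $\amalg_m\Vect_m(X)$ modulo exact-sequence relations, and both use split sequences (for which $\alpha=\beta$) to absorb the stabilizing summands. The paper does this in two stages: it first identifies $\sim_\oplus$ with the group-completion relation on $K_0^\oplus(X)$, then passes to the quotient by exact sequences. You unwind the free-group presentation in one step. Your well-definedness and surjectivity arguments, the derivation of the displayed isomorphism, and the consolidation into single sequences $E_\bullet$, $E'_\bullet$ are all correct.

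The step you defer, however, is the whole content of injectivity, and your recipe for it does not work as stated. Vector bundles do not cancel, and $\beta(E_\bullet)$ and $\beta(E'_\bullet)$ occur on opposite sides of your isomorphism. So split sequences built only from $F_1,F_1',F_2,F_2'$ will not in general yield the two isomorphisms that $\sim_{K_0}$ requires. Nor can an isomorphism $A\oplus B\cong C\oplus D$ be separated into an ``unprimed'' and a ``primed'' piece.

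Instead, let the first coordinate carry the entire isomorphism and make the second a tautology, splitting off the middle terms as well. Write $\sigma(P)$ for the sequence $0\to P\xrightarrow{\id}P\to 0\to 0$, so that $\alpha(\sigma(P))\cong\beta(\sigma(P))\cong P$. Put
\[
H:=E_\bullet\oplus\sigma\bigl(F_2'\oplus\beta(E'_\bullet)\bigr),\qquad H':=E'_\bullet\oplus\sigma\bigl(F_1'\oplus\beta(E_\bullet)\bigr).
\]
Then $F_2\oplus\alpha(H')\cong F_1\oplus\alpha(H)$ is exactly your displayed isomorphism. Meanwhile $F_2'\oplus\beta(H')\cong F_1'\oplus\beta(H)$ holds just by permuting summands. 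Hence $(F_1,F_1')\sim_{K_0}(F_2,F_2')$, with $(H,H')$ as the witnessing pair in the definition.

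With this filled in, your proof is complete, and it is more explicit than the paper's ``from this, we see''.
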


\begin{proof}  Note that $\amalg_{m\ge0}\Vect_m(X)$ is a commutative monoid under direct sum.  If we restrict $\sim_{K_0}$ to the case of short exact sequences of the form $0\to E\xrightarrow{\id}E\to 0\to 0$, defining the equivalence relation $\sim_{\oplus}$, we see that sending $(F, F')\in \Vect_m(X)\times\Vect_{m'}(X)$ to the formal difference $F-F'$ in the group completion $[\amalg_{m\ge0}\Vect_m(X)]^\gp$ gives a bijection of pointed sets
\begin{equation}\label{eqn:GpCompletionMap}
\amalg_{m,m'\ge0}\Vect_m(X)\times\Vect_{m'}(X)/\sim_{\oplus}\to [\amalg_{m\ge0}\Vect_m(X)]^\gp.
\end{equation}
Also, as abelian groups, $[\amalg_{m\ge0}\Vect_m(X)]^\gp$ is canonically isomorphic to $K_0^\oplus(X)$, defined as the free abelian group on isomorphism classes of locally free coherent sheaves on $X$, modulo the subgroup generated by  $\{[E\oplus E']-[E]-[E']\mid (E,E')\in  \amalg_{m,m'\ge0}\Vect_m(X)\times\Vect_{m'}(X)\}$.  

From this, we see that the map \eqref{eqn:GpCompletionMap}  passes to a bijection (in $\Sets_\bullet$) of
\[
\amalg_{m,m'\ge0}\Vect_m(X)\times\Vect_{m'}(X)/\sim_{K_0}
\]
with the quotient (in the category $\Ab$)  of $K_0^\oplus(X)$ by the relation $[E']+[E'']\sim [E]$ for each  short exact sequence
\[
0\to E'\to E\to E''\to 0
\]
on $X$, which gives back the definition of $K_0(X)$.
\end{proof}

We finish the discussion by showing how to represent the relation $\sim_{K_0}$ in homogeneous varieties.

For this, we first describe representing the direct sum operation on twisted locally free coherent sheaves. 
We have the usual direct sum map
\[
\oplus:=\oplus_{(m_1, M_1), (m_2, M_2)}:\Gr(m_1, M_1)\times\Gr(m_2, M_2)\to
\Gr(m_1+m_2, M_1+M_2)
\]
classifying the surjection
\begin{multline*}
k^{M_1+M_2}\otimes\sO_{\Gr(m_1, M_1)\times\Gr(m_2, M_2)}\\
\cong 
[p_1^*(H^0(\Gr(m_1, M_1), E_{m_1, M_1})\otimes\sO_{\Gr(m_1, M_1)}\\ \oplus
p_2^*(H^0(\Gr(m_2, M_2), E_{m_2, M_2})\otimes\sO_{\Gr(m_2, M_2)}]\\\to
p_1^*E_{m_1,M_1}\oplus p_2^*E_{m_2, M_2}.
\end{multline*}
For the twisted version, we need a slightly more involved construction.

Consider the direct sum
\[
(p_{2}^*E_{m_1, M_1})(0,1)\oplus (p_{4}^*E_{m_2, M_2})(1,0)
\]
on $(\P^{N_1}\times \Gr(m_1, M_1))\times (\P^{N_2}\times \Gr(m_2, M_2))$, where the twist $(-)(a,b)$ means $(-)\otimes p_1^*\sO_{\P^{N_1}}(a)\otimes p_3^*\sO_{\P^{N_2}}(b)$. This direct sum is globally generated, by
\begin{multline*}
H^0(\Gr(m_1, M_1), E_{m_1, M_1})\otimes H^0(\P^{N_2}, \sO_{\P^{N_2}}(1))\\\oplus 
H^0(\Gr(m_2, M_2), E_{m_2, M_2})\otimes H^0(\P^{N_1}, \sO_{\P^{N_1}}(1))\cong k^M,
\end{multline*}
with
\[
M=M_1\cdot(N_2+1)+M_2\cdot(N_1+1).
\]
A choice of the above isomorphism gives the representing morphism
\[
\sigma_{(N_1, m_1, M_1),(N_2, m_2, M_2)}:
(\P^{N_1}\times \Gr(m_1, M_1))\times (\P^{N_2}\times \Gr(m_2, M_2))\to \Gr(m_1+m_2, M)
\]

We also have the Veronese embedding 
\[
\ver_{N_1, N_2}:\P^{N_1}\times \P^{N_2}\to \P^N,\ N=(N_1+1)(N_2+1)-1, 
\]
giving the map 
\[
\widetilde{\oplus}:(\P^{N_1}\times \Gr(m_1, M_1))\times (\P^{N_2}\times \Gr(m_2, M_2))\to
\P^N\times \Gr(m_1+m_2, M),
\]
defined as 
\[
\widetilde{\oplus}:=\widetilde{\oplus}_{(N_1, m_1, M_1),(N_2, m_2, M_2)}:=(\ver_{N_1, N_2}\circ p_{13}, \sigma_{(N_1, m_1, M_1),(N_2, m_2, M_2)}).
\]
Letting $m=m_1+m_2$, this satisfies
\begin{multline}\label{eqn:DirectSumPullback}
\widetilde{\oplus}^*(E_{m, M}(-1))\cong \sigma_{(N_1, m_1, M_1),(N_2, m_2, M_2)}^*(E_{m, M})(-1,-1)\\\cong
p_2^*E_{m_1, M_1}(-1,0)\oplus p_4^*E_{m_2, M_2}(0,-1),
\end{multline}
giving us the following computation.
\begin{lemma}\label{lem:TwistedDirectSum} Let $E_1, E_2$ be locally free  sheaves on $X$ of rank $m_1$, $m_2$, $L_1$, $L_2$ globally generated invertible sheaves on $X$ such that $E_1\otimes L_1$ and $E_2\otimes L_2$ are globally generated. For $i=1,2$, let $g_i:X\to \Gr(m_i, M_i)$ be the morphism gotten by choosing $M_i$ generating global sections of $E_i\otimes L_i$,  let $\iota_i:X\to \P^{N_i}$ be the morphism gotten by choosing $N_i+1$ generating global sections of $L_i$,  let $f_i:=(\iota_i, g_i):X\to 
\P^{N_i}\times\Gr(m_i, M_i)$ and let $m=m_1+m_2$. Then
\[
f_i^*(E_{m_i, M_i}(-1))\cong E_i,\ i=1,2,
\]
and
\[
(\widetilde{\oplus}\circ (f_1, f_2))^*(E_{m, M}(-1))\cong E_1\oplus E_2.
\]
\end{lemma}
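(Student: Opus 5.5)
The plan is to combine the pullback identities \eqref{eqn:Pullback1} with the twisted direct sum identity \eqref{eqn:DirectSumPullback}, so that both assertions reduce to functoriality of pullback under composition.

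For the first assertion, note that $f_i=(\iota_i,g_i)$, so $f_i^*(E_{m_i,M_i}(-1))\cong \iota_i^*\sO_{\P^{N_i}}(-1)\otimes g_i^*E_{m_i,M_i}$. Since $\iota_i$ is given by generating sections of $L_i$, we have $\iota_i^*\sO_{\P^{N_i}}(1)\cong L_i$. Since $g_i$ is given by generating sections of $E_i\otimes L_i$, we have $g_i^*E_{m_i,M_i}\cong E_i\otimes L_i$. Combining these gives
\[
f_i^*(E_{m_i,M_i}(-1))\cong E_i\otimes L_i\otimes L_i^{-1}\cong E_i,
\]
which is exactly the third isomorphism in \eqref{eqn:Pullback1}.

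For the second assertion, write $F:=(f_1,f_2):X\to (\P^{N_1}\times\Gr(m_1,M_1))\times(\P^{N_2}\times\Gr(m_2,M_2))$. Then $(\widetilde{\oplus}\circ F)^*\cong F^*\widetilde{\oplus}^*$. By \eqref{eqn:DirectSumPullback},
\[
\widetilde{\oplus}^*(E_{m,M}(-1))\cong p_2^*E_{m_1,M_1}(-1,0)\oplus p_4^*E_{m_2,M_2}(0,-1),
\]
where the twist $(-1,0)$ is by $p_1^*\sO_{\P^{N_1}}(-1)$ and the twist $(0,-1)$ is by $p_3^*\sO_{\P^{N_2}}(-1)$. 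The projections satisfy $p_1F=\iota_1$, $p_2F=g_1$, $p_3F=\iota_2$ and $p_4F=g_2$. Hence $F^*$ of the first summand is $g_1^*E_{m_1,M_1}\otimes\iota_1^*\sO(-1)\cong E_1\otimes L_1\otimes L_1^{-1}\cong E_1$. The second summand gives $E_2$ in the same way. Since pullback commutes with direct sums, the sum is $E_1\oplus E_2$.

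The only step that requires care is justifying \eqref{eqn:DirectSumPullback} itself. First, one checks that $\sigma^*E_{m,M}$ is the tautological quotient, namely $p_2^*E_{m_1,M_1}(0,1)\oplus p_4^*E_{m_2,M_2}(1,0)$. Twisting this by $(-1,-1)$, which is $\ver_{N_1,N_2}^*\sO(-1)$, gives the formula. This follows directly from the representability of the Grassmannian and the choice of generators made in defining $\sigma$, so I expect no real obstacle there.
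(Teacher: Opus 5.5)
Your proposal is correct and is the same argument as the paper's: the first isomorphism is the third identity in \eqref{eqn:Pullback1}, and the second follows by pulling back \eqref{eqn:DirectSumPullback} along $(f_1,f_2)$, using $p_1\circ(f_1,f_2)=\iota_1$, $p_2\circ(f_1,f_2)=g_1$, $p_3\circ(f_1,f_2)=\iota_2$, $p_4\circ(f_1,f_2)=g_2$. The paper states this in one line, and your justification of \eqref{eqn:DirectSumPullback} from the definitions of $\sigma_{(N_1,m_1,M_1),(N_2,m_2,M_2)}$ and of the Veronese component $\ver_{N_1,N_2}\circ p_{13}$ simply spells out the step the paper leaves implicit.
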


\begin{proof} We have already seen that $f_i^*(E_{m_i, M_i}(-1))\cong E_i$, and the second identity follows from this and \eqref{eqn:DirectSumPullback}.
\end{proof}

 Let $\sO_{\Gr(m, M)}(1)$ denote the invertible sheaf $\det E_{m, M}$. The sheaf $\sO_{\Gr(m, M)}(1)$  is very ample and a suitable choice of basis for $H^0(\Gr(m, M), \sO_{\Gr(m, M)}(1))$ defines the classical Pl\"ucker embedding
\[
\plu_{m, M}:\Gr(m, M)\to \P^{N(m, M)},\ N(m, M)=\binom{M}{m}-1.
\]
More generally, for $r\ge1$, choosing a basis for 
$H^0(\Gr(m, M), \sO_{\Gr(m, M)}(r))\cong k^{N(m, M,r)+1}$ defines  an  embedding
\[
\plu_{m, M,r}:\Gr(m, M)\to \P^{N(m, M,r)}.
\]

Let $\Fl(m_\bullet, M)$, $m_\bullet:=(m_t, m_q)$, be the partial flag variety representing sequences of surjective maps of locally free sheaves
\[
\sO_Y^M\to E^t\to E^q
\]
with $E^t$ of rank $m_t$ and $E^q$ of rank $m_q$. We have the tautological sequence of surjections
\[
\sO_{\Fl(m_\bullet, M)}^M\to E^t_{m_\bullet, M}\to E^q_{m_\bullet, M},
\]
and  the tautological exact sequence
\begin{equation}\label{eqn:TautExSeq}
E_{m_\bullet, M}:\ 0\to E^s_{m_\bullet, M}\to E^t_{m_\bullet, M}\to E^q_{m_\bullet, M}\to 0
\end{equation}
on $\Fl(m_\bullet, M)$, with $E^s_{m_\bullet, M}$ of rank   $m_s:=m_t-m_q$.   Note that, although $E^t_{m_\bullet, M}$ and $E^q_{m_\bullet, M}$ are globally generated (by $\sO_{\Fl(m_\bullet, M)}^M$), this is not in general the case for $E^s_{m_\bullet, M}$. In any case, the surjections $\sO_{\Fl(m_\bullet, M)}^M\to E^t_{m_\bullet, M}$ and  $\sO_{\Fl(m_\bullet, M)}^M\to E^q_{m_\bullet, M}$ define morphisms
\[
\pi_{m_t, M}:\Fl(m_\bullet, M)\to \Gr(m_t, M),\ \pi_{m_q, M}:\Fl(m_\bullet, M)\to \Gr(m_q, M).
\]

Let $\sO_{\Fl}(1)$ denote the invertible sheaf $\det E^t_{m_\bullet, M}\otimes\det E^q_{m_\bullet, M}$. $\sO_{\Fl}(1)$ is very ample, and for $r\ge1$, we have the embedding 
\[
i_{m_\bullet, M, r}:\Fl(m_\bullet, M)\hookrightarrow\P^{N(m_\bullet,M,r)},
\]
with $i_{m_\bullet, M, r}^*(\sO_{\P^{N(m_\bullet,M,r)}}(1))\cong \sO_{\Fl}(r)$, 
defined by the composition
\begin{multline*}
\Fl(m_\bullet, M)\xrightarrow{(\pi_{m_t, M}, \pi_{m_q, M})}\Gr(m_t, M)\times\Gr(m_q, M)\\\xrightarrow{(\plu_{m_t, M,r}, \plu_{m_q,M,r})} \P^{N(m_t, M,r)}\times 
\P^{N(m_q, M,r)}\\
\xrightarrow{\ver_{N(m_t, M,r), N(m_q, M,r)}} \P^{N(m_\bullet, M,r)},
\end{multline*}
where $N(m_\bullet, M,r)=(N(m_t, M,r)+1)(N(m_q, M,r)+1)-1$.

Choose an 
$r:=r(m_t, m_q, M)>0$ so that, for $?=s,t,q$, we have 
\begin{equation}\label{eqn:TwistConditions}
\vbox{
\begin{enumerate}
\item[(a)] $H^1(\Fl(m_\bullet, M),  E^?_{m_\bullet, M}\otimes \sO_{\Fl}(r))=0$,
\item[(b)] $E^?_{m_\bullet, M}\otimes \sO_{\Fl}(r)$ is generated by global sections.
\end{enumerate}
}
\end{equation}

Consider the twist
 \[
 E_{m_\bullet, M}(r):\ 0\to E^s_{m_\bullet, M}\otimes\sO_{\Fl}(r)\to E^t_{m_\bullet, M}\otimes\sO_{\Fl}(r)\to E^q_{m_\bullet, M}\otimes\sO_{\Fl}(r)\to 0
 \]
 of the tautological sequence \eqref{eqn:TautExSeq}. Each term is globally generated, so after choosing isomorphisms
 \begin{equation}\label{eqn:ChosenSurjections}
 H^0(\Fl(m_\bullet, M), E^?_{m_\bullet, M}\otimes\sO_{\Fl}(r))\cong k^{M_?},\ ?=s,t,q,\ M_t=M_s+M_q,
 \end{equation}
 we have the morphisms
 \[
 g_?:\Fl(m_\bullet, M)\to \Gr(m_?, M_?),\ ?=s,t,q
 \]
 with
 \[
 g_?^*E_{m_?, M_?}\cong E^?_{m_\bullet, M}\otimes\sO_{\Fl}(r),\ ?=s,t,q.
 \]

 Let $N^*=N^*(N, m_\bullet, M,r):=(N+1)(N(m_\bullet, M,r)+1)-1$ and for $?=s,t,q$, let
 \[
 \Lambda_?:= \Lambda_?(N, m_\bullet, M,r):\P^N\times \Fl(m_\bullet, M)\to \P^{N^*}\times \Gr(m_?, M_?)
 \]
  be the composition
 \begin{multline*}
\P^N\times \Fl(m_\bullet, M) \xrightarrow{\id_{\P^N}\times (i_{m_\bullet, M,r}, g_?)}\P^N\times 
\P^{N(m_\bullet, M,r)}\times \Gr(m_?, M_?)\\\xrightarrow{\ver_{N, N(m_\bullet,M,r)}\times\id_\Gr}
\P^{N^*}\times \Gr(m_?, M_?).
\end{multline*}
We define
\[
\Lambda_{s+q}:\P^N\times \Fl(m_\bullet, M) \to \P^{N^*}\times \Gr(m_t, M_t)
\]
as the composition
\begin{multline*}
\P^N\times \Fl(m_\bullet, M)\xrightarrow{(\Lambda_s, \Lambda_q)}
(\P^{N^*}\times \Gr(m_s, M_s))\times_{\P^{N^*}} (\P^{N^*}\times \Gr(m_q, M_q))\\=
\P^{N^*}\times \Gr(m_s, M_s)\times \Gr(m_q, M_q)\xrightarrow{\id_{\P^{N^*}}\times\oplus}
\P^{N^*}\times \Gr(m_t, M_t).
\end{multline*}
 
 \begin{lemma}\label{lem:ExactSeqPresentation} Let $X$ be a quasi-projective $k$-scheme, let
 \[
E_\bullet:= 0\to E_s\to E_t\to E_q\to0
 \]
 be an exact sequence of locally free sheaves on $X$, with $E_s$ of rank $m_s$, $E_t$ of rank $m_t$ and $E_q$ of rank $m_q$. Let $L$ be a globally generated invertible sheaf on $X$ such that $E_t\otimes L$ is globally generated, and choose $M$ generating sections $s_1,\ldots, s_M$ of $E_t\otimes L$, giving the 
 corresponding morphism
 \[
 \phi:X\to \Fl(m_t,m_q, M)
 \]
 with
 \[
 \phi^*(E_{m_\bullet, M})\cong E_\bullet\otimes L.
 \]
 Choosing $N+1$ generating global sections $t_0,\ldots, t_N$ for $L$ gives the morphism
 \[
 \iota:X\to \P^N 
 \]
 and the morphism
 \[
 \psi:=(\iota, \phi):X\to \P^N\times\Fl(m_\bullet,M)
 \]
 with
 \[
 \psi^*(E_{m_\bullet, M}(-1))\cong E_\bullet.
 \]
 
 Let  $V$ be the $k$-vector space $k^M$ with basis $s_1,\ldots, s_M$, and choose $r\gg0$ so that \eqref{eqn:TwistConditions} for $(m_\bullet, M)$  holds.
 \begin{enumerate}
 \item Let $L_\phi=L\otimes \phi^*\sO_{\Fl}(r)$. Then  $E_?\otimes L_\phi$ is globally generated for $?=s,t,q$, and 
 \begin{equation}\label{eqn:PullbackIso}
  \phi^*(E_{m_\bullet, M}\otimes\sO_{\Fl}(r))\cong E_\bullet\otimes L_\phi.
\end{equation}
 \item Mapping $H^0(\Fl(m_\bullet, M), E^?_{m_\bullet, M}\otimes\sO_{\Fl}(r))$ to $H^0(X, E_?\otimes L_\phi)$ by the isomorphism \eqref{eqn:PullbackIso},  the image
 \[
 \phi^*(H^0(\Fl(m_\bullet, M), E^?_{m_\bullet, M}\otimes\sO_{\Fl}(r)))\subset H^0(X, E_?\otimes L_\phi)
 \]
 generates $E_?\otimes L_\phi$.
 \item Our choice of isomorphism $H^0(\Fl(m_\bullet, M), E^?_{m_\bullet, M}\otimes\sO_{\Fl}(r))\cong k^{M_?}$ gives rise to morphisms for $?=s,t,q$,
 \[
 \gamma_?:X\to \Gr(m_?, M_?)
 \]
 with $\gamma_?^*E_{m_?, M_?}\cong E_?\otimes L_\phi$ and with
 \[
 \gamma_?=g_?\circ  \phi.
 \]
 \item Let $W=\oplus_{i=0}^Nk\cdot t_i$, and let $W_\Fl:=H^0(\Fl(m_\bullet, M), \sO_{\Fl}(r))$. Map $W\otimes W_\Fl$  to $H^0(X, L_\phi)$ via the definition of $L_\phi$ as $L\otimes \phi^*\sO_{\Fl}(r)$ together with the evident map $W\to H^0(X, L)$. Then the image of 
 $W\otimes W_\Fl$ in $H^0(X, L_\phi)$ generates $L_\phi$, giving the morphism
 \[
 \ver_{W, W_\Fl}:X\to \P^{N^*},\ N^*=(N+1)\cdot\dim_kW_\Fl,
 \]
and, after a suitable choice of basis for $W$, $W_\Fl$  and $W\otimes W_\Fl$, we have
\[
(\ver_{W,W_\Fl}, \gamma_?)=\Lambda_?\circ \psi,\ ?=s,t,q.
\]
\item We have isomorphisms
\[
(\ver_{W,W_\Fl}, \gamma_?)^*(E_{m_?, M_?}(-1))\cong E_?,\ ?=s,t,q.
\]
 \end{enumerate}
 \end{lemma}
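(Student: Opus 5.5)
The plan is to derive everything from the universal property of the flag variety: $\phi$ is characterized by $\phi^*(\sO^M\to E^t_{m_\bullet,M}\to E^q_{m_\bullet,M})\cong(\sO_X^M\to E_t\otimes L\to E_q\otimes L)$. The first surjection is given by $s_1,\ldots,s_M$, and the second is $E_t\otimes L\to E_q\otimes L$. Taking kernels gives $\phi^*(E_{m_\bullet,M})\cong E_\bullet\otimes L$. Twisting by $\phi^*\sO_{\Fl}(r)$ then gives \eqref{eqn:PullbackIso}, since pullback commutes with tensor products and kernels of surjections of locally free sheaves. For the global generation in (1), note that by \eqref{eqn:TwistConditions}(b) each $E^?_{m_\bullet,M}\otimes\sO_{\Fl}(r)$ is a quotient of a trivial sheaf $H^0(\dots)\otimes\sO_{\Fl}$. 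Pulling back by $\phi$ keeps this surjective, so $E_?\otimes L_\phi$ is a quotient of $\phi^*H^0(\dots)\otimes\sO_X$. This proves (1) and (2) together: the sections generating $E_?\otimes L_\phi$ are exactly the images of $H^0(\Fl,E^?\otimes\sO_{\Fl}(r))$. Here $L_\phi$ is understood with the $L$ already absorbed. Concretely, the identification $\phi^*(E^?_{m_\bullet,M})\cong E_?\otimes L$ gives $\phi^*(E^?\otimes\sO_{\Fl}(r))\cong E_?\otimes L\otimes\phi^*\sO_{\Fl}(r)$. I will keep track of the single factor of $L$ carefully, because the statement's bookkeeping of $L$ versus $L_\phi$ is where a mismatch could creep in.

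Part (3) follows from the representability of $\Gr(m_?,M_?)$. The morphism $g_?\circ\phi$ classifies the pulled-back surjection $k^{M_?}\otimes\sO_X\to\phi^*(E^?\otimes\sO_{\Fl}(r))$. By (2) and the chosen bases \eqref{eqn:ChosenSurjections}, this is the surjection defining $\gamma_?$, so we may take $\gamma_?:=g_?\circ\phi$ as the definition. The isomorphism $\gamma_?^*E_{m_?,M_?}\cong E_?\otimes L_\phi$ then follows from $g_?^*E_{m_?,M_?}\cong E^?\otimes\sO_{\Fl}(r)$.

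For (4), note that $i_{m_\bullet,M,r}$ is given by a basis of $W_\Fl$, since it is the Veronese of Plücker-type embeddings for $\sO_{\Fl}(r)$; I would take the basis to be compatible with this composite. Then $i_{m_\bullet,M,r}\circ\phi$ is the map given by the images of $W_\Fl$ in $H^0(X,\phi^*\sO_{\Fl}(r))$, and these generate because $\sO_{\Fl}(r)$ is globally generated. Also $\iota$ is given by $W$. Products of generating sections of $L$ and of $\phi^*\sO_{\Fl}(r)$ generate the tensor product, so $W\otimes W_\Fl$ generates $L_\phi$. Composing with $\ver_{N,N(m_\bullet,M,r)}$ corresponds to taking the tensor basis of $W\otimes W_\Fl$. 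Unwinding the definition of $\Lambda_?$ then gives $\Lambda_?\circ\psi=(\ver\circ(\iota,i\circ\phi),g_?\circ\phi)=(\ver_{W,W_\Fl},\gamma_?)$.

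Part (5) is then a twist computation. We have $(\ver_{W,W_\Fl},\gamma_?)^*(E_{m_?,M_?}(-1))=\gamma_?^*E_{m_?,M_?}\otimes\ver_{W,W_\Fl}^*\sO(-1)\cong E_?\otimes L_\phi\otimes L_\phi^{-1}=E_?$.

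The only real obstacle is bookkeeping: making the chosen bases, including the basis for $H^0(\Fl,\sO_{\Fl}(r))$ underlying $i_{m_\bullet,M,r}$, match so that (4) is an equality of morphisms rather than an equality up to a $\GL$-translate. If an exact match fails, I would note that the discrepancy is the action of some element of $\GL_{N^*+1}(k)\times\GL_{M_?}(k)$, which is harmless for the relations $\sim$ used later.
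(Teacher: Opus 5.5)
Your proposal is correct and is essentially the paper's proof: you pull back the generating surjections of \eqref{eqn:TwistConditions}(b) along $\phi$ for (1)--(2), use representability to get $\gamma_?=g_?\circ\phi$ for (3), unwind $\Lambda_?\circ\psi$ for (4), and finish with the twist computation for (5). On the bookkeeping you flag, the linear system defining $i_{m_\bullet,M,r}$ is really $H^0(\Gr(m_t,M),\sO(r))\otimes H^0(\Gr(m_q,M),\sO(r))$, which maps to but is in general larger than $W_\Fl=H^0(\Fl(m_\bullet,M),\sO_{\Fl}(r))$ (the paper identifies the two in the same loose way), so the clean fix is to take $W_\Fl$ to be this tensor product; a $\GL$-translate alone cannot repair the mismatch, since it cannot change the dimension of the target projective space.
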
 
 
 \begin{proof} For (1),  we have  
 \[
 \phi^*(E_{m_\bullet,M})\cong E_\bullet\otimes L
 \]
 by construction, and thus $\phi^*(E_{m_\bullet,M}\otimes\sO_\Fl(r))\cong E_\bullet\otimes L_\phi$. Since $E^?_{m_\bullet,M}\otimes\sO_\Fl(r)$ is globally generated for $?=s,t,q$, so are the $E_?\otimes L_\phi$.
 
 For (2), this follows from (1), since a space of global generators for $E^?_{m_\bullet,M}\otimes\sO_\Fl(r)$ pulls back via $\phi$ to a space of global generators for $\phi^*(E^?_{m_\bullet,M}\otimes\sO_\Fl(r))$.
 
 For (3), the map $g_?$ classifies the surjection
 \[
\pi: H^0(\Fl, E^?_{m_\bullet, M}\otimes\sO_\Fl(r))\otimes_k\sO_\Fl\to E^?_{m_\bullet, M}\otimes\sO_\Fl(r)
 \]
 after a choice of isomorphism $H^0(\Fl, E^?_{m_\bullet, M}\otimes\sO_\Fl(r))\cong k^{M_?}$ and thus $g_?\circ \phi$ classifies the surjection
 \begin{equation}\label{eqn:PiSurjection}
k^{M_?}\otimes_k\sO_X\cong  H^0(\Fl, E^?_{m_\bullet, M}\otimes\sO_\Fl(r))\otimes_k\sO_X\to \phi^*(E^?_{m_\bullet, M}\otimes\sO_\Fl(r))\cong E_?\otimes L_\phi
 \end{equation}
 induced by $\pi$ and the isomorphism from (1). As \eqref{eqn:PiSurjection} is the surjection used to define $\gamma_?$, we have
 \[
 \gamma_?=g_?\circ\phi,
 \]
 and
 \[
 \gamma_?^*(E^?_{m_\bullet, M})\cong  E_?\otimes L_\phi,\ ?=s,t,q.
 \]
 
 For (4), we have
 \[
 \Lambda_?\circ \psi=(\ver_{N, N(m_\bullet, M,r)}\times\id_\Gr)\circ(\id_{\P^N}\times (i_{m_\bullet, M,r}, g_?))\circ(\iota,\phi).
 \]
 so
 \[
 p_2\circ(\Lambda_?\circ \psi)=g_?\circ \phi=\gamma_?.
 \]
 Moreover, the composition $i_{m_\bullet, M,r}\circ\phi:X\to \P^{N(m_\bullet, M)}$ classifies the surjection
\[
k^{N(m_\bullet, M,r)+1}\otimes\sO_X\cong H^0(\Fl, \sO_\Fl(r))\otimes\sO_X\to \phi^*\sO_\Fl(r),
\]
induced by the canonical surjection $H^0(\Fl, \sO_\Fl(r))\otimes\sO_\Fl\to \sO_\Fl(r)$, after pulling back by $\phi$. Thus, the composition
\[
X\xrightarrow{(\iota, \phi)}\P^N\times\Fl(m_\bullet, M)\xrightarrow{\id_{\P^N}\times i_{m_\bullet, M,r}}
\P^N\times \P^{N(m_\bullet, M,r)}\xrightarrow{\ver_{N, N(m_\bullet, M,r)}}\P^{N^*}
\]
is the map classifying the surjection
\begin{multline*}
k^{N^*+1}\otimes\sO_X\cong k^{N+1}\otimes k^{N(m_\bullet, M,r)+1}\otimes\sO_X\\
\cong W\otimes W_\Fl\otimes\sO_X\to L\otimes\phi^*\sO_\Fl(r)=L_\phi
\end{multline*}
which, up to suitable choice of bases, is exactly the map $\ver_{W, W_\Fl}$. Thus
\[
p_1\circ (\Lambda_?\circ \psi)=\ver_{W, W_\Fl}
\]
so
\[
\Lambda_?\circ \psi=(\ver_{W, W_\Fl}, \gamma_?)
\]
as claimed. 

For point (5), 
\begin{align*}
(\ver_{W,W_\Fl}, \gamma_?)^*(E_{m_?, M_?}(-1))&= \gamma_?^*(E_{m_?, M_?})\otimes\ver_{W,W_\Fl}^*(\sO_{\P^{N^*}}(-1))\\
&=E_?\otimes L_\phi\otimes L_\phi^{-1}=E_?.
\end{align*}
 \end{proof}
 
\begin{definition}\label{def:TotalAndSum} Given integers $N, N_1, N_2$, $m_\bullet:=(m_t, m_q)$, $M$, $m_1, M_1$, choose $r$ satisfying \eqref{eqn:TwistConditions} for $(m_\bullet, M)$, and let  $M_t^*:=M_t\cdot(N(m_\bullet, M,r)+1)$,  $N^*_i=(N(m_\bullet, M,r)+1)(N_i+1)-1$,  $M_i^*:=M_1\cdot (N_i+1)$, $i=1,2$; see \eqref{eqn:ChosenSurjections} for the definition of $M_t$.\\[5pt]
1. Define
\[
\oplus(s+q,1):\P^N\times \Fl(m_\bullet, M)\times(\P^{N_1}\times\Gr(m_1, M_1)) \to \P^{N^*_1}\times\Gr(m_t+m_1, M_t^*+M_1^*)
\]
be the composition
\begin{multline*}
(\P^N\times \Fl(m_\bullet, M))\times(\P^{N_1}\times\Gr(m_1, M_1))\xrightarrow{\Lambda_{s+q}\times\id}
\P^{N^*}\times  \Gr(m_t, M_t)\times \P^{N_1}\times \Gr(m_1, M_1)\\
\xrightarrow{\tilde{\oplus}}
\P^{N_1^*}\times \Gr(m_t+m_1, M_t^*+M_1^*)
\end{multline*}
2. Define
\[
\oplus(t,2):\P^N\times \Fl(m_\bullet, M)\times(\P^{N_2}\times\Gr(m_2, M_2)) \to \P^{N^*_2}\times\Gr(m_t+m_2, M_t^*+M_2^*)
\]
as the composition
\begin{multline*}
\P^N\times \Fl(m_\bullet, M)\times(\P^{N_2}\times\Gr(m_2, M_2))\xrightarrow{\Lambda_t\times\id}
\P^{N^*}\times  \Gr(m_t, M_t)\times \P^{N_2}\times \Gr(m_2, M_2)\\
\xrightarrow{\tilde{\oplus}}
\P^{N_2^*}\times \Gr(m_t+m_2, M_t^*+M_2^*).
\end{multline*}
\end{definition}

\begin{remark} As the reader will see in the proof of Theorem~\ref{thm:K0Presentation}, the choice of $r$ in the definition of the maps
$\oplus(s+q,1)$ and  $\oplus(t,2)$ does not matter in our presentation of $K_0(X)$, and so we have not included this choice in the notation for these maps, even though this choice does enter into the definition of the numbers $M_?, M_?^*, M_i^*, N_i^*$, $?=s,t,q$, $i=1,2$.
\end{remark} 

Let $\widetilde{\Hom}(X,\P^N\times \Fl(m_\bullet, M))$ be the quotient of ${\Hom}(X,\P^N\times \Fl(m_\bullet, M))$ by the action of $\GL_{N+1}(k)\times\GL_M(k)$, and define $\widetilde{\Hom}(X,\P^N\times\Gr(m, M))$ similarly.

One can easily check that the maps $\oplus(s+q,1)$, $\oplus(t,2)$ define a map
\begin{multline}\label{multline:Relation}
\widetilde{\Hom}(X,\P^N\times \Fl(m_\bullet, M))\times \widetilde{\Hom}(X,\P^{N_1}\times\Gr(m_1, M_1))\times \widetilde{\Hom}(X,\P^{N_2}\times\Gr(m_2, M_2))\\\xrightarrow{(\oplus(s+q,1)_*\circ p_{12}, \oplus(t,2)_*\circ p_{13})_{N,m_\bullet, M}}\\ [\amalg_m \colim_{\N_+\times\N_+}\overline{\Hom}(X,\P^{?_1}\times\Gr(m, ?_2)]\times [\amalg_{m'} \colim_{\N_+\times\N_+}\overline{\Hom}(X,\P^{?_1}\times\Gr(m', ?_2)]
\end{multline}
Letting 
\begin{align*}
\coeq_I(X,m):=
\coeq[
\colim_{\N_+\times\N_+\times\N_+}&\widetilde{\Hom}(X, \P^{?_1}\times\P^{?_2}\times\Gr(m, ?_3))\\&\xymatrix{\ar@<5pt>[r]^{p_{13}(X)}\ar@<-5pt>[r]_{\lambda(X,m)}&}
\colim_{\N_+\times\N_+}\overline{\Hom}(X, \P^{?_1}\times\Gr(m, ?_2))]
\end{align*}
we have the induced maps
\begin{multline}\label{multline; coeqII}\ \\
\amalg_{N,m_\bullet, M}\widetilde{\Hom}(X, \P^N\times \Fl(m_\bullet, M))\times [\amalg_{N_1, m_1,M_1}\widetilde{\Hom}(X,\P^{N_1}\times\Gr(m_1, M_1))]\\\times[\amalg_{N_2, m_2,M_2}\widetilde{\Hom}(X,\P^{N_2}\times\Gr(m_2, M_2))]\\
\xymatrix{\ar@<5pt>[r]^{\prod_{N,m_\bullet, M}(\oplus(s+q,1)_*\circ p_{12}, \oplus(t,2)_*\circ p_{13})_{N,m_\bullet, M}}\ar@<-5pt>[r]_{\prod_{N,m_\bullet, M}(\pi_1\times\pi_2)\circ p_{23}}&}
\amalg_{m, m'}\coeq_I(X,m)\times \coeq_I(X, m')
\end{multline}
where $\pi_1=\amalg_m\pi_{1,m}$, with
\[
\pi_{1,m}:\amalg_{N_1, M_1}\widetilde{\Hom}(X,\P^{N_1}\times\Gr(m, M_1))\to
\coeq_I(X,m)
\]
is the evident map to the quotient, and $\pi_2$ is defined similarly.

\begin{theorem} \label{thm:K0Presentation} Let $X$ be a connected quasi-projective scheme over a field $k$. Let 
\[
\Pi_X:\amalg_{m,m'}\coeq_I(X,m)\times\coeq_I(X,m')\to\coeq_{II}(X)
\]
 denote the co-equalizer in $\Sets_*$ of the diagram \eqref{multline; coeqII}.  Recall the bijections $\Psi(X, m):\coeq_I(X, m)\xrightarrow{\sim} \Vect_m(X)$, $m\ge0$, from Proposition~\ref{prop:VectmPresentation}. Then the diagram
\[
\xymatrix{
\amalg_{m,m'}\coeq_I(X,m)\times\coeq_I(X,m')\ar[r]^-{\Pi_X}\ar[d]^{\amalg_{m,m'}\Psi(X,m)\times\Psi(X,m')}&\coeq_{II}(X)\\
\amalg_{m,m'}\Vect_m(X)\times\Vect_{m'}(X)\ar[r]& K_0(X)
}
\]
induces a bijection of pointed sets $\coeq_{II}(X)\to K_0(X)$, natural in $X$.
\end{theorem}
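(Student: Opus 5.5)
The plan is to reduce the statement to the preceding lemma identifying $K_0(X)$ with $\amalg_{m,m'}\Vect_m(X)\times\Vect_{m'}(X)/\sim_{K_0}$. Via the bijections $\Psi(X,m)$ of Proposition~\ref{prop:VectmPresentation}, the source of $\Pi_X$ is identified with $\amalg_{m,m'}\Vect_m(X)\times\Vect_{m'}(X)$. The bottom map of the diagram is surjective onto $K_0(X)$ with fibers exactly the $\sim_{K_0}$-classes. It therefore suffices to show that, under the identification $\amalg\Psi\times\Psi$, the equivalence relation generated by the two parallel arrows in \eqref{multline; coeqII} is exactly $\sim_{K_0}$. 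Naturality in $X$ then follows from the naturality of $\Psi(X,m)$ and of the bottom map.

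\textbf{Step 1: the arrows land in $\sim_{K_0}$-related pairs.} Take a triple of classes $(\psi,f_1,f_2)$ with $\psi:X\to\P^N\times\Fl(m_\bullet,M)$ and $f_i:X\to\P^{N_i}\times\Gr(m_i,M_i)$. Set $E_\bullet=\psi^*(E_{m_\bullet,M}(-1))$ and $F_i=f_i^*(E_{m_i,M_i}(-1))$.
\begin{itemize}
\item By Lemma~\ref{lem:ExactSeqPresentation}(4),(5), $\Lambda_?\circ\psi$ pulls back the twisted tautological bundle to $E_?$ for $?=s,t,q$.
\item By the definition of $\Lambda_{s+q}$ and the untwisted direct sum map, $\Lambda_{s+q}\circ\psi$ pulls back to $E_s\oplus E_q$.
\item Lemma~\ref{lem:TwistedDirectSum} then shows that $\oplus(s+q,1)\circ(\psi,f_1)$ represents $\alpha(E_\bullet)\oplus F_1$, and that $\oplus(t,2)\circ(\psi,f_2)$ represents $\beta(E_\bullet)\oplus F_2$.
\end{itemize}
The upper arrow therefore sends the triple to $(F_1\oplus\alpha(E_\bullet),F_2\oplus\beta(E_\bullet))$, while the lower arrow sends it to $(F_1,F_2)$. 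These are $\sim_{K_0}$-related, taking one sequence to be $E_\bullet$ and the other to be trivial. This uses that a trivial sequence is allowed, or alternatively the symmetric form of the relation. So the bottom map factors through a map $\coeq_{II}(X)\to K_0(X)$, which is surjective.

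\textbf{Step 2: every elementary $\sim_{K_0}$ move is realized by the arrows.} Suppose $(F_1,F_1')\sim_{K_0}(F_2,F_2')$ via sequences $E_\bullet$ and $E'_\bullet$. By symmetry and transitivity, it is enough to treat the basic move $(F,F')\sim(F\oplus\alpha(E_\bullet),F'\oplus\beta(E_\bullet))$ for a single $E_\bullet$. For this, choose $L$ and generating sections as in Lemma~\ref{lem:ExactSeqPresentation} to obtain $\psi$, and choose representatives $f_1,f_2$ of $F,F'$ using Proposition~\ref{prop:VectmPresentation}. Step 1 shows that the triple $(\psi,f_1,f_2)$ realizes exactly this move. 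Hence the generated equivalence relations agree, and the induced map $\coeq_{II}(X)\to K_0(X)$ is injective.

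\textbf{Expected main obstacle.} The delicate point is well-definedness. The relation in \eqref{multline; coeqII} is defined on $\widetilde{\Hom}$, meaning modulo the $\GL$-actions, and then mapped to classes in $\coeq_I$. One must check that the outputs of $\oplus(s+q,1)$ and $\oplus(t,2)$ are independent of the choices of $r$, of bases, and of representatives, up to the relations $\sim_{N,m,M}$ and the coequalizer in $\coeq_I$. The cleanest way to handle this is to pass through the bijections $\Psi$: the output class is determined by the isomorphism class of the pulled-back bundle, and Step 1 computes that bundle without reference to these choices. So the plan is to verify only the bundle identities and let Proposition~\ref{prop:VectmPresentation} absorb all remaining choice-dependence.
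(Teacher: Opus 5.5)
Your proposal is correct and follows essentially the paper's own proof: identify everything via the bijections $\Psi(X,m)$, use Lemmas~\ref{lem:ExactSeqPresentation} and~\ref{lem:TwistedDirectSum} to compute that the two arrows send $(\psi,f_1,f_2)$ to $(E_s\oplus E_q\oplus F_1,\,E_t\oplus F_2)$ and $(F_1,F_2)$ respectively, and then check that the equivalence relation these pairs generate is $\sim_{K_0}$. Your reduction of $\sim_{K_0}$ to elementary moves, and your use of the injectivity of $\Psi$ to handle independence of the choices, just spell out the steps the paper calls ``easy to see'' and ``easily check.''
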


\begin{proof} We need to see that the equivalence relation $\sim^*_{K_0}$ on $\amalg_{m,m'}\Vect_m(X)\times\Vect_{m'}(X)$ induced by   the relation
\begin{multline*}
(\Psi(X,m_1+m_t)\circ \oplus(s+q,1)_*(\psi, f_1),\Psi(X,m_2+m_t)\circ \oplus(t,2)_*(\psi, f_2))\\\sim^* (\Psi(X,m_1)\circ f_1, \Psi(X,m_2)\circ f_2)
\end{multline*}
for $\psi:X\to \P^N\times \Fl(m_\bullet, M))$, $f_i:X\to \P^{N_i}\times\Gr(m_i, M_i)$, agrees with the equivalence relation $\sim_{K_0}$. 

It follows from Lemma~\ref{lem:ExactSeqPresentation} that each short exact sequence 
\begin{equation}\label{eqn:TargetSeq}
0\to E_s\to E_t\to E_q\to 0
\end{equation}
of locally free coherent sheaves on $X$ arises (up to isomorphism) as $\psi^*(E_{m_\bullet, M}(-1))$ for a suitable $\psi:=(\iota, \phi)$, with $\iota:X\to \P^N$ corresponding to a choice of $N+1$ global generators for an invertible sheaf $L$,  $\phi:X\to \Fl(m_\bullet, M)$ corresponding to the sequence of surjective maps $\sO_X^M\to E_t\otimes L\to E_q\otimes L$ constructed from a choice of $M$ global generators for $E_t\otimes L$. Conversely, each $\psi:X\to \P^N\times \Fl(m_\bullet, M)$ gives a corresponding exact sequence \eqref{eqn:TargetSeq} by taking $\psi^*(E_{m_\bullet, M}(-1))$, and these two operations are inverse to one another. 

Moreover,  given $\psi:X\to \P^N\times\Fl(m_\bullet, M)$ as above,   locally free sheaves $E_i$ of ranks $m_i$, and globally generated invertible sheaves $L_i$ such that $E_i\otimes L_i$ is globally generated, $i=1,2$, we have the corresponding morphisms
\[
f_i:X\to \P^{N_i}\times\Gr(m_i, M_i),\ i=1,2,
\]
by choosing a set of $M_i$ global generators for $E_i\otimes L_i$ and $N_i+1$ global generators for $L_i$. Applying Lemma~\ref{lem:ExactSeqPresentation} again, we  have
\[
\Psi(X, m_1+m_t)(\oplus(s+q,1)_*(\psi, f_1))=E_s\oplus E_q\oplus E_1,
\]
\[
\Psi(X, m_2+m_t)(\oplus(t,2)_*(\psi, f_2))= E_t\oplus E_2,
\]
and
\[
(\Psi(X,m_1)(f_1), \Psi(X, m_2)(f_2))= (E_1,E_2).
\]
Thus, the relation $\sim^*$ is given by
\[
(E_1, E_2)\sim^*(E_s\oplus E_q\oplus E_1, E_t\oplus E_2)
\]
for each $(E_1, E_2)\in \Vect_{m_1}(X)\times\Vect_{m_2}(X)$ and each short exact sequence $0\to E_s\to E_t\to E_q\to 0$ in $\ses_m(X)$. It is then easy to see that  $\sim^*_{K_0}=\sim_{K_0}$.
\end{proof}

\section{The top degree Chern class map} Throughout this section we fix an infinite field $k$, $(X, X^*)$ will be an admissible pair over $k$,  and we set $d:=\dim_kX$.  For simplicity, we will assume that $X$ is connected.

Using our  presentation of $K_0(X)$ from the previous section, we construct a map of pointed sets
\[
c_d:K_0(X)\to \CH^d(X, X^*)
\]
induced by the classical Chern class maps $c_d:K_0(Y)\to \CH^d(Y)$ for $Y$ smooth and quasi-projective over $k$. We will only use the $Y$ of the form a product of homogeneous spaces $\Gr(m, M)$ and $\Fl(m_\bullet, M')$, considered as homogeneous spaces for a product of general linear groups,  for which we have the well-defined pullback maps
\[
f^*:\CH^d(Y)\to \CH^d(X, X^*)
\]
for each morphism $f:X\to Y$, using Theorem~\ref{thm:HomogFunct}.

\begin{definition}
Given morphisms $f_i:X\to \P^{N_i}\times\Gr(m_i, M_i)$, $i=1,2$, we have the map to the product
\[
(f_1, f_2):X\to [\P^{N_1}\times\Gr(m_1, M_1)]\times [\P^{N_2}\times\Gr(m_2, M_2)].
\]
Let 
\[
p_1: [\P^{N_1}\times\Gr(m_1, M_1)]\times [\P^{N_2}\times\Gr(m_2, M_2)]\to
 \P^{N_1}\times\Gr(m_1, M_1),
\] 
and
\[
p_2: [\P^{N_1}\times\Gr(m_1, M_1)]\times [\P^{N_2}\times\Gr(m_2, M_2)]\to
 \P^{N_2}\times\Gr(m_2, M_2)
\] 
be the projections.

We have the element 
\[
[p_1^*(E_{m_1, M_1}(-1))]-[p_2^*(E_{m_2, M_2}(-1))]\in K_0(\P^{N_1}\times\Gr(m_1, M_1)\times \P^{N_2}\times\Gr(m_2, M_2))
\]
and its Chern class
\[
c_d([p_1^*(E_{m_1, M_1}(-1))]-[p_2^*(E_{m_2, M_2}(-1))])\in \CH^d(\P^{N_1}\times\Gr(m_1, M_1)\times \P^{N_2}\times\Gr(m_2, M_2)).
\]
Define $c_d(f_1-f_2)\in \CH^d(X, X^*)$  by
\[
c_d(f_1-f_2):=(f_1, f_2)^*(c_d([p_1^*(E_{m_1, M_1}(-1))]-[p_2^*(E_{m_2, M_2}(-1))])).
\]
\end{definition}

\begin{lemma}\label{lem:Compatibility} Let  $f_i:X\to \P^{N_i}\times\Gr(m_i, M_i)$ be morphisms, $i=1,2$.\\[5pt]
1. Given  elements $g_i\in \GL_{N_i+1}(k)\times\GL_{M_i}(k)$, $i=1,2$, we have
\[
c_d(f_1-f_2)=c_d(g_1\cdot f_1-g_2\cdot f_2)\text{ in }\CH^d(X, X^*).
\]
2. Given $M_i'\ge M_i$, $N_i'\ge N_i$, $i=1,2$, we have the closed immersions $i_{m_i, M_i', M_i}:\Gr(m_i, M_i)\to \Gr(m_i, M_i')$, $i_{N_i', N_i}:\P^{N_i}\to \P^{N_i'}$, giving the morphisms
\[
f_i':X\to \P^{N_i'}\times \Gr(m_i, M_i'),\ f_i':=(i_{N_i', N_i}\times i_{m, M_i', M_i})\circ f_i,\ i=1,2.
\]
Then
\[
c_d(f_1'-f_2')=c_d(f_1-f_2)\text{ in }\CH^d(X, X^*).
\]
3. For $i=1,2$, let $E_i$ be a locally free sheaf on $X$ of rank $m_i$, let $L_i$ be an invertible sheaf on $X$ and let $\varpi_i:\sO_X^{N_i+1}\to L_i$, $\pi_i:\sO_X^{M_i}\to E_i\otimes L_i$,  be surjections, such that $p_1\circ f_i$ classifies $\varpi_i$ and $p_2\circ f_i$ classifies $\pi_i$. Suppose we have another pair of surjections $\varpi_i':\sO_X^{N_i'+1}\to L_i$, $\pi_i':\sO_X^{M_i'}\to E_i\otimes L_i$, giving a second pair of maps
\[
f_i':X\to \P^{N_i'}\times\Gr(m_i, M_i'),\ i=1,2.
\]
Then
\[
c_d(f_1'-f_2')=c_d(f_1-f_2)\text{ in }\CH^d(X, X^*).
\]
4. Take integers $N_i, N_i', M_i>0$, $i=1, 2$ and an integer $m>0$. Consider the maps
\eqref{Veronese2}
\[
\lambda_i:=\lambda_{N_i, N_i', m,M_i}:\P^{N_i}\times\P^{N_i'}\times\Gr(m, M_i)\to \P^{N_i''}\times \Gr(m, M_i''),
\]
with $M_i''=(N_i'+1)\cdot M_i$, $N_i''=(N_i+1)\cdot(N_i'+1)-1$, $i=1,2$. We also have the projections
\[
p_{13,i}:\P^{N_i}\times\P^{N_i'}\times\Gr(m, M_i)\to \P^{N_i}\times\Gr(m, M_i),\ i=1,2.
\]
Let $g_i:X\to \P^{N_i}\times\P^{N_i'}\times\Gr(m, M_i)$ be morphisms, $i=1,2$, and let
\[
f_i=p_{13,i}\circ g_i:X\to  \P^{N_i}\times\Gr(m, M_i),\ f_i'=\lambda_i\circ g_i:X\to \P^{N_i''}\times \Gr(m, M_i''),\ i=1,2.
\]
Then
\[
c_d(f_1-f_2)=c_d(f_1'-f_2')\text{ in }\CH^d(X, X^*).
\]
5. Take non-negative integers $N, N_1, N_2$, $m_\bullet:=(m_t,m_q)$, $M$, $m_1$, $M_1$, $m_2$, $M_2$, with $m_t\ge m_q$, and choose an integer $r\gg0$ satisfying \eqref{eqn:TwistConditions} for $(m_\bullet, M)$.    Let $N(m_\bullet, M,r)=(N(m_t, M,r)+1)(N(m_q, M,r)+1)-1$. Recall from \eqref{eqn:ChosenSurjections} the integer $M_t$, and from Definition~\ref{def:TotalAndSum} the integers $M_t^*:=M_t\cdot(N(m_\bullet, M,r)+1)$,  $N^*_i=(N(m_\bullet, M,r)+1)(N_i+1)-1$,  $M_i^*:=M_i\cdot (N_i+1)$, $i=1,2$, and the morphisms
\[
\oplus(s+q,1):\P^N\times \Fl(m_\bullet, M)\times\P^{N_1}\times\Gr(m_1, M_1) \to \P^{N^*_1}\times\Gr(m_t+m_1, M_t^*+M_1^*)
\]
and
\[
\oplus(t,2):\P^N\times \Fl(m_\bullet, M)\times\P^{N_2}\times\Gr(m_2, M_2) \to \P^{N^*_2}\times\Gr(m_t+m_2, M_t^*+M_2^*).
\]
Take morphisms
\[
f_i:X\to \P^{N_i}\times\Gr(m_i, M_i), i=1,2,
\]
and
\[
\psi:X\to \P^N\times \Fl(m_\bullet, M),
\]
and let
\[
f_1':=\oplus(s+q,1)\circ (\psi, f_1):X\to \P^{N^*_1}\times\Gr(m_t+m_1, M_t^*+M_1^*),
\]
\[
 f_2'=\oplus(t,2)\circ (\psi, f_2):X\to \P^{N^*_2}\times\Gr(m_t+m_2, M_t^*+M_2^*).
\]
Then 
\[
c_d(f_1-f_2)=c_d(f_1'-f_2')\text{ in }\CH^d(X, X^*).
\]
\end{lemma}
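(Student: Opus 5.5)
The uniform strategy is to reduce each of the five statements to two facts. The first is the functoriality of the pullback $f^*:\CH^d(H)\to\CH^d(X,X^*)$ for equivariant maps of homogeneous spaces (Proposition~\ref{prop:HomogFunctoriality}). The second is an identity of classical Chern classes in $\CH^d$ of a smooth product of Grassmannians, flag varieties and projective spaces. In each case I would write both sides as $F^*(\alpha)$ and $F'^*(\alpha')$. Here $F,F':X\to H,H'$ are maps to homogeneous spaces for products of general linear groups, which are strongly $k$-rational by Remarks~\ref{rem:Star}(2). Then I would exhibit a third homogeneous space $H''$, a map $G:X\to H''$, and equivariant morphisms $p:H''\to H$, $p':H''\to H'$ with $pG=F$, $p'G=F'$ and $p^*\alpha=p'^*\alpha'$ in $\CH^d(H'')$. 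Proposition~\ref{prop:HomogFunctoriality} then gives $F^*\alpha=G^*p^*\alpha=G^*p'^*\alpha'=F'^*\alpha'$.

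For (1), take $H''=H$ and let $p$ be translation by $(g_1,g_2)$. This map is equivariant for the conjugation automorphism of $G$. In fact one can argue more directly from Theorem~\ref{thm:HomogFunct}: $((g_1,g_2)\cdot(f_1,f_2))^*[z]$ is computed by $f^*$ of a general translate of $(g_1,g_2)^{-1}$-translated cycles, and $G(k)$ acts trivially on $\CH^d(H)$. The K-class also pulls back to itself, since the tautological bundles are $\GL$-equivariant. For (2), take $p=\id$ and $p'$ the closed immersion $i\times i$, which is $\rho$-equivariant. We need $(i\times i)^*$ of the tautological class to be the tautological class, which holds because $i_{m,M',M}^*E_{m,M'}=E_{m,M}$ and $i^*\sO(1)=\sO(1)$, together with naturality of $c_d$ on smooth schemes. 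For (4), set $H''=\prod_i\P^{N_i}\times\P^{N_i'}\times\Gr(m,M_i)$ and $G=(g_1,g_2)$, with $p=p_{13}\times p_{13}$ and $p'=\lambda_1\times\lambda_2$. These are equivariant by Lemma~\ref{lem:TensorIdentity}(2), and the pullbacks of $E(-1)$ agree by Lemma~\ref{lem:TensorIdentity}(1). So the two K-classes on $H''$ coincide, hence so do their $c_d$. For (3), first reduce via (2) and (1) to the case where the generating data differ only by appending generators. Then take the concatenated surjections $\sO^{N_i+N_i'+2}\to L_i$ and $\sO^{M_i+M_i'}\to E_i\otimes L_i$, with classifying map $f_i''$. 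Relation~(2) of Definition~\ref{def:EquivRel1} is realized, after a $\GL$-translation, by $f''_i=g\cdot(i\circ f_i)$. More precisely, I would compare $f_i$ and $f_i''$ through the map to $\P^{N_i}\times\Gr(m_i,M_i)\times \mathbb A$-type data: the linear projection $\Gr(m,M+M')\dashrightarrow\Gr(m,M)$ is only rational, so this is where care is needed. I would instead use that the locus of $f_i''$ in the open subset of $\Gr(m,M+M')$ where the first $M$ coordinates generate is an affine bundle over $\Gr(m,M)$. Rather than pass to a non-homogeneous space, I would use a family $t\mapsto(f_1,\ldots,f_M,tf_{M+1},\ldots)$ parametrized by $t\in\A^1$, giving $X\times\A^1\to\Gr(m,M+M')$. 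Proposition~\ref{prop:RatEquivPullback}, together with homotopy invariance of $\CH^d(H)$, then gives that the classes at $t=1$ and $t=0$ agree, and the class at $t=0$ is $i\circ f_i$.

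For (5), take $H''=\P^N\times\Fl(m_\bullet,M)\times\prod_i\P^{N_i}\times\Gr(m_i,M_i)$, $G=(\psi,f_1,f_2)$, $p$ the projection to the last two factors, and $p'=(\oplus(s+q,1)\circ p_{123},\oplus(t,2)\circ p_{124})$. These are equivariant, by Lemma~\ref{lem:TensorIdentity}(2) and the analogous statement for $\Lambda_?$ and $\tilde\oplus$. On $H''$ the class $p'^*([E(-1)]-[E(-1)])$ equals $[E^s]+[E^q]+[E_1]-[E^t]-[E_2]$ by Lemma~\ref{lem:ExactSeqPresentation} and \eqref{eqn:DirectSumPullback}, applied universally to the tautological sequence. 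This equals $[E_1]-[E_2]$ in $K_0(H'')$ because the tautological sequence is exact, so the Chern classes agree.

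The main obstacle is (3): the equivalence relation on generating data involves adding generators, which is not induced by an equivariant morphism of homogeneous spaces. The homotopy-type argument above needs a version of Proposition~\ref{prop:RatEquivPullback} for maps of $X\times\P^1$, with the degeneration at $t=\infty$ handled. I would try to arrange this by replacing the $\A^1$-family with a linear $\GL_{M+M'}(k)$-path, exploiting that $\GL$ is rational.
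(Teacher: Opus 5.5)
Your arguments for (1), (2), (4) and (5) are the paper's: the trivial action of $\GL(k)$ on $\CH^*$ of the target, and Proposition~\ref{prop:HomogFunctoriality} combined with the identities of tautological classes from Lemma~\ref{lem:TensorIdentity}, \eqref{eqn:DirectSumPullback} and the universal exact sequence on $\Fl(m_\bullet,M)$. Your reduction in (3), via (1) and (2), to comparing $\iota_i\circ f_i$ with the map $f_i''$ given by the concatenated generators is also the paper's. (Here $\iota_i$ is the pair of standard closed immersions into $\P^{N_i''}\times\Gr(m_i,M_i'')$, with $N_i''=N_i+N_i'+1$ and $M_i''=M_i+M_i'$.)

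The gap is in how you finish (3). Write $h_{i,j}$, $h'_{i,j}$ for the sections of $E_i\otimes L_i$ defining $\pi_i$, $\pi_i'$, and $s_{i,j}$, $s'_{i,j}$ for the sections of $L_i$ defining $\varpi_i$, $\varpi_i'$. Your claim that $f_i''=g\cdot(\iota_i\circ f_i)$ for some $g\in\GL(k)$ is false in general. A $k$-point of $\GL_{M_i''}$ only produces generating sections in the $k$-span of $h_{i,1},\ldots,h_{i,M_i}$. The $h'_{i,j}$, however, are arbitrary global sections; for affine $X$, $H^0(X,E_i\otimes L_i)$ is infinite-dimensional. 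For the same reason, your proposed fallback of replacing the $\A^1$-family by a linear $\GL_{M+M'}(k)$-path cannot connect $\iota_i\circ f_i$ to $f_i''$.

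What you are missing is that your $\A^1$-family has no degeneration at infinity once it is homogenized with a twist by $\sO_{\P^1}(1)$. On $X\times\P^1$, take the sections $(t_0+t_1)h_{i,j}$ and $t_1h'_{i,j}$ of $p_1^*(E_i\otimes L_i)\otimes\sO_{\P^1}(1)$. Likewise take $(t_0+t_1)s_{i,j}$ and $t_1s'_{i,j}$ of $p_1^*L_i\otimes\sO_{\P^1}(1)$. This is your family $(h,th')$ after the substitution $t=t_1/(t_0+t_1)$.

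These sections generate at every point. The $h$'s (resp.\ $s$'s) generate where $t_0+t_1\neq0$, and the $h'$'s (resp.\ $s'$'s) generate where $t_1\neq0$. This is exactly where the surjectivity of $\pi_i'$ and $\varpi_i'$ is used. The resulting morphisms $F_i:X\times\P^1\to\P^{N_i''}\times\Gr(m_i,M_i'')$ satisfy $F_i|_{X\times\{0\}}=\iota_i\circ f_i$ and $F_i|_{X\times\{\infty\}}=f_i''$.

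Now apply Proposition~\ref{prop:RatEquivPullback}(5),(6) to $(F_1,F_2)$ and to the components of a cycle representing the universal $c_d$. This gives $c_d(\iota_1 f_1-\iota_2 f_2)=c_d(f_1''-f_2'')$ directly. You need neither homotopy invariance of $\CH^d$ of the homogeneous space nor a modified version of Proposition~\ref{prop:RatEquivPullback}; this is precisely the paper's proof of (3).
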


\begin{proof}
 The fact that $\GL_{N_1+1}(k)\times\GL_{M_1}(k)\times \GL_{N_2+1}(k)\times\GL_{M_2}(k)$ acts trivially on $\CH^*(\P^{N_1}\times\Gr(m_1, M_1)\times \P^{N_2}\times\Gr(m_2, M_2))$ yields  (1). 

For (2), the closed immersions $i_{m_i, M_i', M_i}$, $i_{N_i', N_i}$ are equivariant with respect to the evident homomorphisms of general linear groups, so (2) follows from Proposition~\ref{prop:HomogFunctoriality}, and the fact that for 
\[
\iota:= i_{N_1',N_1}\times i_{m_1, M_1', M_1}\times  i_{N_2',N_2}\times i_{m_2, M_2', M_2}
\]
we have
\begin{multline*}
\iota^*(c_d([p_1^*(E_{m_1, M_1'}(-1))]-[p_2^*(E_{m_2, M_2'}(-1))]))\\=
c_d([p_1^*(E_{m_1, M_1}(-1))]-[p_2^*(E_{m_2, M_2}(-1))])
\end{multline*}

For (3), suppose $\pi_i$ is given by the global sections $h_{i,1},\ldots, h_{i, M_i}$ of $E_i\otimes L_i$ and $\pi_i'$ is given by the global sections $h_{i,1}',\ldots, h_{i, M_i'}'$ of $E_i\otimes L_i$, $i=1,2$.  Let $\pi_i''$ be given by the global sections $h_{i,1},\ldots, h_{i, M_i}, h_{i,1}',\ldots, h_{i, M_i'}'$.
Similarly, if $\varpi_i$ is given by global sections $s_{i,0},\ldots, s_{i, N_i}$ of $L_i$ and $\varpi_i'$ is given by the global sections $s_{i,0}',\ldots, s_{i, N_i'}'$ of $L_i$, $i=1,2$, let $\varpi_i''$ be given by global sections $s_{i,0},\ldots, s_{i, N_i}, s_{i,0}',\ldots, s_{i, N_i'}'$. Let $M_i''=M_i+M_i'$, $N_i''=N_i+N_i'+1$, and let
\[
f_i'':X\to \P^{N_i''}\times \Gr(m, M_i'')
\]
be the map given by the surjections $\varpi_i''$ and $\pi_i''$. Clearly, it suffices to compare $c_d(f_1-f_2)$ and $c_d(f_1''-f_2'')$. 

We have the inclusions $i_{m, M_i'', M_i}:\Gr(m, M_i)\to \Gr(m, M_i'')$ and $i_{N_i'', N_i}:\P^{N_i}\to \P^{N_i''}$. Let $\iota_i=i_{N_i'', N_i}\times i_{m, M_i'', M_i}$. 
Using (2), it suffices to show 
\[
c_d(\iota_1\circ f_1-\iota_2\circ f_2)=c_d(f_1''-f_2'')
\]
For this, let $t_0, t_1$ be standard homogeneous coordinates on $\P^1$, with $0=[1:0]$ and $\infty=[0:1]$. We have the locally free sheaves $E_i(1)$ on $X\times\P^1$, with generating global sections
\[
(t_0+t_1)h_{i,1},\ldots, (t_0+t_1)h_{i, M_i}, t_1h_{i,1}',\ldots, t_1h_{i, M_i'}'
\]
for $E_i(1)\otimes L_i$ and similarly, we have the invertible sheaves $L_i(1)$ on $X\times\P^1$ with
generating global sections
\[
(t_0+t_1)s_{i,0},\ldots, (t_0+t_1)s_{i, N_i}, t_1s_{i,0}',\ldots, t_1s_{i, N_i'}'
\]
 Letting 
\[
F_i:X\times\P^1\to \P^{N_i}\times\Gr(m, M_i'')
\]
be the corresponding morphism, we have 
\[
F_i|_{X\times\{0\}}=\iota_i\circ f_i, \ F_i|_{X\times\{\infty\}}=f_i'',\ i=1,2,
\]
after choosing trivializations of $\sO_{\P^1}(1)$ at $0$ and $\infty$. By Proposition~\ref{prop:RatEquivPullback}, we have
\[
c_d(F_1|_{X\times\{0\}}-F_2|_{X\times\{0\}})=c_d(F_1|_{X\times\{\infty\}}-F_2|_{X\times\{\infty\}}),
\]
proving (3).

To prove (4), Lemma~\ref{lem:TensorIdentity} gives us the isomorphism of locally free sheaves on 
$\P^{N_i}\times\P^{N_i'}\times\Gr(m, M_i)$,
\[
p_{13,i}^*(E_{m, M_i}(-1))\cong \lambda_i^*(E_{m, M_i''}(-1)),\ i=1,2,
\]
so
\[
c_d([p_{13,1}^*(E_{m, M_1}(-1))]-[p_{13,2}^*(E_{m, M_2}(-1))])=
c_d([\lambda_1^*(E_{m, M_1''}(-1))]-[\lambda_2^*(E_{m, M_2''}(-1))])
\]
in $\CH^d(\P^{N_i}\times\P^{N_i'}\times\Gr(m, M_i))$. On the other hand, the morphism $p_{13,i}$ is equivariant with respect to the projection
\[
p_{13}: \GL_{N_i+1}\times \GL_{N_i'+1}\times \GL_{M_i}\to \GL_{N_i+1}\times \GL_{M_i}
\]
and $\lambda_i$ is equivariant with respect to 
\[
\rho_i=(\rho_{\otimes,N_i+1, N_i'+1}\circ p_{12},  \rho_{\otimes,N'_i+1, M_i}\circ p_{23}:\GL_{N_i+1}\times \GL_{N_i'+1}\times \GL_{M_i}\to \GL_{N_i''}\times \GL_{M_i''}
\]
where $\rho_{\otimes, N, M}:\GL_N\times \GL_M\to \GL_{NM}$ is the tensor product representation (in suitable coordinates). Thus, by Proposition~\ref{prop:HomogFunctoriality}, we have
\begin{align*}
c_d(f_1'-f_2')&=(f_1', f_2')^*(c_d([p_1^*(E_{m_1, M''_1}(-1))]-[p_2^*(E_{m_2, M''_2}(-1))]))\\
&=(g_1, g_2)^*(c_d([\lambda_1^*(E_{m, M_1''}(-1))]-[\lambda_2^*(E_{m, M_1''}(-1))]))\\
&=(g_1, g_2)^*(c_d([p_{13,1}^*(E_{m, M_1}(-1))]-[p_{13,2}^*(E_{m, M_2}(-1))]))\\
&=(f_1, f_2)^*(c_d([E_{m, M_1}(-1)]-[E_{m, M_2}(-1)]))\\
&=c_d(f_1-f_2),
\end{align*}
proving (4).

For (5), we have the universal exact sequence
\begin{equation}\label{eqn:UnivExactSeq}
0\to E_{m_\bullet, M}^s\to E_{m_\bullet, M}^t\to E_{m_\bullet, M}^q\to0
\end{equation}
on $\Fl(m_\bullet, M)$. It follows from Lemma~\ref{lem:TwistedDirectSum} and the definition of the maps $\oplus(s+q,1)$ and $\oplus(t,2)$ (Definition~\ref{def:TotalAndSum}) that
\begin{multline*}
\oplus(s+q,1)^*(E_{m_t+m_1, M_t^*+M_1^*}(-1))\\\cong p_{12}^*(E_{m_\bullet, M}^s(-1,0)\oplus E_{m_\bullet, M}^q(-1,0))\oplus p_{34}^*(E_{m_1, M_1}(-1))
\end{multline*}
and
\[
\oplus(t,2)^*(E_{m_t+m_2, M_t^*+M_2^*}(-1))\cong p_{12}^*(E_{m_\bullet, M}^t(-1,0))\oplus p_{34}^*(E_{m_2, M_2}(-1))
\]
Let
\[
Y=[\P^N\times \Fl(m_\bullet, M)]\times[\P^{N_1}\times\Gr(m_1, M_1)] \times[\P^{N_2}\times\Gr(m_2, M_2)], 
\]
with projections
\[
p_0:Y\to \P^N\times \Fl(m_\bullet, M),\ p_1:Y\to \P^{N_1}\times\Gr(m_1, M_1),\ p_2:Y\to 
\P^{N_2}\times\Gr(m_2, M_2),
\]
and corresponding projections $p_{ij}$, $0\le i<j\le 2$.
Using the exact sequence \eqref{eqn:UnivExactSeq}, we have the relation in $K_0(Y)$
\begin{multline*}
[p_{01}^*(\oplus(s+q,1)^*(E_{m_t+m_1, M_t^*+M_1^*}(-1)))]-
[p_{02}^*(\oplus(t,2)^*(E_{m_t+m_2, M_t^*+M_2^*}(-1)))]\\=
[p_{1}^*(E_{m_1, M_1}(-1))]-[p_{2}^*(E_{m_1, M_1}(-1))], 
\end{multline*}
giving the relation in $\CH^d(Y)$
\begin{multline*}
c_d([p_{01}^*(\oplus(s+q,1)^*(E_{m_t+m_1, M_t^*+M_1^*}(-1)))]-
[p_{02}^*(\oplus(t,2)^*(E_{m_t+m_2, M_t^*+M_2^*}(-1)))])\\=
c_d([p_{1}^*(E_{m_1, M_1}(-1))]-[p_{2}^*(E_{m_2, M_2}(-1))])
\end{multline*}

We   have the map $(\psi, f_1, f_2):X\to Y$ with
\[
\oplus(s+q,1)\circ p_{01}\circ (\psi, f_1, f_2)=f_1',\ \oplus(t,2)\circ p_{02}\circ (\psi, f_1, f_2)=f_2',
\]
and
\[
p_{1}\circ (\psi, f_1, f_2)=f_1, \ p_{2}\circ (\psi,f_1,f_2)=f_2.
\]
Applying the functoriality statement of  Proposition~\ref{prop:HomogFunctoriality}  gives
\begin{align*}
c_d(f_1-f_2)&= (\psi, f_1, f_2)^*(c_d([p_{1}^*(E_{m_1, M_1}(-1))]-[p_{2}^*(E_{m_2, M_2}(-1))])\\
&=(\psi, f_1, f_2)^*c_d\Big([p_{01}^*(\oplus(s+q,1)^*(E_{m_t+m_1, M_t^*+M_1^*}(-1)))]\\&\hskip 100pt-
[p_{02}^*(\oplus(t,2)^*(E_{m_t+m_2, M_t^*+M_2^*}(-1)))]\Big)\\
&=c_d(f_1'-f_2')
\end{align*}
\end{proof}

\begin{theorem}\label{thm:TopChernClass} Let $k$ be  infinite field, let $(X, X^*)$ be an admissible pair over $k$, and let $d=\dim_kX$.  Then there is a map of pointed sets
\[
c_d:K_0(X)\to \CH^d(X, X^*),
\]
uniquely determined by the identity
\begin{equation}\label{eqn:CharId}
c_d([f_1^*(E_{m_1, M_1}(-1))]-[f_2^*(E_{m_2, M_2}(-1))])=c_d(f_1-f_2)
\end{equation}
for morphisms $f_i:X\to \P^{N_i}\times\Gr(m_i, M_i)$, $i=1,2$.
\end{theorem}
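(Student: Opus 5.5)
The plan is to define $c_d$ on the presentation of $K_0(X)$ given by Theorem~\ref{thm:K0Presentation} and to check that the assignment $(f_1,f_2)\mapsto c_d(f_1-f_2)$ respects every identification made in that presentation. Start from the set $\amalg_{N_i,m_i,M_i}\Hom(X,\P^{N_1}\times\Gr(m_1,M_1))\times\Hom(X,\P^{N_2}\times\Gr(m_2,M_2))$ of pairs $(f_1,f_2)$, and send each pair to $c_d(f_1-f_2)\in\CH^d(X,X^*)$; this uses Theorem~\ref{thm:HomogFunct} for products of projective spaces and Grassmannians, which are homogeneous for products of $\GL_n$ and are strongly $k$-rational by Remarks~\ref{rem:Star}(2). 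The goal is to show this map factors through $\coeq_{II}(X)$. Composing with the inverse of the bijection $\coeq_{II}(X)\cong K_0(X)$ then gives $c_d$. The identity \eqref{eqn:CharId} holds by construction, since $\Psi(X,m_i)(f_i)=f_i^*(E_{m_i,M_i}(-1))$. Every class in $K_0(X)$ is of the form $[F]-[F']$ with $F,F'$ represented by such $f_i$ (Proposition~\ref{prop:VectmPresentation}), so \eqref{eqn:CharId} determines $c_d$ uniquely. The pair $f_1=f_2$ gives $c_d(0)=0$, because the class $[p_1^*E(-1)]-[p_2^*E(-1)]$ pulls back to zero along the diagonal. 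More directly, Lemma~\ref{lem:Compatibility}(3) lets us take $f_1=f_2$ to be the same map $X\to H$; then $(f_1,f_1)$ factors through the diagonal $H\to H\times H$, where the class vanishes, and Proposition~\ref{prop:HomogFunctoriality} transports this. So the map is pointed.

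The checks go level by level. First, independence of the $\widetilde{\Hom}$ and $\overline{\Hom}$ representatives, meaning the $\GL$-actions and the relations $\sim_{m,M}$, $\sim_N$, follows from Lemma~\ref{lem:Compatibility}(1),(2),(3). Part (1) handles the group action. Parts (2) and (3) handle the stabilization maps $i_{m,M',M}$ and changes of generating sections, which together generate $\sim_{N,m,M}$ and the transition maps in the colimits. Second, compatibility with the coequalizer $\coeq_I(X,m)$, meaning $p_{13}(X)$ versus $\lambda(X,m)$, is exactly Lemma~\ref{lem:Compatibility}(4). Here we apply it with $g_1=g$ and $g_2$ arbitrary, or more precisely separately in each variable. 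To allow changing only one coordinate, use (4) with $g_2$ chosen so that $p_{13}\circ g_2=\lambda\circ g_2'$ is unchanged. Alternatively, combine (4) with (3), since both sides of the pair represent the same bundle. Third, the relations defining $\coeq_{II}(X)$ via $\oplus(s+q,1)$ and $\oplus(t,2)$ are exactly Lemma~\ref{lem:Compatibility}(5).

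The main subtlety is that the equivalence relation on pairs is generated by relations that alter one coordinate at a time, whereas parts (1)--(4) of Lemma~\ref{lem:Compatibility} are phrased for pairs. The cleanest route is to bypass the explicit generators using Lemma~\ref{lem:Compatibility}(3). That part says $c_d(f_1-f_2)$ depends only on the isomorphism classes of the triples $(E_i,L_i)$, since any two choices of generating sections give the same value. Part (4) then shows the dependence on $L_i$ disappears: it compares $(L,E)$ with $(L\otimes L',E)$ in either slot independently, taking the other $g_j$ to be $(f_j,\text{const})$ and noting $\lambda$ only tensors $L$. Hence $c_d(f_1-f_2)$ depends only on the pair $(E_1,E_2)\in\Vect_{m_1}(X)\times\Vect_{m_2}(X)$. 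Part (5) says the value is unchanged under $(E_1,E_2)\sim(E_s\oplus E_q\oplus E_1,E_t\oplus E_2)$. These relations generate $\sim_{K_0}$, as shown in the proof of Theorem~\ref{thm:K0Presentation}, so the map descends to $K_0(X)$.

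I expect the main obstacle to be this bookkeeping step of reducing to invariance under $\sim_{K_0}$. One must make sure the one-slot modifications (stabilizing, twisting by $L'$, adding a short exact sequence) can be realized with the other slot fixed up to the already-established equivalences. One must also ensure that every generator of $\sim_{K_0}$, including ones with $E_\bullet$ and $E'_\bullet$ in both slots, is a composite of the moves in (5) together with symmetric versions. For the latter, apply (5) with trivial exact sequences $0\to E\xrightarrow{\id}E\to0\to0$ to move direct summands between slots.
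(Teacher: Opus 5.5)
Your proposal is correct and follows the paper's proof essentially step for step. You define $c_d$ on pairs $(f_1,f_2)$ by pulling back from $\CH^d$ of the product of homogeneous spaces, descend it through the $\GL$-actions and stabilizations via Lemma~\ref{lem:Compatibility}(1)--(3), through the coequalizer $\coeq_I$ via Lemma~\ref{lem:Compatibility}(4) and Proposition~\ref{prop:VectmPresentation}, and to $K_0(X)$ via Lemma~\ref{lem:Compatibility}(5) and Theorem~\ref{thm:K0Presentation}, with uniqueness coming from every class having the form $[E_1]-[E_2]$; your extra bookkeeping (one-slot moves in part (4) via a $g_j$ with trivial $L_j'$, and each generator of $\sim_{K_0}$ as a composite of two part-(5) moves) correctly fills in details the paper leaves implicit.
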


\begin{proof}  Breaking up $X$ into its connected components and discarding those of dimension $<d$, we may assume that $X$ is connected. We have the map of sets
\begin{multline*}
\amalg_{N_1, m_1, M_1, N_2, m_2, M_2}\Hom(X,\P^{N_1}\times\Gr(m_1, M_1))\times \Hom(X,\P^{N_2}\times\Gr(m_2, M_2))\\
\xrightarrow{c_d^{(0)}}\CH^d(X, X^*)
\end{multline*}
sending $(f_1, f_2)$ to $(f_1, f_2)^*(c_d([p_{12}^*(E_{m_1, M_1}(-1))]-[p_{34}^*(E_{m_2, M_2}(-1))]))$.
\\[5pt]
 {\bf Step 1} The map $c_d^{(0)}$
descends to a well-defined map of sets
\begin{multline*}
\amalg_{N_1, m_1, M_1, N_2, m_2, M_2}\underline{\Hom}(X,\P^{N_1}\times\Gr(m_1, M_1))\times \underline\Hom(X,\P^{N_2}\times\Gr(m_2, M_2))\\
\xrightarrow{c_d^{(1)}} \CH^d(X, X^*)
\end{multline*}
This follows from Lemma~\ref{lem:Compatibility}(1)-(3).\\[5pt]
{\bf Step 2} The map $c_d^{(1)}$ descends to a map of pointed sets
\[
c_d^{(2)}:\amalg_{m_1, m_2}\Vect_{m_1}(X)\times\Vect_{m_2}(X)\to \CH^d(X, X^*)
\]
via the surjection of Proposition~\ref{prop:VectmPresentation}, where  $*=\Vect_{0}(X)\times\Vect_{0}(X)$ is the base-point on the source, and $0$ is the base-point on the target.  This follows from 
Proposition~\ref{prop:VectmPresentation} and Lemma~\ref{lem:Compatibility}(4).\\[5pt]
{\bf Step 3} The map $c_d^{(2)}$ descends to a well-defined map of pointed sets
\[
c_d:K_0(X)\to \CH^d(X, X^*)
\]
via the surjection of Theorem~\ref{thm:K0Presentation}. This follows from Theorem~\ref{thm:K0Presentation} and Lemma~\ref{lem:Compatibility}(5).

The fact that $c_d$  satisfies the identity \eqref{eqn:CharId} follows from the definition of $c_d^{(0)}$. The identity \eqref{eqn:CharId} determines $c_d$ uniquely since $K_0(X)$ is generated by formal differences of classes $[E_1]-[E_2]$, for $E_1, E_2$ locally free coherent sheaves, and for each $E_1, E_2$, there are morphisms $f_i:X\to \P^{N_i}\times \Gr(m_i, M_i)$, $i=1,2$ with $E_i\cong f_i^*(E_{m_i, M_i}(-1))$, $i=1,2$.
\end{proof}

\begin{remark}\label{rem:ChernClassRegular} Let $Y\in\Sch^\qp_k$ be a regular scheme, $n>0$ an integer. We have the Chern class map $c_n:K_0(Y)\to \CH^n(Y)$ defined using  Fulton's Chern class operators $c_n^F(-)\cap -$  \cite[\S 3.2, Example 3.2.7]{Fulton}, 
\[
c_n(x):=c_n^F(x)\cap [Y], 
\]
where $[Y]\in \CH_*(Y)$ is the fundamental class. In case $Y$ is smooth over $k$, this is the same as the usual Chern class map defined by Grothendieck;  see Remark~\ref{rem:FultonChernClass} for further details.
\end{remark}

We note the following naturalities of the map $c_d$.

\begin{proposition}\label{prop:ChernClassFunct} Let $(X, X^*)$ be an admissible pair over an infinite field $k$, and let $d=\dim_kX$.\\[5pt]
1. Let $(Y, Y^*)$ be a second admissible pair with $\dim_kY=d$ and let $f:Y\to X$ be a morphism.  Suppose that the conditions (i)-(iii) of Lemma~\ref{lem:EtalePullback} are satisfied, giving the pullback map $f^*:\CH^d(X, X^*)\to \CH^d(Y, Y^*)$. Then the diagram
\[
\xymatrix{
K_0(X)\ar[r]^-{f^*}\ar[d]^{c_d}&K_0(Y)\ar[d]^{c_d}\\
\CH^d(X,X^*)\ar[r]^-{f^*}&\CH^d(Y, Y^*)
}
\]
commutes.\\[5pt]
2. Let $f:Y\to X$ be a morphism, where  $Y\in\Sch_k^\qp$ is regular and of pure dimension $d$ over $k$. We have the Chern class map $c_d:K_0(Y)\to \CH^d(Y)$ of Remark~\ref{rem:ChernClassRegular}.
Suppose that $f$ is flat over $X\setminus X^*$, giving the pullback map $f^*:\CH^d(X,X^*)\to \CH^d(Y)$, following Remark~\ref{rem:OpenIm}(2).   Then the diagram
\[
\xymatrix{
K_0(X)\ar[r]^-{f^*}\ar[d]^{c_d}&K_0(Y)\ar[d]^{c_d}\\
\CH^d(X,X^*)\ar[r]^-{f^*}&\CH^d(Y)
}
\]
commutes.  
\\[5pt]
3. Let $k\hookrightarrow F$ be an extension of fields. Suppose that the base-extension $X_F$ is reduced and $X_F\setminus (X^*)_F$ is regular, giving the pullback map (Lemma~\ref{lem:base-change})
\[
p_X^*:\CH^d(X, X^*)\to \CH^d(X_F, X_F^*).
\]
Then the diagram 
\[
\xymatrix{
K_0(X)\ar[r]^-{p_X^*}\ar[d]^{c_d}&K_0(X_F)\ar[d]^{c_d}\\
\CH^d(X,X^*)\ar[r]^-{p_X^*}&\CH^d(X_F, X_F^*)
}
\]
commutes.
\end{proposition}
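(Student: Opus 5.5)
The plan is to reduce each of the three statements to the compatibility of the various pullbacks with the maps to homogeneous spaces that define $c_d$, using the characterizing identity \eqref{eqn:CharId} of Theorem~\ref{thm:TopChernClass}. Every $x\in K_0(X)$ can be written as $[f_1^*(E_{m_1,M_1}(-1))]-[f_2^*(E_{m_2,M_2}(-1))]$ for morphisms $f_i:X\to \P^{N_i}\times\Gr(m_i,M_i)$. For any $f:Y\to X$, the pullback $f^*(x)$ is then represented by the composites $f_i\circ f$, since $(f_i\circ f)^*(E_{m_i,M_i}(-1))\cong f^*f_i^*(E_{m_i,M_i}(-1))$. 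It will therefore suffice to compare the two ways of computing the image of
\[
\xi:=c_d([p_1^*(E_{m_1,M_1}(-1))]-[p_2^*(E_{m_2,M_2}(-1))])\in \CH^d(H),
\]
where $H=\P^{N_1}\times\Gr(m_1,M_1)\times\P^{N_2}\times\Gr(m_2,M_2)$ is a homogeneous space for the strongly $k$-rational group $\prod\GL$.

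For (1), $f$ satisfies (i)--(iii) of Lemma~\ref{lem:EtalePullback}, so Proposition~\ref{prop:PullbackFunct3} applied to $q=f$ and to the map $(f_1,f_2):X\to H$ gives
\[
f^*((f_1,f_2)^*\xi)=((f_1,f_2)\circ f)^*\xi=(f_1f,f_2f)^*\xi.
\]
By \eqref{eqn:CharId} for $(Y,Y^*)$, the right-hand side is $c_d(f^*x)$, and the left-hand side is $f^*c_d(x)$.

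For (2), the target is the usual Chow group of a regular $Y$. Here I would redo the argument of Proposition~\ref{prop:PullbackFunct3} with the pullback of Remark~\ref{rem:OpenIm}(2). Choose $z\in Z^d(H)$ representing $\xi$, and choose $g\in G(k)$ in $U_{(f_1,f_2),z}$ and also in the Kleiman open set for $(f_1,f_2)\circ f:Y\to H$ (Remark~\ref{rem:SmoothCase}). Then $(f_1,f_2)^*(gz)$ is a cycle supported in $X\setminus X^*$, over which $f$ is flat, so $f^*$ of it is defined as a cycle. It agrees with $((f_1,f_2)f)^*(gz)$, because flat pullback composed with the proper-intersection pullback equals the proper-intersection pullback of the composite. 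The class of the latter is $((f_1,f_2)f)^*\xi$ in $\CH^d(Y)$. This in turn equals $c_d$ of $f^*x$, by naturality of Fulton's Chern classes for regular $Y$ (Remark~\ref{rem:ChernClassRegular}) together with the fact that $((f_1,f_2)f)^*:\CH^d(H)\to\CH^d(Y)$ is compatible with $c_d$. Since $Y$ is only regular, not smooth, this last compatibility is the point needing care: the pullback from a smooth $H$ to $Y$ is defined via the graph, which is an lci morphism, and Fulton's Chern classes commute with lci pullback (Fulton, \S 6 and Example 3.2.7). On cycles, this refined pullback agrees with the set-theoretic one when intersections are proper, by Fulton \S 7.1.

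For (3), I apply Proposition~\ref{prop:BCCompat}(1) to $(f_1,f_2):X\to H$. This gives
\[
p_X^*(f_1,f_2)^*\xi=(f_1,f_2)_F^*\,p_H^*\xi .
\]
Moreover, $p_H^*\xi$ is the class $\xi_F$ on $H_F$, because classical Chern classes commute with flat base change and the tautological bundles pull back to the tautological bundles. Since $(f_{1F},f_{2F})$ represents $p_X^*x$, the identity \eqref{eqn:CharId} over $F$ finishes the argument.

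The main obstacle I expect is (2): identifying the pullback $\CH^d(H)\to\CH^d(Y)$ obtained through translates with Fulton's lci pullback for a merely regular $Y$, and checking the compatibility with Chern classes there. The rest is formal bookkeeping from the earlier functoriality results.
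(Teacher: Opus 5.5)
Your proposal is correct and follows essentially the same route as the paper: (1) via Proposition~\ref{prop:PullbackFunct3} applied to $(f_1,f_2)$, (3) via the base-change compatibility of Proposition~\ref{prop:BCCompat}(1) (which the paper re-runs at the level of cycles), and (2) via the compatibility of lci pullback with Fulton's Chern classes and with cycle-theoretic pullback, \eqref{eqn:BivChernClass}, \eqref{eqn:ChernClassLciPullback}, \eqref{eqn:ChernClassLciPullback2}. The only difference is organizational, in (2): the paper first shows that the Chern class of Theorem~\ref{thm:TopChernClass} for the admissible pair $(Y,\emptyset)$ agrees with Fulton's, and then quotes (1) with $Y^*=\emptyset$, whereas you unwind that same argument directly.
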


\begin{proof} For (1),  we reduce to showing that, given morphisms $f_i:X\to \P^{N_i}\times\Gr(m_i, M_i)$, $i=1,2$, we have
\[
f^*(c_d(f_1-f_2))=c_d((f_1f)-(f_2f)).
\]
We have
\begin{align*}
f^*(c_d(f_1-f_2))&=f^*(f_1, f_2)^*c_d([p_{1}^*E_{m_1, M_1}(-1)]- [p_{2}^*E_{m_2, M_2}(-1)])\\
&=(f_1f, f_2f)^*c_d([p_{1}^*E_{m_1, M_1}(-1)]- [p_{2}^*E_{m_2, M_2}(-1)])\\
&=c_d((f_1f)-(f_2f)),
\end{align*}
where we use Proposition~\ref{prop:PullbackFunct3} for the second identity.  

For (2), we claim that $c_d:K_0(Y)\to \CH^d(Y)$, defined using Fulton's Chern class operators,  agrees with the Chern class map defined via Theorem~\ref{thm:TopChernClass} for the admissible pair $(Y,\0)$. Accepting the claim, (2) is a special case of (1),  using Remark~\ref{rem:OpenIm} to note that the pullback map $f^*:\CH^d(X, X^*)\to \CH^d(Y)$ is defined as a special case of the pullback map constructed in Lemma~\ref{lem:EtalePullback}, with $Y^*=\0$.

To prove the claim, we need only check that for $(f_1, f_2):Y\to \P^{N_1}\times\Gr(m_1,M_1)\times \P^{N_2}\times\Gr(m_2,M_2)=:H$ a morphism, that 
\begin{multline*}
c_d^F((f_1, f_2)^*([p_1^*(E_{m_1, M_1}(-1))]-[p_2^*(E_{m_2, M_2}(-1)]))\cap [Y]\\=
(f_1, f_2)^*(c_d([p_1^*(E_{m_1, M_1}(-1))]-[p_2^*(E_{m_2, M_2}(-1))])).
\end{multline*}
This is \eqref{eqn:BivChernClass} with $\alpha=[H]$, together with \eqref{eqn:ChernClassLciPullback}.

The proof of (3) is similar to the proof of (1). Given morphisms $f_i:X\to \P^{N_i}_k\times\Gr_k(m_i, M_i)$, $i=1,2$, we have the induced morphisms $f_{iF}:X_F\to \P_F^{N_i}\times\Gr_F(m_i, M_i)$, $i=1,2$. If we have an algebraic cycle 
\[
Z\in \CH^d(\P^{N_1}_k\times\Gr_k(m_1, M_1)\times \P^{N_2}_k\times\Gr_k(m_2, M_2))
\]
representing $c_d([p_{1}^*E_{m_1, M_1}(-1)]- [p_{2}^*E_{m_2, M_2}(-1)]))$, then the base-change 
\[
Z_F\in 
\CH^d(\P^{N_1}_F\times\Gr_F(m_1, M_1)\times \P^{N_2}_F\times\Gr_F(m_2, M_2)
\]
represents $c_d([p_{1}^*E^F_{m_1, M_1}(-1)]- [p_{2}^*E^F_{m_2, M_2}(-1)]))$, where $E^F_{m_1, M_1}$, $E^F_{m_2, M_2}$ are the respective base-changes of $E^F_{m_1, M_1}$, $E^F_{m_2, M_2}$. If  the pullback $(f_1, f_2)^*(Z)$ is well-defined, so that $c_d(f_1-f_2)$ is represented by  $(f_{1}, f_{2})^*(Z)$, then the pull-back $(f_{1F}, f_{2F})^*(Z_F)$ is also well-defined, and  $c_d(f_{1F}-f_{2F})$ is therefore represented by $(f_{1F}, f_{2F})^*(Z_F)$. Since
$(f_{1F}, f_{2F})^*(Z_F)=p_X^*(f_1, f_2)^*(Z)$ as cycles, we thus have
\[
c_d(f_{1F}-f_{2F})=p_X^*(c_d(f_1-f_2))\in \CH^*(X_F, X_F^*),
\]
which suffices to prove (3).
\end{proof}

\section{Riemann-Roch theorem}
In the introduction, we briefly discussed the consequences of the Grothendieck-Riemann-Roch theorem for the relation of the Chow ring and $K_0$ for a smooth quasi-projective $k$-scheme. In this section, we prove a version of GRR for $\CH^d(X, X^*)$; here $k$ will be an infinite field.

For   $X$ a quasi-projective $k$-scheme of dimension $d=\dim_kX$, we have the subgroup $F^dK_0(X)$ of $K_0(X)$ generated by the classes  $[i_{x*}k(x)]$, where $k(x)$ is the residue field of a closed point $i_x:x\to X$ of $X$ supported in $X_\reg(d)$.  

\begin{proposition}[See \hbox{\cite[Proposition 2.1]{LW}}] \label{prop:CycleClass} Let $(X, X^*)$ be an admissible pair and let $d=\dim_kX$. 
Then the group homomorphism
\[
\cl^d:Z^d(X\setminus X^*)\to K_0(X)
\]
sending a closed point $x\in X\setminus X^*$ to the $K_0$-class of the residue field $k(x)$, viewed as a coherent $\sO_X$-module of finite homological dimension, descends to a well-defined group homomorphism
\[
\cl^d:\CH^d(X, X^*)\to K_0(X)
\]
with image $F^dK_0(X)$.
\end{proposition}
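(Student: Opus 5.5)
The plan is to check, generator by generator, that $\cl^d$ kills $R^d(X,X^*)$, and then read off the image directly from the definitions.

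\emph{Well-definedness on cycles and image.} For a closed point $x\in X\setminus X^*\subset X_\reg(d)$, the local ring $\sO_{X,x}$ is regular, so $k(x)$ has a finite resolution by free $\sO_{X,x}$-modules. This local resolution spreads out to a finite resolution of $i_{x*}k(x)$ by locally free sheaves on a neighborhood $U$ of $x$. I would then use that $X$ is quasi-projective: a coherent sheaf supported on $X_\reg$ has finite Tor-dimension over $\sO_X$, and so admits a finite locally free resolution on $X$ (resolve by sums of $\sO_X(-n)$; the kernel after $d$ steps is locally free). This gives the class $[k(x)]\in K_0(X)$ and hence the homomorphism $\cl^d$ on $Z^d(X\setminus X^*)$.

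By definition, $F^dK_0(X)$ is generated by the $[k(x)]$ with $x$ a closed point of $X_\reg(d)$. The image of $\cl^d$ is the subgroup generated by such $x$ lying in $X\setminus X^*$. To get all of $F^dK_0(X)$ I would use Lemma~\ref{lem:CHSurj} with $X'=X_\sing\cup X(\le d-1)$. Given the descent to $\CH^d$, that lemma says every $x\in X^*\setminus X'$ is equivalent modulo $R^d(X,X')$ to a cycle supported in $X\setminus X^*$, so $[k(x)]$ lies in the image. It therefore suffices to prove the descent for an arbitrary admissible pair.

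\emph{Descent.} Take a Cartier curve $i_C:C\to X$ relative to $X^*$ and $f\in\sO_{C:C\cap X^*}^\times$. By Proposition~\ref{prop:CMMoving} we may assume $C\subset X_\CM$. The curve $C$ is then a local complete intersection (of codimension $d-1$) near $C\cap X^*$, and it is locally contained in $X_\reg$ elsewhere. Hence $\sO_C$ has finite Tor-dimension on $X$ and defines $[\sO_C]\in K_0(X)$; the same holds for every coherent $\sO_C$-module that is perfect on $C$ near $C\cap X^*$.

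Write $f=a/b$, where $a,b$ are sections of $\sO_C$ (after twisting by a line bundle, or working on the semilocalization) that are units near $C\cap X^*$ and nonzerodivisors on $C$. There are exact sequences
\[
0\to\sO_C\xrightarrow{a}\sO_C\to\sO_C/a\to0,
\]
and similarly for $b$, possibly twisted by a line bundle $\sL$ on $C$. Since $a$ and $b$ have the same divisor class in $\Pic(C)$ away from the twist, this gives
\[
[\sO_C/a]-[\sO_C/b]=0\in K_0(X).
\]
The modules $\sO_C/a$ and $\sO_C/b$ are supported in $C\setminus X^*\subset X_\reg$. A filtration by residue fields shows
\[
[\sO_C/a]=\sum_x \ell(\sO_{C,x}/a)\,[k(x)],
\]
and the multiplicities $\ell(\sO_{C,x}/a)$ are exactly those of $\Div a$ as defined via the lengths $m_j$ at generic points. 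Using the twist $\sL\otimes\sL^{-1}$ to handle the line bundle cleanly is the standard argument of \cite[Proposition 2.1]{LW}.

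\emph{Main obstacle.} I expect the main obstacle to be the matching of $\Div f$ with the $K_0$-classes when $C$ is nonreduced or has embedded points. The definition of $\Div$ uses the lengths $m_j$ at the generic points, whereas $\ell(\sO_{C,x}/a)$ is computed on the possibly non-Cohen–Macaulay local ring. My first attempt would be to filter $\sO_C$ near each point of $C\setminus X^*$ by $\sO_{C_{j,\red}}$-modules. Since $X$ is regular there, this reduces the comparison to integral curves, where the two lengths agree, using additivity of both sides.
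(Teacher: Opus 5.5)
Your proposal has the same structure as the paper's proof: the image is identified exactly as there, via Lemma~\ref{lem:CHSurj} with $X'=X_\sing\cup X(\le d-1)$, and your descent step reconstructs \cite[Proposition 2.1]{LW}, which the paper simply cites (observing that its density hypothesis is unnecessary and that $R^d(X,X^*)=R_0(X,X^*)$ by Proposition~\ref{prop:Comparison}). Two points in that reconstruction need tightening: if $C$ has embedded points off $X^*$ at which $f$ has zeros or poles, then $a$ and $b$ cannot both be nonzerodivisors, so first divide $\sO_C$ by its sections supported at finitely many points of $C\setminus X^*$ (this changes neither $\Div f$ nor the Cartier condition); and the lengths should be compared via $\ell(M/aM)-\ell(\ker(a|_M))$, which, unlike $\ell(M/aM)$, is additive along your filtration and vanishes on finite-length modules (see \cite[Appendix A.2]{Fulton}).
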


\begin{proof} Take $Y\subset X$ with $X\setminus Y$ regular. It is shown in \cite[Proposition 2.1]{LW} that the map $Z_0(X\setminus Y)\to K_0(X)$ sending $x$ to the $K_0$-class of the residue field $k(x)$ descends to a well-defined homomorphism $\cl_0:\CH_0(X,Y)\to K_0(X)$.  There is a tacit assumption that $X\setminus Y$ is dense in $X$, but this is not needed for the proof, so is also applicable to $\CH^d(X, X^*)$, via Proposition~\ref{prop:Comparison}(2).

It remains to show that $\cl^d$ has image $F^dK_0(X)$. For this,  it is clear that $\cl^d(\CH^d(X, X^*))$ is the subgroup of $K_0(X)$ generated by classes $[k(x)]$ for $x$ a closed point in $X\setminus X^*$, which is $F^dK_0(X)$ in case $X^*=X_\sing\cup X(\le d-1)$. However, by Lemma~\ref{lem:CHSurj}, the map
\[
\CH^d(X, X^*)\to \CH^d(X, X_\sing\cup X(\le d-1))
\]
induced by the inclusion $X_\sing\cup X(\le d-1))\subset X^*$ is surjective. Since  the diagram
\[
\xymatrix{
\CH^d(X, X^*)\ar[r]\ar[dr]_{\cl^d}& \CH^d(X, X_\sing\cup X(\le d-1))\ar[d]^{\cl^d}\\
&K_0(X)
}
\]
commutes, we have $\cl^d(\CH^d(X, X^*))=F^dK_0(X)$.
\end{proof}
 
From now on we simply write
\[
\cl^d:\CH^d(X, X^*)\to F^dK_0(X).
\]

We have the Chern class map $c_d:K_0(X)\to \CH^d(X, X^*)$ defined in the previous section.

\begin{theorem}[Riemann-Roch for 0-cycles]\label{thm:RR} Let $k$ be an infinite field and let $(X, X^*)$ be an admissible pair over $k$.  Then 
\begin{enumerate}
\item The restriction of $c_d$ to $F^dK_0(X)$ is a homomorphism of abelian groups
\[
c_d:F^dK_0(X)\to \CH^d(X, X^*).
\]
\item We have
\begin{enumerate}
\item[(a)] $c_d\circ \cl^d=(-1)^{d-1}(d-1)!\cdot\id_{\CH^d(X, X^*)}$,
\item[(b)] $\cl^d\circ c_d=(-1)^{d-1}(d-1)!\cdot\id_{F^dK_0(X)}$.
\end{enumerate}
\end{enumerate}
In particular, the homomorphisms $\cl^d:\CH^d(X, X^*)\to F^dK_0(X)$ and $c_d:F^dK_0(X)\to \CH^d(X, X^*)$ both have kernels that are killed by $(d-1)!$.
\end{theorem}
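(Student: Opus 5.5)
The plan is to reduce everything to a single local computation: for a closed point $x\in X\setminus X^*$,
\[
c_d(\cl^d([x]))=(-1)^{d-1}(d-1)!\,[x]\in \CH^d(X,X^*).
\]
All three assertions then follow formally. The identity (a) holds on generators, and since $\CH^d(X,X^*)$ is generated by the classes $[x]$, it extends to all of $\CH^d(X,X^*)$ once we know $c_d$ is additive on $F^dK_0(X)$.

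For the key computation I use the naturality of $c_d$ under pullback, Proposition~\ref{prop:ChernClassFunct}(2). Let $U\subset X\setminus X^*$ be an open neighborhood of $x$ and $j:U\to X$ the inclusion. By Remark~\ref{rem:OpenIm}(2) we have $j^*:\CH^d(X,X^*)\to \CH^d(U)$ compatible with $c_d$. On the regular scheme $U$ the classical Riemann--Roch without denominators (Fulton, Example~15.3.1) gives
\[
c_d([k(x)])=(-1)^{d-1}(d-1)!\,[x]\in \CH^d(U).
\]
Since $j^*$ is not injective, this is not enough by itself. Instead I would use the localization sequence, Lemma~\ref{lem:Localization}, with $W=X\setminus U$ replaced by a suitable closed set. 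Concretely, choose $U$ so that $X\setminus U=X^*\cup W$ with $W\subset X\setminus X^*$ closed. Then the ambiguity lies in the image of $\CH_0(W)$. Taking $U$ to be the complement of a general hypersurface section through none of the relevant points, I would argue that the support of a representative of $c_d([k(x)])$ can be computed by moving, as in Theorem~\ref{thm:HomogFunct}.

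More robustly, I would instead compute $c_d([k(x)])$ directly from the definition. Choose a finite locally free resolution of $k(x)$ on a neighborhood of $x$; the Koszul complex of a regular system of parameters extends after twisting to a complex of locally free sheaves on $X$ that is exact off $x$. Present the resulting $K_0$-class via maps $f_i:X\to\P^{N_i}\times\Gr(m_i,M_i)$ whose generating sections are chosen general. Then the pulled-back cycle representing $c_d$ is supported in any prescribed neighborhood of $x$ up to rational equivalence in $R^d(X,X^*)$, using Proposition~\ref{prop:TranslationPullback}. The computation therefore localizes to $X\setminus X^*$, where it is the smooth/regular case.

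For (1), note that $c_d$ on $F^dK_0(X)$ is the restriction of the total Chern class degree-$d$ part. For classes $a,b\in F^dK_0(X)$, the Whitney formula at the level of Grassmannians (Theorem~\ref{thm:K0Presentation}, Lemma~\ref{lem:Compatibility}(5)) gives
\[
c_d(a+b)=\sum_{i+j=d}c_i(a)c_j(b).
\]
The cross terms vanish because $a,b$ have rank $0$ and all $c_i$, $0<i<d$, of classes supported at points vanish after pulling back to a neighborhood where the support is removed. I expect this vanishing of lower Chern classes to be the main obstacle, since only $c_d$ has been defined into $\CH^d(X,X^*)$. I would handle it by working with $X\to$ products of Grassmannians and using that the relevant classes come from $K_0$ of the universal space, together with the point-support computation above.

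Finally, (b) follows from (a) and the surjectivity of $\cl^d$ (Proposition~\ref{prop:CycleClass}):
\[
\cl^d c_d\cl^d=\cl^d\circ(-1)^{d-1}(d-1)!.
\]
The kernel statements are then immediate.
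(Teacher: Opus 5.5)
There is a genuine gap, and it sits exactly at the step you label ``more robustly''. You claim that the pulled-back cycle representing $c_d$ can be taken to be supported in any prescribed neighborhood of $x$ up to $R^d(X,X^*)$, and that ``the computation therefore localizes to $X\setminus X^*$''. Nothing supports this. Proposition~\ref{prop:TranslationPullback} only says that the class of $f^{-1}(gY)$ is independent of a general translate $g$; it gives no control over where the support lies.

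Even granting such a representative on a small $V\subset X\setminus X^*$, the identity $c_d([k(x)])=(-1)^{d-1}(d-1)!\,[x]$ in $\CH^d(V)$ does not lift to $\CH^d(X,X^*)$. A rational equivalence on $V$ uses curves whose closures may meet $X^*$, where the functions need not be units, so it need not lie in $R^d(X,X^*)$. For the same reason your localization remark is misapplied. Lemma~\ref{lem:Localization} only removes closed sets $W\subset X\setminus X^*$ and keeps $X^*$. For $U\subset X\setminus X^*$, the kernel of $\CH^d(X,X^*)\to\CH^d(U)$ is not controlled by $\CH_0(W)$. (Also, a twisted Koszul complex of $d$ global sections is not exact off $x$: the sections generally have other common zeros and need not be a regular sequence at singular points.)

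The missing ideas are as follows.
\begin{enumerate}
\item Apply the localization sequence with $U=X\setminus\{x_1\}$, which still contains $X^*$, and induct on the number of points of $z=\sum r_ix_i$. This shows $c_d(\cl^d(z))-(-1)^{d-1}(d-1)!\,[z]=m[x_1]$ for some integer $m$.
\item Reduce to $X$ projective.
\item Pull the relation back, via Lemma~\ref{lem:PartialFunct}, to a projective regular $\tilde X\to X(d)$ that is an isomorphism near the relevant points (resolution, in characteristic $0$) or \'etale there (Lemma~\ref{lem:RegularRes}, after spreading out, in characteristic $p$).
\item Apply Riemann--Roch without denominators on $\tilde X$. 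In the merely regular case the paper needs spreading out to a smooth $\F_p$-scheme for this (Lemma~\ref{lem:GRRReg}).
\item Take degrees to get $m\cdot\deg q^*(x_1)=0$, hence $m=0$.
\end{enumerate}

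Your treatment of (1) also fails, and the dependency runs the wrong way. The Whitney formula $c_d(a+b)=\sum_{i+j=d}c_i(a)c_j(b)$ holds on the universal product of Grassmannians. But no lower Chern classes $c_i$, $0<i<d$, with values in relative groups on $X$, and no product, are constructed. So there is no mechanism showing that the pulled-back cross terms vanish in $\CH^d(X,X^*)$ when $a,b\in F^dK_0(X)$.

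The paper never needs additivity in advance. Since $c_d$ is defined on all of $K_0(X)$ as a map of pointed sets, it proves (2a) directly for every cycle $z=\sum r_ix_i$; that is what the induction above does. Then (1) is formal from the surjectivity of $\cl^d$ (Proposition~\ref{prop:CycleClass}): if $\cl^d\alpha=a$ and $\cl^d\beta=b$, then $c_d(a+b)=(-1)^{d-1}(d-1)!\,(\alpha+\beta)=c_d(a)+c_d(b)$. Your deduction of (b) from (a) is fine.
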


\begin{proof} Note first that (1) is a consequence of (2a). Indeed, by Proposition~\ref{prop:CycleClass}, the map
\[
\cl^d:\CH^d(X, X^*)\to F^dK_0(X)
\]
is a surjective group homomorphism, and if we know that 
\[
c_d\circ \cl^d=(-1)^{d-1}(d-1)!\cdot\id_{\CH^d(X, X^*)}, 
\]
it follows that $c_d\circ \cl^d$ is a group homomorphism, which implies that $c_d$ is itself also a group homomorphism.

Similarly, the identity (2a) implies the identity (2b) by   applying composition $-\circ \cl^d$ with the surjective homomorphism $\cl^d$.

It thus remains to prove (2a).

Let  $j:X\to \bar{X}$ be a projective closure of $X$ and let $\bar{X}^*\supset \bar{X}_\sing\cup \bar{X}(d-1)$ be a closed subset with $\dim_k\bar{X}^*<d$ and with $j^{-1}(\bar{X}^*)=X^*$; since the closure of a reduced scheme is reduced, $\bar{X}(d)^0$ is reduced. Then $(\bar{X}, \bar{X}^*)$ is an admissible pair with $j(X\setminus X^*)\subset \bar{X}\setminus \bar{X}^*$. We have a well-defined pushforward map
\[
j_*:Z^d(X\setminus X^*)\to Z^d(\bar{X}\setminus \bar{X}^*),
\]
even though $j$ is not proper, and we have $j^*:K_0(\bar{X})\to K_0(X)$, with
\[
j^*(\cl^d(j_*(z)))=\cl^d(z)\in K_0(X)
\]
for $z\in Z^d(X\setminus X^*)$. We also have the pullback map $j^*:\CH^d(\bar{X}, \bar{X}^*)\to \CH^d(X, X^*)$ by Remark~\ref{rem:OpenIm}, with  $j^*[j_*(z)]=[z]$. We can apply Proposition~\ref{prop:ChernClassFunct}(1),  which gives the identity 
\[
j^*(c_d([j_*(z)]))=c_d(j^*[j_*(z)])=c_d([z]).
\]
Thus,  we may assume from the start that $X$ is projective. 

Suppose we have $z=\sum_{i=1}^s r_ix_i\in Z_0(X\setminus X^*)$ with the $x_i$ distinct and each $r_i\neq0$. We proceed by induction on $s$, the case $s=0$ following from the fact that $\cl^d(0)=0$ and $c_d(0)=0$.

So take $s>0$ and assume the result for all $s'<s$ and all admissible pairs $(V, X^*)$ with $V\subset X$ open, $V\supset X^*$.  Let $U=X\setminus\{x_1\}$, with inclusion $j:U\to X$. By our induction hypothesis, we have
\[
c_d(\cl^d([\sum_{i=2}^sr_i\cdot x_i]))=(-1)^{d-1}(d-1)!\cdot [\sum_{i=2}^sr_i\cdot x_i]\in \CH^d(X, X^*).
\]

We have $j^*[x_1]=0$ so 
\[
j^*[\sum_{i=1}^sr_i\cdot [x_i]))=j^*[\sum_{i=2}^sr_i\cdot [x_i]))
\]
and since $j^*\circ c_d\circ \cl^d=c_d\circ \cl^d\circ j^*$, we have
\begin{align*}
j^*(c_d(\cl^d&(\sum_{i=1}^sr_i\cdot [x_i]))-(-1)^{d-1}(d-1)!\cdot \sum_{i=1}^sr_i\cdot [x_i])\\
&=(c_d\circ \cl^d)(j^*(\sum_{i=1}^sr_i\cdot [x_i]))-(-1)^{d-1}(d-1)!\cdot j^*(\sum_{i=1}^sr_i\cdot [x_i])\\
&=(c_d\circ \cl^d)(j^*(\sum_{i=2}^sr_i\cdot [x_i]))-(-1)^{d-1}(d-1)!\cdot j^*(\sum_{i=2}^sr_i\cdot [x_i])\\
&=j^*(c_d(\cl^d(\sum_{i=2}^sr_i\cdot [x_i]))-(-1)^{d-1}(d-1)!\cdot \sum_{i=2}^sr_i\cdot [x_i])\\
&=0 \in \CH^d(U, X^*).
\end{align*}
Via the localization sequence (Lemma~\ref{lem:Localization})
\[
\Z\cdot [x_1]=\CH_0(x_1)\xrightarrow{i_{x_1*}}\CH^d(X, X^*)\xrightarrow{j^*}\CH^d(U, X^*)\to 0
\]
we find that  there is an integer $m$ with
\begin{equation}\label{eqn:RRIdentity}
c_d(\cl^d(\sum_{i=1}^sr_i\cdot [x_i]))-(-1)^{d-1}(d-1)!\cdot \sum_{i=1}^sr_i\cdot [x_i]=m\cdot [x_1]\in \CH^d(X, X^*),
\end{equation}
and we need to show that $m=0$.

Suppose first that $k$ has characteristic zero. Then $X_\reg(d)=X_\sm(d)$ and we can use resolution of singularities to give us a projective birational  morphism
\[
p:\tilde{X}\to   X(d)
\]
with $\tilde{X}$  smooth over $k$ and $p$ an isomorphism over $X_\reg(d)\supset X\setminus X^*$. Applying Remark~\ref{rem:OpenIm}(2), we have the well-defined pullback map
\[
p^*:\CH^d(X, X^*)\to \CH^d(\tilde{X}).
\]
with $p^*([x_i])=[y_i]$ if $y_i\in \tilde{X}$ is the closed point $p^{-1}(x_i)$. By Proposition~\ref{prop:ChernClassFunct}(2) and Riemann-Roch on the smooth $k$-scheme $\tilde{X}$, we have
\begin{multline*}
p^*(c_d(\cl^d(\sum_{i=1}^sr_i\cdot [x_i]))-(-1)^{d-1}(d-1)!\cdot \sum_{i=1}^sr_i\cdot [x_i])\\=
c_d(\cl^d(\sum_{i=1}^sr_i\cdot [y_i]))-(-1)^{d-1}(d-1)!\cdot \sum_{i=1}^sr_i\cdot [y_i]=0
\end{multline*}
 Thus 
 \[
0=p^*(c_d(\cl^d(\sum_{i=1}^sr_i\cdot [x_i]))-(-1)^{d-1}(d-1)!\cdot \sum_{i=1}^sr_i\cdot [x_i])=p^*(m[x_1])=m[y_1].
 \]

Let $\pi:\tilde{X}\to \Spec k$ be the structure morphism. Then we have
\[
0=\pi_*(m[y_1])=m[k(y_1):k]\cdot [\Spec k]\in \CH_0(\Spec k)=\Z\cdot [\Spec k]
\]
so $m=0$, and the induction goes through.

In characteristic $p>0$, we use a modification of this argument.\\[5pt]
{\bf Step 1}. We reduce to the case of $k$ finitely generated over $\F_p$. For this, there is a  subfield $k_0$ of $k$, finitely generated over $\F_p$, over which $X, X^*$ and  $x_1,\ldots, x_s\in X\setminus X^*$ are all defined, that is, there is a quasi-projective $k_0$-scheme $X_0$, a closed subset $X^*_0$ of $X_0$ and  closed points $x_{i0}$ of $X_0\setminus X_0^*$ such that
\[
(X_0, X_0^*, x_{10},\ldots, x_{s0})\times_{k_0}k\cong (X, X^*, x_1,\ldots, x_s) 
\]
Let $p_X:X=X_0\times_{k_0}k\to X_0$ be the projection.
 
We claim that $\dim_{k_0}X_0=\dim_kX=d$ and $X_0\setminus X_0^*$ is an admissible pair over $k_0$. This first identity follows from the faithful flatness of $k_0\hookrightarrow k$, more generally, $X_0(i)\times_{k_0}k\cong X(i)$, and thus $X_0^*\supset X_0(\le d-1)$. Similarly, $X(d)^0=X_0(d)^0\times_{k_0}k$ and by faithful flatness again, $X_0(d)^0\times_{k_0}k$ being reduced implies $X_0(d)^0$ is reduced. Take $y_0\in X_0\setminus X^*_0$ and let $y\in X\setminus X^*$ be a point lying over $y_0$,  giving the flat local extension $\sO_{X_0,y_0}\to \sO_{X, y}$. By \cite[Theorem 23.7]{MatsumuraCRT}, the fact that $\sO_{X,y}$ is regular of dimension $d$ implies $\sO_{X_0,y_0}$ is regular of dimension $d$, hence $X_0\setminus X_0^*\subset X_{0\reg}$. Since $X\setminus X^*$ is dense in $X(d)$, it follows that $X_0\setminus X_0^*$ is dense in  $X_{0\reg}$. Thus $(X_0, X_0^*)$ is an admissible pair over $k_0$, as claimed. In particular, we have well-defined pullback maps
\[
p_X^*:\CH^d(X_0, X_0^*)\to \CH^d(X, X^*),\ p_X^*:F^dK_0(X_0)\to F^dK_0(X).
\]
Since $x_{i0}\times_{k_0}k=x_i$, we have $p_X^*([x_{i0}])=[x_i]\in \CH^d(X, X^*)$ and $p_X^*[k(x_{i0})]=[k(x_i)]$ in $F^dK_0(X)$. By Proposition~\ref{prop:ChernClassFunct}(3) (substituting $(k_0, k)$ for $(k, F)$) we have
\[
p_X^*(c_d(\cl^d(\sum_{i=1}^sr_i[x_{i0}]))=c_d(\cl^d(\sum_{i=1}^sr_i[x_i]).
\]
This reduces us to proving the analog of \eqref{eqn:RRIdentity} for $(X_0, X_0^*)$; changing notation, we may assume that $k$ is finitely generated over $\F_p$. \\[5pt]
 {\bf Step 2}. Spreading out.    
   
 As above, we have  the identity \eqref{eqn:RRIdentity} in $\CH^d(X, X^*)$. 
 Represent $\cl^d(\sum_{i=1}^sr_i\cdot [x_i])$ as the $K_0$-class of a perfect complex $E$ on $X$, so 
\[
\cl^d(\sum_{i=1}^sr_i\cdot [x_i])=[E_{\text{ev}}]-[E_{\text{odd}}]
\]
where $E_{\text{ev}}=\oplus_{i\ge0}E_{2i}$ and $E_{\text{odd}}=\oplus_{i\ge0}E_{2i+1}$.  Taking a morphism
\begin{equation}\label{eqn:RepresentingMorphism}
(f_1, f_2):X\to (\P^{N_1}\times\Gr(m_1, M_1))\times(\P^{N_2}\times\Gr(m_2, M_2))
\end{equation}
with $f_1^*E_{m_1, M_1}(-1)\cong E_{\text{ev}}$ and $f_2^*E_{m_2, M_2}(-1)\cong E_{\text{odd}}$, we have
\[
c_d(\cl^d(\sum_{i=1}^sr_i\cdot [x_i]))=c_d(f_1-f_2):=(f_1, f_2)^*(c_d([p_1^*E_{m_1, M_1}(-1)]-[p_2^*E_{m_2, M_2}(-1)])).
\]

Represent 
\[
c_d([p_{1}^*E_{m_1, M_1}(-1)]-[p_{2}^*E_{m_2, M_2}(-1)])\in \CH^d(\P^{N_1}\times\Gr(m_1, M_1)\times\P^{N_2}\times\Gr(m_2, M_2))
\]
by a cycle
\[
C_d\in Z^d(\P^{N_1}\times\Gr(m_1, M_1)\times\P^{N_2}\times\Gr(m_2, M_2)),
\]
so $c_d(\cl^d(\sum_{i=1}^sr_i\cdot [x_i]))$ is represented by the 0-cycle
\[
(f_1, f_2)^*(g\cdot C_d)\in Z^d(X\setminus X^*)
\]
for some $g\in (\GL_{N_1+1}\times\GL_{M_1}\times \GL_{N_2+1}\times\GL_{M_2})(k)$, with $(f_1, f_2)^{-1}(\supp(gC_d))$ a finite set of closed points of $X\setminus X^*$. Similarly, the identity
\[
c_d(\cl^d(\sum_{i=1}^sr_i\cdot [x_i]))-(-1)^{d-1}(d-1)!\sum_{i=1}^sr_i\cdot [x_i])=m\cdot [x_1]\in \CH^d(X, X^*)
\]
is given by an identity of 0-cycles on $X\setminus X^*$, 
\[
\sum_{j=1}^\ell i_{C_j*}(\Div h_j)=m\cdot x_1 -(f_1, f_2)^*(g\cdot C_d)+(-1)^{d-1}(d-1)!\sum_{i=1}^sr_i\cdot x_i,
\] 
where $i_{C_j}:C_j\to X$ are Cartier curves on $X$ relative to $X^*$ with $h_j\in \sO_{C_j:C_j\cap X^*}^\times$.

Let $U_j\subset C_j$ be the maximal open subset of $C_j$ over which $h_j$ is a regular function. Since $h_j\in \sO_{C_j:C_j\cap X^*}^\times$,  $U_j$ contains $C_j\cap X^*$. Thus, letting $T=\cup_{j=1}^\ell i_{C_j}(C_j\setminus U_j)$, $T$ is a finite set of closed points of $X\setminus X^*$. Choose a closed point $r_{ij}$, $i=1,\ldots, q_j$ in each irreducible component of $C_j$, with $i_{C_j}(r_{ij})\in X\setminus X^*$ and let $R=\{i_{C_j}(r_{ij})\mid j=1,\ldots, \ell, i=1\ldots q_j\}$. Let
\[
P=\{x_1,\ldots, x_s\}\cup (f_1, f_2)^{-1}(\supp(gC_d))\cup T\cup R.
\]
Then $P$ is a finite set of closed points of $X\setminus X^*\subset X(d)$.

Since $k$ is finitely generated over $\F_p$, we can apply the spreading out method discussed in \S\ref{sec:Spread}. We spread out $(X(d), X^*\cap X(d), P)$ to $(\sX, \sX^*, \sP)\to \Spec S$ for  a finitely generated $\F_p$-algebra $S$ that is smooth over $\F_p$, with $\sX\to \Spec S$ projective, $k$ the quotient field of $S$ and $(\sX, \sX^*, \sP)\otimes_Sk=(X(d), X^*\cap X(d), P)$.

We then apply Lemma~\ref{lem:RegularRes} to give  a flat projective $S$-scheme $\tilde{\sX}\to \Spec S$ with a projective morphism $q:\tilde{\sX}\to \sX$ such that $q$ is \'etale over a neighborhood of $\sP$ in $\sX$, and with $\tilde{\sX}$ smooth and quasi-projective of dimension $d+\dim_kS$ over $\F_p$. Let $q_k:\tilde{X}\to X(d)$ be the pullback of $q$ over $\Spec k\to \Spec S$. Then $\tilde{X}$ is   projective of pure dimension $d$ over $k$ and is regular,  and  $q_k$ is a projective morphism that is \'etale over a neighborhood of $P$ in $X(d)$. 

In particular, the cycle-theoretic pullbacks $q_k^*((f_1, f_2)^*(g\cdot C_d)), q_k^*(1\cdot x_i)\in Z^d(\tilde{X})$ are  defined. Using Lemma~\ref{lem:PartialFunct} we have
\[
 [q_k^*((f_1, f_2)^*(g\cdot C_d))]-[q_k^*((-1)^{d-1}(d-1)!\sum_{i=1}^sr_ix_i)]=[m\cdot q_k^*(1\cdot x_1)]\in \CH^d(\tilde{X}).
 \]
 Using the functoriality of cycle-theoretic pullback (when defined), we have
 \[
 q_k^*((f_1, f_2)^*(g\cdot C_d))=(f_1q_k, f_2q_k)^*(g\cdot C_d).
 \]
 
By the naturality of the Chern class map $c_d:K_0(-)\to \CH^d(-)$ as expressed in Remark~\ref{rem:LciPullback} and \eqref{eqn:ChernClassLciPullback2},  we have
 \begin{align*}
[(f_1q_k, f_2q_k)^*(g\cdot C_d))]&=c_d([(f_1q_k)^*E_{m_1, M_1}(-1)]-[(f_2q_k)^*E_{m_2, M_2}(-1)]\\
&=c_d([q_k^*E_\text{ev}]-[q_k^*E_{\text{odd}}])\\
&=c_d(\cl^d([q_k^*(\sum_{i=1}^sr_i\cdot x_i)]))\\
&=(-1)^{d-1}(d-1)![q_k^*(\sum_{i=1}^sr_i\cdot x_i)]\in \CH^d(\tilde{X}),
\end{align*}
with this last identity following from Lemma~\ref{lem:GRRReg}. 
Thus 
\begin{align*}
m\cdot [q_k^*(1\cdot x_1)]&= [q_k^*((f_1, f_2)^*(g\cdot C_d))]-[q_k^*((-1)^{d-1}(d-1)!\sum_{i=1}^sr_ix_i]\\
&=[(f_1q_k, f_2q_k)^*(g\cdot C_d)]-(-1)^{d-1}(d-1)![q_k^*(\sum_{i=1}^sr_i\cdot x_i)]\\
&=0\in \CH^d(\tilde{X}).
\end{align*}

Let $\pi:\tilde{X}\to \Spec k$ be the structure morphism. Since $q_k^*(1\cdot x_1)$ is a non-zero, effective zero-cycle on the projective $k$-scheme $\tilde{X}$, we have $\pi_*(q_k^*(1\cdot x_1))\neq0\in \CH_0(\Spec k)=\Z$, and we see as before that $m=0$.
\end{proof}

\begin{corollary}\label{cor:RR} Let $k$ be an infinite field,  let  $(X, X^*)$, $(X, X')$ be admissible pairs over $k$ and let   $d=\dim_kX$. Suppose that $X'\subset X^*$. Then the map
\[
j^{X^*, X'}_*:\CH^d(X, X^*)\to \CH^d(X, X')
\]
induced by the inclusion $X\setminus X^*\subset X\setminus X'$ is surjective with kernel killed by $(d-1)!$. 
\end{corollary}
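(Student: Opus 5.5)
Surjectivity is already Lemma~\ref{lem:CHSurj}, so all the work is the statement about the kernel. The plan is to compare the two Riemann-Roch identities of Theorem~\ref{thm:RR}, one for $(X,X^*)$ and one for $(X,X')$, through the cycle class maps and the Chern class maps.

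First check that the cycle class maps are compatible: $\cl^d_{X'}\circ j^{X^*,X'}_*=\cl^d_{X^*}$ as maps $\CH^d(X,X^*)\to F^dK_0(X)$. This holds already at the level of cycles, since both send a closed point $x\in X\setminus X^*$ to $[k(x)]\in K_0(X)$. Both targets are the same group $F^dK_0(X)$ by Proposition~\ref{prop:CycleClass}.

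Now take $\alpha\in\CH^d(X,X^*)$ with $j^{X^*,X'}_*(\alpha)=0$. Then $\cl^d_{X^*}(\alpha)=\cl^d_{X'}(j^{X^*,X'}_*\alpha)=0$. Applying $c_d^{X^*}:K_0(X)\to\CH^d(X,X^*)$ and Theorem~\ref{thm:RR}(2a) for the pair $(X,X^*)$ gives
\[
(-1)^{d-1}(d-1)!\,\alpha=c_d^{X^*}(\cl^d_{X^*}(\alpha))=c_d^{X^*}(0)=0,
\]
using that $c_d$ is a map of pointed sets, so $c_d(0)=0$. Hence $(d-1)!\,\alpha=0$, which is the claim.

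The only point needing care is the well-definedness and compatibility of the cycle class maps for the two choices of closed subset, and this follows directly from the cycle-level description. The argument never needs any relation between $c_d^{X^*}$ and $c_d^{X'}$, which would otherwise require Proposition~\ref{prop:ChernClassFunct} together with the identity map $(X,X')\to(X,X^*)$. So I expect no real obstacle: the heavy lifting is already done in Theorem~\ref{thm:RR}.
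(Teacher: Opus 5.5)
Your proposal is correct and follows the paper's proof: surjectivity comes from Lemma~\ref{lem:CHSurj}, and the kernel bound comes from the commutative triangle $\cl^d\circ j^{X^*,X'}_*=\cl^d$ combined with Theorem~\ref{thm:RR}. The only difference is that you spell out the kernel statement of Theorem~\ref{thm:RR} directly from the identity (2a) and $c_d(0)=0$.
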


\begin{proof} The map $j^{X^*, X'}_*$ is surjective by Lemma~\ref{lem:CHSurj}. We have the commutative diagram
\[
\xymatrix{
\CH^d(X, X^*)\ar[r]^{j^{X^*, X'}_*}\ar[dr]_{\cl^d}&\CH^d(X, X')\ar[d]^{\cl^d}\\
&F^dK_0(X)
}
\]
By Theorem~\ref{thm:RR}, the kernel of $\cl^d:\CH^d(X, X^*)\to F^dK_0(X)$ is killed by $(d-1)!$, so the same holds for the kernel of $j^{X*, X'}_*$.
\end{proof}

\begin{remark} We expect that the map $j^{X^*, X'}_*:\CH^d(X, X^*)\to \CH^d(X, X')$ is an isomorphism for all admissible pairs $(X, X^*)$, $(X, X')$ with  $X'\subset X^*$. 
\end{remark}

In case $X$ is equi-dimensional of dimension $d$ over $k$, Theorem~\ref{thm:RR} translates into the corresponding result for $\CH_0(X, X^*)$, using Proposition~\ref{prop:Comparison}  to give the identification $\CH^d(X, X^*)=\CH_0(X, X^*)$. 

If $X$ is not equi-dimensional over $k$, given a closed subset $Y^*$ of $X$  with $X\setminus Y^*$ regular  and dense in $X$. we still have the homomorphism
\[
\cl_0:\CH_0(X, Y^*)\to F_0K_0(X)
\]
with $F_0K_0(X)$ the subgroup of $K_0(X)$ generated by the classes $[k(x)]$ for $x$ a closed point in $X_\reg$ (see \cite[Proposition 2.1]{LW}). In general, there  is no analog of Theorem~\ref{thm:RR} for $\cl_0$, even taking $Y^*=X_\sing$ to be the minimal choice. 

Given $X, Y^*$ as above, let $X^*(\le i)=(Y^*\cap X(i))\cup X(\le i-1)$. Then we have the map \eqref{eqn:CH0CH*Comp}, so the (surjective) maps $\cl^i:\CH^i(X(\le i), X^*(\le i))\to F^iK_0(X(\le i))$ induce the  homomorphism
\[
\prod_{i=0}^{\dim_kX}\cl^i_0:\CH_0(X, Y^*)\to \oplus_{i=0}^{\dim_kX}F^iK_0(X(\le i)).
\]

\begin{proposition}\label{prop:RRCH0} The map  
$\prod_{i=0}^{\dim_kX}\cl^i_0$ is surjective. Moreover, suppose $X$ is reduced and 
\[
\codim_{X(i)}X(i)\cap X(j)\ge 2,\ 0\le i<j\le \dim_kX.
\]
Then the kernel of $\prod_{i=0}^{\dim_kX}\cl^i_0$ is killed by $(\dim_kX-1)!$.
\end{proposition}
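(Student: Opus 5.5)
The plan is to factor $\prod_i\cl^i_0$ through the map $\gamma_{X,Y^*}$ of \eqref{eqn:CH0CH*Comp}. By construction,
\[
\prod_{i=0}^{\dim_kX}\cl^i_0=\Big(\oplus_{i=0}^{\dim_kX}\cl^i\Big)\circ\gamma_{X,Y^*},
\]
where $\cl^i:\CH^i(X(\le i),X^*(\le i))\to F^iK_0(X(\le i))$ is the cycle class map of Proposition~\ref{prop:CycleClass} applied to the admissible pair $(X(\le i),X^*(\le i))$.

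We first check that these pairs are admissible. $X(\le i)$ is quasi-projective of dimension $i$, and $X(\le i)(i)^0$ is reduced because $X$ is reduced in the second statement. For the first statement some reducedness of $X(i)^0$ is implicit in the definition of $\cl^i$, which we take as given. The closed set $X^*(\le i)$ contains $X(\le i-1)$. Its complement $X(i)\setminus(Y^*\cup X(\le i-1))$ lies in $X\setminus Y^*$, hence is regular. It is of pure dimension $i$ and dense in $X(i)$, since $X\setminus Y^*$ is dense in $X$.

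For surjectivity, it suffices to show that $\gamma_{X,Y^*}$ is surjective, because each $\cl^i$ is surjective onto $F^iK_0(X(\le i))$ by Proposition~\ref{prop:CycleClass}. On generators, $\gamma$ comes from the decomposition
\[
Z_0(X\setminus Y^*)=\oplus_i Z^i\big(X(\le i)\setminus X^*(\le i)\big),
\]
which is an identification of groups. Every generator of each summand therefore lies in the image, so $\gamma$ is surjective with no hypothesis beyond those needed to define it.

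For the kernel under the codimension hypothesis, Proposition~\ref{prop:CH0CH*} shows that $\gamma_{X,Y^*}$ is an isomorphism. Hence $\ker\prod_i\cl^i_0\cong\oplus_i\ker\cl^i$. By Theorem~\ref{thm:RR} applied to each pair $(X(\le i),X^*(\le i))$, the group $\ker\cl^i$ is killed by $(i-1)!$ for $i\ge1$. For $i=0$, $\cl^0$ is injective: $\CH^0$ of a $0$-dimensional scheme relative to its non-reduced locus is free on its points, and these map to independent classes. Since $(i-1)!$ divides $(\dim_kX-1)!$ for all $1\le i\le\dim_kX$, the whole kernel is killed by $(\dim_kX-1)!$.

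The main obstacle is confirming that Proposition~\ref{prop:CH0CH*} applies here, which requires $k$ infinite and the codimension condition; both hold in the second statement. A smaller point is the $i=0,1$ edge cases, where the relevant factorials are $1$ and the claims are trivial or follow from Remark~\ref{rem:CH1=Pic}.
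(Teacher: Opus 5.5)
Your proposal is correct and follows the paper's proof. Surjectivity comes from the identification of generators under $\gamma_{X,Y^*}$ together with surjectivity of each $\cl^i$ (Proposition~\ref{prop:CycleClass}); the kernel bound comes from $\gamma_{X,Y^*}$ being an isomorphism (Proposition~\ref{prop:CH0CH*}) and Theorem~\ref{thm:RR} applied to each summand. Your extra care with the $i=0$ summand (where in fact $X^*(\le 0)=\emptyset$ and $\cl^0$ is injective) and with $(i-1)!\mid(\dim_kX-1)!$ only spells out what the paper leaves implicit.
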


\begin{proof} Since $Z_0(X\setminus Y^*)=\oplus_{i=0}^{\dim_kX}Z^i(X(\le i), X^*(\le i))$, the map 
\eqref{eqn:CH0CH*Comp} is surjective. As $\cl^i:\CH^i(X(\le i), X^*(\le i))\to F^iK_0(X(\le i))$ is surjective for each $i$, the surjectivity of $\prod_{i=0}^{\dim_kX}\cl^i_0$ follows.

In case $\codim_{X(i)}X(i)\cap X(j)\ge 2$ for all $0\le i<j\le \dim_kX$, the map \eqref{eqn:CH0CH*Comp} is an isomorphism (Proposition~\ref{prop:CH0CH*}) and then the second statement follows from Theorem~\ref{thm:RR}.
\end{proof}

\begin{remark}The inclusion $\iota_{\le i}:X(\le i)\to X$ induces a pullback map
\[
\iota_{\le i}^*:F_0K_0(X)\to F^iK_0(X(\le i))
\]
and 
\[
\oplus_{i=0}^{\dim_kX}\iota_{\le i}^*:F_0K_0(X)\to \oplus_{i=0}^{\dim_k X}F^iK_0(X(\le i))
\]
 is surjective. However, it is not clear that Proposition~\ref{prop:RRCH0} sheds any light on the exponent of the kernel of $\cl_0:\CH_0(X, Y^*)\to F_0K_0(X)$ in case $X$ is not equi-dimensional, even under the hypotheses of Proposition~\ref{prop:RRCH0}.
 \end{remark} 

\begin{ex}  Consider the case $X=\Spec k[x,y,z]/(z(x,y))$, $Y=\{(0,0,0)\}$ discussed in Example~\ref{ex:CH0}. It is not hard to show that $F_0K_0(X)=0$: if $p\neq (0,0,0)$ is a closed point in the $z$-axis, with defining ideal $(f_p(z))\subset k[z]$, then as $f_p(0)$ is in $k^\times$, we can extend $f_p(z)$ to a regular function $\tilde{f}_p(x,y,z)\in k[x,y,z]/(z(x,y))$ with restriction to the $x$-$y$-plane the constant function with value $f_p(0)$. Then $\tilde{f}_p(x,y,z)$ is a unit on $X\setminus\{p\}$, hence a non-zero divisor in $k[x,y,z]/(z(x,y))$, giving the exact sequence
\[
0\to \sO_X\xrightarrow{\times \tilde{f}_p}\sO_X\to k(p)\to 0,
\]
showing that $[k(p)]=0$ in $K_0(X)$. Similarly, for $q\neq(0,0,0)$ a closed point in the $x$-$y$-plane, we also have $[k(q)]=0$ in $K_0(X)$, so $F_0K_0(X)=0$. 

On the other hand, we have computed
\[
\CH_0(X, Y^*)=Z_0(z\text{-axis}\setminus\{(0,0,0)\})
\]
so $\cl_0:\CH_0(X, Y^*)\to F_0K_0(X)$ has kernel the free abelian group on the closed points of $\A^1_k\setminus\{0\}$, a free abelian group whose rank is the cardinality of $k$ (since we are assuming $k$ is an infinite field). 

In \cite{GK}, Gupta-Krishna use the modification $\CH_0^{\text{BK}}(X, Y^*)$ of $\CH_0(X, Y^*)$ defined by Binda-Krishna in \cite[\S 3.3]{BindaKrishna},  using finite morphisms $f:C\to X$ with $C$ of dimension one and $f$ an lci morphism over a neighborhood of $Y^*\cap f(C)$ in $X$ instead of taking $f$ to be a closed immersion. The same analysis as for $\CH_0(X, Y^*)$ in the above example gives 
\[
\CH_0^{\text{BK}}(X, Y^*)=Z_0(z\text{-axis}\setminus\{(0,0,0)\})=Z_0(\A^1_k\setminus\{0\}),
\]
for $k$ an arbitrary field. This says that the result  \cite[Theorem 8.1(1), (2)]{GK} for $X$ a geometrically reduced affine scheme of finite type over a field $k$ should add the hypothesis that $X$ is equi-dimensional over $k$. 
\end{ex}

\section{Spreading out}\label{sec:Spread} We collect some elementary notions and results on the ``spreading out'' method, to be used in the proof of Theorem~\ref{thm:RR} in case the base-field $k$ has positive characteristic.

Let $Y\to \Spec k$ be a  quasi-projective $k$-scheme, with $k$ a field of characteristic $p>0$. Suppose that $k$ is finitely generated over $\F_p$. Fixing a locally closed immersion $Y\subset\P^N_k$, let $\bar{Y}$ be the closure and $Z:=\bar{Y}\setminus Y$ the complement.  

Let $S_0\subset k$ be a finitely generated $\F_p$-sub-algebra with quotient field $k$, such that $\bar{Y}$ and $Z$ have generators for their respective homogeneous ideals $I_{\bar{Y}}, I_Z$ in $k[X_0,\ldots, X_N]$  contained in $S_0[X_0,\ldots, X_N]$; let $\sI_{\bar{Y}}$, $\sI_Z$ be the  homogeneous ideals in $S_0[X_0,\ldots, X_N]$ with these generators, and let $\bar{\sY}_0, \bar{\sZ}_0\subset \P^N_{S_0}$ be the closed subschemes defined by these ideals. Inverting a non-zero element of $S_0$ if necessary, we may assume that $\sI_{\bar{Y}}\subset \sI_Z$, so $\bar{\sZ}_0$ is a closed subscheme of $\bar{\sY}_0$. 

As $S_0$ is a domain,  finitely generated over $\F_p$, the regular locus in $\Spec S_0$ is open and dense, so after inverting a non-zero element of $S_0$, we may assume that $S_0$ is regular, and since $\F_p$ is perfect, $S_0$ is smooth over $\F_p$. Similarly, $\bar{\sY}_0\to \Spec S_0$, $\bar{\sZ}_0\to \Spec S_0$ are flat over the generic point $\Spec k\hookrightarrow \Spec S_0$, so inverting an additional non-zero element of $S_0$ to form the finitely generated $\F_p$-algebra $S$, we have projective $S$-schemes $\bar{\sY}, \bar{\sZ}$ with $\bar{\sZ}$ a closed subscheme of $\bar{\sY}$,  and the quasi-projective $S$-scheme $\sY:=\bar{\sY}\setminus\bar{\sZ}$,  with all three flat over $S$.  We call $\sY\to \Spec S$ a {\em spreading out} of $Y\to \Spec k$. Clearly, if $Y$ is projective over $k$, then $\sY$ is projective over $\Spec S$. In general, $\sY$ is quasi-projective over $\Spec S$, and since $S$ is finitely generated over $\F_p$, $\sY$ is also quasi-projective over $\F_p$.

Similarly, if we have closed subschemes $Y_1,\ldots, Y_r$ of $Y$, we construct as above a spreading out of $\sY$ together with closed subschemes $\sY_1,\ldots, \sY_r$ that give a spreading out $(\sY, \sY_1,\ldots, \sY_r)$ of $(Y, Y_1,\ldots, Y_r)$. 

We call a spreading out $\sY\to \Spec S$ a {\em smooth spreading out} of $Y$ if $\sY$ is smooth over $\F_p$.\\[5pt]
We note that, if a smooth spreading out of $Y$ exists, then as $\sY$ is a regular scheme, $Y$ is also regular, being a localization of $\sY$. Similarly, $\sY$ is equi-dimensional of dimension $d+\dim_{\F_p}S$ over $\F_p$, if and only if $Y$ is equi-dimensional of dimension $d$ over $k$.

\begin{remark}\label{rem:FultonChernClass}
We have the  Chow groups $\CH_m(Y)$ of algebraic cycles of dimension $m$ (over $k$) modulo rational equivalence. We will use Fulton's Chern class operators  \cite[\S 3.2, Example 3.2.7]{Fulton}
\[
c_n^F(-)\cap -:K_0(Y)\to \Hom_\Z(\CH_m(Y)\to \CH_{m-n}(Y))
\]
Assuming $Y$ is equi-dimensional of dimension $d$ over $k$, we can index the Chow groups by codimension, giving
\[
c_n^F(-)\cap -:K_0(Y)\to \Hom_\Z(\CH^m(Y)\to \CH^{m+n}(Y))
\]
and we have
\[
c_n^F(-)\cap [Y]:K_0(Y)\to \CH^n(Y)
\]
where $[Y]\in \CH^0(Y)$ is the fundamental class: if $Y$ has integral components $Y_1,\ldots, Y_r$, then $[Y]$ is the class of $\sum_{i=1}^r1\cdot Y_i$ in $\CH_{\dim_kY}(Y)$. 

If $Y$ is smooth over $k$, the Chern class map defined following Grothendieck's method \cite{Grothendieck}
\[
c_n(-):K_0(Y)\to \CH^n(Y)
\]
satisfies
\[
c_n(x)=c_n^F(x)\cap[Y]\in \CH^n(Y);
\]
see \cite[\S 3.2, Example 3.2.7]{Fulton}. We will therefore often  write $c_n(x)$ for $c_n^F(x)\cap [Y]$ for $Y$ equi-dimensional over $k$. Given a smooth spreading out $\sY\to \Spec S$ of $Y$, we will use the same notation replacing $Y$ with $\sY$, viewing $\sY$ as a smooth quasi-projective $\F_p$-scheme.
\end{remark}

\begin{lemma}\label{lem:GRRReg} Let $k$ be a finitely generated field extension of $\F_p$, and take $Y\in \Sch_k^{\qp}$. Suppose $Y$ is equi-dimensional over $k$ and regular. Let $Z\subset Y$ be an integral closed subscheme of codimension $m$, giving the corresponding elements $[Z]\in \CH^m(Y)$, $[\sO_Z]\in K_0(Y)$.   Then
\[
c_n^F([\sO_Z])\cap[Y]=\begin{cases} 0 &\text{ for }n<m,\\ (-1)^{m-1}(m-1)![Z]&\text{ for }n=m.\end{cases}
\]
\end{lemma}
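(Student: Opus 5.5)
The plan is to reduce to the smooth case via a smooth spreading out, and then use the classical Riemann--Roch without denominators for a closed embedding of a regular subscheme, as in Fulton \cite[Example 15.3.1, \S 15.3]{Fulton}. Since $k$ is finitely generated over $\F_p$ and $Y$ is regular and equi-dimensional, I first spread out $(Y,Z)$ to $(\sY,\sZ)\to\Spec S$ as in \S\ref{sec:Spread}. After shrinking $\Spec S$ I arrange that $\sZ$ is integral and flat over $S$ with generic fibre $Z$, and that $\sZ$ has codimension $m$ in $\sY$.

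I cannot expect $\sY$ to be smooth over $\F_p$ in general. However, $Y$ is regular and essentially of finite type over $\F_p$, hence so is $\sY$ over a neighbourhood of the generic fibre. Since $\F_p$ is perfect, regularity equals smoothness over $\F_p$, and the regular locus is open. So I replace $\sY$ by its regular locus $\sY^\circ$, which still contains the generic fibre $Y$, and I replace $\sZ$ by $\sZ\cap\sY^\circ$. Then $\sY^\circ$ is a smooth quasi-projective $\F_p$-scheme. Fulton's Chern class operators are compatible with flat pullback, in particular with the localization $Y\to\sY^\circ$ (a limit of open immersions). Hence it suffices to prove the formula $c_n(\sO_\sZ)=0$ for $n<m$ and $c_m(\sO_\sZ)=(-1)^{m-1}(m-1)![\sZ]$ on the smooth $\F_p$-scheme $\sY^\circ$. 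Restricting to the generic fibre then yields the claim for $Y$, because flat pullback sends $[\sO_\sZ]$ to $[\sO_Z]$ (by flatness of $\sZ$) and $[\sZ]$ to $[Z]$.

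On the smooth scheme $\sY^\circ$ over the perfect field $\F_p$, the statement is the classical one, going back to Grothendieck and Jouanolou. One proof uses Grothendieck--Riemann--Roch without denominators for the closed immersion $i:\sZ\to\sY^\circ$. In the top degree this gives $c(i_*\alpha)=1+i_*(P(\ldots))$, with leading term $(-1)^{m-1}(m-1)!\,i_*(\operatorname{rk}\alpha)$ and all lower Chern classes vanishing. Here $\sZ$ may be singular, which is allowed since only the pushforward to $\sY^\circ$ is involved. Alternatively, I can argue by a deformation to the normal cone. Over the open set where $\sZ$ is regular, $\sZ$ is locally a complete intersection, and the Koszul complex computes $c(\sO_\sZ)$. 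The singular locus of $\sZ$ has codimension $>m$ in $\sY^\circ$, so by the localization sequence it does not affect $\CH^{\le m}$ classes supported on $\sZ$; I would check this carefully. Thus $c_n$ for $n\le m$ can be computed after removing it.

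The main obstacle is exactly this last point: making precise that $c_n([\sO_\sZ])$ for $n\le m$ is determined away from a closed subset of codimension $>m$. I would handle it using the fact that $c_n^F([\sO_\sZ])\cap[\sY^\circ]$ is supported on $\sZ$; this follows from the localized Chern classes of a complex exact off $\sZ$ (Fulton, \S18.1). Removing a closed subset $T$ of codimension $>m$ in $\sY^\circ$ then gives an injection $\CH_{\dim\sZ}(\sZ)\to\CH_{\dim\sZ}(\sZ\setminus T)$, which reduces the problem to the lci case. The lci computation is a standard Koszul/splitting-principle calculation and should be routine.
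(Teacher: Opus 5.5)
Your proposal is correct and follows the paper's proof. You spread out to a scheme smooth over $\F_p$ (passing to the regular locus of $\sY$ where the paper inverts an element of $S$), transport the formula along the flat localization $Y\to\sY$ using compatibility of $c_n^F$ with flat pullback, and apply the classical statement \cite[Example 15.3.1]{Fulton}, which the paper simply cites while you reconstruct its proof. Two small corrections: the formula $c(i_*\alpha)=1+i_*P(\ldots)$ involves the normal bundle and so needs $\sZ$ nonsingular (or at least regularly embedded), so it is your localization argument (support on $\sZ$, then removing the singular locus of $\sZ$) that handles singular $\sZ$; and the vanishing must be read for $0<n<m$, since $c_0^F(x)\cap[Y]=[Y]$.
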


\begin{proof} Since $k$ is finitely generated over $\F_p$, there is  a  spreading out $(\sY, \sZ)\to \Spec S$ of $(Y, Z)$. Since $\sY$ is of finite type over $\F_p$, the regular locus in $\sY$ is open, and by assumption contains $Y$. Thus, after inverting a non-zero element of  $S$ if necessary, we may assume the $\sY$ is regular. Since $\F_p$ is perfect, this implies that $\sY$ is smooth over $\F_p$, hence $\sY\to \Spec S$ is a smooth spreading out of $Y$. 

We may assume that  $\sZ\subset \sY$ is an integral codimension $m$ closed subscheme of $\sY$, again after localizing $S$ further if necessary. Let $j:Y\to \sY$ be the inclusion, a flat morphism. Then as $c_n^F(-)\cap-$ is functorial for flat morphisms \cite[Theorem 3.2]{Fulton}, we have
\[
j^*(c_n^F([\sO_\sZ])\cap[\sY])=c_n^F([\sO_Z])\cap[Y].
\]
Since $\sY$ is smooth over $\F_p$, we have
\[
c_n^F([\sO_\sZ])\cap[\sY]=c_n([\sO_\sZ])\in \CH^n(\sY),
\]
and by Grothendieck-Riemann-Roch for the smooth quasi-projective $\F_p$-scheme $\sY$ \cite[Example 15.3.1]{Fulton}, we have
\[
c_n([\sO_\sZ])=\begin{cases} 0 &\text{ for }n<m,\\ (-1)^{m-1}(m-1)![\sZ]&\text{ for }n=m,\end{cases}
\]
where $[\sZ]\in \CH^m(\sY)$ is the class of the integral closed subscheme $\sZ$. But since $j^*([\sZ])=[Z]$, the lemma is a consequence of these three identities.
\end{proof}

Now let $(X, X^*)$ be an admissible pair over $k$ with a finite set of closed points $P\subset X\setminus X^*$, $P=\{p_0,\ldots, p_r\}$. Suppose $k$ is finitely generated over $\F_p$. Then there exists a spreading out $(\sX, \sX^*, \sP)\to \Spec S$ of $(X, X^*, P)$, $\sP=\{\mathfrak{p}_0,\ldots, \mathfrak{p}_r\}$.  

\begin{lemma}\label{lem:RegularRes} There exists a smooth quasi-projective $\F_p$-scheme $\sY\to \Spec \F_p$ and a morphism $q:\sY\to \sX$ over $\F_p$ such that\\[5pt]
1. $q$ is proper and generically finite\\[2pt]
2. $q$ is \'etale over a neighborhood of $P\subset \sP$\\[2pt]
3. After localizing $S$ is necessary, $\sY\to \Spec S$ is flat\\[5pt]
Moreover, if $X$ is projective over $k$, we may take  $\sY$ to be projective over   $\Spec S$ and over $\sX$.
\end{lemma}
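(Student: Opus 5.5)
The plan is to resolve $\sX$ near $\sP$ by de Jong's alterations, which work over an arbitrary base field. First I will arrange the spreading out so that $\sX$ is integral (or treat each irreducible component of $X(d)$ separately and take disjoint unions), and so that $\sX\to\Spec S$ is projective with $\sX$ quasi-projective over $\F_p$. Then I will apply de Jong's theorem to the finite-type $\F_p$-variety $\sX$. This gives a regular, hence smooth since $\F_p$ is perfect, quasi-projective $\F_p$-scheme $\sY_1$ together with a proper, surjective, generically finite morphism $q_1:\sY_1\to\sX$. If $\sX$ is projective over $S$, $\sY_1$ can be taken projective over $\sX$. This settles (1).

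The main obstacle is (2), the \'etaleness over a neighborhood of $P$. De Jong gives only generic \'etaleness, and only when there is a separable-alteration variant; the locus where $q_1$ fails to be \'etale may well contain the points of $\sP$. I will get around this using the hypothesis $P\subset X\setminus X^*\subset X_\reg(d)$. Since each $p_i$ is a regular point, each generic point $\mathfrak p_i$ of $\sP$ is a regular point of $\sX$. Hence there is an open $\sV\subset\sX$, containing the generic points of $\sP$, that is regular and therefore smooth over $\F_p$.

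The key step is to replace $\sY_1$ by a variety which, over $\sV$, is the identity. Concretely, I take $\sY$ to be the closure of the graph of the birational inverse, that is, a smooth compactification of $\sV$ over $\sX$. The way I expect this to work is to apply de Jong's theorem to the pair $(\bar\sX,\bar\sX\setminus\sV)$ after first blowing up the complement of $\sV$. Since de Jong's construction is an isomorphism (up to a generically \'etale finite cover) over any open subset where one already has a smooth model, I would choose the alteration to be \'etale over $\sV$, for instance by taking the product of the alteration with the identity on $\sV$ and using that $\sV$ is already smooth. Alternatively, and this is what I would try first, I will use Gabber's refinement of de Jong's theorem, the prime-to-$\ell$ alteration. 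It allows one to take $q$ finite \'etale over a given regular open dense subset after a generically \'etale step, and one can further arrange \'etaleness over the regular locus. Failing that, I would fall back on the following argument. Let $\sY_1\to\sX$ be an alteration and $\sU\subset\sX$ the open subset over which $q_1$ is finite \'etale. Translates of $\sP$ can be moved into $\sU$ only after shrinking $S$, which is allowed: $q$ need only be \'etale over a neighborhood of $P=\sP\otimes_Sk$, i.e. at the generic fibres of $\sP$. So it suffices to ensure that the generic points $\mathfrak p_i$ lie in $\sU$, which again uses regularity of $X$ at $P$ and the purity-of-branch-locus argument for a generically \'etale alteration over a regular base.

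Once \'etaleness over a neighborhood of the generic points of $\sP$ is established, I will shrink $S$ so that the neighborhood contains $\sP$, giving (2). For (3), generic flatness of $\sY\to\Spec S$ over the integral base gives flatness after inverting a nonzero element of $S$. Shrinking $S$ preserves smoothness of $\sY$ over $\F_p$, since it is an open restriction, and preserves the projectivity assertions. Finally, I will check that base change to $k$ keeps $q_k$ \'etale over a neighborhood of $P$, as needed in the proof of Theorem~\ref{thm:RR}.
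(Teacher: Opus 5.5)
There is a genuine gap in (2), which you correctly identify as the crux. None of your proposed routes actually produces étaleness over a neighborhood of $P$.

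\textbf{Route 1: a smooth compactification of $\sV$ over $\sX$.} This is a resolution of singularities of $\sX$ that is an isomorphism over $\sV$. It is not available in characteristic $p$.

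\textbf{Route 2: de Jong's construction.} De Jong's theorem gives no control of the alteration over the regular locus. It is not, even up to a generically étale finite cover, an isomorphism over a prescribed smooth open. Gluing the alteration with the identity of $\sV$ gives $\sY_1\amalg\sV\to\sX$, which is not proper. Properness (projectivity) of $\tilde X$ is exactly what the proof of Theorem~\ref{thm:RR} uses to pass from $m\,[q_k^*(1\cdot x_1)]=0$ to $m=0$ by taking degrees.

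\textbf{Route 3: Gabber's refinement.} Gabber's prime-to-$\ell$ refinement controls the degree of the alteration, not its étale locus.

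\textbf{The fallback.} This fails for several reasons.
\begin{itemize}
\item Purity of the branch locus only says that the branch locus of a finite cover of a regular scheme by a normal scheme is pure of codimension one. Nothing prevents that divisor from containing $p_i$.
\item An alteration need not even be quasi-finite over $p_i$, so purity does not apply in the first place.
\item There is no group action that would let you translate $\sP$.
\item The non-étale locus has closed image in $\sX$, and $p_i$ lies over the generic point of $\Spec S$. So if that image contains $p_i$, it contains all of $\mathfrak p_i$, and no shrinking of $S$ removes it.
\end{itemize}

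\textbf{The missing ingredient.} What you need is the refined alteration theorem of Bhatt--Snowden. Given a variety and finitely many closed points in its smooth locus, it provides a smooth alteration that is étale over a neighborhood of those points. This is precisely what the paper invokes:
\begin{itemize}
\item Shrink $S$ so that $\sX\setminus\sX^*$ is regular, hence smooth over $\F_p$, and so that $\sP\subset\sX\setminus\sX^*$.
\item Choose closed points $x_i\in\mathfrak p_i$; these lie in the smooth locus.
\item Apply Bhatt--Snowden to get a smooth $\sY$ and a proper, generically finite $q:\sY\to\sX$ that is étale over a neighborhood of $\{x_0,\ldots,x_r\}$.
\item Since $\mathfrak p_i$ is irreducible, such a neighborhood meets $\mathfrak p_i$ in a dense open set and so contains $p_i$. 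This gives (2).
\item For the projective case, compactify $\Spec S$ and $\sX$, apply the projective version of Bhatt--Snowden (their Theorem 2.6) to $\overline{\sX}$, and pull back over $\Spec S$.
\end{itemize}
Your remaining steps (regularity of $\sX$ near $\sP$, shrinking $S$, generic flatness for (3)) are fine once this input is in place.
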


\begin{proof} Since $\sX\setminus \sX^*$ is of finite type over $\F_p$, the regular locus of  $\sX\setminus \sX^*$ is an open subscheme, containing the regular scheme $X\setminus X^*$. Thus,  after localizing $S$ if necessary, we may assume that $\sX\setminus \sX^*$ is regular. Since $\F_p$ is perfect, this implies that $\sX\setminus \sX^*$ is smooth over $\F_p$. Similarly, we may assume that $\sP\subset \sX\setminus \sX^*$. For each $i=1,\ldots, r$ choose a closed point $x_i\in \mathfrak{p}_i$, so $x_i$ is in the smooth locus of $\sX$ over $\F_p$.

By \cite[Theorem 1.1, Theorem 2.6]{BhattSnowden}, there is a smooth, finite-type $\F_p$-scheme $\sY$ and a proper morphism $q:\sY\to \sX$ that is generically finite and is \'etale over a neighborhood of $\{x_0,\ldots, x_r\}$. The points (1), (2), (3) follow from this.

  In case $X\to \Spec k$ is projective, $\sX\to \Spec S$ is projective by construction, say $\sX$ is a closed subscheme of $\P^N_S$. Let $\overline{\Spec S}\subset \P^M_{\F_p}$ be a projective closure of the 
 finite-type affine $\F_p$-scheme $\Spec S$, and let 
 \[
 \overline{\sX}\subset \P^N_{\P^M_{\F_p}}=\P^N_{\F_p}\times_{\F_p}\P^M_{\F_p}\subset \P^{NM-1}_{\F_p}
 \]
 be the closure of $\sX$. Applying the Bhatt-Snowden construction as above to $\overline{\sX}$ gives us $\overline{\sY}\to \overline{\sX}$, and  since $\overline{\sX}$ is proper over $\F_p$, 
  \cite[Theorem 2.6]{BhattSnowden} tells us that we can take $\overline{\sY}$ to be projective over $\F_p$. But then $\overline{\sY}\to \overline{\sX}$ is a projective morphism, so pulling back over $\Spec S\hookrightarrow \P^M_{\F_p}$ gives $\sY\to \sX\to \Spec S$ with $\sY\to \Spec S$ and $\sY\to \sX$ projective. 

\end{proof}

\begin{remark}\label{rem:LciPullback} Let $Y$ be quasi-projective  and  equi-dimensional over $k$, and let $f:Y\to H$ be a morphism to a smooth quasi-projective $k$-scheme $H$. Take $x\in K_0(H)$. Since $H$ is smooth over $k$, the morphism $f$ is an lci morphism, so we have a well-defined lci pullback map \cite[\S 6.6]{Fulton}
\[
f^!:\CH_n(H)\to \CH_{n-c}(Y)
\]
where $c=\dim_kH-\dim_kY$; if $Y$ is regular, we write this as
\[
f^*:\CH^*(H)\to \CH^*(Y).
\]

For $\alpha\in \CH_r(H)$,  $m>0$ an integer and $x\in K_0(H)$, we have
\begin{equation}\label{eqn:BivChernClass}
f^!(c_m^F(x)\cap \alpha)=c_m^F(f^*(x))\cap f^!(\alpha).
\end{equation}
Indeed, we have the bivariant class in $\CH^*(H=H)$ by  $(Z\xrightarrow{g}H, y\in \CH_r(Z))\mapsto c_m^F(g^*x)\cap y$ (see \cite[\S17.3, pg. 325]{Fulton}), and  $f^!$ defines a bivariant class in $\CH^*(Y\to H)$, via $(g:Z\to H, y\in \CH_r(Z))\mapsto f^!(y)\in \CH_{r-c}(Z\times_HY)$. Then \eqref{eqn:BivChernClass} follows from \cite[Proposition 17.3.2]{Fulton} applied to $c=f^!$. In particular, taking $\alpha=[H]$ and assuming $Y$ is regular, we have
\begin{equation}\label{eqn:ChernClassLciPullback}
f^*(c_m(x))=c_m(f^*(x))\in \CH^m(Y)
\end{equation}

If now $c_m(x)$ is represented by some $z\in Z^m(H)$ such that $f^{-1}(\supp(z))$ has pure codimension $m$ in $Y$, then the cycle-theoretic pullback $f^*(z)\in Z^m(Y)$ is defined and we have
\begin{equation}\label{eqn:ChernClassLciPullback2}
c_m(f^*(x))=[f^*(z)]\in \CH^m(Y).
\end{equation} 
\end{remark}

\end{document}